\documentclass[a4paper,reqno,10pt]{amsart}

\usepackage{hyperref}

\usepackage{geometry}

\usepackage{comment} 

\usepackage{hyphenat}

\usepackage{manfnt} 
\reversemarginpar 

\usepackage[svgnames]{xcolor}

\usepackage{url}

\usepackage{footmisc}

\usepackage{amsmath}
\usepackage{amsthm}
\usepackage{amssymb}

\allowdisplaybreaks 

\usepackage{amscd}

\usepackage[T1]{fontenc}
\usepackage{lmodern}

\usepackage{bbm}

\usepackage{mathrsfs} 

\usepackage[OMLmathsfit]{isomath}

\usepackage{mathabx}

\usepackage{leftindex}

\usepackage{enumitem}
\setlist{noitemsep}

\usepackage{stackengine}

\swapnumbers 

\newtheoremstyle{exercise}
  {3pt} 
  {3pt} 
  {\small\rmfamily} 
  {
} 
  {\rmfamily\scshape} 
  {.} 
  {.5em} 
  {} 

\newtheoremstyle{newplain}
  {5pt}
  {5pt}
  {\itshape}
  {}
  {\rmfamily\scshape}
  { ---}
  {.5em}
  {}

\newtheoremstyle{newremark}
  {5pt}
  {5pt}
  {\rmfamily}
  {}
  {\rmfamily\scshape}
  { ---}
  {.5em}
  {}

\theoremstyle{newplain}
\newtheorem*{Theorem*}{Theorem} 

\theoremstyle{newplain}
\newtheorem{Theorem}{Theorem}

\newtheorem{Corollary}[Theorem]{Corollary}
\newtheorem{Proposition}[Theorem]{Proposition}
\newtheorem{Conjecture}[Theorem]{Conjecture}
\newtheorem{Definition}[Theorem]{Definition}
\newtheorem{Scholia}[Theorem]{Scholia}

\theoremstyle{newremark}
\newtheorem{Empty}[Theorem]{}
\newtheorem{Remark}[Theorem]{Remark}

\newtheorem{Claim}[Theorem]{Claim}

\theoremstyle{exercise}

\numberwithin{Theorem}{section}
\numberwithin{equation}{section}

\newcommand{\N}{\mathbb{N}}

\newcommand{\R}{\mathbb{R}}

\newcommand{\Rm}{\R^m}
\newcommand{\Rn}{\R^n}

\newcommand{\calB}{\mathscr{B}}
\newcommand{\calC}{\mathscr{C}}
\newcommand{\calD}{\mathscr{D}}
\newcommand{\calE}{\mathscr{E}}
\newcommand{\calF}{\mathscr{F}}

\newcommand{\calM}{\mathscr{M}}

\newcommand{\calP}{\mathscr{P}}

\newcommand{\calS}{\mathscr{S}}
\newcommand{\calT}{\mathscr{T}}
\newcommand{\calU}{\mathscr{U}}
\newcommand{\calV}{\mathscr{V}}

\newcommand{\frS}{\frak S}

\newcommand{\ssfH}{\mathsfit{H}}

\newcommand{\ssfL}{\mathsfit{L}}

\newcommand{\balpha}{\boldsymbol{\alpha}}

\newcommand{\bD}{\mathbf{D}}
\newcommand{\bE}{\mathbf{E}}
\newcommand{\bF}{\mathbf{F}}

\newcommand{\bI}{\mathbf{I}}

\newcommand{\bc}{\mathbf{c}}
\newcommand{\bcG}{\mathbf{c}_{\mathrm{(G)}}}
\newcommand{\bcH}{\mathbf{c}_{\mathrm{(H)}}}

\newcommand{\bq}{\mathbf{q}}

\newcommand{\bv}{\mathbf{v}}

\newcommand{\bev}{\mathbf{ev}}   

\DeclareMathOperator{\rmabsconv}{\mathrm{aco}}   
\DeclareMathOperator{\rmbdry}{\mathrm{bdry}}          
\newcommand{\rmbd}{\mathrm{bd}}                       
\DeclareMathOperator{\rmclos}{\mathrm{clos}}          
\DeclareMathOperator{\rmconv}{\mathrm{co}}            
\newcommand{\rmcomp}{\mathrm{comp}}

\DeclareMathOperator{\rmdist}{\mathrm{dist}}          
\DeclareMathOperator{\rmdiv}{\mathrm{div}}            
\DeclareMathOperator{\rmev}{\mathrm{ev}}              
\DeclareMathOperator{\rmhull}{\mathrm{hull}}          
\newcommand{\rmid}{\mathrm{id}}                       
\DeclareMathOperator{\rmim}{\mathrm{im}}              
\DeclareMathOperator{\rmint}{\mathrm{int}}            

\DeclareMathOperator{\rmloc}{\mathrm{loc}}

\newcommand{\rmseq}{\mathrm{seq}}						
\newcommand{\rms}{\mathrm{s}}
\DeclareMathOperator{\rmspt}{\mathrm{spt}}            

\newcommand{\ip}{\,\,\begin{picture}(-1,1)(-1,-2.5)\circle*{2}\end{picture}\;\,\,}

\newcommand{\ind}{\mathbbm{1}}

\newcommand{\hel} {
\hskip2.5pt{\vrule height7pt width.5pt depth0pt}
\hskip-.2pt\vbox{\hrule height.5pt width7pt depth0pt}
\, }

\newcommand{\shel} {
\hskip2.5pt{\vrule height5pt width.5pt depth0pt}
\hskip-.2pt\vbox{\hrule height.5pt width5pt depth0pt}
\, }

\newcommand{\rmrestr}{\mathrm{restr}}
\newcommand{\rmext}{\mathrm{ext}}

\newcommand{\lno}{\pmb{\thickvert}}
\newcommand{\rno}{\mkern+1mu\pmb{\thickvert}}

\def\XXint#1#2#3{{%
\setbox0=\hbox{$#1{#2#3}{\int}$}
\vcenter{\hbox{$#2#3$}}\kern-.5\wd0}}

\newcommand{\TV}{TV}
\newcommand{\SCH}{SC\!H}

\newcommand{\cqfd} {
\renewcommand{\qedsymbol}{$\blacksquare$}
\qed
\renewcommand{\qedsymbol}{$\square$} }

\newcommand{\veps}{\varepsilon}

\newcommand{\vphi}{\varphi}

\newcommand{\la}{\langle}
\newcommand{\ra}{\rangle}

\newcommand{\ie}{{\it i.e.}\ }
\newcommand{\eg}{{\it e.g.}\ }

\renewcommand{\em}{\bf}

\renewcommand{\leq}{\leqslant}
\renewcommand{\geq}{\geqslant}

\renewcommand{\subset}{\subseteq}
\renewcommand{\supset}{\supseteq}

\newlength{\drop}

\newcommand*{\plogo}{\fbox{$\mathcal{HSH}$}}
\usepackage{trajan}

\newcommand*{\titleAT}{\begingroup
\drop=0.1\textheight
\vspace*{\drop}
\rule{0.93\textwidth}{1pt}\par 
\vspace{2pt}\vspace{-\baselineskip}
\rule{0.93\textwidth}{0.4pt}\par 
\vspace{0.5\drop}
\centering

\textcolor{Blue}{
{\trjnfamily
	\Huge MORE SO D'ANALYSE}\\[5\baselineskip]
{\trjnfamily
	\Large VOLUME I}\\[1.8\baselineskip]
{\trjnfamily
	\Huge MEASURE THEORY}\\[5\baselineskip]
}\par 
\vspace{0.25\drop}
\rule{0.3\textwidth}{0.4pt}\par 
\vspace{\drop} 

{\Large \scshape Thierry De Pauw}\par 
\vfill 
{\large \textcolor{Blue}{\plogo}}\\[0.5\baselineskip]

{\large \scshape Home Sweet Home}\par 
\vspace*{\drop}
\endgroup}

\begin{document}

\title{Localized locally convex topologies}

\author[Th. De Pauw]{Thierry De Pauw}

\address{Institute for Theoretical Sciences / School of Science, Westlake University\\
No. 600, Dunyu Road, Xihu District, Hangzhou, Zhejiang, 310030, China}

\email{thierry.depauw@westlake.edu.cn}

\keywords{Divergence equation, semireflexivity}

\subjclass[2020]{Primary 35F05, 46A25, 46F05; Secondary 35D30, 35Q35, 46A04, 46A08, 46A20}


\begin{abstract}
Motivated by ill-posed PDEs such as $\rmdiv v = F$ we study locally convex topologies $\calT_\calC$ on real vector spaces $X$ that are a ``localized'' version of a locally convex topology $\calT$ to members of a family $\calC$ of convex subsets of $X$.
The distributions $F$ arising as $\rmdiv v$ are expected to be the members of the dual of well-chosen $X$ with respect to an appropriate localized topology $\calT_\calC$.
In this work, the emphasis is on studying the functional analytic properties of $\calT_\calC$, according to those of $\calT$ and $\calC$.
For instance, we show that in all foreseen applications, $\calT_\calC$ is sequential but none of Fr\'echet-Urysohn, barrelled, and bornological.
These awkward phenomena are illustrated explicitly on a specific example corresponding to the distributional divergence of continuous vector fields in $\Rm$.
We also show that, essentially, $\calT_\calC$ is semireflexive if and only if members of $\calC$ are $\calT$-compact.
This leads to an abstract existence theorem, thereby establishing a general scheme for characterizing those $F$ such that $\rmdiv v = F$ for various classes of regularity of $v$, various classes of domains, and various boundary conditions. 
\end{abstract}

\maketitle



\tableofcontents


\section*{Introduction}
\label{sec.INTRO}

Let $X$ be a real vector space.
Let also $\|\cdot\|$ and $\lno \cdot \rno$ be, respectively, a norm and a seminorm on $X$, the latter being lower-semicontinuous with respect to the former.
The purpose of this paper is to study a vector topology $\calT_\calC$ on $X$ such that a linear form $f : X \to \R$ is $\calT_\calC$-continuous if and only if
\begin{equation*}
(\forall \veps > 0)(\exists \theta > 0)(\forall x \in X): |f(x)| \leq \theta \cdot  \|x\| + \veps \cdot  \lno x \rno.
\end{equation*}
\par 
More generally, instead of a normed space $X[\|\cdot\|]$ we handle the case of a locally convex topological vector space $X[\calT]$ whose topology is generated by a filtering family of seminorms $\la \|\cdot\|_i\ra_{i \in I}$ and $X$ is the union of a non-decreasing sequence $\la X_k \ra_k$ of $\calT$-closed vector subspaces.
In this case, we study a vector topology $\calT_\calC$ on $X$ such that a linear form $f : X \to \R$ is $\calT_\calC$-continuous if and only if
\begin{equation}
\label{eq.INTRO.2}
(\forall \veps > 0)(\exists i \in I)(\exists \theta > 0)(\forall x \in X_{\lceil \veps^{-1} \rceil}): |f(x)| \leq \theta\cdot \|x\|_i + \veps\cdot \lno x \rno.
\end{equation}
\par 
Our initial motivation arose from the following application which will play the role of a case study in this introduction -- details are given in section \ref{sec.OE}.
Assume that $v \in C(\Rm;\Rm)$ is a continuous vector field in $\Rm$ and consider its distributional divergence $\rmdiv v$ defined as follows
\begin{equation*}
\calD(\Rm) \to \R : \vphi \mapsto - \int_{\Rm} v \ip (\nabla \vphi) \,d\ssfL^m,
\end{equation*}
where $\calD(\Rm)$ is the space of test functions and $\ssfL^m$ the Lebesgue measure.
Given $\veps > 0$ we may choose a smooth vector field $w \in C^\infty(\Rm;\Rm)$ such that $|v(x)-w(x)| < \veps$ for all $x \in B(0,\veps^{-1})$.
Letting $\theta = \max \{ |(\rmdiv w)(x)| : x \in B(0,\veps^{-1})\}$ we have
\begin{equation}
\label{eq.INTRO.3}
|\la \vphi , \rmdiv v \ra| \leq \left| \int_{\Rm} ( \rmdiv w ) \cdot \vphi \,d\ssfL^m \right| + \left| \int_{\Rm} (v-w) \ip (\nabla \vphi) \,d\ssfL^m \right| < \theta \cdot \|\vphi\|_{L_1} + \veps \cdot  \|\nabla \vphi \|_{L_1}
\end{equation}
for all test functions $\vphi$ supported in $B(0,\veps^{-1})$.
This is a special case of the general setup above with $X = \calD(\Rm)$, $\|\cdot\| = \|\cdot\|_{L_1}$, $\lno \cdot \rno = \| \nabla \cdot \|_{L_1}$, and $X_k = \calD(\Rm) \cap \{ \vphi : \rmspt \vphi \subset B(0,k)\}$.
\par 
The ultimate aim of the techniques developed here is to establish a converse to the previous paragraph, \ie that the linear forms $F : \calD(\Rm) \to \R$ such that
\begin{equation}
\label{eq.INTRO.1}
(\forall \veps > 0)(\exists \theta > 0)(\forall \vphi \in \calD(\Rm)) : \rmspt \vphi \subset B(0,\lceil \veps^{-1} \rceil) \Rightarrow |\la \vphi, F \ra| < \theta \cdot \|\vphi \|_{L_1} + \veps \cdot \|\nabla \vphi \|_{L_1}
\end{equation}
are precisely those of the type $F = \rmdiv v$ for some $v \in C(\Rm;\Rm)$.
This particular result was proved in \cite{DEP.PFE.06b}, a different version in \cite{DEP.TOR.09}, a similar result for differential forms in \cite{DEP.MOO.PFE.08}, whereas applications to Lebesgue-summable (rather than continuous) vector fields will appear in \cite{DEP.26a}.
\par 
Our focus in the present paper is twofold:
\begin{itemize}
\item To develop a general theory, culminating in the new existence theorem \ref{AET.FR}, that encompasses all cases quoted in the previous paragraph and more forthcoming applications.
\item To provide a careful study of these topologies $\calT_\calC$ which, we feel, is warranted by the unexpected, sometimes awkward, often tricky issues that they offer to the user and that were not always fully appreciated previously\footnote{including by the present author.}. These surprises are described in the remaining part of the introduction. 
\end{itemize}
We present in section \ref{sec.OE} a detailed analysis of the special situation described above with continuous vector fields; our proof here differs somewhat from, but is inspired by the original given in \cite{DEP.PFE.06b}.
\\
\par 
The author hopes that the readership will include at least part of the community of analysts studying PDEs akin $\rmdiv v = F$.
As their toolbox does not necessarily comprise topics such as sequential {\it vs.} Fr\'echet-Urysohn spaces, the Mackey-Arens theorem, and the Krein-\v{S}mulian theorem, these are treated carefully in appendices or the body of the text.
We now proceed to describing in more detail the content of this work.
\\
\par 
Our first task in section \ref{sec.EUL} is to prove the existence and uniqueness of a locally convex topology $\calT_\calC$ on $X$ associated with a locally convex topology $\calT$ and a localizing family $\calC$, \ie a collection of convex sets containing the origin such that for all $C \in \calC$, $x \in X$, and $t \in \R$ there exists $D \in \calC$ containing $x + t \cdot C$ (definition \ref{EUL.2}).
The main localizing family to keep in mind throughout is $\calC = \{ C_k : k \in \N \}$, where $C_k = X_k \cap \{ x : \lno x \rno \leq k \}$ under the assumptions of the second paragraph of this introduction.
The topology $\calT_\calC$ is characterized by the following universal property (see \ref{EUL.3} and \ref{GO.1}(B)):
\begin{enumerate}
\item[(1)] For every $C \in \calC$ we have $\calT_\calC \hel C = \calT \hel C$;
\item[(2)] For every locally convex space $Y$ and every linear map $f : X \to Y$ if the restriction $f|_C : C \to Y$ is $\calT \hel C$-continuous for all $C \in \calC$ then $f$ is $\calT_\calC$-continuous.
\end{enumerate}
Here, $\calT \hel C$ is the topology on $C$ induced by $\calT$ and similarly for $\calT_\calC \hel C$.
Condition (1) is the reason why we call $\calT_\calC$ a {\em localized} topology, the ``localization'' being understood with respect to members of $\calC$.
The existence and uniqueness theorem \ref{EUL.4} has been stated in \cite[theorem 3.3]{DEP.MOO.PFE.08} and in the course of the full proof given here we single out two useful observations \ref{EUL.9} and \eqref{eq.EUL.1}.
One should carefully notice that $\calT_\calC$ is in general {\it not} an inductive limit topology in the category of locally convex topological vector spaces, since the members of $\calC$ are not necessarily vector subspaces of $X$ but merely convex subsets.
\\
\par 
The short section \ref{sec.GO} gathers a few facts that are true at this level of generality, for instance, that $\calT$ is coarser than $\calT_\calC$.
It also touches upon the following difficulty.
If $Y \subset X$ is a vector subspace then one checks that $\calC \hel Y = \{ C \cap Y : C \in \calC \}$ is a localizing family in $Y$, therefore, one may consider the localized topology $(\calT \hel Y)_{\calC \shel Y}$ on $Y$.
We have $\calT_\calC \hel Y \subset (\calT \hel Y)_{\calC \shel Y}$ but equality does not seem to hold in general -- theorem \ref{CCC.1}(C) theorem \ref{CWC.6}(B) give sufficient conditions for equality\footnote{with respect to \ref{CCC.1}(C) we observe that \cite[proposition 3.9]{DEP.MOO.PFE.08} states a wrong hypothesis; at least it doesn't correspond to the proof given there.}.
This is a disturbance, since it prevents straightforward applications of Hahn-Banach's theorem in conjunction with testing $\calT_\calC$-continuity of a partially defined linear form by means of \eqref{eq.INTRO.2}, \ie in accordance with, for instance, theorem \ref{CCC.5}, see remark \ref{FD.4}.
This justifies the special notion of {\it uniformly sequentially $\calT_\calC$-dense subspace} introduced in \ref{FD.5} and the corresponding extension theorem \ref{FD.6}.
\\
\par 
The results of section \ref{sec.CCC} hold in the slightly restricted setting when the localizing family $\calC$ is assumed to be a non-decreasing sequence of $\calT$-closed convex sets whose union is $X$.
It offers characterizations of $\calT_\calC$-convergent sequences under varying circumstances, theorem \ref{CCC.1}(A) and theorem \ref{CCC.4}(A).
Whether ``sequences suffice'' to characterize topological notions is subtle, as it depends on the notion under scrutiny\footnote{For instance, the notion of bounded set in a topological vector space is determined by its convergent sequences, \ref{LCTVS.5}(B).}.
Specifically, a Hausdorff topological space $X$ can be:
\begin{itemize}
\item {\it sequential}, \ie for all Hausdorff topological spaces $Y$ and all $f:X \to Y$ if $f$ is sequentially continuous then $f$ is continuous;
\item {\it Fr\'echet-Urysohn}, \ie for all $A \subset X$ and all $x \in \rmclos A$ there exists a sequence in $A$ converging to $x$ (in particular, if $A$ is dense then it is sequentially dense).
\end{itemize}
The latter implies the former but the converse fails in general.
Confusion is paroxystic when the terminology ``sequential'' is used in place of ``Fr\'echet-Urysohn'' as is the case, for instance, in the author's \cite{DEP.97} and in \cite{YAMAMURO}.
Details are given in appendix \ref{app.A} whose relevance here stems from the fact that in typical applications, \eg section \ref{sec.OE}, $X[\calT_\calC]$ is sequential, \ref{OE.4}(A), but is not Fr\'echet-Urysohn\footnote{A typical analyst interested in the PDE $\rmdiv v = F$ is by now walking on (rotten) eggs -- of course, as with everything, it only takes to get acquainted with the situation.}, \ref{OE.4}(B).
\par 
Theorem \ref{CWC.6}(A) gives a sufficient condition for $X[\calT_\calC]$ to be sequential: One assumes that each $C \in \calC$ is also $\calT$-compact and that $\calT \hel C$ is sequential.
Note that the latter is weaker than assuming that $\calT$ itself is sequential (the case when $\calT$ is a weak topology associated with a Banach space structure on $X$ will arise in applications and weak topologies are never sequential unless $X$ is finite-dimensional).
\par 
In the setting of section \ref{sec.CCC}, $X[\calT_\calC]$ may not be sequential but theorem \ref{CCC.2} yields a slightly weaker version, strong enough to imply that sequentially continuous {\it linear} maps defined on $X$ (or seminorms) are continuous.
This property of $X$ holds when $X$ is bornological, see remark \ref{CCC.3}(2) for a reference and appendix \ref{app.BORN} for the definition of bornological spaces.
Notwithstanding, in typical applications, see \eg section \ref{sec.OE}, $X[\calT_\calC]$ is not bornological, \ref{OE.4}(D).
It is not barrelled either, \ref{OE.4}(C) (see appendix \ref{app.BARREL} for a definition of barrelled space). 
The latter means that the Banach-Steinhaus theorem fails: if $\la F_n \ra_n$ is a sequence in the dual of $X[\calT_\calC]$ that converges pointwise to a linear form $F$ then $F$ may not be $\calT_\calC$-continuous, see the proof of \ref{OE.4}(C) for an example.
Theorems \ref{CCC.6} and \ref{CCC.7} prove in great generality that $X[\calT_\calC]$ is none of Fr\'echet-Urysohn, bornological or barrelled unless $\calT = \calT_\calC$.
\\
\par 
The setting of section \ref{sec.CWC} is a specialization of that of section \ref{sec.CCC}, namely one now assumes that the localizing family $\calC$ is a non-decreasing sequence of {\it $\calT$-compact} (rather than merely $\calT$-closed) convex sets whose union is $X$.
In that case, a substantial simplification occurs in describing the localized topology $\calT_\calC$, see theorem \ref{CWC.2} where also a simple criterion is given for the $\calT_\calC$-openness and the $\calT_\calC$-closedness of arbitrary (not necessarily convex) subsets of $X$.
\par 
This remarkable simplification is instrumental in the main theorems of the paper but it necessitates appropriate modifications to the setup of our case study.
Indeed, in this case,
\begin{equation*}
C_k = \calD(\Rm) \cap \left\{ \vphi : \rmspt \vphi \subset B(0,k) \text{ and } \|\nabla \vphi \|_{L_1} \leq k \right\}
\end{equation*}
and it is plain that $C_k$ is not $\|\cdot\|_{L_1}$-compact.
Thus, we ought to replace $X = \calD(\Rm)$ with a larger set that will guarantee the requested compactness.
\par 
The natural candidate is $X = BV_c(\Rm)$, the vector space consisting of (equivalence classes of) functions of bounded variation\footnote{\ie those Lebesgue-summable  $u$ whose distributional partial derivatives $\partial_i u$ are signed measures.} whose support is compact.
Observe that $\|u\| = \|u\|_{L_1}$ is defined for $u \in BV_c(\Rm)$ and that $\|\nabla \vphi\|_{L_1}$ may be replaced by the total variation $\lno u \rno = \|Du\|(\Rm)$.
Letting $X_k = BV_c(\Rm) \cap \{ u : \rmspt u \subset B(0,k) \}$ we observe that
\begin{equation*}
C_k = X_k \cap \{ u : \lno u \rno \leq k \} = BV_c(\Rm) \cap \{ u : \rmspt u \subset B(0,k) \text{ and } \|Du\|(\Rm) \leq k \}
\end{equation*}
is indeed $\|\cdot\|_{L_1}$-compact.
Moreover, if $v \in C(\Rm;\Rm)$ then we may define its divergence acting on $BV_c(\Rm)$ by the formula
\begin{equation*}
BV_c(\Rm) \to \R : u \mapsto - \sum_{i=1}^m \int_{\Rm} v_i \, d(\partial_i u).
\end{equation*}
Using the same approximation of $v$ by a smooth $w$ as in \eqref{eq.INTRO.3} and using the results of section \ref{sec.CCC}, it is now a matter of routine verification to show that $\rmdiv v$ defined this way is $\calT_\calC$-continuous, where $\calT_\calC$ is the localized topology on $BV_c(\Rm)$ associated with $\calT = \calT_{\|\cdot\|_{L_1}}$, $\calC = \{ C_k : k \in \N\}$, and the $C_k$ are as above.
\par 
It remains to establish a bijective correspondence between the members of the dual of $BV_c[\calT_\calC]$ and the distributions described in \eqref{eq.INTRO.1}, see \ref{OE.5}, based on the fact that $\calD(\Rm)$ is {\it uniformly sequentially $\calT_\calC$-dense} in $BV_c(\Rm)$ (recall the relevant earlier comment when describing section \ref{sec.GO}).
\\
\par 
Section \ref{sec.FD} considers the strong topology $\beta(X^*,X)$ on the dual of $X[\calT_\calC]$, \ie the topology of uniform convergence on $\calT_\calC$-bounded subsets of $X$, under the assumption that the $\calT_\calC$-bounded subsets of $X$ are precisely the subsets of members of $\calC$.
This condition holds, for instance, in the setting of section \ref{sec.CWC}, \ie when members of $\calC$ are $\calT$-compact.
Thus, $\calC$ is a fundamental system of $\calT_\calC$-bounded sets in $X$ and, since it is also assumed to be countable, the strong topology $\beta(X^*,X)$ on the dual is metrizable.
Unlike in the case of the dual of a normed space, this does not immediately imply that $X[\calT_\calC]^*[\beta(X^*,X)]$ is complete (\ie a Fr\'echet space), yet we easily check this in theorem \ref{FD.2}.
It should be noted that if $X_k = X$ for all $k$ then $X[\calT_\calC]^*[\beta(X^*,X)]$ is, in fact, a Banach space, theorem \ref{FD.3}.
The remaining part of section \ref{sec.FD} is devoted to the problem of extending $\calT_\calC$-continuous linear forms from a uniformly sequentially $\calT_\calC$-dense subspace $Y \subset X$ to $X$.
\\
\par 
Section \ref{sec.BWS} presents a useful special case of localized topologies, namely that of a {\it bounded weak* topology}.
Let $Y$ be a locally convex topological vector space and $\calB$ a local base for its topology.
It is easy to see that the collection $\calB^\circ = \{ U^\circ : U \in \calB\}$ of polars of members of $\calB$ is a localizing family in $Y^*$.
If $\sigma(Y^*,Y)$ denotes the usual weak* topology on the dual $Y^*$ then the localized topology $\sigma(Y^*,Y)_{\calB^\circ}$ on $Y^*$ is its {\it bounded weak* topology} which we also denote by $\sigma_{\rmbd}(Y^*,Y)$.
Theorem \ref{BWS.3} gives an explicit local base for $\sigma_{\rmbd}(Y^*,Y)$ whereas theorems \ref{BWS.4} and \ref{BWS.5} are, respectively, those of Banach-Grothendieck and Krein-\v{S}mulian.
The content of this section is classical and we present the definitions and proofs for completeness.
Specifically, the proofs of \ref{BWS.3} and \ref{BWS.4} are minor generalizations of, respectively, \cite[Ch. V \S5 Lemma 4]{DUNFORD.SCHWARTZ.I} and \cite[Ch. V \S5 Theorem 6]{DUNFORD.SCHWARTZ.I}.
\\
\par 
Section \ref{sec.SD} makes the link between some localized topologies, such as that of our case study, and bounded weak* topologies.
In order to state this, we need to recall the notion of {\it semireflexivity}.
The first dual $X[\calT_\calC]^*$ equipped with the strong topology $\beta(X^*,X)$ has itself a dual space which we denote as $X^{**}$.
The evaluation map $\rmev : X \to X^{**}$ is well-defined and injective in general.
We say that $X[\calT_\calC]$ is {\it semireflexive} if $\rmev$ is also surjective, \ie whenever $X^{**}$ is linearly identified with $X$ via the evaluation map.
Theorem \ref{SD.5} essentially states that each member of $\calC$ is $\calT$-compact if and only if $X[\calT_\calC]$ is semireflexive and in that case the evaluation map is a homeomorphism when $X^{**}$ is equipped with its bounded weak* topology.
In other words, $\calT_\calC$ is identified via the evaluation map with $\sigma_{\rmbd}(X^{**},X^*)$.
Via this identification we gain, for instance, an explicit description of a local base for the localized topology.
The proof of our semireflexivity theorem relies on the Mackey-Arens theorem, see appendix \ref{app.DUAL}.
One implication of the semireflexivity theorem was obtained in a slightly different way in \cite[Theorem 3.16]{DEP.MOO.PFE.08}.
That the $\calT$-compactness of members of $\calC$ be also necessary for the semireflexivity of $X[\calT_\calC]$ sets the scope of the method developed in this paper.
\\
\par 
Section \ref{sec.AET} contains the main abstract existence theorem \ref{AET.FR} that we illustrate here for our case study only.
\begin{equation*}
\begin{CD}
C(\Rm;\Rm)^* \simeq M_c(\Rm;\Rm) @<{\nabla = \rmdiv^*}<< BV_c(\Rm)[\calT_\calC]^{**} \simeq BV_c(\Rm)\\
C(\Rm;\Rm) @>{\rmdiv}>> BV_c(\Rm)[\calT_\calC]^*
\end{CD}
\end{equation*}
Here, $C(\Rm;\Rm)$ is equipped with its structure of a Fr\'echet space (local uniform convergence) and its dual is identified with the space $M_c(\Rm;\Rm)$ of compactly supported $\Rm$-valued Borel measures on $\Rm$. 
The bidual $BV_c(\Rm)[\calT_\calC]^{**}$ is identified with $BV_c(\Rm)[\calT_\calC]$ by the semireflexivity theorem.
The adjoint of $v \mapsto \rmdiv v$ is the operator $u \mapsto \nabla u$ that sends $u \in BV_c(\Rm)$ to its distributional gradient $\nabla u = (\partial_1u,\ldots,\partial_mu)$. 
\par 
The goal is to prove that the operator $\rmdiv$ is surjective.
We first observe that its range is dense.
There are at least two ways to see this.
One way is by smoothing (members of $BV_c(\Rm)[\calT_\calC]^*$) as in \cite{DEP.PFE.06b}.
In the present paper, instead, we notice that $\lno \cdot \rno$ is, in fact, a norm (by a constancy theorem) and rely on Hahn-Banach's theorem to infer that the range of $\rmdiv$ is dense -- step {\bf (iii)} of the proof of theorem \ref{AET.FR}.
\par 
It remains to show that the range of $\rmdiv$ is closed which is equivalent, by the closed range theorem, to showing that the range of $\nabla$ is weakly* closed.
By the Krein-\v{S}mulian theorem, we reduce this to showing that $\rmim(\nabla) \cap V_k^\circ$, $k \in \N$, are weakly* closed, where $V_k = C(\Rm;\Rm) \cap \{ v : \sup\{ |v(x)| : x \in B(0,k) \} \leq k^{-1} \}$.
Upon observing that the polar $V_k^\circ$ consists of those $\Rm$-valued measures supported in $B(0,k)$ and whose total variation does not exceed $k$, the semireflexivity theorem allows to apply the $BV$-compactness theorem in the range of $\nabla$, thereby completing the proof. 
The argument for proving the abstract existence theorem follows similar lines. 
\par 
In the context of our case study the operator $\rmdiv$ does not admit a bounded linear right inverse.
The proof of this is, in essence, in \cite[Second proof of position 2, p.401]{BOU.BRE.03}. 
In fact, $\rmdiv$ does not even admit a uniformly continuous non-linear right inverse, see \cite{BOU.11}.
Yet, by Michael's selection theorem (see appendix \ref{app.MCC}) $\rmdiv$ admits a continuous, positively homogeneous, non-linear right inverse.
In theorem \ref{AET.FR}(N) we also gather extra estimates that such right inverse may be requested to satisfy -- the same bound that a bounded linear right inverse would satisfy if it existed.
Very roughly speaking, this means that there is no linear formula (in particular, no formula involving convolution of $F$ with a kernel) giving a solution $v$ continuously in terms of $F$.
Whether there is an explicit non-linear algorithm that gives $v$ continuously in terms of $F$ will be addressed in future investigations -- in this respect, see also \cite{COH.DEV.TAD.24}.
\\
\par 
Section \ref{sec.CT} characterizes the $\beta(X^*,X)$-compact subsets of the dual $X[\calT_\calC]^*$ as those strongly closed sets $\calF$ for which the continuity condition \eqref{eq.INTRO.2} holds uniformly in $f \in \calF$, \ie given $\veps > 0$ the index $i \in I$ and $\theta > 0$ can be chosen independently of $f \in \calF$.
This is a consequence of Ascoli's theorem and interesting to know, since a common version of the Banach-Alaoglu theorem (see appendix \ref{PR.100}) does not apply because $X[\calT_\calC]$ is in general not barrelled.
\\
\par 
Appendix \ref{app.A} contains the definition of sequential and Fr\'echet-Urysohn topological spaces.
It gives a useful characterization \ref{PR.3}(D) of Fr\'echet-Urysohn spaces in terms of the possibility of applying a diagonal argument to double-indexed sequences.
Appendix \ref{app.LCTVS} serves as ``preliminaries and notation'' for all that concerns locally convex topological vector spaces.
Appendix \ref{app.DUAL} provides a detailed presentation of dual systems, weak topologies and polar topologies (in particular, the Mackey topology and the strong topology) and offers a full proof of the Mackey-Arens theorem \ref{DUAL.6}(E) and its corollary \ref{DUAL.6}(H) which is the main tool used to prove the semireflexivity theorem \ref{SD.5}. 
Appendices \ref{app.BARREL} and \ref{app.BORN} give a short account of barrelled and bornological spaces, respectively. 
Appendix \ref{app.MCC} proves Michael's selection theorem in the case of Fr\'echet spaces.
We hope the material contained in these appendices will help the reader find their way in the main body of the paper. 
\\
\par 
We close this introduction with a short account of the divergence equation, \ie the equation $\rmdiv v = F$.
In many cases, a solution $v$ can be obtained as deriving from a potential, \ie $v = \nabla u$, where $\triangle u = F$.
However, in some critical cases, the regularity of $\nabla u$ is not optimal.
For instance, if $F \in L^p(\Rm)$ and $p=m$ then $\nabla u \in W^{1,m}(\Rm)$ may not be continuous\footnote{The following example is due to Nirenberg: $u(x)=\vphi(x)\cdot x_1 \cdot |\log |x|_2|^\alpha$, $0 < \alpha < \frac{m-1}{m}$, see \cite[Section 3 Remark 7]{BOU.BRE.03}.} even though there exists a continuous vector field $v$ such that $\rmdiv v = F$, see theorem \ref{OE.3} and \ref{OE.2}(D).
In fact, $p=1$, $p=m$, and $p=\infty$ are critical cases for the Poisson equation, see \eg the survey \cite{RUS.13}.
Bourgain-Brezis' landmark paper \cite{BOU.BRE.03} proved -- among many other things -- that if $F$ is $m$-summable then there exists a continuous $v$ whose divergence equals $F$ (this was in the periodic setting, $\Rm$ being replaced by $\mathbb{T}^m$) and, inspired by this, \cite{DEP.PFE.06b} characterized those distributions $F$ that are the divergence of a continuous vector field.
The abstract existence theorem proved here will be shown in \cite{DEP.26a} to also encompass the following cases: 
\begin{itemize}
\item a boundary condition $v=0$ at infinity (see \cite{DEP.TOR.09} and \cite{MOO.PIC.18});
\item a boundary condition $v=0$ when $\Rm$ is replaced with an open set $\Omega$ satisfying a mild regularity condition such as a Poincar\'e inequality; 
\item The continuity condition of $v$ is replaced by Lebesgue-summability (this pertains to Lipschitz-free spaces, see \cite{GOD.15} for a survey and \cite{DEP.26b});
\item The vector fields are replaced with differential forms $\omega$ and the equation $\rmdiv v = F$ by $d\omega = F$ (see \cite{DEP.MOO.PFE.08}) and the domain $\Rm$ by manifolds, singular varieties and, more generally, some appropriate classes of doubling measure spaces. 
\end{itemize}

\section{Existence and uniqueness of a localization}
\label{sec.EUL}

\begin{Empty}
\label{EUL.1}
Given a set $X$, $C \subset X$, and $\calT$ a set of subsets of $X$ we define $\calT \hel C = \{ E \cap C : E \in \calT \}$. 
Thus, $\calT \hel C$ is a set of subsets of $C$. If $\calT$ is a topology on $X$ then $\calT \hel C$ is a topology on $C$, so-called \textit{induced} by $\calT$.
Notice that $(\calT \hel D) \hel C = \calT \hel (D \cap C) = \calT \hel C$ whenever $C \subset D$. 
\end{Empty}

\begin{Empty}
\label{EUL.2}
Let $X$ be a vector space and $\calC$ a set of subsets of $X$. 
We say that $\calC$ is a {\em localizing family in $X$} if it satisfies the following conditions.
\begin{enumerate}
\item[(i)] $\calC$ is non-empty;
\item[(ii)] Each member of $\calC$ is convex and contains 0;
\item[(iii)] For every $C \in \calC$, every $x \in X$, and every $t \in \R$ there exists $D \in \calC$ such that $x+t \cdot C \subset D$. 
\end{enumerate}
In that case one easily checks that $X = \cup \calC$.
\end{Empty}

\begin{Empty}
\label{EUL.3}
Let $X[\calT]$ be a locally convex topological vector space and let $\calC$ be a localizing family on $X$. 
We say that $\calS$ is a {\em localization of $\calT$ by $\calC$} if $\calS$ is a locally convex vector topology on $X$ such that
\begin{enumerate}
\item[(i)] For every $C \in \calC$, $\calS \hel C \subset \calT \hel C$;
\item[(ii)] For every locally convex topological vector space $Y[\calU]$ and every linear map $f : X \to Y$, {\em if} the restriction $f|_C : C \to Y$ is $(\calT \hel C,\calU)$-continuous, for every $C \in \calC$, {\em then} $f$ is $(\calS,\calU)$-continuous. 
\end{enumerate}
The remaining part of this section is devoted to proving the existence and uniqueness of such $\calS$ given $\calT$ and $\calC$.
We should note right away that the inclusion in (i) above is, in fact, an equality \ref{GO.1}(B) and is also the reason why our vocabulary includes the word \textit{ localization} for lack of better options: the topologies $\calS$ and $\calT$ coincide ``locally'' in every $C \in \calC$.
\end{Empty}

\begin{Theorem}
\label{EUL.4}
Assume $X[\calT]$ is a locally convex topological vector space and $\calC$ is a localizing family in $X$. 
There then exists a unique localization of $\calT$ by $\calC$ and it is given by the formula in \ref{EUL.6} below.
\end{Theorem}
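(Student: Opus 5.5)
The natural candidate is to define $\calT_\calC$ as the locally convex topology whose zero-neighbourhood base is the collection $\calU$ of all convex, balanced (symmetric suffices) sets $U \subset X$ such that for every $C \in \calC$, $U \cap C$ is a $\calT \hel C$-neighbourhood of $0$ in $C$. Equivalently, $U$ must be relatively $\calT$-open-around-$0$ in each $C$. Presumably this is the formula of \ref{EUL.6}. I would first check that $\calU$ is a filter base of absorbing, balanced, convex sets stable under positive scalings, so that it generates a locally convex vector topology. Stability under finite intersections and scalings is immediate. Scaling $tU$ is handled with \ref{EUL.2}(iii): given $C$, pick $D \supset t^{-1}C$, so that $tU\cap C = t(U \cap t^{-1}C)$ is a neighbourhood of $0$ relative to $C$.

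Absorbency is where the localizing property is genuinely used. Given $x \in X$, choose $D \in \calC$ with $x \in D$ (recall $X=\cup\calC$), and more generally $D\supset \R x$ locally. Since $U\cap D$ is a relative neighbourhood of $0$ in $D$ and $t\mapsto tx$ is $\calT$-continuous, $tx \in U$ for small $t$.

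Next, \ref{EUL.3}(i) requires that every $\calT_\calC$-open $G$ meets each $C$ in a $\calT\hel C$-open set. Take $y \in G\cap C$ and $U \in \calU$ with $y+U\subset G$. The key is translation: $(y+U)\cap C = y + (U \cap (C-y))$, and by (iii) there is $D \in \calC$ containing $C - y$. Then $U\cap D$ is a relative $\calT$-neighbourhood of $0$ in $D$, hence $U \cap (C-y)$ is a relative neighbourhood of $0$ in $C-y$. Translating back, $(y+U)\cap C$ is a relative neighbourhood of $y$ in $C$. I expect this translation step to be the main technical point, since the sets $C$ are neither subspaces nor stable under translation.

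For \ref{EUL.3}(ii), let $f:X\to Y$ be linear with each $f|_C$ continuous, and let $W$ be a convex balanced $\calU$-neighbourhood of $0$ in $Y$. Then $f^{-1}(W)$ is convex and balanced, and $f^{-1}(W)\cap C=(f|_C)^{-1}(W)$ is a relative neighbourhood of $0$ in $C$ because $f(0)=0$. So $f^{-1}(W)\in\calU$, and $f$ is continuous at $0$, hence continuous.

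For uniqueness, let $\calS$ be any localization. Applying \ref{EUL.3}(ii) for $\calS$ to the identity $(X,\calT_\calC)\to(X,\calS)$ is not immediately available, so I argue directly. Using (ii) for $\calS$ with $f=\mathrm{id}:X\to X[\calT_\calC]$, which satisfies the hypothesis by the previous paragraph, gives $\calT_\calC\subset\calS$. Conversely, each convex balanced $\calS$-neighbourhood $V$ of $0$ satisfies $V\cap C\in$ neighbourhoods of $0$ in $\calT\hel C$ by (i) for $\calS$, so $V\in\calU$ and $\calS\subset\calT_\calC$. Symmetrically, (ii) for $\calT_\calC$ applied to $\mathrm{id}:X\to X[\calS]$ gives the same inclusion, so the two topologies coincide.
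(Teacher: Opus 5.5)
Your argument is correct for existence and uniqueness, and it takes a genuinely different route from the paper. The paper defines the open sets directly: $E$ is open if each $x\in E$ lies in a convex $V\subset E$ whose traces $V\cap C$ are $\calT\hel C$-\emph{open} for every $C\in\calC$. With that choice \ref{EUL.3}(i) and (ii) are almost immediate. The work then goes into checking by hand that this $\calS$ is a vector topology: translation invariance \ref{EUL.7} (where \ref{EUL.2}(iii) is used), dilations, absorption, continuity of addition and of scalar multiplication, and closed points.

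You instead prescribe a local base of convex balanced sets whose traces are merely relative neighbourhoods of $0$. The vector-topology axioms then follow from the standard criterion for a filter base of absolutely convex absorbing sets, with \ref{EUL.2}(iii) entering in your dilation step. The translation argument based on \ref{EUL.2}(iii) moves into your verification of \ref{EUL.3}(i). Your version is more economical on the topological-vector-space side. The paper's formulation has the advantage of yielding at once the criterion \ref{EUL.9}: a convex set with relatively open traces is open, with no balancedness or $0\in V$ required. That criterion is used repeatedly later, e.g.\ in \ref{CCC.1}(A) and \ref{BWS.3}. Your uniqueness argument is the paper's.

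What is not yet covered is the last clause of the statement, that the localization is \emph{given by} the formula of \ref{EUL.6}. Your base condition is not literally that formula, so your ``presumably'' needs a short argument. Uniqueness alone does not give it, unless one also knows the \ref{EUL.6} formula is a localization, which is what the paper proves and you do not. The argument is easy in both directions.

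First, let $G$ be open for your topology and $x\in G$, and take $U$ in your base with $x+U\subset G$. Then $x+\rmint U$ (interior for your topology) is convex, open, and contains $x$. By your step for \ref{EUL.3}(i) it has $\calT\hel C$-open traces, so $G$ satisfies \ref{EUL.6}.

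Conversely, let $V$ be convex with $x\in V$ and every $V\cap C$ being $\calT\hel C$-open. By \ref{EUL.2}(iii) choose $D\supset x+C$ and $D'\supset x-C$ in $\calC$, and $O,O'\in\calT$ with $V\cap D=O\cap D$ and $V\cap D'=O'\cap D'$. Then $x\in O\cap O'$ and $((V-x)\cap(x-V))\cap C=((O-x)\cap(x-O'))\cap C$. So $U=(V-x)\cap(x-V)$ is in your base and $x+U\subset V$.

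Two small tidy-ups remain. You should record that your topology is Hausdorff: every convex balanced $\calT$-neighbourhood of $0$ is in your base, so $\calT$ is coarser. Also, in the uniqueness step, the hypothesis of \ref{EUL.3}(ii) for $\rmid_X : X\to X[\calT_\calC]$ is supplied by your verification of (i), not by the preceding paragraph.
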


\begin{Remark}
\label{EUL.5}
Starting from the next section this topology will be called {\em the localization of $\calT$ by $\calC$} and denoted by $\calT_\calC$. 
It is in general not an inductive limit in the category of locally convex topological vector spaces (unless each member of $\calC$ is a vector subspace of $X$) and the ensuing lack of heredity of some useful properties of locally convex vector topologies calls for care, see for instance \ref{CCC.6.R}(Z)(ii) and (iii) and compare with \cite[Ch. II \S7.2 corollary 1 and \S8.2 corollary 1]{SCHAEFER.I}.
In fact, the universal property of $X[\calT_\calC]$ is not expressible in the category of locally convex topological vector spaces because $C[\calT \hel C]$, $C \in \calC$, are not necessarily objects of this category.
However, in many cases of interest the topology $\calT_\calC$ is an inductive limit in the category of Hausdorff topological spaces, see \ref{CWC.3}.
\end{Remark}

\begin{proof}[Proof of uniqueness]
Let $\calS_j$ satisfy the two conditions of \ref{EUL.3} labelled as $\mathrm{(i)}_j$ and $\mathrm{(ii)}_j$, $j=1,2$. 
We apply $\mathrm{(ii)}_1$ to $Y$ being $X[\calS_2]$ and $f$ being $\rmid_X$. Given $C \in \calC$ and $O \in \calS_2$, $(\rmid_X)|_C^{-1}(O)=O \cap C \in \calS_2 \hel C \subset \calT \hel C$ where the last inclusion ensues from $\mathrm{(i)}_2$. 
Thus, $\rmid_X$ is $(\calS_1,\calS_2)$-continuous. 
Reversing the roles played by $\calS_1$ and $\calS_2$ we see that $\rmid_X$ is also $(\calS_2,\calS_1)$-continuous.
\end{proof}

\begin{Empty}
\label{EUL.6}
The proof of existence consists in showing that the following is a localization of $\calT$ by $\calC$:
\begin{equation*}
\begin{split}
\calS = \calP(X) \cap \big\{ E : &\text{ For every $x \in E$ there exists a convex set $V \subset X$ such that}\\
& \mathrm{(a)} \; x \in V \subset E \\
& \mathrm{(b)} \; \forall C \in \calC : V \cap C \in \calT \hel C \big\} \,.
\end{split}
\end{equation*}
Notice that $\calS$ is indeed a topology on $X$.
\end{Empty}

\begin{proof}[Proof of condition \ref{EUL.3}(i)]
Fix $C \in \calC$. 
Let $S \in \calS \hel C$, \ie $S = E \cap C$ for some $E \in \calS$. 
We ought to show that $S \in \calT \hel C$.  
With each $x \in E$ associate a convex set $V_x$ such that $x \in V_x \subset E$ and $V_x \cap C \in \calT  \hel C$. 
The latter means that $V_x \cap C = O_x \cap C$ for some $O_x \in \calT$. 
Define $O = \cup_{x \in E} O_x \in \calT$. 
Notice that $E = \cup_{x \in E} V_x$. 
Therefore,
\begin{equation*}
S = E \cap C = \cup_{x \in E} (V_x \cap C) = \cup_{x \in E} (O_x \cap C) = O \cap C \in \calT \hel C \,.
\end{equation*}
\end{proof}

\begin{proof}[Proof of condition \ref{EUL.3}(ii)]
Let $O \subset Y$ be open. 
We ought to show that $f^{-1}(O) \in \calS$. 
Let $x \in f^{-1}(O)$. 
Since $f(x) \in O$ and $Y$ is locally convex there exists an open convex set $W$ such that $f(x) \in W \subset O$. 
Consider the convex set $V = f^{-1}(W)$.  
Given $C \in \calC$ we note that $V \cap C = f^{-1}(W) \cap C = (f|_C)^{-1}(W) \in \calT \hel C$, owing to the $\calT \hel C$-continuity of $f|_C$.
\end{proof}

\begin{Empty}
\label{EUL.7}
\textit{ 
If $E \in \calS$ and $a \in X$ then $E + a \in \calS$.
}
\end{Empty}

\begin{proof}
Given $x \in E + a$, \ie $x-a \in E$, we choose a convex set $V$ such that $x - a \in V \subset E$ and $V \cap D \in \calT \hel D $ for every $D \in \calC$. 
Letting $W = V + a$ we notice that $W$ is convex and $x \in W \subset E + a$. 
We ought to show that $W \cap C \in \calT \hel C$ for every $C \in \calC$. 
Fix $C \in \calC$ and observe that
\begin{equation*}
\begin{split}
W  \cap C = (V + a) \cap C & = \big[ V \cap (C - a) \big] + a 
\intertext{(by \ref{EUL.2}(iii) there exists $D \in \calC$ such that $C - a \subset D$, since $\calC$ is a localizing family)}
& \subset (V \cap D ) + a
\intertext{($V \cap D = U \cap D$ for some $U \in \calT$, since $V \cap D \in \calT \hel D$)}
& = (U \cap D) + a \\
& = (U + a) \cap (D + a) \,.
\end{split}
\end{equation*}
Thus, $W \cap C \subset (U + a) \cap (D + a) \cap C = (U+a) \cap C$. 
We now establish the reverse inclusion. 
Let $y \in (U+a) \cap C$. 
Thus, $y-a \in U  \cap (C-a) \subset U \cap D = V \cap D$ so that $y \in (V+a) \cap (D+a) = W \cap (D+a)$ and finally $(U+a) \cap C \subset W \cap (D+a) \cap C = W \cap C$. 
This shows that $W \cap C = (U+a) \cap C \in \calT \hel C$, since $U+a \in \calT$.
\end{proof}

\begin{Empty}
\label{EUL.8}
\textit{ 
If $E \in \calS$ and $t \in \R \setminus \{0\}$ then $t\cdot E  \in \calS$.
}
\end{Empty}

\begin{proof}
Similar to that of \ref{EUL.7}
\end{proof}

\begin{Empty}
\label{EUL.9}
\textit{ 
If $V \subset X$ is convex and $V \cap C \in \calT \hel C$ for all $C \in \calC$ then $V \in \calS$.
}
\end{Empty}

\begin{proof}
Obvious.
\end{proof}

\begin{proof}[Proof that addition of vectors in $X$ is $\calS$-continuous]
Let $x,y \in X$ and let $E \in \calS$ be such that $x + y \in E$. 
We ought to find $F,G \in \calS$ such that $x \in F$, $y \in G$, and $F + G \subset E$. 
Let $V$ be a convex set associated with $E$ and $x+y$ in the definition of $\calS$. 
Thus $x + y \in V \subset E$ and $V \in \calS$, according to \ref{EUL.9}. 
Therefore, the convex set $W = V -(x+y)$ belongs to $\calS$, according to \ref{EUL.7}, and so does $\frac{1}{2} \cdot  W$, according to \ref{EUL.8}. 
Since $0 \in \frac{1}{2} \cdot  W$, the sets $F = x + \frac{1}{2} \cdot  W$ and $G = y + \frac{1}{2} \cdot  W$ are open $\calS$-neighbourhoods of $x$ and $y$, respectively and, since $\frac{1}{2} \cdot  W + \frac{1}{2} \cdot  W \subset W$, we have $F + G \subset V \subset E$.
\end{proof}

\begin{Empty}
\label{EUL.10}
\textit{ 
If $E \in \calS$ and $0 \in E$ then $E$ is absorbing.
}
\end{Empty}

\begin{proof}
Let $E$ be as in the statement and let $V$ be a convex set such that $0 \in V \subset E$ and $V \cap C \in \calT \hel C$ for every $C \in \calC$. 
Let $x \in X$. 
There exists $C \in \calC$ such that $x \in C$. 
Choose $U \in \calT$ with $V \cap C = U \cap C$. 
Recalling \ref{EUL.2}(ii) notice that $0 \in U$. 
There exists $t \geq 1$ such that $x \in t \cdot U$, according to \cite[Theorem 1.15(a)]{RUDIN}. 
Notice that $t^{-1} \cdot x \in C$, since $C$ is convex and $0,x \in C$. 
Therefore, $t^{-1} \cdot x \in U \cap C = V \cap C \subset E$. 
\end{proof}

\begin{proof}[Proof that multiplication of a scalar by a vector in $X$ is $\calS$-continuous]
Let $t \in \R$, $x \in X$, and $E \in \calS$ be such that $t \cdot x \in E$. 
We ought to find $\veps > 0$ and $F \in \calT$ containing $x$ such that $s \cdot y \in E$ whenever $|s-t| < \veps$ and $y \in F$. 
Let $V$ be a convex set associated with $E$ and $t \cdot x$ in the definition of $\calS$. 
Thus, $t \cdot x \in V \subset E$ and $V \in \calS$, according to \ref{EUL.9}. 
The convex set $W = \frac{1}{2} \cdot (V -t \cdot x)$ belongs to $\calS$, according to \ref{EUL.7} and \ref{EUL.8}, and so does $W' = W \cap (-W)$. 
Since $0 \in W'$, $W'$ is absorbing, according to \ref{EUL.10}. 
Thus, there exists $t' > 0$ such that $x \in t' \cdot W'$. 
Define $\veps' = (t')^{-1}$ and notice that $\tau \cdot x \in W'$ whenever $|\tau| \leq \veps'$, because $0,\veps' \cdot x \in W'$ and $W'$ is convex and symmetric. 
Define also $F = x + (1+|t|)^{-1} \cdot W'$ so that $F \in \calS$, according to \ref{EUL.8} and \ref{EUL.7}, and $x \in F$. 
Observe now that if $y \in F$ and $|s-t| \leq 1$ then $s \cdot (y-x) = \frac{s}{1+|t|}(1+|t|) \cdot (y-x) \in W'$. 
Therefore, if $|s-t| \leq \veps := \min\{1,\veps'\}$ and $y \in F$ then $s \cdot y - t \cdot x =s \cdot (y-x) + (s-t) \cdot x \in W' + W' \subset W + W \subset V-t \cdot x$ and in turn $s \cdot y \in V \subset E$.
\end{proof}

\begin{proof}[Proof that $\calS$ is a locally convex vector topology]
We already proved that $X \times X \to X : (x,y) \mapsto x + y$ and $\R \times X \to X : (t,x) \mapsto t \cdot x$ are continuous. 
We now show that $\{x\}$ is $\calS$-closed for each $x \in X$. 
Put $E = X \setminus \{x\}$ and let $y \in E$. 
Since $\{x\}$ is $\calT$-closed and $\calT$ is locally convex, there exists a convex set $V \in \calT$ such that $y \in V \subset E$. 
Obviously $V \cap C \in \calT \hel C$ for every $C \in \calC$. 
Therefore, $E \in \calS$. 
This proves that $\calS$ is a vector topology on $X$. 
The following collection $\calB$ is a local base of $\calS$ consisting of convex sets, according to the definition of $\calS$ and \ref{EUL.9}:
\begin{equation}
\label{eq.EUL.1}
\calB = \calP(X) \cap \big\{ V : \text{ $V$ is convex, $0 \in V$ and }
 V \cap C \in \calT \hel C \text{ for every }C \in \calC \big\} .
\end{equation}
\end{proof}

\section{General observations}
\label{sec.GO}

\begin{Empty}
Let $X$ be a vector space and let $\calC$ and $\calD$ be two localizing families in $X$.
We say that $\calC$ and $\calD$ are {\em interlaced} if each member of $\calC$ is contained in some member of $\calD$ and vice versa.
\end{Empty}

\begin{Theorem}
\label{GO.1}
Let $X[\calT]$ be a locally convex topological vector space and let $\calC$ be a localizing family in $X$. The following hold.
\begin{enumerate}
\item[(A)] $\calT \subset \calT_\calC$.
\item[(B)] For every $C \in \calC$ one has $\calT_\calC \hel C = \calT \hel C$.
\item[(C)] If $\calD$ is a localizing family in $X$ and $\calC$ and $\calD$ are interlaced then $\calT_\calC = \calT_\calD$.
\item[(D)] If $B \subset X$ is $\calT_\calC$-bounded then it is $\calT$-bounded as well.
\item[(E)] If $B \subset C \in \calC$ and $B$ is $\calT$-bounded then $B$ is $\calT_\calC$-bounded.
\item[(F)] For all $C \in \calC$: $C$ is $\calT$-compact iff it is $\calT_\calC$-compact.
\item[(G)] If $Y$ is a vector subspace of $X$ then $\calC \hel Y$ is a localizing family in $Y$ such that $\calT_\calC \hel Y \subset (\calT \hel Y)_{\calC \shel Y}$.
\end{enumerate}
\end{Theorem}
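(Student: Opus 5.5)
The plan is to work directly from the explicit description of $\calT_\calC$ obtained in the proof of Theorem \ref{EUL.4}. That proof gives the formula \ref{EUL.6}, the observation \ref{EUL.9}, and the local base $\calB$ of \eqref{eq.EUL.1}. Two further tools will be used: the universal property \ref{EUL.3}(ii) and the uniqueness part of Theorem \ref{EUL.4}.

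For (A), let $U \in \calT$ and $x \in U$. By local convexity of $\calT$ choose a convex $\calT$-open $V$ with $x \in V \subset U$. Then $V \cap C \in \calT \hel C$ for every $C$, so $U \in \calT_\calC$ by \ref{EUL.6}. Alternatively, apply \ref{EUL.3}(ii) to $\rmid_X : X \to X[\calT]$, whose restrictions to each $C$ are trivially continuous.

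For (B), the inclusion $\calT_\calC \hel C \subset \calT \hel C$ is \ref{EUL.3}(i). The reverse inclusion follows from (A), since $\calT \hel C \subset \calT_\calC \hel C$.

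For (C), I would check that $\calT_\calC$ satisfies the defining conditions of the localization of $\calT$ by $\calD$, and then invoke uniqueness. The cleanest route is the formula \ref{EUL.6}, so it suffices to show that, for a convex $V$, the condition ``$V \cap C \in \calT \hel C$ for all $C \in \calC$'' is equivalent to the same condition for all $D \in \calD$. Given $D \in \calD$, pick $C \in \calC$ with $D \subset C$. If $V \cap C = U \cap C$ with $U \in \calT$, then intersecting with $D$ gives $V \cap D = U \cap D$. By symmetry the two conditions agree, so the two formulas for $\calS$ coincide.

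For (D), boundedness is monotone under coarser topologies. Every $\calT$-neighbourhood of $0$ is a $\calT_\calC$-neighbourhood by (A), so if it absorbs $B$ in $\calT_\calC$ it absorbs $B$ in $\calT$.

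For (E), which I expect to be the only mildly delicate point, take $V \in \calB$. Write $V \cap C' = U \cap C'$ with $U \in \calT$ a $0$-neighbourhood, where $C'$ is chosen as follows. The difficulty is that absorption of $B$ by $V$ requires scaling $B$ down into $V$, so I want a localizing set containing all small multiples of $B$. Since $C$ is convex with $0 \in C$, we have $sB \subset sC \subset C$ for $0 \le s \le 1$. So take $C' = C$: $B$ is $\calT$-bounded, hence there is $t \geq 1$ with $B \subset tU$, i.e.\ $t^{-1}B \subset U$. Also $t^{-1}B \subset C$, so $t^{-1}B \subset U \cap C = V \cap C \subset V$. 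Therefore $B \subset tV$. Because $\calB$ is a local base of convex neighbourhoods, shrinking further works for all larger $t$: replace $V$ by a balanced member, or use convexity together with $0 \in V$.

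For (F), compactness of $C$ depends only on the induced topology on $C$, which is the same for $\calT$ and $\calT_\calC$ by (B).

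For (G), the conditions of \ref{EUL.2} for $\calC \hel Y$ are immediate. In particular $C \cap Y$ is convex and contains $0$, and for $y \in Y$ and $t \in \R$ one has $y + t(C \cap Y) \subset (y + tC) \cap Y \subset D \cap Y$. For the inclusion, apply (B) to $\calT_\calC$. Alternatively, use \ref{EUL.6}: if $V$ is a convex set that is good for $\calC$, then $V \cap Y$ is convex, and $(V \cap Y) \cap (C \cap Y) = (U \cap Y) \cap (C \cap Y)$ with $U \cap Y \in \calT \hel Y$. Hence every $\calT_\calC$-open $E$ has $E \cap Y$ open in $(\calT \hel Y)_{\calC \shel Y}$.
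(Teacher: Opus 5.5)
Your proposal is correct. For (A), (B), (D), (E) and (F) it is the paper's argument: your alternative in (A) is exactly the paper's proof, and in (E) you use the same $t \geq 1$, with $t^{-1}\cdot B \subset U \cap C = V \cap C \subset V$.

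You genuinely diverge only in (C) and (G). There you work with the explicit description \ref{EUL.6} of the localized topology, whereas the paper works only with the characterization \ref{EUL.3}.

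\textbf{Part (C).} The paper checks that $\calT_\calD$ satisfies \ref{EUL.3}(i) and (ii) relative to $\calC$ and concludes by uniqueness. You instead show that the two defining conditions on a set $V$, relative to $\calC$ and relative to $\calD$, are equivalent: $D \subset C$ and $V \cap C = U \cap C$ give $V \cap D = U \cap D$. Hence the two sets given by \ref{EUL.6} are literally equal. No separate uniqueness argument is needed, despite your opening sentence announcing one.

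\textbf{Part (G).} The paper applies \ref{EUL.3}(ii) to $\rmid_Y$ with target $Y[\calT_\calC \hel Y]$. You intersect a witnessing convex set $V$ with $Y$ and use $(V\cap Y)\cap(C\cap Y) = (U_C\cap Y)\cap(C\cap Y)$, which is a complete proof.

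\textbf{Comparison.} Your route is more concrete and slightly shorter. The paper's route uses only the defining properties \ref{EUL.3} plus uniqueness, so it never has to open up the formula. Both ultimately reduce to $(\calT\hel C)\hel D = \calT\hel D$ for $D \subset C$ (see \ref{EUL.1}).

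\textbf{Small loose ends.}
\begin{itemize}
\item In (E), say why $0 \in U$: namely $0 \in V\cap C = U\cap C$.
\item Also in (E), drop the last sentence about larger $t$. Absorption needs only one $t>0$, and every $\calT_\calC$-neighbourhood of $0$ contains a member of $\calB$.
\item In (G), ``apply (B) to $\calT_\calC$'' is not an argument by itself. It is your \ref{EUL.6} computation that actually proves the inclusion.
\end{itemize}
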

\noindent
We recall the description of $\calT_\calC$ given in \ref{EUL.6} and its local base $\calB$ given above, see \eqref{eq.EUL.1}.

\begin{proof}[Proof of (A)]
Trivially, $(\rmid_X)|_C : C \to X$ is $(\calT \hel C , \calT)$-continuous for all $C \in \calC$.
Thus, $\rmid_X$ is $(\calT_\calC,\calT)$-continuous, by \ref{EUL.3}(ii).
%
%
%
%
\end{proof}

\begin{proof}[Proof of (B)]
Fix $C \in \calC$. 
Recall that $\calT_\calC \hel C \subset \calT \hel C$, \ref{EUL.3}(i). 
Let $E \in \calT \hel C$. 
There is $U \in \calT$ so that $E \cap C = U \cap C$. 
$U \in \calT_\calC$, by (A), thus, $E  \in \calT_\calC \hel C$.
\end{proof}

\begin{proof}[Proof of (C)]
We shall show that $\calT_\calD$ is a localization of $\calT$ by $\calC$. 
The conclusion will ensue from the uniqueness \ref{EUL.4}  of this localization.
We refer to definition \ref{EUL.3}
First note that $\calT_\calD$ is indeed a locally convex vector topology.
We ought to show that conditions (i) and (ii) of \ref{EUL.3} are satisfied.
\par 
\textbf{ (i)} Let $C \in \calC$.
Choose $D \in \calD$ containing $C$.
Since $\calT_\calD$ is a localization of $\calT$ by $\calD$, $\calT_\calD \hel D \subset \calT \hel D$.
Thus, $\calT_\calD \hel C = \left( \calT_\calD \hel D \right) \hel C  \subset (\calT \hel D) \hel C = \calT \hel C$.
\par 
\textbf{ (ii)} Let $Y[\calU]$ be a locally convex topological vector space and $f : X \to Y$ a linear map.
Assuming that $f|_C : C \to Y$ is $(\calT \hel C,\calU)$-continuous whenever $C \in \calC$, we ought to show that $f$ is $(\calT_\calD,\calU)$-continuous.
Given $D \in \calD$ choose $C \in \calC$ containing $D$.
The restriction $f|_D = \left( f|_C\right)|_D$ is continuous with respect to $(\calT \hel C) \hel D = \calT \hel D$.
Since $D$ is arbitrary and $\calT_\calD$ is a localization of $\calT$ by $\calD$, the conclusion follows.
\end{proof}

\begin{proof}[Proof of (D)]
This is an immediate consequence of (A).
\end{proof}

\begin{proof}[Proof of (E)]
Let $B$ and $C$ be as in the statement and $V \in \calB$ (recall \eqref{eq.EUL.1}, the last displayed equation of section \ref{sec.EUL}).
We ought to show that $B \subset t \cdot  V$ for some $t > 0$.
There is $U \in \calT$ such that $V \cap C = U \cap C$.
As $B$ is $\calT$-bounded, there exists $t \geq 1$ so that $B \subset t \cdot U$.
Notice that $C \subset t \cdot C$.
Thus, $B \subset  (t \cdot U) \cap (t \cdot C) = t \cdot (U \cap C) = t \cdot (V \cap C) \subset t \cdot  V$. 
\end{proof}

\begin{proof}[Proof of (F)]
For $A \subset C \in \calC$ we observe that $A$ is $\calT$-compact iff it is $(\calT \hel C)$-compact iff it is $(\calT_\calC \hel C)$-compact (according to (B)) iff it is $\calT_\calC$-compact. 
\end{proof}

\begin{proof}[Proof of (G)]
Clearly, $\emptyset \neq \calC \hel Y$ and each member of $\calC \hel Y$ is a convex set containing the origin. 
If $C \in \calC$, $y \in Y$, and $t \in \R$ then $y + t \cdot (C \cap Y) = (y + t \cdot C) \cap Y$ so that condition \ref{EUL.2}(iii) holds for $\calC \hel Y$, \ie $\calC \hel Y$ is a localizing family in $Y$.
\par 
Notice that the second assertion is equivalent to the $((\calT \hel Y)_{\calC \shel Y},\calT_\calC \hel Y)$-con\-ti\-nuity of $\rmid_Y$.
This continuity property of $\rmid_Y$ follows from showing that \ref{EUL.3}(ii) applies with $X[\calT]$, $\calC$, and $Y[\calU]$ replaced by $Y[\calT \hel Y]$, $\calC \hel Y$, and $Y[\calT_\calC \hel Y]$.
Let $D \in \calC \hel Y$.
There is $C \in \calC$ such that $D = C \cap Y$.
Since $\calT_\calC \hel C \subset \calT \hel C$, by \ref{EUL.3}(i), we have $(\calT_\calC \hel Y) \hel D = \calT_\calC \hel (C \cap Y) \subset \calT \hel (C \cap Y) = (\calT \hel Y) \hel D$, \ie $(\rmid_Y)|_D$ is $((\calT \hel Y) \hel D,\calT_\calC \hel Y)$-continuous.
\end{proof}

\section{Countable localizing families of closed convex sets}
\label{sec.CCC}

Abusing vocabulary ever so slightly we will say that a localizing family $\calC$ is {\em non-decreasing countable} if it can be numbered $\calC = \{ C_k : k \in \N\}$ in such a way that $C_k \subset C_{k+1}$, $k \in \N$.

\begin{Theorem}
\label{CCC.1}
Assume that:
\begin{itemize}
\item $X[\calT]$ is a locally convex topological vector space;
\item $\calC$ is a non-decreasing countable localizing family in $X$;
\item Each member of $\calC$ is $\calT$-closed.
\end{itemize}
The following hold.
\begin{enumerate}
\item[(A)] A sequence $\la x_j \ra_j$ in $X$ converges to $x \in X$ with respect to the topology $\calT_\calC$ if and only if it converges to $x$ with respect to the topology $\calT$ and is contained in some $C \in \calC$;
\item[(B)] A set $B \subset X$ is $\calT_\calC$-bounded if and only if it is $\calT$-bounded and contained in some $C \in \calC$.
\item[(C)] If $Y$ is a vector subspace of $X$ and its topology $\calT_\calC \hel Y$ is sequential then $\calT_\calC \hel Y = (\calT \hel Y)_{\calC \shel Y}$.
\end{enumerate}
\end{Theorem}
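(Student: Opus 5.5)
The plan is to prove (A) first, since (B) and (C) both reduce to it.

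\textbf{(A).} The ``if'' direction is immediate. If $x_j \to x$ in $\calT$ and all $x_j \in C$, then by \ref{EUL.2}(iii) there is $D \in \calC$ with $x + C - C \supset \ldots$; more simply, choose $D\in\calC$ containing $C$ and $x$ (possible since $\calC$ is non-decreasing and $X = \cup\calC$). Then $x_j \to x$ in $\calT \hel D = \calT_\calC \hel D$ by \ref{GO.1}(B), hence in $\calT_\calC$. The ``only if'' direction follows as below.

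\begin{enumerate}
\item[(1)] $\calT_\calC$-convergence implies $\calT$-convergence by \ref{GO.1}(A).
\item[(2)] Put $y_j = x_j - x$. Since $-x + C_k$ lies in some $C_{k'}$, it is enough to show that $\la y_j\ra_j$ lies in some $C_k$.
\item[(3)] Argue by contradiction. Using \ref{EUL.2}(iii) to enlarge, for each $k$ there is $m$ with $2\cdot C_k \subset C_m$. Hence if no $C_k$ contains the sequence, we can pick indices $j_1<j_2<\cdots$ with $y_{j_k} \notin 2\cdot C_k$, \ie $z_k := \tfrac12 y_{j_k}\notin C_k$.
\item[(4)] Construct a $\calT_\calC$-neighbourhood of $0$ of the form
\begin{equation*}
V = \bigcup_n \left( W_1 + \cdots + W_n\right), \qquad W_k = 2^{-k}\cdot C_k \cap U_k ,
\end{equation*}
where the $U_k$ are convex open $\calT$-neighbourhoods of $0$ chosen inductively. $V$ is convex and contains $0$.
\item[(5)] Show that $V \cap C_m \in \calT \hel C_m$ for each $m$, so that $V \in \calT_\calC$ by \ref{EUL.9}. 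This is the usual argument for inductive-limit neighbourhoods: around a point of $V\cap C_m$ one absorbs a small $\calT$-ball using the later summands $W_k$, $k>m$, which contain $U_k \cap 2^{-k} C_m$.
\item[(6)] Choose the $U_k$ small enough that the $y_{j_n}$ avoid $V$. Since $C_n$ is $\calT$-closed and convex with $z_n \notin C_n$, there is a convex $\calT$-neighbourhood $N_n$ of $0$ with $z_n \notin C_n + N_n$. The partial sums $W_1+\cdots+W_n$ lie in $\sum_{k\le n} 2^{-k}C_k\subset C_n$. Requiring $U_{n+1}+U_{n+2}+\cdots \subset N_n$, for example via $U_{k}\subset 2^{-k}N_{l}$ for all $l<k$, forces every element of $V$ to lie in $C_n + N_n$ up to the index shift. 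This excludes $z_n$, and hence excludes $y_{j_n}$ after arranging an extra factor $2$.
\end{enumerate}

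This contradicts $y_j \to 0$ in $\calT_\calC$. Steps (5) and (6), choosing the $U_k$ so that $V$ is simultaneously relatively open in every $C_m$ and misses the subsequence, are the main obstacle. I would write the bookkeeping carefully, possibly replacing the factors $2^{-k}$ by a faster-decaying sequence.

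\textbf{(B).} If $B$ is $\calT$-bounded and $B\subset C\in\calC$, it is $\calT_\calC$-bounded by \ref{GO.1}(E). Conversely, a $\calT_\calC$-bounded set is $\calT$-bounded by \ref{GO.1}(D). Suppose $B$ is contained in no $C\in\calC$. Since $k\cdot C_k$ is contained in some member of $\calC$, we may pick $b_k \in B\setminus k\cdot C_k$. Boundedness gives $k^{-1}b_k\to 0$ in $\calT_\calC$, so by (A) the sequence $\la k^{-1} b_k\ra_k$ lies in some $C_m$. But $k^{-1}b_k\notin C_k\supset C_m$ for $k\ge m$, a contradiction.

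\textbf{(C).} By \ref{GO.1}(G), it remains to show that $\rmid_Y : Y[\calT_\calC\hel Y]\to Y[(\calT\hel Y)_{\calC\shel Y}]$ is continuous. Since the domain is sequential, sequential continuity suffices. Let $y_j\to y$ in $\calT_\calC \hel Y$. By (A) applied in $X$, $y_j\to y$ in $\calT$ and all $y_j\in C\cap Y$ for some $C\in\calC$. The family $\calC\hel Y$ is a non-decreasing countable localizing family of $\calT\hel Y$-closed convex sets, so (A) applied in $Y$ gives $y_j\to y$ in $(\calT\hel Y)_{\calC\shel Y}$.
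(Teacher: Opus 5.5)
Your ``if'' direction of (A), and your arguments for (B) and (C), are correct and essentially the paper's. The gap is in the ``only if'' direction of (A). Step (5) is only asserted by analogy with inductive limits, and for the $V$ you define it is false. By \ref{GO.1}(B), a $\calT_\calC$-neighbourhood of $0$ must contain $O_m\cap C_m$ for some $\calT$-neighbourhood $O_m$ of $0$, for \emph{every} $m$. Your $V$, however, is assembled only from the shrunken pieces $2^{-k}C_k\cap U_k$.

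Concretely, take the setting of \ref{CCC.4} with $X_k=X$. Every element of $W_1+\cdots+W_n$ has $\lno v\rno\leq\sum_k k2^{-k}=2$, so $V\subset\{\lno\cdot\rno\leq 2\}$ however the $U_k$ are chosen. Now suppose $\lno\cdot\rno$ is not $\calT$-continuous, which is the only case of interest. Then each absolutely convex $\calT$-neighbourhood $O$ of $0$ contains points with $\lno x\rno=3$. Hence $O\cap C_3\not\subset V$, and $V$ is not a $\calT_\calC$-neighbourhood of $0$. In section \ref{sec.OE}, for instance, the $\calM_{\TV}$-null sequences $\la u_{j,k}\ra_j$ of \ref{OE.4}(B) with $k$ large never enter $V$.

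The inductive-limit argument does not transfer because the $C_m$ are not subspaces. To absorb $q-p$ for $q\in C_m$ near $p\in V$, you need a later summand containing $U_k\cap C_{m'}$, where $C_m-C_m\subset C_{m'}$. Your $W_k$ only contains $U_k\cap 2^{-k}C_m$. Faster decay only makes this worse. A repair would need re-indexing, e.g.\ $W_k=2^{-k}C_{\phi(k)}\cap U_k$ with $2^kC_k\subset C_{\phi(k)}$, together with the fact that $C_m-C_m$ lies in some $C_{m'}$. None of this is in the proposal.

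The missing idea is that no sums are needed. The Hahn--Banach separation you already invoke in step (6) works if you \emph{intersect} instead of add. Since $y_{j_k}\notin C_k$ (no factor $2$ is needed), \ref{LCTVS.8}(C) gives a convex $\calT$-open $U_k\supset C_k$ with $y_{j_k}\notin U_k$. Then $U=\bigcap_k U_k$ is convex, contains $0$, and misses every $y_{j_k}$. Since $C_n\subset C_k\subset U_k$ for $k>n$, we get
\begin{equation*}
U\cap C_n=\Bigl(\bigcap_{k=1}^{n}U_k\Bigr)\cap C_n\in\calT\hel C_n
\end{equation*}
for all $n$. Hence $U\in\calT_\calC$ by \ref{EUL.9}, contradicting $y_j\to 0$ in $\calT_\calC$. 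This is the paper's argument, and it is exactly where the $\calT$-closedness of the $C_k$ is used.
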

\begin{proof}[Proof of (A)]
We must establish that for every sequence $\la x_j \ra_j$ in $X$ and $x \in X$ the following are equivalent.
\begin{enumerate}
\item[(a)] $\la x_j \ra_j$ $\calT_\calC$-converges to $x$;
\item[(b)] $\la x_j \ra_j$ $\calT$-converges to $x$ and is contained in some $C \in \calC$.
\end{enumerate}
We abbreviate $y_j = x_j - x$.
\par 
\textit{Proof that $(b) \Rightarrow (a)$}.
Assuming (b), notice that $x \in C$, since $C$ is $\calT$-closed.
By \ref{EUL.2}(iii), $\la y_j \ra_j$ is a sequence in some $D \in \calC$.
As $\la y_j \ra_j$ is $\calT$-null, it is $(\calT \hel D)$-null, hence, also $(\calT_\calC \hel D)$-null, by \ref{GO.1}(B), \ie it is $\calT_\calC$-null.
\par 
\textit{Proof that $(a) \Rightarrow (b)$}.
Assuming (a), observe that $\la x_j \ra_j$ $\calT$-converges to $x$, according to \ref{GO.1}(A).
It remains to show that there exists $C \in \calC$ such that $x_j \in C$ for all $j \in \N$.
Referring again to \ref{EUL.2}(iii) notice that it suffices to establish that the $\calT_\calC$-null sequence $\la y_j \ra_j$ is contained in some member of $\calC$.
Assume if possible that this is not the case.
Then, since $\la C_k \ra_k$ is non-decreasing, there exists an increasing sequence of positive integers $j_1 < j_2 < \cdots$ such that $y_{j_k} \not \in C_k$ for all $k \in \N$.
As $\calT$ is locally convex and $C_k$ is $\calT$-closed and convex, Hahn-Banach's theorem \ref{LCTVS.8}(C) applies to showing the existence of a convex $\calT$-open set $U_k$ such that $C_k \subset U_k$ and $y_{j_k} \not \in U_k$, $k \in \N$. 
Thus, the set $U = \cap_{k=1}^\infty U_k$ is convex, contains 0, since each $C_k$ does, and $y_{j_k} \not \in U$ for all $k$. 
The proof will be complete upon showing that $U$ is $\calT_\calC$-open, as this would readily contradict the $\calT_\calC$-convergence of $\la y_j \ra_j $ to 0.
By \ref{EUL.9}, it is enough to observe that $U \cap C_n \in \calT \hel C_n$ for all $n$. 
Indeed, if $k > n$ then $C_n \subset C_k  \subset U_k$ so that $C_n \cap U_k = C_n$ and in turn
\begin{equation*}
U \cap C_n = \cap_{k=1}^\infty (U_k \cap C_n) = \left( \cap_{k=1}^n U_k \right) \cap C_n \in \calT \hel C_n \,.
\end{equation*}
\end{proof}

\begin{proof}[Proof of (B)] 
We must establish that for every $B \subset X$ the following are equivalent.
\begin{enumerate}
\item[(a)] $B$ is $\calT_\calC$-bounded;
\item[(b)] $B$ is $\calT$-bounded and is contained in some $C \in \calC$.
\end{enumerate}
\par 
\textit{Proof that $(b) \Rightarrow (a)$}.
This is \ref{GO.1}(D).
\par 
\textit{Proof that $(a) \Rightarrow (b)$}.
In view of \ref{GO.1}(C) it suffices to establish that $B$ is contained in some $C \in \calC$.
First note that for each $k\in \N$ there is $j_k \in \N$ such that $k \cdot C_k \subset C_{j_k}$, according to \ref{EUL.2}(iii).
Assume if possible that no $C \in \calC$ contains $B$. 
Then for each $k$ there exists $x_k \in B \setminus C_{j_k}$. 
Since $B$ is $\calT_\calC$-bounded, the sequence $\la \frac{x_k}{k} \ra_k$ is $\calT_\calC$-null, by \ref{LCTVS.5}(B).
Yet for every $k$, $\frac{x_k}{k} \not \in C_k$, in contradiction with (A).
\end{proof}

\begin{proof}[Proof of (C)]
We shall show that $\rmid_Y : Y \to Y$ is sequentially $(\calT_\calC \hel Y , (\calT \hel Y)_{\calC \shel Y})$-con\-tin\-u\-ous.
As $\calT_\calC \hel Y$ is assumed to be sequential, it will ensue that $(\calT \hel Y)_{\calC \shel Y} \subset \calT_\calC \hel Y$.
In view of \ref{GO.1}(G) the proof will be complete.
\par 
Let $\la y_j \ra_j$ be $\calT_\calC \hel Y$-null sequence in $Y$.
Then $\la y_j \ra_j$ is also a $\calT_\calC$-null sequence in $Y$.
According to (A), $\la y_j \ra_j$ is $\calT$-null and contained in some $C \in \calC$.
Therefore, $\la y_j \ra_j$ is $\calT \hel Y$-null and contained in $C \cap Y \in \calC \hel Y$.
In view of (A) again, $\la y_j \ra_j$ is $(\calT \hel Y)_{\calC \hel Y}$-null.
Since $\la y_j \ra_j$ is arbitrary, we have established the sequential $(\calT_\calC \hel Y , (\calT \hel Y)_{\calC \shel Y})$-continuity of $\rmid_Y$.
\end{proof}

\begin{Theorem}
\label{CCC.2}
Assume that:
\begin{itemize}
\item $X[\calT]$ is a locally convex topological vector space;
\item $\calC$ is a non-decreasing countable localizing family in $X$;
\item Each member of $\calC$ is $\calT$-closed;
\item For each $C \in \calC$ the topology $\calT \hel C$ is sequential.
\end{itemize}
Then the following hold.
\begin{enumerate}
\item[(A)] If $V \subset X$ is convex and $\calT_\calC$-sequentially open then it is $\calT_\calC$-open;
\item[(B)] If $Y[\calU]$ is a locally convex topological vector space and $f : X \to Y$ is linear then the following are equivalent:
\begin{enumerate}
\item[(a)] $f$ is $(\calT_\calC,\calU)$-continuous;
\item[(b)] $f$ is sequentially $(\calT_\calC,\calU)$-continuous.
\end{enumerate} 
\item[(C)] If $q$ is a seminorm on $X$ then the following are equivalent:
\begin{enumerate}
\item[(a)] $q$ is $\calT_\calC$-continuous;
\item[(b)] $q$ is sequentially $\calT_\calC$-continuous.
\end{enumerate} 
\end{enumerate}
\end{Theorem}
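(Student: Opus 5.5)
The plan is to prove (A) first and deduce (B) and (C) from it by taking preimages of convex open sets.

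\textbf{Proof of (A).} Let $V$ be convex and $\calT_\calC$-sequentially open. By \ref{EUL.9} it suffices to show that $V \cap C \in \calT \hel C$ for every $C \in \calC$. Since $\calT \hel C$ is sequential, it suffices in turn to show that $V \cap C$ is $(\calT \hel C)$-sequentially open.

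So let $x \in V \cap C$, and let $\la x_j \ra_j$ be a sequence in $C$ that $\calT$-converges to $x$. The sequence lies in $C \in \calC$, so by \ref{CCC.1}(A) it $\calT_\calC$-converges to $x$. As $V$ is $\calT_\calC$-sequentially open, $x_j \in V$ for all large $j$. Hence $x_j \in V \cap C$ eventually, which is the required sequential openness.

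The delicate point is which notion of "sequential" is used. If "$\calT \hel C$ is sequential" means that sequentially open subsets of $C$ are open, the argument above applies directly. If the paper's definition is via continuity of maps (as in the introduction), I would first recall from appendix \ref{app.A} that the two formulations are equivalent. One way is to test the definition with the map into the two-point Sierpi\'nski-type space induced by $V \cap C$; that space is not Hausdorff, so this step needs care. The alternative is to cite the appendix characterization directly. This translation between the two definitions is the main obstacle; the rest is routine.

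\textbf{Proof of (B).} The implication (a)$\Rightarrow$(b) is trivial. For (b)$\Rightarrow$(a), it suffices to check that $f^{-1}(W) \in \calT_\calC$ for every convex $\calU$-open set $W$, since such sets form a base of $\calU$. The set $f^{-1}(W)$ is convex because $f$ is linear. It is $\calT_\calC$-sequentially open: if $x_j \to x \in f^{-1}(W)$ in $\calT_\calC$, then $f(x_j) \to f(x)$ in $\calU$, so $f(x_j) \in W$ eventually. By (A), $f^{-1}(W)$ is $\calT_\calC$-open.

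\textbf{Proof of (C).} A seminorm $q$ is continuous exactly when $\{q<1\}$ is an open neighbourhood of $0$; this uses the fact that $q$ is continuous at $0$ iff it is continuous everywhere, via $|q(x)-q(y)| \leq q(x-y)$. The set $\{q<1\}$ is convex. If $q$ is sequentially continuous, then $\{q<1\}$ is sequentially open: when $x_j \to x$ with $q(x)<1$, we have $q(x_j) \to q(x)$, so $q(x_j) < 1$ eventually. By (A), $\{q<1\}$ is $\calT_\calC$-open, so $q$ is $\calT_\calC$-continuous. Alternatively, one can apply (B) to the seminormed space $X/q^{-1}(0)$ with the map $X \to X/q^{-1}(0)$.
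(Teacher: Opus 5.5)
Your proof is correct and follows the paper's route: (A) goes through \ref{EUL.9}, the sequentiality of $\calT \hel C$, and \ref{CCC.1}(A), and (B) and (C) follow by applying (A) to preimages of convex open sets. Your sequential-openness test in (A), with $x \in V \cap C$ and $\la x_j \ra_j$ in $C$, is the correctly stated one, and your definitional worry is moot: appendix \ref{app.A} defines a Hausdorff space to be sequential exactly when its sequentially open sets are open, and $C[\calT \hel C]$ is Hausdorff.
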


\begin{Remark}
\label{CCC.3}
The following comments are in order.
\begin{enumerate}
\item[(1)] If we assume that $\calT$ itself is sequential then each $\calT \hel C$ is sequential, $C \in \calC$, since $C$ is assumed to be $\calT$-closed, recall \ref{PR.2}(C)(a).
\item[(2)] Conclusion (B) says that for a linear map defined on $X[\calT_\calC]$ to be continuous it suffices that it be sequentially continuous. This very useful property holds when the domain of the linear map is bornological \cite[Ch. II \S8.3]{SCHAEFER.I}. However, none of the localized topologies that we have in mind in forthcoming applications are bornological, see \ref{CCC.6.R}(Z)(ii).
\end{enumerate}
\end{Remark}

\begin{proof}[Proof of (A)]
Let $V \subset X$ be convex and $\calT_\calC$-sequentially open. 
We ought to show that $V \in \calT_\calC$. 
According to \ref{EUL.9}, it suffices to prove that $V \cap C \in \calT \hel C$, for every $C \in \calC$. 
Fix $C \in \calC$. 
Since we assume $\calT \hel C$ to be sequential, it is enough to show that $V \cap C$ is $\calT \hel C$-sequentially open. 
Let $\la x_j \ra_j$ be a sequence in $V \cap C$ that converges to $x \in C$ with respect to $\calT \hel C$. 
Since $\la x_j \ra_j$ converges to $x$ with respect to $\calT$, it follows from \ref{CCC.1}(A) that this convergence occurs with respect to $\calT_\calC$ as well.
Thus, $\la x_j \ra_j$ is eventually in $V$, whence, also eventually in $V \cap C$.
\end{proof}

\begin{proof}[Proof of (B)]
Since the reverse implication is trivial, we prove only that $(b) \Rightarrow (a)$.
As $Y[\calU]$ is locally convex, it is enough to show that $f^{-1}(W) \in \calT_\calC$ whenever $W \in \calU$ is convex. 
As $f^{-1}(W)$ is convex, by linearity of $f$, it is sufficient to show that $f^{-1}(W)$ is $\calT_\calC$-sequentially open, according to (A). 
Let $\la x_j \ra_j$ be a sequence in $X$ that converges to $x \in f^{-1}(W)$ with respect to $\calT_\calC$. 
Since $f$ is sequentially continuous, $\lim_j f(x_j) = f(x) \in W$, and, since $W$ is open, $\la f(x_j) \ra_j$ is eventually in $W$, \ie $\la x_j \ra_j$ is eventually in $f^{-1}(W)$. 
\end{proof}

\begin{proof}[Proof of (C)]
As in the proof of (B) it is sufficient to show that $(b) \Rightarrow (a)$.
For each $\veps > 0$ define $I_\veps = \,]-\veps,\veps[$.
Notice that $q^{-1}(I_\veps)$ is convex, since $q$ is a seminorm, and is sequentially $\calT_\calC$-open, since $q$ is sequentially $\calT_\calC$-continuous.
It follows from (A) that $q^{-1}(I_\veps)$ is $\calT_\calC$-open.
Now if $U \subset \R$ is open and $x \in q^{-1}(U)$ choose $\veps > 0$ such that $]\nu(x)-\veps,\nu(x)+\veps[\,\subset U$ and notice that $x + q^{-1}(I_\veps) \subset q^{-1}(U)$, by the triangle inequality. 
Since $x$ is arbitrary, $q^{-1}(U)$ is $\calT_\calC$-open.
\end{proof}

\begin{Theorem}
\label{CCC.4}
Assume that
\begin{itemize}
\item $X[\calT]$ is a locally convex topological vector space whose topology is generated by a family of seminorms $\la \| \cdot \|_i \ra_{i \in I}$;
\item $\lno \cdot \rno$ is a seminorm on $X$ that is $\calT$-lower-semicontinuous;
\item $\la X_k \ra_k$ is a non-decreasing sequence of $\calT$-closed vector subspaces whose union is $X$;
\item $C_k = X_k \cap \{ x : \lno x \rno \leq k \}$ for each $k \in \N$;
\item $\calC = \{ C_k : k \in \N \}$.
\end{itemize}
Then $\calC$ is a non-decreasing countable localizing family consisting of $\calT$-closed sets and the following hold. 
\begin{enumerate}
\item[(A)] For every sequence $\la x_j \ra_j$ in $X$ the following are equivalent:
\begin{enumerate}
\item[(a)] $\lim_j x_j=0$ with respect to $\calT_\calC$.
\item[(b)] the following hold:
\begin{enumerate}
\item[(i)] $\lim_j \|x_j\|_i = 0$ for all $i \in I$;
\item[(ii)] $\sup_j \lno x_j \rno < \infty$;
\item[(iii)] there exists $k \in \N$ such that $x_j \in X_k$ for all $j \in \N$.
\end{enumerate}
\end{enumerate}
\item[(B)] For every $B \subset X$ the following are equivalent:
\begin{enumerate}
\item[(a)] $B$ is $\calT_\calC$-bounded.
\item[(b)] the following hold:
\begin{enumerate}
\item[(i)] $\sup_{x \in B}  \|x\|_i < \infty$ for all $i \in I$;
\item[(ii)] $\sup_j \lno x_j \rno < \infty$;
\item[(iii)] there exists $k \in \N$ such that $x_j \in X_k$ for all $j \in \N$.
\end{enumerate}
\end{enumerate}
\item[(C)] The seminorm $\lno \cdot \rno$ is $\calT_\calC$-bounded and $\calT_\calC$-lower-semicontinuous.
\end{enumerate}
\end{Theorem}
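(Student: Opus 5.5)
First I will check that $\calC$ is a non-decreasing countable localizing family of $\calT$-closed sets, and then read (A) and (B) off from \ref{CCC.1}. Each $C_k$ is convex and contains $0$ because $\lno\cdot\rno$ is a seminorm and $X_k$ is a vector subspace. Monotonicity $C_k\subset C_{k+1}$ is immediate from $X_k\subset X_{k+1}$. For $\calT$-closedness, write $C_k = X_k\cap\{x:\lno x\rno\leq k\}$. The first set is $\calT$-closed by hypothesis, and the second is $\calT$-closed because a lower-semicontinuous function has closed sublevel sets.

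For the localizing condition \ref{EUL.2}(iii), take $x\in X$ and $t\in\R$. Pick $n$ with $x\in X_n$ and $n\geq \lno x\rno + |t|k$, and also $n\geq k$. Then $x+t\cdot C_k\subset X_n$, and for $y\in C_k$ we have $\lno x+ty\rno\leq \lno x\rno+|t|k\leq n$. So $x+t\cdot C_k\subset C_n$.

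For (A) I apply \ref{CCC.1}(A) with limit $0$. Thus $\calT_\calC$-nullity of $\la x_j\ra_j$ is equivalent to two conditions: $\calT$-nullity, which is (i), since $\calT$ is generated by the seminorms $\|\cdot\|_i$; and containment in some $C_k$. If the sequence lies in $C_k$, then (ii) and (iii) hold. Conversely, if (ii) gives a bound $M$ and (iii) gives containment in $X_{k'}$, then the sequence lies in $C_n$ with $n\geq\max\{M,k'\}$, using $X_{k'}\subset X_n$.

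Part (B) is the same translation applied to \ref{CCC.1}(B). Here $\calT$-boundedness means (i), by the standard description of bounded sets through generating seminorms. In (ii) and (iii) the index $j$ should be read as ranging over $x\in B$.

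For (C), boundedness of $\lno\cdot\rno$ means that it maps $\calT_\calC$-bounded sets to bounded subsets of $\R$. By (B), every $\calT_\calC$-bounded set lies in some $C_k$, on which $\lno\cdot\rno\leq k$. Lower semicontinuity follows because $\calT\subset\calT_\calC$ by \ref{GO.1}(A): each sublevel set $\{\lno\cdot\rno\leq s\}$ is $\calT$-closed, hence $\calT_\calC$-closed.

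I do not expect a real obstacle. The only point needing care is the localizing property, where the subspace index and the seminorm bound must be chosen large simultaneously. If "$\calT_\calC$-bounded seminorm" is meant instead as the existence of a $\calT_\calC$-bounded unit ball, that reading would fail in general, so I will adopt the reading "bounded on bounded sets."
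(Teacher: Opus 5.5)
Your proposal is correct and matches the paper's proof step for step. You check that $\calC$ is a non-decreasing localizing family of $\calT$-closed sets, with closedness coming from lower semicontinuity and the closedness of $X_k$. You read (A) and (B) off from \ref{CCC.1}(A),(B), and you obtain (C) from (B) together with $\calT\subset\calT_\calC$ (\ref{GO.1}(A)).

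One detail is a slight improvement on the paper. For the localizing property you require $x\in X_n$, $n\geq k$ and $n\geq \lno x\rno+|t|k$ at the same time. The paper's index $\lceil \lno x\rno+|t|\max\{k,l\}\rceil$ need not dominate $\max\{k,l\}$ when $|t|<1$, so your choice is the more careful one.
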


\begin{Remark}
These observations will be applied in the important cases where:
\begin{itemize}
\item the topology $\calT$ is normable;
\item or $X_k = X$ for all $k \in \N$;
\item or both.
\end{itemize}
The following obvious simplifications occur.
\begin{enumerate}
\item[(1)] If $\calT$ is defined by a unique norm $\|\cdot\|$ then each occurrence of ``for all $i \in I$'' may be removed and each $\|\cdot\|_i$ replaced by $\|\cdot\|$.
\item[(2)] If $X_k = X$ for all $k \in \N$ then one may remove (iii) in (A) and (B).
Conclusion (C) says that the only interesting cases (\ie $\calT_\calC \neq \calT$) occur provided that $X[\calT]$ is not barrelled and is not bornological, see \ref{CCC.7}(C).
\end{enumerate}

\end{Remark}

\begin{proof}[Preliminaries to the proof]
Clearly, $\calC$ is non-empty and consists of convex sets containing the origin.
If $y \in x + t \cdot C_k$, $x \in X$, $t \in \R$, and $l \in \N$ is such that $x \in X_l$ then $y \in C_{\lceil \lno x \rno + |t| \max \{k,l \}\rceil}$.
Thus, $\calC$ is a localizing family.
It is obviously a non-decreasing countable localizing family.
\par 
For each $k \in \N$ the set $B_k = X \cap \{x : \lno x \rno \leq k \}$ is $\calT$-closed since $\lno \cdot \rno$ is $\calT$-lower-semicontinuous and, therefore, $C_k = B_k \cap X_k$ is $\calT-$closed as well, since so is $X_k$.
\par 
Consequently, \ref{CCC.1} applies.
\end{proof}

\begin{proof}[Proof of (A)]
This is a reformulation of \ref{CCC.1}(A) in the present context.
\end{proof}

\begin{proof}[Proof of (B)]
This is a reformulation of \ref{CCC.1}(B) in the present context.
\end{proof}

\begin{proof}[Proof of (C)]
If $B \subset X$ is $\calT_\calC$-bounded then $\sup_{x \in B} \lno x \rno < \infty$, according to (B).
This means that $\lno \cdot \rno$ is $\calT_\calC$-bounded. 
We next show it is also $\calT_\calC$-lower-semicontinuous, \ie that $F_t := X \cap \{ x : \lno x \rno \leq t \}$ is $\calT_\calC$-closed for every $t \in \R$. 
Since $\lno \cdot \rno$ is $\calT$-lower-semicontinuous, by hypothesis, $F_t$ is $\calT$-closed, therefore also $\calT_\calC$-closed, according to \ref{GO.1}(A).
\end{proof}

Below, the difference with the hypotheses of \ref{CCC.4} is underlined.

\begin{Theorem}
\label{CCC.5}
Assume that
\begin{itemize}
\item $X[\calT]$ is a locally convex topological vector space whose topology is generated by a \underline{filtering} family of seminorms $\la \| \cdot \|_i \ra_{i \in I}$;
\item $\lno \cdot \rno$ is a seminorm on $X$ that is $\calT$-lower-semicontinuous;
\item $\la X_k \ra_k$ is a non-decreasing sequence of $\calT$-closed vector subspaces whose union is $X$;
\item $C_k = X_k \cap \{ x : \lno x \rno \leq k \}$ for each $k \in \N$;
\item $\calC = \{ C_k : k \in \N \}$;
\end{itemize}
Then for every linear function $f : X \to \R$ the following are equivalent.
\begin{enumerate}
\item[(a)] $f$ is $\calT_\calC$-continuous.
\item[(b)] $(\forall \veps > 0)(\exists i \in I)(\exists \delta > 0)(\forall x \in X_{\lceil \veps^{-1}\rceil}):$
\begin{equation*}
\left (\|x\|_i \leq \delta \text{ and } \lno x \rno \leq \veps^{-1} \right) \Rightarrow |f(x)| \leq \veps.
\end{equation*}
\item[(c)] $(\forall \veps > 0)(\exists i \in I)(\exists \theta > 0)(\forall x \in X_{\lceil \veps^{-1}\rceil}):$
\begin{equation*}
|f(x)| \leq \theta  \|x\|_i + \veps\lno x \rno .
\end{equation*}
\end{enumerate}
\end{Theorem}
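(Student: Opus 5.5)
We prove the cycle (a) $\Rightarrow$ (b) $\Rightarrow$ (c) $\Rightarrow$ (a).

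\textbf{(a) $\Rightarrow$ (b).} Fix $\veps>0$ and put $k=\lceil \veps^{-1}\rceil$. Since $f$ is $\calT_\calC$-continuous, the set $V=\{x : |f(x)|<\veps\}$ is a $\calT_\calC$-open convex neighbourhood of $0$. By \ref{GO.1}(B), $V\cap C_k \in \calT\hel C_k$. Hence there is a $\calT$-neighbourhood of $0$ whose trace on $C_k$ lies in $V$. Because the family of seminorms is filtering, this neighbourhood contains a set of the form $\{x : \|x\|_i\le\delta\}$ for a single $i$. Now take $x\in X_k$ with $\|x\|_i\le\delta$ and $\lno x\rno\le \veps^{-1}\le k$. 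Then $x\in C_k$, so $|f(x)|<\veps$.

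\textbf{(b) $\Rightarrow$ (c).} This is a homogeneity argument. Given $\veps>0$, we want $|f(x)|\le\theta\|x\|_i+\veps\lno x\rno$ on $X_{\lceil\veps^{-1}\rceil}$. Apply (b) with $\eta=\veps$, which gives $i$ and $\delta$ valid on the same $X_{\lceil\veps^{-1}\rceil}$. Fix $x$ in that subspace and set $s=\|x\|_i/\delta+\veps\lno x\rno$.

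If $s>0$, the vector $y=x/s$ satisfies $\|y\|_i\le\delta$ and $\lno y\rno\le \veps^{-1}$. By (b), $|f(y)|\le\veps$, so $|f(x)|\le \veps s=(\veps/\delta)\|x\|_i+\veps^2\lno x\rno$. This is not quite the target, but the inequality $\veps^2\le \veps$ only holds for $\veps\le1$. The cleaner route is to apply (b) with $\eta=\min\{\veps,1\}$. The subspace $X_{\lceil\eta^{-1}\rceil}$ contains $X_{\lceil\veps^{-1}\rceil}$, so the bound transfers, and we can take $\theta=\eta/\delta$.

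If $s=0$, then $t x$ satisfies the hypotheses of (b) for every $t>0$. Hence $|f(x)|\le \veps/t$ for all $t$, so $f(x)=0$.

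\textbf{(c) $\Rightarrow$ (a).} By \ref{EUL.9}, it suffices to show that each convex set $V_\eta=\{x:|f(x)|<\eta\}$ satisfies $V_\eta\cap C_n\in\calT\hel C_n$ for all $n$. Translation invariance then handles neighbourhoods of arbitrary points; alternatively, invoke \ref{EUL.3}(ii) and show that $f|_{C_n}$ is $\calT\hel C_n$-continuous for each $n$.

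Fix $n$ and $x_0\in C_n$, and let $x\in C_n$. Then $x-x_0\in X_n$ and $\lno x-x_0\rno\le 2n$. Given $\eta>0$, choose $\veps\le \min\{1/n,\ \eta/(4n)\}$, so that $\lceil\veps^{-1}\rceil\ge n$ and $X_n\subset X_{\lceil\veps^{-1}\rceil}$. By (c) there are $i$ and $\theta$ with
\[|f(x)-f(x_0)|\le \theta\|x-x_0\|_i+2n\veps\le\theta\|x-x_0\|_i+\eta/2.\]
This is less than $\eta$ whenever $\|x-x_0\|_i<\eta/(2\theta)$. Thus $f|_{C_n}$ is $\calT\hel C_n$-continuous at $x_0$, and \ref{EUL.3}(ii) gives the $\calT_\calC$-continuity of $f$.

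The main obstacle is bookkeeping rather than substance. One has to keep track of which $X_k$ each quantifier refers to, using the monotonicity of $\la X_k\ra_k$ to shrink $\veps$ freely. The filtering hypothesis is used only in (a) $\Rightarrow$ (b), to produce a single index $i$.
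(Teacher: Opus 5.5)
Your proof is correct and follows the paper's own cycle (a)$\Rightarrow$(b)$\Rightarrow$(c)$\Rightarrow$(a): \ref{GO.1}(B) together with the filtering hypothesis for the first step, rescaling for the second, and \ref{EUL.3}(ii) with the bound $\lno x-x_0\rno\le 2n$ on each $C_n$ for the third. The one variation is in (b)$\Rightarrow$(c), where you normalize once by $s=\|x\|_i/\delta+\eta\lno x\rno$ instead of using the paper's three-case split; this also covers the degenerate case $\|x\|_i=\lno x\rno=0$, which the paper's case (i) does not handle because it divides by $\|x\|_i$.
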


\begin{Remark}
The following obvious simplification occurs.
\begin{enumerate}
\item[(1)] If $\calT$ is defined by a unique norm $\|\cdot\|$ then each occurrence of ``$\exists i \in I$'' may be removed and each $\|\cdot\|_i$ replaced by $\|\cdot\|$.
\item[(2)] If $X_k = X$ for all $k \in \N$ then each occurrence of ``$\forall x \in X_{\lceil \veps^{-1}\rceil}$'' may be replaced by ``$\forall x \in X$''.
\end{enumerate}
\end{Remark}

\begin{proof}[Proof that $(a) \Rightarrow (b)$]
Let $\veps > 0$ and choose $k \in \N$ large enough for $\frac{1}{\veps} \leq k$.
Note that, being $\calT_\calC \hel C_k$-continuous, the function $f |_{C_k}$ is $\calT \hel C_k$-continuous, according to \ref{GO.1}(B).
Thus, there exists a $\calT \hel C_k$-neighbourhood of the origin, say $W$, such that $|f(x)| \leq \veps$ whenever $x \in W$.
Furthermore, $W = U \cap C_k$ for some $U \in \calT$.
Since $0 \in U$ and $\la \|\cdot\|_i \ra_{i \in I}$ is filtering, there are $i \in I$ and $\delta > 0$ such that $X \cap \{ x : \|x\|_i \leq \delta \} \subset U$.
If $x \in X_{\lceil \veps^{-1} \rceil}$ and $\lno x \rno \leq \frac{1}{\veps}$ then $x \in C_k$.
If, moreover, $\|x\|_i \leq \delta$ then $x \in U$ as well.
In that case $x \in W$, whence, $|f(x)| \leq \veps$.
\end{proof}

\begin{proof}[Proof that $(b) \Rightarrow (c)$]
It clearly suffices to establish that (c) holds for all $0 < \veps \leq 1$.
With such $\veps$ associate $i$ and $\delta$ as in (b).
Define $\theta = \frac{\veps}{\delta}$.
Let $x \in X_{\lceil \veps^{-1}\rceil}$.
The remaining part of the proof is divided in three cases.
\par 
\textbf{(i)} Assume that $\lno x \rno = 0$.
Define $\hat{x} = \frac{\delta \cdot x}{\|x\|_i}$.
Notice that $\hat{x} \in X_{\lceil \veps^{-1}\rceil}$, $\left\| \hat{x} \right\|_i = \delta$, and $\lno \hat{x} \rno = 0$.
Thus, by (b),
\begin{equation*}
\left| f(x) \right| = \frac{\|x\|_i}{\delta}\left| f \left( \hat{x} \right)\right| \leq \frac{\|x\|_i}{\delta} \veps = \theta \|x\|_i = \theta \|x\|_i + \veps \lno x \rno .
\end{equation*}
\par 
We henceforth assume that $\lno x \rno \neq 0$ and we abbreviate $\tilde{x} = \frac{x}{\lno x \rno}$.
Thus, $\lno \tilde{x} \rno = 1$.
\par 
\textbf{(ii)} Assume that $\left\| \tilde{x} \right\|_i > \delta$.
Define $\hat{x} = \frac{\delta \cdot \tilde{x}}{\left\|\tilde{x}\right\|_i}$.
Notice that $\hat{x} \in X_{\lceil \veps^{-1}\rceil}$, $\left\| \hat{x} \right\|_i = \delta$, and $\lno \hat{x} \rno = \frac{\delta}{\left\|\tilde{x}\right\|_i} < 1 \leq \frac{1}{\veps}$.
Thus, by (b),
\begin{equation*}
\left| f\left(\tilde{x} \right) \right| = \frac{\left\|\tilde{x}\right\|_i}{\delta}\left| f \left( \hat{x} \right)\right| \leq \frac{\left\|\tilde{x}\right\|_i}{\delta} \veps = \theta \left\|\tilde{x}\right\|_i ,
\end{equation*}
whence, $|f(x)| \leq \theta \|x\|_i \leq \theta \|x\|_i + \veps \lno x \rno$.
\par 
\textbf{(iii)} Assume that $\left\| \tilde{x} \right\|_i \leq \delta$.
In this case, (b) applies to $\tilde{x}$ so that $\left| f\left( \tilde{x} \right) \right| \leq \veps$.
Accordingly, $|f(x)| \leq \veps \lno x \rno \leq \theta \|x\|_i + \veps \lno x \rno$.
\end{proof}

\begin{proof}[Proof that $(c) \Rightarrow (a)$]
In view of the definition of localized topology \ref{EUL.3}(ii), the $\calT_\calT$-continuity of $f$ is equivalent to the $\calT \hel C_k$-continuity of each restriction $f|_{C_k} : C_k \to \R$.
Fix $k \in \N$.
Let $\la x_\lambda \ra_{\lambda \in \Lambda}$ be a net in $C_k$ that $\calT \hel C_k$-converges to $x \in C_k$.
Thus, it also $\calT$-converges to $x$, \ie $\la \|x-x_\lambda\|_i \ra_{\lambda \in \Lambda}$ converges to 0 in $\R$ for all $i \in I$.
Let $0 < \veps < 1$ and put $\hat{\veps} = \frac{\veps}{4k}$.
Let $\theta$ and $i$ be associated with $\hat{\veps}$ in condition (c).
Note that $x,x_\lambda \in C_k \subset X_k$, thus, $x-x_\lambda \in X_k \subset X_{\lceil \hat{\veps}^{-1}\rceil}$ for all $\lambda$.
Therefore, we infer from (c) and the linearity of $f$ that
\begin{equation*}
|f(x) - f(x_\lambda)| \leq \theta \|x-x_\lambda\|_i + \hat{\veps} \lno x-x_\lambda \rno
\leq \theta \|x-x_\lambda\|_i + \hat{\veps} 2k \leq \theta \|x-x_\lambda\|_i + \frac{\veps}{2}
\end{equation*}
for all $\lambda \in \Lambda$, since $\lno x -x_\lambda \rno \leq \lno x \rno + \lno x_\lambda \rno \leq 2k$.
Choose $\lambda_0 \in \Lambda$ such that $\|x-x_\lambda\|_i \leq \frac{\veps}{2\theta}$ whenever $\lambda \geq \lambda_0$.
For those $\lambda$ one has $|f(x) - f(x_\lambda)| \leq \veps$.
Since $\veps$ is arbitrarily small, the proof is complete\footnote{For the definition of net, convergence of nets, and the relation to continuity see \eg \cite[Ch.2 \S2 and Ch.3 \S 1 Theorem 1]{KELLEY}.}.
\end{proof}

Below, we omit the filtration of $X$ by $\la X_k \ra_k$ (see \ref{CCC.7} for the more general version) and the extra hypothesis with respect to \ref{CCC.4} is underlined.

\begin{Theorem}
\label{CCC.6}
Assume that
\begin{itemize}
\item $X[\calT]$ is a locally convex topological vector space;
\item $\lno \cdot \rno$ is a seminorm on $X$ that is $\calT$-lower-semicontinuous;
\item $C_k = X \cap \{ x : \lno x \rno \leq k \}$ for each $k \in \N$;
\item \underline{$C_k[\calT \hel C_k]$ is first countable for each $k \in \N$};
\item $\calC = \{ C_k : k \in \N \}$.
\end{itemize}
Consider the propositions:
\begin{enumerate}
\item[(1)] $X[\calT]$ is barrelled.
\item[(2)] $X[\calT_\calC]$ is barrelled.
\item[(3)] $\lno \cdot \rno$ is $\calT$-continuous.
\item[(4)] $\lno \cdot \rno$ is sequentially $\calT$-continuous.
\item[(5)] $\lno \cdot \rno$ is sequentially $\calT_\calC$-continuous.
\item[(6)] $\lno \cdot \rno$ is $\calT_\calC$-continuous.
\item[(7)] $X[\calT_\calC]$ is Fr\'echet-Urysohn.
\item[(8)] $X[\calT_\calC]$ is bornological.
\item[(9)] $\calT_\rmseq = \calT_\calC$.
\item[(10)] $\calT = \calT_\calC$.
\end{enumerate}
The following hold.
\begin{enumerate}
\item[(A)] $(4) \iff (5) \iff (6) \iff (7) \iff (8)$.
\item[(B)] $[(1) \vee (2) \vee (3)] \Rightarrow [(4) \wedge (5) \wedge (6) \wedge (7) \wedge (8)]$.
\item[(C)] $(3) \Rightarrow (10)$.
\item[(D)] If $\calT_\calC$ is sequential then $[(4) \vee (5) \vee (6) \vee (7) \vee (8)] \Rightarrow (9)$.
\item[(E)] If both $\calT$ and $\calT_\calC$ are sequential then
\begin{enumerate}
\item[(a)] $(3) \iff (4) \iff (5) \iff (6) \iff (7) \iff (8)$.
\item[(b)] $(1) \iff (2)$.
\item[(c)] $[(1) \vee (2)] \Rightarrow [(3) \wedge (4) \wedge (5) \wedge (6) \wedge (7) \wedge (8)]$.
\item[(d)] $[(3) \vee (4) \vee (5) \vee (6) \vee (7) \vee (8)] \Rightarrow (10)$.
\end{enumerate}
\end{enumerate}
\end{Theorem}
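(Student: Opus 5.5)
The plan is to organise everything around one elementary observation, drawn from \ref{CCC.1}(A) and \ref{CCC.4}(A) with $X_k = X$: a sequence is $\calT_\calC$-null iff it is $\calT$-null and $\lno\cdot\rno$-bounded. Before starting, I record that each $C_k$ is $\calT$-closed, and that first countability of $C_k[\calT\hel C_k]$ implies it is sequential. Hence \ref{CCC.2} applies.

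\textbf{Part (A).} For $(4)\Rightarrow(5)$, note that a $\calT_\calC$-null sequence is $\calT$-null. For $(5)\Rightarrow(4)$, take a $\calT$-null $\la x_j\ra_j$ with $\lno x_j\rno\geq\eta>0$ along a subsequence. Then $y_j=x_j/\lno x_j\rno$ is still $\calT$-null, since we divide by at least $\eta$, and satisfies $\lno y_j\rno=1$. So $\la y_j\ra_j$ is $\calT_\calC$-null by \ref{CCC.1}(A), contradicting (5).

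$(5)\iff(6)$ is \ref{CCC.2}(C).

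For $(6)\Rightarrow(7)$, I will show that $\calT_\calC$ is first countable, which gives Fr\'echet--Urysohn. Under (6) the set $W=\{\lno x\rno<1\}$ is a $\calT_\calC$-open neighbourhood of $0$ contained in $C_1$. For any $x$, the set $x+W$ is open and lies in $x+C_1\subset C_k$ for some $k$. On $C_k$ we have $\calT_\calC\hel C_k=\calT\hel C_k$ by \ref{GO.1}(B), and this is first countable, so a countable base at $x$ in $C_k$ intersected with $x+W$ gives a countable $\calT_\calC$-base at $x$.

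The same argument gives $(6)\Rightarrow(8)$. A first countable locally convex Hausdorff space is metrizable, and metrizable locally convex spaces are bornological.

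For $(8)\Rightarrow(6)$, recall that $\lno\cdot\rno$ is $\calT_\calC$-bounded by \ref{CCC.4}(C). A seminorm bounded on bounded sets of a bornological space is continuous (appendix \ref{app.BORN}).

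\textbf{Main obstacle: $(7)\Rightarrow(5)$.} Assuming (5) fails, the scaling above produces a $\calT$-null sequence $\la x_j\ra_j$ with $\lno x_j\rno=1$. For each fixed $n$, the sequence $\la n x_j\ra_j$ is $\calT$-null with seminorm $n$, so it is $\calT_\calC$-null. I intend to feed a double sequence of the form $a_{n,j}=y_n+n\,x_j$ into the diagonal characterisation \ref{PR.3}(D). Here $\la y_n\ra_n$ is a $\calT_\calC$-null sequence of distinct points chosen so that $a_{n,j}\to y_n$ as $j\to\infty$, and so that no admissible diagonal sequence can converge. Any $\calT_\calC$-convergent sequence must stay in some $C_k$, so it must have bounded seminorm. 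On the other hand, $\lno a_{n_k,j_k}\rno\geq n_k-\lno y_{n_k}\rno$ with $n_k\to\infty$ forces unbounded seminorm. The delicate point is choosing the $y_n$ (for instance small multiples of the $x_j$) so that the hypotheses of \ref{PR.3}(D) hold exactly. This is the step I expect to require the most care.

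\textbf{Parts (B) and (C).} For $(1)\Rightarrow(3)$, the set $\{\lno x\rno\leq1\}$ is a $\calT$-closed barrel, by lower semicontinuity, so under (1) it is a $\calT$-neighbourhood of $0$. For $(2)\Rightarrow(6)$, the same barrel is $\calT_\calC$-closed by \ref{CCC.4}(C). Finally $(3)\Rightarrow(4)$ is trivial. Together with (A) this gives (B).

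For (C), assume (3) and take a basic $V$ from \eqref{eq.EUL.1}. Then $V\cap C_1=U\cap C_1$ for some $U\in\calT$. Also $C_1\supset\{\lno x\rno<1\}\in\calT$, so $V\supset U\cap\{\lno x\rno<1\}$, which is a $\calT$-neighbourhood of $0$. Hence $\calT_\calC\subset\calT$, and with \ref{GO.1}(A) this gives (10).

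\textbf{Parts (D) and (E).} For (D), the equivalence $(4)\iff(5)$ together with \ref{CCC.1}(A) shows that $\calT$ and $\calT_\calC$ have the same convergent sequences. A sequential topology is determined by its convergent sequences, so $\calT_\calC=\calT_\rmseq$.

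For (E), assume in addition that $\calT$ is sequential. Then $(4)\Rightarrow(3)$, since a sequentially continuous seminorm on a sequential space is continuous, and this yields (a). Combining with (C) gives (d). For (b), chain $(2)\Rightarrow(6)\Rightarrow(3)\Rightarrow(10)$: under (10) we have $\calT=\calT_\calC$, so (1) and (2) coincide, and $(1)\Rightarrow(3)\Rightarrow(10)$ gives the converse. Then (c) follows from (B) and (a).
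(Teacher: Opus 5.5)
Your proposal is correct except for the step you left open, $(7)\Rightarrow(5)$, and there the ``delicate point'' does not exist. Statement (b) of \ref{PR.3}(D) places no condition on the intermediate limits. The perturbation by $\pm a/j$ that makes points distinct is internal to the proof of \ref{PR.3}(D), not part of its statement. Moreover, for any $\calT_\calC$-null $\la y_n\ra_n$ whatsoever, \ref{CCC.1}(A) puts all the $y_n$ in some $C_{k_0}$, so $\lno y_n + n x_j\rno \geq n-k_0$ holds automatically.

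So simply take $y_n=0$. By \ref{PR.3}(A), (7) makes $X[\calT_\calC]$ sequential, so \ref{PR.3}(D) applies to $x_{n,j}=n\,x_j$. It yields increasing $n(i),j(i)$ with $n(i)\,x_{j(i)}\to 0$ in $\calT_\calC$. This is forbidden by \ref{CCC.1}(A), since $\lno n(i)\,x_{j(i)}\rno=n(i)\to\infty$. This is precisely \ref{PR.21}.

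The paper closes the cycle more cheaply, $(6)\Rightarrow(7)\Rightarrow(8)\Rightarrow(6)$, taking $(7)\Rightarrow(8)$ from \ref{PR.6}, whose proof is this same diagonal argument. With that, your separate $(6)\Rightarrow(8)$ via metrizability is also unnecessary, though it is correct.

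Elsewhere you follow the paper closely, with two differences worth noting.
\begin{enumerate}
\item[(i)] For $(6)\Rightarrow(7)$ you prove the stronger statement that $\calT_\calC$ is first countable, using the sets $B_n\cap(x+W)$. The paper instead extracts the sequence directly from $A\cap G_j\cap U_j$. The ingredients are the same: the open $\lno\cdot\rno$-ball provided by (6), which sits inside some $C_k$, and first countability of $\calT\hel C_k=\calT_\calC\hel C_k$.
\item[(ii)] For (E)(b) the paper argues $(2)\Rightarrow(9)\Rightarrow(10)$ through (D), which is the only place the hypothesis that $\calT_\calC$ be sequential enters. Your chain $(2)\Rightarrow(6)\Rightarrow(3)\Rightarrow(10)$ uses only (C) and the sequentiality of $\calT$.
\end{enumerate}
Since (E)(a), (c), (d) likewise use only the sequentiality of $\calT$, your route shows that all of (E) holds under that assumption alone. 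In that case $\calT_\calC=\calT$ is automatically sequential whenever any of (3)--(8) holds.
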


\begin{Remark}
\label{CCC.6.R}
Some comments are in order. 
\begin{enumerate}
\item[(A)] The assumption that each $C_k[\calT \hel C_k]$ be first countable is only used in the proof that $(6) \Rightarrow (7)$ while the weaker assumption that each $C_k[\calT \hel C_k]$ be sequential is used in the proof that $(5) \Rightarrow (6)$.
\item[(D,E)] The hypothesis in conclusions (D) and (E) that $X[\calT_\calC]$ be sequential is satisfied if one further assumes that each $C_k[\calT \hel C_k]$ is compact, see \ref{CWC.6}(A) (use \ref{PR.20}(E)).
\item[(Z)] All localized topologies considered in the applications will:
\begin{enumerate}
\item[(i)] be sequential;
\item[(ii)] \textbf{not} be barrelled;
\item[(iii)] \textbf{not} be bornological;
\item[(iv)] \textbf{not} be Fr\'echet-Urysohn.
\end{enumerate}
Concretely, we will show for some $X[\calT_\calC]$ defined and used in section \ref{sec.OE} that:
\begin{enumerate}
\item[(ii)] There exists a sequence $\la F_j \ra_j$ in $X[\calT_\calC]^*$ that converges pointwise to a linear function $F : X \to \R$ but $F$ is not $\calT_\calC$-continuous, see the proof of \ref{OE.4}(C). In other words, the Banach-Steinhaus theorem and uniform boundedness principle do not hold in this context.
\item[(iii)] There exists a linear functional $F : X \to \R$ that sends $\calT_\calC$-bounded sets to bounded sets but is not $\calC_\calT$-continuous, see the proof of \ref{OE.4}(D).
\item[(iv)] There exists $A \subset X$ and a point in the $\calT_\calC$-closure of $A$ which is not the $\calT_\calC$-limit of any sequence in $A$, see the proof of \ref{OE.4}(B).
\end{enumerate}
\end{enumerate}
\end{Remark}

\begin{proof}[Proof that $(4) \Rightarrow (5)$]
If $\la x_j \ra_j$ is a $\calT_\calC$-null sequence then it is also $\calT$-null, by \ref{GO.1}(A), whence $\lim_j \lno x_j \rno = 0$, by (4).
Accordingly, $\lno \cdot \rno$ is sequentially $\calT_\calC$-continuous at the origin.
\end{proof}

\begin{proof}[Proof that $(5) \Rightarrow (4)$]
Let $\la x_j\ra_j$ be a $\calT$-null sequence.
Fix $\veps > 0$ and define a sequence $\la \hat{x}_{j,\veps} \ra_j$ by the formula $\hat{x}_{j,\veps} = \frac{x_j}{\max \{ \veps, \lno x_j \rno \}}$.
As $\hat{x}_{j,\veps} = t_{j,\veps} \cdot x_j$ with $|t_{j,\veps}| \leq \frac{1}{\veps}$, $\la \hat{x}_{j,\veps} \ra_j$ is $\calT$-null as well.
Abbreviate $J_\veps = \N \cap \{ j : \lno x_j \rno > \veps \}$ and notice that 
\begin{equation}
\label{eq.CCC.1}
\lno \hat{x}_{j,\veps} \rno = \begin{cases}
1 & \text{if } j \in J_\veps \\
\frac{\lno x_j \rno}{\veps} & \text{if } j \not\in J_\veps.
\end{cases}
\end{equation}
In particular, $\lno \hat{x}_{j,\veps} \rno \leq 1$ for all $j \in \N$ so that $\la \hat{x}_{j,\veps} \ra_j$ is $\calT_\calC$-null, according to \ref{CCC.1}(A).
Thus, $\lim_j \lno \hat{x}_{j,\veps} \rno = 0$, by (5).
If $j$ is so large that $\lno \hat{x}_{j,\veps} \rno < 1$ then $\lno x_j \rno \leq \veps$, by \eqref{eq.CCC.1}.
Since $\veps > 0$ is arbitrary, we have established that $\lno \cdot \rno$ is sequentially $\calT_\calC$-continuous at the origin.
\end{proof}

\begin{proof}[Proof that $(5) \Rightarrow (6)$]
Each $C_k[\calT \hel C_k]$ being first countable, is sequential.
Therefore, \ref{CCC.2}(C) applies. 
\end{proof}

\begin{proof}[Proof that $(6) \Rightarrow (5)$]
Trivial.
\end{proof}

\begin{proof}[Proof that $(6) \Rightarrow (7)$]
Let $A \subset X$ and $a \in \rmclos_{\calT_\calC}(A)$.
We ought to prove the existence of a sequence $\la x_j \ra_j$ in $A$ which converges to $x$ with respect to $\calT_\calC$.
For each $j \in \N$ define $G_j$ by the formula
\begin{equation*}
G_j = X \cap \left\{ x : \lno x -a \rno < \frac{1}{j} \right\} \subset X \cap \left\{ x : \lno x -a \rno < 1 \right\} \subset X \cap \left\{ x : \lno x \rno \leq k \right\} = C_k
\end{equation*}
where $k$ is chosen larger than $1 + \lno a \rno$.
Observe that $G_j \in \calT_\calC$, according to (6).
Since $\calT \hel C_k$ is first countable, there exists a countable basis of $\calT \hel C_k$-open neighbourhoods of $a$, say $\la W_j \ra_j$.
Thus, $W_j = U_j \cap C_k$ for some $U_j \in \calT$.
Notice that $a \in G_j \cap U_j$ and that $G_j \cap U_j \in \calT_\calC$, by \ref{GO.1}(A).
Since $a \in \rmclos_{\calT_\calC}(A)$, there exists $x_j \in A \cap G_j \cap U_j$, for all $j \in \N$.
Clearly, $\la x_j \ra_j$ is a sequence in $A$ and in $C_k$.
As $G_j \cap U_j \subset C_k \cap U_j = W_j$, we infer that $\la x_j \ra_j$ is $\calT \hel C_k$-convergent to $a$, hence also $\calT$-convergent to $a$.
It follows from \ref{CCC.1}(A) that $\la x_j\ra_j$ is $\calT_\calC$-convergent to $a$.
\end{proof}

\begin{proof}[Proof that $(7) \Rightarrow (8)$]
This is \ref{PR.6}.
\end{proof}

\begin{proof}[Proof that $(8) \Rightarrow (6)$]
The seminorm $\lno \cdot \rno$ is $\calT_\calC$-bounded, according to \ref{CCC.4}(C).
Therefore, it is $\calT_\calC$-continuous, by \ref{PR.13}(B), since $X[\calT_\calC]$ is assumed to be bornological.
\end{proof}

\begin{proof}[Proof of (B)]
Since the seminorm $\lno \cdot \rno$ is $\calT$-lower-semicontinuous, by assumption, we note that $(1) \Rightarrow (3)$, according to \ref{PR.8}(B).
By the same argument, $(2) \Rightarrow (6)$, since $\lno \cdot\rno$ is $\calT_\calC$-lower-semicontinuous, according to \ref{CCC.4}(C).
Trivially, $(3) \Rightarrow (4)$.
The conclusion now follows from (A).
\end{proof}

\begin{proof}[Proof of (C)]
In view of \ref{GO.1}(A), it is sufficient to show that $\calT_\calC \subset \calT$.
Let $E \in \calT_\calC$ and $x \in E$.
Recalling \ref{EUL.6} we find a convex set $V$ such that $x \in V \subset E$ and $V \cap C_k \in \calT \hel C_k$ for all $k$.
Choose $k$ so that $\lno x \rno < k$.
Define $O_k = X \cap \{ y : \lno y \rno < k\} \subset C_k$.
By (3), $O_k \in \calT$.
There exists $U_k \in \calT$ such that $V \cap C_k = U_k \cap C_k$.
Notice that $x \in U_k \cap O_k \subset U_k \cap C_k = V \cap C_k \subset V \subset E$.
Since $U_k \cap O_k \in \calT$, the set $E$ is a $\calT$-neighbourhood of $x$.
As $x$ is arbitrary, $E \in \calT$.
\end{proof}

\begin{proof}[Proof of (D)]
In view of (A), conclusion (D) will be established upon showing that $(5) \Rightarrow (9)$.
Let $E \in \calT_\calC$, $x \in E$, and $\la x_j \ra_j$ a sequence which converges to $x$ with respect to $\calT$.
It ensues from the argument in the proof of $(5) \Rightarrow (4)$ that $\la x_j \ra_j$ is also $\calT_\calC$-convergent to $x$.
Thus, $\la x_j \ra_j$ is eventually in $E$.
Since $x$ and $\la x_j \ra_j$ are arbitrary, this shows that $E$ is sequentially $\calT$-open.
From the arbitrariness of $E$ we infer that $\calT_\calC \subset \calT_\rmseq$.
Since $\calT \subset \calT_\calC$, by \ref{GO.1}(A), it follows that $\calT_\rmseq \subset \left(\calT_\calC \right)_\rmseq = \calT_\calC$, where the last equality is the hypothesis that $\calT_\calC$ be sequential.
Finally, $\calT_\calC \subset \calT_\rmseq \subset \calT_\calC$.
\end{proof}

\begin{proof}[Proof of (E)(a)]
We already observed that $(3) \Rightarrow (4)$.
The reverse implication follows for the fact that $\calT$ is sequential, \ref{PR.2}(B).
The other equivalences have been proved in (A).
\end{proof}

\begin{proof}[Proof of (E)(b)]
Notice that $(1) \Rightarrow (3)$ (recall the proof of (B)) and, by (C), $(3) \Rightarrow (10)$.
Furthermore, $[(1) \wedge (10)] \Rightarrow (2)$.
This shows that $(1) \Rightarrow (2)$.
With regard to the reverse implication, we note that $(2) \Rightarrow (9)$, by (B) and (D).
Furthermore, $(9) \Rightarrow (10)$, since $\calT$ is assumed to be sequential.
Trivially, $[(2) \wedge (10)] \Rightarrow (1)$.
\end{proof}

\begin{proof}[Proof of (E)(c)]
This follows from (B) and (E)(a).
\end{proof}

\begin{proof}[Proof of (E)(d)]
This follows from (C) and (E)(a).
\end{proof}

Below, the difference with the hypotheses of \ref{CCC.4} is underlined.

\begin{Theorem}
\label{CCC.7}
Assume that
\begin{itemize}
\item $X[\calT]$ is a locally convex topological vector space;
\item $\lno \cdot \rno$ is a seminorm on $X$ that is $\calT$-lower-semicontinuous;
\item $\la X_k \ra_k$ is a non-decreasing sequence of $\calT$-closed vector subspaces whose union is $X$;
\item $C_k = X_k \cap \{ x : \lno x \rno \leq k \}$ for each $k \in \N$;
\item \underline{$\calT \hel C_k$ is first countable for each $k \in \N$};
\item $\calC = \{ C_k : k \in \N \}$.
\end{itemize}
For fixed $k \in \N$ consider the following propositions.
\begin{enumerate}
\item[(1)] $X_k[\calT \hel X_k]$ is barrelled.
\item[(2)] $X_k[(\calT \hel X_k)_{\calC \shel X_k}]$ is barrelled.
\item[(3)] The restriction of $\lno \cdot \rno$ to $X_k$ is $\calT \hel X_k$-continuous.
\item[(4)] The restriction of $\lno \cdot \rno$ to $X_k$ is sequentially $\calT \hel X_k$-continuous.
\item[(5)] The restriction of $\lno \cdot \rno$ to $X_k$ is sequentially $(\calT \hel X_k)_{\calC \shel X_k}$-continuous.
\item[(6)] The restriction of $\lno \cdot \rno$ to $X_k$ is $(\calT \hel X_k)_{\calC \shel X_k}$-continuous.
\item[(7)] $X_k[(\calT \hel X_k)_{\calC \shel X_k}]$ is Fr\'echet-Urysohn.
\item[(8)] $X_k[(\calT \hel X_k)_{\calC \shel X_k}$ is bornological.
\item[(9)] $(\calT \hel X_k)_\rmseq = (\calT \hel X_k)_{\calC \shel X_k}$.
\item[(10)] $\calT \hel X_k = (\calT \hel X_k)_{\calC \shel X_k}$.
\end{enumerate}
For every $k \in \N$ the following hold.
\begin{enumerate}
\item[(A)] $(4) \iff (5) \iff (6) \iff (7) \iff (8)$.
\item[(B)] $[(1) \vee (2) \vee (3)] \Rightarrow [(4) \wedge (5) \wedge (6) \wedge (7) \wedge (8)]$.
\item[(C)] $(3) \Rightarrow (10)$.
\item[(D)] If $\calT_\calC$ is sequential then $[(4) \vee (5) \vee (6) \vee (7) \vee (8)] \Rightarrow (9)$.
\item[(E)] If both $\calT$ and $\calT_\calC$ are sequential then
\begin{enumerate}
\item[(a)] $(3) \iff (4) \iff (5) \iff (6) \iff (7) \iff (8)$.
\item[(b)] $(1) \iff (2)$.
\item[(c)] $[(1) \vee (2)] \Rightarrow [(3) \wedge (4) \wedge (5) \wedge (6) \wedge (7) \wedge (8)]$.
\item[(d)] $[(3) \vee (4) \vee (5) \vee (6) \vee (7) \vee (8)] \Rightarrow (10)$.
\end{enumerate}
\end{enumerate}
\end{Theorem}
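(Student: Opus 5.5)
The plan is to reduce Theorem \ref{CCC.7} to Theorem \ref{CCC.6} applied inside the subspace $X_k$. Fix $k \in \N$ and work in the locally convex space $X_k[\calT \hel X_k]$ with the restricted seminorm $\lno \cdot \rno|_{X_k}$. This seminorm is $\calT \hel X_k$-lower-semicontinuous, since sublevel sets are intersections of $\calT$-closed sets with $X_k$.

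The localizing family to use is $\calC \hel X_k = \{C_j \cap X_k : j \in \N\}$. For $j \geq k$ we have $C_j \cap X_k = X_k \cap \{x : \lno x \rno \leq j\}$, while for $j<k$ the set $C_j \cap X_k \subset X_k \cap \{\lno x\rno \le k\}$. Hence $\calC \hel X_k$ is interlaced with $\calD_k := \{X_k \cap \{x : \lno x \rno \leq j\} : j \in \N\}$. By \ref{GO.1}(C), the topology $(\calT \hel X_k)_{\calC \shel X_k}$ therefore coincides with $(\calT\hel X_k)_{\calD_k}$, which is exactly the localized topology appearing in Theorem \ref{CCC.6} for the space $X_k$. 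First countability of $\calT \hel (X_k \cap \{\lno x \rno \leq j\})$ holds for $j \geq k$ because this set equals $C_j \cap X_k \subset C_j$, and a subspace of a first countable space is first countable. For $j<k$ the same follows since the set sits inside $C_k$. So every hypothesis of \ref{CCC.6} is met, and its propositions (1)--(10), read in $X_k$, are literally (1)--(10) of the present statement. Conclusions (A), (B) and (C) then transfer verbatim.

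The only genuine obstacle is the sequentiality hypotheses in (D) and (E). These are stated for $\calT_\calC$ and $\calT$ on $X$, whereas \ref{CCC.6}(D),(E) need $(\calT\hel X_k)_{\calC \shel X_k}$ and $\calT \hel X_k$ to be sequential.
\begin{itemize}
\item For $\calT\hel X_k$: since $X_k$ is $\calT$-closed, sequentiality passes to the closed subspace by \ref{PR.2}(C)(a).
\item For the localized topology: $X_k$ is $\calT_\calC$-closed by \ref{GO.1}(A), so $\calT_\calC \hel X_k$ is sequential by the same fact. Then \ref{CCC.1}(C) applied with $Y = X_k$ gives $\calT_\calC \hel X_k = (\calT \hel X_k)_{\calC \shel X_k}$, so the latter is sequential as well.
\end{itemize}
With these two facts in hand, (D) and (E)(a)--(d) follow directly from \ref{CCC.6}(D),(E).
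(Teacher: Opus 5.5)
Your proposal is correct and follows the paper's proof. Fix $k$, apply \ref{CCC.6} to $X_k[\calT \hel X_k]$ with the family $\calD_k$ of sets $X_k \cap \{ x : \lno x \rno \leq j \}$, and get first countability because each such set lies in $C_{\max\{j,k\}}$; then identify $(\calT \hel X_k)_{\calD_k}$ with $(\calT \hel X_k)_{\calC \shel X_k}$ by interlacing and \ref{GO.1}(C). Your additional step transferring the sequentiality hypotheses of (D) and (E) to $X_k$ is also correct, using \ref{PR.2}(C)(a) for $\calT \hel X_k$ and \ref{PR.2}(C)(a) together with \ref{CCC.1}(C) for $(\calT \hel X_k)_{\calC \shel X_k}$; it fills in a point the paper's proof passes over silently.
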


\begin{Remark}
\label{CCC.7.R}
The theorem would be stronger if each occurrence of $(\calT \hel X_k)_{\calC \hel X_k}$ were replaced by $\calT_\calC \hel X_k$.
However, at this level of generality I do not know whether these two topologies coincide.
Notwithstanding, if each $C_k[\calT \hel C_k]$ is compact then they are equal \ref{CWC.6}(B).
The latter is what happens in the applications.
\end{Remark}

\begin{proof}
Abbreviate $B_l = X \cap \{ x : \lno x \rno \leq l \}$, $l \in \N$.
The proof consists in applying \ref{CCC.6} to $X_k$ (in place of $X$), $\calT \hel X_k$ (in place of $\calT$), the restriction of $\lno \cdot \rno$ to $X_k$ (in place of $\lno \cdot \rno$), and $\calD_k = \{ D_l : l \in \N \}$ (in place of $\calC$) where $D_l = X_k \cap B_l$.
Most obviously, $X_k[\calT \hel X_k]$ is a locally convex topological vector space and the restriction of $\lno \cdot \rno$ to $X_k$ is $\calT \hel X_k$-lower-semicontinuous.
\par 
Fix $x \in \N$.
If $l < k$ then $D_l = X_k \cap B_l \subset X_k \cap B_k = C_k$, since $B_l \subset B_k$.
If $l \geq k$ then $D_l = X_k \cap B_l \subset X_l \cap B_l = C_l$, since $X_l \subset X_k$.
In both cases, $D_l[\calT \hel D_l]$ being a topological subspace of a first countable space is itself first countable.
\par 
To complete the proof it remains to establish that $(\calT \hel X_k)_{\calC \shel X_k} = (\calT \hel X_k)_{\calD_k}$.
In view of \ref{GO.1}(C) it is sufficient to show that $\calC \hel X_k$ and $\calD_k$ are interlaced.
The previous paragraph implies that each member of $\calD_k$ is contained in a member of $\calC \hel X_k$.
We ought to show that each $C \in \calC \hel X_k$ is contained in some member of $\calD_k$.
There exists $l \in \N$ such that $C = X_k \cap C_l$.
If $l < k$ then $C = X_k \cap C_l \subset X_k \cap C_k = X_k \cap B_k = D_k \in \calD_k$.
If $l \geq k$ then $C = X_k \cap C_l = X_k \cap X_l \cap B_l = X_k \cap B_l = D_l \in \calD_k$.
\end{proof}
\section{Countable localizing families of compact convex sets}
\label{sec.CWC}

\begin{Scholia}
\label{CWC.1}
Assume that $X$ is a locally convex topological vector space, $K \subset U \subset X$, $K$ is convex and compact and $U$ is open. There then exists a convex open set $V$ such that $K \subset V$ and $\rmclos V \subset U$.
\end{Scholia}

\begin{proof}
According to \cite[1.10]{RUDIN} applied with $C = X \setminus U$, there exists a neighborhood $W$ of 0 such that $(K + W) \cap (C + W) = \emptyset$.
One can choose $W$ open and convex (since $X$ is locally convex).
Thus, $V = K + W$ is open and convex.
As $V \cap (C + W) = \emptyset$, it is clear that $\rmclos V \subset U$.
\end{proof}

\begin{Theorem}
\label{CWC.2}
Assume that:
\begin{itemize}
\item $X[\calT]$ is a locally convex topological vector space;
\item $\calC$ is a non-decreasing countable localizing family in $X$;
\item Each member of $\calC$ is $\calT$-compact.
\end{itemize}
The following hold.
\begin{enumerate}
\item[(A)] The localization of $\calT$ by $\calC$ consists of:
\begin{equation*}
\calT_\calC = \calP(X) \cap \big\{ E : E \cap C \in \calT \hel C \text{ for every } C \in \calC \big\}.
\end{equation*}
\item[(B)] For every set $E \subset X$:
\begin{enumerate}
\item[(a)] $E$ is $\calT_\calC$-open if and only if $E \cap C$ is $\calT \hel C$-open for every $C \in \calC$;
\item[(b)] $E$ is $\calT_\calC$-closed if and only if $E \cap C$ is $\calT \hel C$-closed for every $C \in \calC$.
\end{enumerate}
\item[(C)] A set $B \subset X$ is $\calT_\calC$-bounded if and only it is contained in some $C \in \calC$.
\item[(D)] $X[\calT_\calC]$ is sequentially complete.
\end{enumerate}
\end{Theorem}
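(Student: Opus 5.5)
The heart of the matter is (A); (B), (C), (D) will follow quickly from it and from \ref{CCC.1}. Write $\calC = \{C_k : k \in \N\}$ with $C_k \subset C_{k+1}$ and let $\calE$ denote the right-hand side of (A). Since $\calT_\calC \hel C = \calT \hel C$ by \ref{GO.1}(B), we immediately get $\calT_\calC \subset \calE$. For the reverse inclusion, fix $E \in \calE$ and $x \in E$. Using \ref{EUL.7} and \ref{EUL.2}(iii), translating by $-x$ replaces $\calC$ by an interlaced family and does not change the hypothesis, so we may assume $x = 0$. We must then produce a convex $V$ with $0 \in V \subset E$ and $V \cap C_k \in \calT \hel C_k$ for all $k$; \ref{EUL.9} then finishes the argument.

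\emph{Construction of $V$ (the main step).} Build inductively convex sets $V_k$ that are relatively $\calT \hel C_k$-open in $C_k$, such that
\begin{itemize}
\item $0 \in V_1$;
\item $\rmclos_\calT V_k \subset E$ (closures inside the compact $C_k$ are compact);
\item $V_{k+1} \cap C_k \supset \rmclos_\calT V_k$;
\item and in fact $V_{k+1} \cap C_k = V_k'$ for some relatively open convex $V_k' \supset \rmclos V_k$.
\end{itemize}
To start, choose $U_1 \in \calT$ convex with $U_1 \cap C_1 \subset E$ (possible since $E\cap C_1$ is $\calT\hel C_1$-open and $\calT$ is locally convex), then shrink to $V_1$ with compact closure inside $E \cap C_1$.

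For the inductive step, the compact convex set $K_k = \rmclos V_k \subset E \cap C_{k+1}$, and $E \cap C_{k+1} = O \cap C_{k+1}$ with $O \in \calT$. Apply Scholia \ref{CWC.1} to $K_k \subset O$ to get a convex open $W$ with $K_k \subset W$ and $\rmclos W \subset O$. Then set $V_{k+1} = W \cap C_{k+1}$, possibly intersected with a smaller convex open set chosen so that the closure stays inside $O$.

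Finally let $V = \bigcup_k V_k$. This is an increasing union of convex sets, hence convex, and $V \subset E$. For each $n$, $V \cap C_n = \bigcup_{k \ge n} (V_k \cap C_n)$, a union of $\calT\hel C_n$-open sets, hence open in $C_n$. The delicate point, and where compactness is essential, is keeping the nesting $V_k \subset V_{k+1}$ while each $V_{k+1}$ stays relatively open. I expect this bookkeeping to be the main obstacle. If direct nesting fails, the fallback is to take $V_{k+1} = \rmconv(V_k \cup (W\cap C_{k+1}))$ and use compactness of $K_k$ to show the convex hull is still relatively open in $C_{k+1}$ and its closure stays in $O$.

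With (A) in hand, (B)(a) is a restatement of (A). For (B)(b), take complements and use $(X\setminus E)\cap C = C \setminus (E\cap C)$.

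For (C), note that each $C \in \calC$ is $\calT$-compact, hence $\calT$-bounded, so its subsets are $\calT_\calC$-bounded by \ref{GO.1}(E). Conversely, compact sets are closed, so \ref{CCC.1}(B) applies.

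For (D), a $\calT_\calC$-Cauchy sequence is $\calT_\calC$-bounded and therefore lies in some $C_k$ by (C). It is also $\calT$-Cauchy by \ref{GO.1}(A). Compactness of $C_k$ gives a $\calT$-cluster point $x\in C_k$, and a Cauchy sequence with a cluster point converges to it, so the sequence $\calT$-converges to $x$. By \ref{CCC.1}(A), the convergence also holds in $\calT_\calC$.
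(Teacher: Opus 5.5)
Your proposal is correct and follows the paper's proof. Your relatively open sets $V_k$ are exactly the paper's $C_k \cap V_k$, built by the same application of Scholia~\ref{CWC.1} to the compact convex set $\rmclos_\calT V_k \subset C_k$, and (B)--(D) go through as in the paper; in (D) you find the cluster point in $\calT$ and transfer convergence via \ref{CCC.1}(A), whereas the paper works directly in $\calT_\calC$ using \ref{GO.1}(F), an immaterial difference. The ``obstacle'' you anticipate does not arise: with $V_{k+1} = W \cap C_{k+1}$ one has $V_k \subset \rmclos_\calT V_k \subset W$ and $V_k \subset C_k \subset C_{k+1}$, so the nesting is automatic and neither the extra shrinking nor the convex-hull fallback is needed.
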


\begin{Remark}
\label{CWC.3}
Property \ref{CWC.2}(A) is in effect saying that $X[\calT_\calC]$ is the inductive limit of the topological spaces $C[\calT \hel C]$, $C \in \calC$, in the category of topological spaces (these are called ``weak topologies'' in \cite[Chap. VI \S 8]{DUGUNDJI}) and it allows to dispense with worrying about convex sets at all. 
\end{Remark}

\begin{proof}[Proof of (A)]
We abbreviate
\begin{equation*}
\calE = \calP(X) \cap \big\{ E : E \cap C \in \calT \hel C \text{ for every } C \in \calC \big\} 
\end{equation*}
and we recall that $\calT_\calC$ is described in \ref{EUL.6}.
If $E \in \calT_\calC$ then for all $C \in \calC$ one clearly has $E \cap C \in \calT_\calC \hel C$, hence, $E \cap C \in  \calT \hel C$, by \ref{GO.1}(B).
Thus, $E \in \calE$.
Since $E$ is arbitrary, we have shown that $\calT_\calC \subset \calC$.
The remainder of the argument consists in proving the reverse inclusion $\calE \subset \calT_\calC$.
As in the previous section let $\calC = \{ C_k : k \in \N\}$ be numbered so that $C_k \subset C_{k+1}$, $k \in \N$.
\par 
Let $E \in \calE$.
In view of \ref{EUL.6} we ought to show that for every $x \in E$ there exists a convex set $V$ such that $x \in V \subset E$ and $V \cap C \in \calT \hel C$ for all $C \in \calC$.
First note that, by definition of $\calE$, there are $U_k \in \calT$ such that $E \cap C_k = U_k \cap C_k$ for all $k \in \N$.
\par 
Fix $x \in E$ and let $k_0$ be large enough for $x \in C_{k_0}$.
Notice that $x \in U_{k_0}$.
Applying the scholia to $K = \{x\}$ and $U = U_{k_0}$ we find a convex set $V_{k_0} \in \calT$ such that $\{x\} \subset V_{k_0} \subset \rmclos_{\calT} V_{k_0} \subset U_{k_0}$.
We claim that there exists a sequence $\la V_k \ra_{k \geq k_0}$ of convex $\calT$-open sets such that $V_{k_0}$ is as above and 
\begin{equation}
\label{eq.CWC.2}
C_k \cap \rmclos_\calT V_k \subset V_{k+1} \subset \rmclos_\calT V_{k+1} \subset U_{k+1}
\end{equation}
for all $k \geq k_0$.
We proceed inductively with respect to $k$.
Assuming that $V_{k_0},\ldots,V_k$ have been defined we note that $\rmclos_\calT V_k \subset U_k$ (by the choice of $V_{k_0}$ if $k=k_0$ and by \eqref{eq.CWC.2} if $k > k_0$).
Accordingly, $C_k \cap \rmclos_\calT V_k \subset C_k \cap U_k = C_k \cap E \subset C_{k+1} \cap E = U_{k+1} \cap E \subset U_{k+1}$.
Therefore, the scholia applied to $K = C_k \cap \rmclos_\calT V_k$ and $U=U_{k+1}$ yields a convex open set $V_{k+1}$ satisfying \eqref{eq.CWC.2}.
\par
We claim that the sequence $\la C_k \cap V_{k} \ra_{k \geq k_0}$ is non-decreasing. 
Indeed, $C_k \cap V_k \subset C_k \subset C_{k+1}$ and, according to \eqref{CWC.2}, $C_k \cap V_k \subset C_k \cap \rmclos_\calT V_k \subset V_{k+1}$ for all $k \geq k_0$.
Since each $C_k \cap V_k$ is convex, so is
\begin{equation*}
V = \bigcup_{k \geq k_0} C_k \cap V_k.
\end{equation*}
Furthermore, $x \in V$, since $x \in C_{k_0} \cap V_{k_0}$.
Observe also that $C_k \cap V_k \subset C_k \cap \rmclos_\calT V_k \subset C_k \cap U_k = C_k \cap E \subset E$ for all $k \geq k_0$.
Whence, $x \in V \subset E$.
\par 
We now show that $V \cap V_j \in \calT \hel C_j$ for all $j \in \N$.
Fix $j \in \N$.
Since $\la V_k \cap C_k \ra_{k \geq k_0}$ is non-decreasing, we have
\begin{equation*}
V = \bigcup_{k \geq \max\{j,k_0\}} C_k \cap V_k.
\end{equation*}
As, for each $k \geq \max\{j,k_0\}$ we have $C_j \cap C_k = C_j$, we conclude that
\begin{equation*}
V \cap C_j = C_j \cap \left( \bigcup_{k=\max\{j,k_0\}}^\infty  V_{k} \right) \in \calT \hel C_j  \,,
\end{equation*}
because each $V_k$ is $\calT$-open.
By the arbitrariness of $x$, this shows that $E \in \calT_\calC$.
As $E$ is arbitrary, we have shown that $\calE \subset \calT_\calC$.
\end{proof}

\begin{proof}[Proof of (B)]
Notice that (a) is a rephrasing of (A).
In order to prove (b), let $E \subset X$ and observe that
\begin{equation*}
\begin{split}
(\forall C \in \calC) : E \cap C \text{ is } \calT \hel C \text{-closed } & \iff (\forall C \in \calC) : C \setminus (E \cap C)  \text{ is } \calT \hel C \text{-open } \\
& \iff (\forall C \in \calC) : C \cap  (X \setminus E)  \text{ is } \calT \hel C \text{-open }\\
& \iff X \setminus E \text{ is } \calT_\calC \text{-open (by (a))} \\
& \iff E \text{ is } \calT_\calC \text{-closed} \,.\\
\end{split}
\end{equation*} 
\end{proof}

\begin{proof}[Proof of (C)]
Since $\calT$-compact sets are $\calT$-bounded and subsets of $\calT$-bounded sets are $\calT$-bounded as well, this follows from \ref{CCC.1}(B).
\end{proof}

\begin{proof}[Proof of (D)]
Let $\la x_j \ra_j$ be a $\calT_\calC$-Cauchy sequence in $X$, \ie for every $\calT_\calC$-neighbour\-hood of the origin $E$ there exists $j_0 \in \N$ such that $x_{j} - x_{k} \in E$ whenever $\min\{j,k\} \geq j_0$. 
As $\la x_j \ra_j$ is readily bounded, it follows from (C) that $\{x_j : j \in \N \} \subset C$ for some $C \in \calC$. 
Since $C$ is $\calT$-compact, it also is $\calT_\calC$-compact, according to \ref{GO.1}(F), hence, countably $\calT_\calC$-compact and, therefore, $\la x_j \ra _j$ has a $\calT_\calC$-accumulation point $x \in C$, \cite[Chap. XI, 3.2]{DUGUNDJI}. 
Let $U$ be a $\calT_\calC$-neighbourhood of $x$, define $V = U-x$, and choose a $\calT_\calC$-neighbourhood of the origin $W$ such that $W + W \subset V$. 
Let $j_0 \in \N$ be such that $x_j - x_k \in W$ whenever $\min\{j,k\} \geq j_0$. 
Let also $k \in \N$ be such that $k \geq j_0$ and $x_k \in x + W$. 
Then for every $j \geq j_0$ one has $x_j - x = (x_k-x) + (x_j-x_k) \in W + W \subset V$, \ie $x_j \in U$.
\end{proof}

\begin{Theorem}
\label{CWC.6}
Assume that:
\begin{itemize}
\item $X[\calT]$ is a locally convex topological vector space;
\item $\calC$ is a non-decreasing countable localizing family in $X$;
\item Each member of $\calC$ is $\calT$-compact;
\item For each $C \in \calC$ the topology $\calT \hel C$ is sequential.
\end{itemize}
The following hold.
\begin{enumerate}
\item[(A)] $X[\calT_\calC]$ is sequential.
\item[(B)] If $Y \subset X$ is a $\calT_\calC$-closed vec\-tor subspace (in particular, if $Y$ is $\calT$-closed) then $\calT_\calC \hel Y = (\calT \hel Y)_{\calC \shel Y}$.
\end{enumerate}
\end{Theorem}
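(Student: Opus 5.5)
For (A), the plan is to rely on the description of $\calT_\calC$ in \ref{CWC.2}(B): a set $E \subset X$ is $\calT_\calC$-open if and only if $E \cap C$ is $\calT \hel C$-open for every $C \in \calC$. Let $E$ be sequentially $\calT_\calC$-open. It suffices to show that, for fixed $C \in \calC$, the set $E \cap C$ is sequentially $\calT \hel C$-open, since $\calT \hel C$ is sequential by hypothesis. So take $x \in E \cap C$ and a sequence $\la x_j \ra_j$ in $C$ that $\calT \hel C$-converges to $x$. By \ref{GO.1}(B) the topologies $\calT \hel C$ and $\calT_\calC \hel C$ coincide, so $x_j \to x$ in $\calT_\calC$; alternatively one can invoke \ref{CCC.1}(A), since compact sets are closed. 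Because $E$ is sequentially $\calT_\calC$-open, the sequence is eventually in $E$, hence eventually in $E \cap C$. Therefore $E \cap C \in \calT \hel C$ for every $C$, and $E \in \calT_\calC$. This proves that $X[\calT_\calC]$ is sequential, and this step is essentially routine.

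For (B), \ref{GO.1}(G) already gives $\calT_\calC \hel Y \subset (\calT \hel Y)_{\calC \shel Y}$. By \ref{CCC.1}(C), whose hypotheses hold because compact sets are $\calT$-closed, equality follows once $\calT_\calC \hel Y$ is shown to be sequential. The main obstacle is that sequentiality is not hereditary to arbitrary subspaces, but it is hereditary to closed subspaces: this is the standard fact recorded in \ref{PR.2}(C)(a). Since $Y$ is $\calT_\calC$-closed and $X[\calT_\calC]$ is sequential by (A), $Y[\calT_\calC \hel Y]$ is sequential.

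For completeness, here is the direct argument I would give if needed. Suppose $A \subset Y$ is sequentially closed in $Y$, and let $\la a_j \ra_j \subset A$ converge in $X$ to some $a$. Then $a \in Y$ because $Y$ is closed and hence sequentially closed, so $a \in A$. Thus $A$ is sequentially closed in $X$, hence closed in $X$, hence closed in $Y$.

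For the parenthetical claim, if $Y$ is $\calT$-closed then it is $\calT_\calC$-closed, by \ref{GO.1}(A).

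Alternatively, one can avoid the heredity lemma. Each $C \cap Y$ is $\calT$-compact, so \ref{CWC.2}(B) applies to $(\calT \hel Y)_{\calC \shel Y}$. Given a set $E \subset Y$ that is open in the latter topology, $E \cap C \cap Y$ is relatively open in $C \cap Y$. Then $E \cup (X \setminus Y)$ meets each $C$ in a $\calT \hel C$-open set, using that $C \cap Y$ is $\calT \hel C$-closed. This gives openness in $\calT_\calC$, and hence $E \in \calT_\calC \hel Y$. I would present whichever route is cleaner, probably this second one, since it also uses only closedness of $Y$ together with \ref{CWC.2}(B).
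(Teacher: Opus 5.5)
Your argument for (A) is the paper's, with sequentially open sets in place of sequentially closed ones. Your first route for (B) is exactly the paper's proof: closed subspaces of the sequential space $X[\calT_\calC]$ are sequential by \ref{PR.2}(C)(a), and then \ref{CCC.1}(C) applies.

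Your second route for (B) is genuinely different, and it is correct. Take $E$ open in $(\calT \hel Y)_{\calC \shel Y}$ and write $E \cap C = U \cap C \cap Y$ with $U \in \calT$. Then $(E \cup (X \setminus Y)) \cap C = (U \cap C) \cup (C \setminus Y)$. Here $C \setminus Y \in \calT_\calC \hel C = \calT \hel C$ because $Y$ is $\calT_\calC$-closed. So \ref{CWC.2}(B)(a) gives $E \cup (X\setminus Y) \in \calT_\calC$, and intersecting with $Y$ yields $E \in \calT_\calC \hel Y$.

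You do not actually need compactness of $C \cap Y$, nor \ref{CWC.2} on the side of $Y$. That $E \cap (C \cap Y)$ is $\calT \hel (C\cap Y)$-open is just \ref{GO.1}(B) applied in $Y[\calT \hel Y]$ with the localizing family $\calC \hel Y$. \ref{CWC.2} is only needed in $X$.

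What your route buys is generality and transparency. It uses neither (A) nor the hypothesis that each $\calT \hel C$ be sequential. So (B) already holds as soon as $\calC$ is a non-decreasing countable localizing family of $\calT$-compact sets and $Y$ is $\calT_\calC$-closed. It also exhibits (B) as the elementary fact that a closed subspace of the topological inductive limit described in \ref{CWC.3} carries the inductive limit topology of the traces $C \cap Y$.

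The paper's route is shorter given the machinery already in place, being a two-line consequence of (A) and \ref{CCC.1}(C). However, it ties (B) to the sequentiality assumption, which your argument shows is superfluous for that part.
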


\begin{proof}[Proof of (A)]
Let $E \subset X$ be sequentially $\calT_\calC$-closed.
We ought to show that $E$ is $\calT_\calC$-closed which, according to \ref{CWC.2}(B)(b), amounts to showing that $E \cap C$ is $\calT \hel C$-closed for all $C \in \calC$.
Fix $C \in \calC$. 
Since the topology $\calT \hel C$ is sequential, it is sufficient to prove that $E\cap C$ is sequentially $\calT \hel C$-closed.
Let $\la x_j \ra_j$ be a sequence in $E \cap C$ that is $\calT \hel C$-convergent to some $x \in C$.
As $\la x_j \ra_j$ is also $\calT$-convergent to $x$, it follows from \ref{CCC.1}(A) that it is $\calT_\calC$-convergent to $x$.
Thus, $x \in E$, since $E$ is sequentially $\calT_\calC$-closed.
Accordingly, $x \in E \cap C$.
Since $\la x_j \ra_j$ and $x$ are arbitrary, $E \cap C$ is sequentially $\calT \hel C$-closed.
\end{proof}

\begin{proof}[Proof of (B)]
In view of \ref{CCC.1}(C) it is sufficient to show that $\calT_\calC \hel Y$ is sequential.
As $X[\calT_\calC]$ is sequential, by (A), and $Y$ is $\calT_\calC$-closed, by hypothesis, this follows from \ref{PR.2}(C)(a).
\end{proof}

\section{First dual, dense subspaces, and extensions}
\label{sec.FD}

\begin{Empty}
\label{FD.0}
Let $X[\calT]$ be a locally convex topological vector space and $\calC$ a localizing family in $X$.
Here, we consider the following condition.
\begin{enumerate}
\item[($\diamond$)] \textit{ A subset of $X$ is $\calT_\calC$-bounded if and only if it is contained in some $C \in \calC$.}
\end{enumerate}
We identify circumstances that guarantee this condition ($\diamond$) is satisfied.
\begin{enumerate}
\item[(A)] \textit{ If $\calC$ is a non-decreasing countable family of $\calT$-compact sets then }($\diamond$) \textit{ is satisfied.}
\end{enumerate}
\par
{\it Proof.}
This is an immediate consequence of \ref{CWC.2}(C).\cqfd
\begin{enumerate}
\item[(B)] \textit{ Assume that $X[\calT]$ and $\calC$ satisfy the hypotheses of \ref{CCC.4} and assume, furthermore, that for every $i \in I$ and every $k \in \N$ there exists $\Gamma_{i,k} > 0$ such that $\|x\|_i \leq \Gamma_{i,k} \cdot \lno x \rno$ for all $x \in X_k$. Then }($\diamond$) \textit{ is satisfied.}
\end{enumerate}
\par 
{\it Proof.}
In view \ref{CCC.1}(B) it suffices to show that each subset of some $C_k$ is $\calT$-bounded.
If $B \subset C_k$ then $\sup \{ \|x\|_i : x \in B \} \leq k \cdot \Gamma_{i,k}$ for all $i$.
This completes the proof.\cqfd
\end{Empty}

\begin{Empty}
\label{FD.1}
We will be considering the vector space $X[\calT_\calC]^*$ consisting of all $\calT_\calC$-continuous linear functions $f : X \to \R$.
As the dual of a locally convex topological vector space it is itself traditionally equipped with the strong topology $\beta(X[\calT_\calC]^*,X)$ (we will denote it briefly by $\beta(X^*,X)$ when no confusion can arise), \ie the topology of uniform convergence on $\calT_\calC$-bounded subsets of $X$.
This topology is generated by the family of seminorms $p_B$, corresponding to all $\calT_\calC$-bounded subsets $B$ of $X$, where $p_B(f) = \sup \{ |f(x)| : x \in B \}$.
Note that $\calT_\calC$-continuous linear functions $X \to \R$ are $\calT_\calC$-bounded \cite[1.32]{RUDIN} but that $\calT_\calC$-bounded linear functions $X \to \R$ may not be $\calT_\calC$-continuous, as $X[\calT_\calC]$ is not bornological in most interesting cases.
\end{Empty}

\begin{Theorem}
\label{FD.2}
Assume that:
\begin{itemize}
\item $X[\calT]$ be a locally convex topological vector space;
\item $\calC$ is a non-decreasing countable localizing family in $X$;
\item Each member of $\calC$ is $\calT$-closed;
\item A subset of $X$ is $\calT_\calC$-bounded if and only if it is contained in some $C \in \calC$.
\end{itemize}
Then $X[\calT_\calC]^*[\beta(X^*,X)]$ is a Fr\'echet space.
\end{Theorem}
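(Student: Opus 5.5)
The plan is to show two things: the strong topology is metrizable, and the dual is complete under it. Both rest on the fourth hypothesis and on the localized nature of $\calT_\calC$-continuity.

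\textbf{Metrizability.} By the fourth hypothesis, the $\calT_\calC$-bounded sets are exactly the subsets of the members $C_k$ of the non-decreasing sequence $\calC=\{C_k : k\in\N\}$. Hence the countable family of seminorms $p_{C_k}(f)=\sup\{|f(x)| : x\in C_k\}$, $k\in\N$, generates $\beta(X^*,X)$. Two checks are needed:
\begin{itemize}
\item Each $p_{C_k}$ is finite on $X[\calT_\calC]^*$, since $C_k$ is $\calT_\calC$-bounded and continuous linear forms are bounded on bounded sets, see \ref{FD.1}.
\item For an arbitrary bounded $B$, choosing $C_k\supset B$ gives $p_B\le p_{C_k}$.
\end{itemize}
The topology is Hausdorff because $X=\bigcup_k C_k$. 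Indeed, if $p_{C_k}(f)=0$ for all $k$, then $f=0$. A Hausdorff locally convex topology generated by countably many seminorms is metrizable.

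\textbf{Completeness: candidate limit.} Let $\la f_n\ra_n$ be a $\beta$-Cauchy sequence, so that $f_n$ is uniformly Cauchy on each $C_k$. Since $\bigcup_k C_k = X$, it converges pointwise to a linear form $f:X\to\R$. Moreover, $f_n\to f$ uniformly on each $C_k$.

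\textbf{Completeness: continuity of the limit.} This is the step requiring care, because $X[\calT_\calC]$ is not barrelled and Banach–Steinhaus is unavailable. Uniform convergence, however, suffices. By the universal property \ref{EUL.3}(ii) (equivalently \ref{EUL.4}), it is enough to show that each restriction $f|_{C_k}$ is $\calT\hel C_k$-continuous. Each $f_n|_{C_k}$ is $\calT_\calC\hel C_k=\calT\hel C_k$-continuous by \ref{GO.1}(B). A uniform limit of continuous real functions on the topological space $C_k[\calT\hel C_k]$ is continuous, by the standard $\veps/3$ argument, which works equally well with nets. Hence $f\in X[\calT_\calC]^*$.

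\textbf{Completeness: convergence.} It remains to show $f_n\to f$ in $\beta(X^*,X)$. This is exactly the uniform convergence on each $C_k$, hence on every bounded set, already established above.

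The main obstacle is the continuity of $f$, and it is resolved by the observation that uniform convergence on each member of $\calC$, together with the localization property, replaces the missing barrelledness. The $\calT$-closedness of the members of $\calC$ plays no essential role beyond making the setting consistent with \ref{CCC.1}.
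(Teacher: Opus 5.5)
Your proposal is correct and follows essentially the same route as the paper. In both, the countable family $\la p_C\ra_{C\in\calC}$ generates $\beta(X^*,X)$, a Cauchy sequence converges uniformly on each $C\in\calC$, and continuity of the limit follows from \ref{EUL.3}(ii), \ref{GO.1}(B), and the fact that uniform limits of $\calT\hel C$-continuous functions are continuous (you are also right that the $\calT$-closedness of the members of $\calC$ is never used).
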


\begin{proof}
It follows from the last hypothesis that $\calC$ is a fundamental system of $\calT_\calC$-bounded sets. 
Therefore, the strong topology $\beta(X^*,X)$ of $X[\calT_\calC]^*$ is generated by the countable family of seminorms $\la p_{C} \ra_{C \in \calC}$, \ie is defined by a translation invariant metric.
It remains to show that it is complete.
\par 
Let $\la f_n \ra_n$ be a Cauchy sequence in $X[\calT_\calC]^*$ with respect to $\beta(X^*,X)$.
Then it is a Cauchy sequence with respect to each seminorm $p_C$ and, in particular, each sequence $\la f_n(x) \ra_n$, $x \in X$, is Cauchy in $\R$.
Consequently, a linear function $f: X \to \R$ is well-defined by the formula $f(x) = \lim_n f_n(x)$.
A routine argument shows that $\lim_n \sup \{ |f(x) - f_n(x)| : x \in C \} = 0$.
\par 
It remains to show that $f$ is $\calT_\calC$-continuous.
According to the definition of localized topology \ref{EUL.3}, it is sufficient to show that $f|_C$ is $\calT \hel C$-continuous for all $C \in \calC$.
Note that each $(f_n)|_C$ is $\calT \hel C$-continuous, according to its $\calT_\calC \hel C$-continuity and \ref{GO.1}(B).
Since $\la (f_n)|_C \ra_n$ converges uniformly to $f|_C$, the conclusion follows.
\end{proof}

\begin{Theorem}
\label{FD.3}
Assume that:
\begin{itemize}
\item $X[\calT]$ is a locally convex topological vector space;
\item $\calC$ is a non-decreasing countable localizing family in $X$;
\item Each member of $\calC$ is $\calT$-closed;
\item A subset of $X$ is $\calT_\calC$-bounded if and only if it is contained in some $C \in \calC$;
\item There exists $C_0 \in \calC$ such that for each $C \in \calC$ there exists $t > 0$ such that $C \subset t \cdot C_0$.
\end{itemize}
Then $X[\calT_\calC]^*[\beta(X^*,X)]$ is a Banach space.
\end{Theorem}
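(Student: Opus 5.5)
By Theorem \ref{FD.2}, $X[\calT_\calC]^*[\beta(X^*,X)]$ is already a Fr\'echet space whose topology is generated by the countable family of seminorms $\la p_C \ra_{C \in \calC}$. It therefore suffices to show that this topology is generated by the single seminorm $p_{C_0}$. Completeness is then inherited: a $p_{C_0}$-Cauchy sequence is Cauchy for every $p_C$, hence converges in the Fr\'echet topology, which coincides with the $p_{C_0}$-topology.

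\textbf{Step 1: domination.} For each $C \in \calC$ choose $t_C > 0$ with $C \subset t_C \cdot C_0$. Then for every $f \in X[\calT_\calC]^*$,
\begin{equation*}
p_C(f) = \sup\{|f(x)| : x \in C\} \leq \sup\{|f(t_C y)| : y \in C_0\} = t_C \, p_{C_0}(f).
\end{equation*}
So every $p_C$ is dominated by a multiple of $p_{C_0}$. Conversely, $C_0$ is itself one of the generating sets (it is $\calT_\calC$-bounded, being a member of $\calC$), so $p_{C_0}$ is among the generators. Hence the strong topology equals the topology of $p_{C_0}$ alone.

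\textbf{Step 2: $p_{C_0}$ is a norm.} Note that $p_{C_0}(f) < \infty$ because $C_0$ is $\calT_\calC$-bounded and $f$ is continuous. Suppose $p_{C_0}(f) = 0$. Then $p_C(f) = 0$ for all $C \in \calC$ by Step 1, and since $\cup\calC = X$ (see \ref{EUL.2}) we get $f = 0$.

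\textbf{Step 3: completeness.} Combining Steps 1 and 2 with Theorem \ref{FD.2}, $X[\calT_\calC]^*$ is a normed space with norm $p_{C_0}$ whose norm topology is the complete metrizable topology $\beta(X^*,X)$. A Cauchy sequence for $p_{C_0}$ is Cauchy for every $p_C$, hence $\beta$-Cauchy, hence convergent. Therefore the space is a Banach space.

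The main care needed is in Step 1, making sure that the family $\la p_C \ra_{C \in \calC}$ really does generate $\beta(X^*,X)$, i.e.\ that $\calC$ is a fundamental system of bounded sets. This follows from the fourth hypothesis exactly as in the proof of \ref{FD.2}. Beyond that point the argument is routine.
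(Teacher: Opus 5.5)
Your proposal is correct and follows the paper's argument: you invoke \ref{FD.2} for the Fr\'echet structure, then use $C \subset t\cdot C_0$ to get $p_C \leq t\, p_{C_0}$, so that $\beta(X^*,X)$ is generated by the single norm $p_{C_0}$. The only difference is that you spell out details the paper leaves implicit, namely that $p_{C_0}$ separates points and that completeness carries over to the norm.
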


\begin{proof}
We know that $X[\calT_\calC]^*[\beta(X^*,X)]$ is a Fr\'echet space, by \ref{FD.2}, and the extra hypothesis guarantees that the topology $\beta(X^*,X)$ is generated by the single norm $p_{C_0}$.
\end{proof}

\begin{Empty}
\label{FD.4}
Assume that the hypotheses of \ref{CCC.5} are satisfied, $Y \subset X$ is a vector subspace, and define $Y_k = X_k \cap Y$, $k \in \N$.
Assume also that $f : Y \to \R$ is linear and satisfies the following condition: 
\begin{equation*}
(\forall \veps > 0)(\exists i \in I)(\exists \theta > 0)\left(\forall y \in Y_{\lceil \veps^{-1} \rceil}\right) : \quad |f(y)| \leq \theta \|y\|_i + \veps \lno y \rno .
\end{equation*}
One may wonder whether $f$ extends to a $\calT_\calC$-continuous linear function defined on $X$.
One is tempted to argue that $f$, being continuous with respect to the localized topology, it  extends, by Hahn-Banach, since localized topologies are locally convex.
However, this should be subjected to careful scrutiny:
\begin{enumerate}
\item[(1)] First, \ref{CCC.5} does not immediately apply to showing that $f$ is $(\calT \hel Y)_{\calC \shel Y}$-continuous.
The problem is that we do not know whether the topologies $\calT \hel (Y \cap C_k)$ are sequential unless, for instance, $Y$ is $\calT$-closed.
However, let us further assume (as we will in our applications) that $\calT \hel C_k$ is metrizable.
In that case, \ref{CCC.5} indeed applies and $f$ is $(\calT \hel Y)_{\calC \shel Y}$-continuous.
\item[(2)] In order to apply Hahn-Banach's theorem to extend $f$ we would need to know that $f$ is $\calT_\calC \hel Y$-continuous.
However, for all we know $f$ is $(\calT \hel Y)_{\calC \shel Y}$-continuous and the inclusion \ref{GO.1}(G) goes in the wrong direction.
We would need instead to be able to apply \ref{CCC.1}(C).
This requires $\calT_\calC \hel Y$ to be sequential which is the case when $Y$ is $\calT_\calC$-closed (recall \ref{CWC.2}(E) and \ref{PR.2}(A)(a)) but what if $Y$ is $\calT_\calC$-dense?
%
%
\item[(3)] Thus, if $Y$ is $\calT_\calC$-dense then we do not know that $f$ is $\calT_\calC \hel Y$-continuous but we could try to approximate any $x \in X \setminus Y$ by the terms of a $\calT_\calC$-convergent sequence in $Y$ and use the bounds we assume on $f$ to define $f(x)$.
However, for all we know $Y$ is not sequentially $\calT_\calC$-dense, since $X[\calT_\calC]$ is not Fr\'echet-Urysohn in general \ref{CCC.7}.
\end{enumerate}
As we cannot avoid the above issues in general, we introduce the following favourable situation (that fortunately arises in applications).
\end{Empty}

\begin{Empty}
\label{FD.5}
Assume that the hypotheses of \ref{CCC.5} are satisfied and let $Y \subset X$ be a vector subspace.
We say that $Y$ is {\em uniformly sequentially $\calT_\calC$-dense} in $X$ if 
\begin{equation*}
\left(\exists \gamma \in \N^\N\right)(\exists \Gamma > 0)(\forall k \in \N)(\forall x \in X_k)(\exists \la y_j \ra_j \text{ in } X_{\gamma(k)}): 
y_j \xrightarrow{\calT} x \text{ and } \sup_j \lno y_j \rno \leq \Gamma \cdot \lno x \rno .
\end{equation*}
\par 
Notice that if $\la y_j \ra_j$ is associated with $x$ as above then it $\calT_\calC$-converges to $x$, according to \ref{CCC.4}(A).
Thus, if $Y$ is uniformly sequentially $\calT_\calC$-dense then it is sequentially $\calT_\calC$-dense, hence, it is $\calT_\calC$-dense.
If $Y$ is sequentially $\calT_\calC$-dense then for each $x$ there exists a sequence $\la y_j \ra_j$ satisfying the conditions in the definition above except for $\la y_j \ra_j$ may not be contained in an $X_l$ whose index $l$ is controlled by $k$ such that $x \in X_k$ and $\sup_j \lno y_j \rno \leq \Gamma \cdot \lno x \rno$ is replaced with the weaker $\sup_j \lno y_j \rno < \infty$.
This explains the word ``uniform'' in our terminology.
\end{Empty}

\begin{Theorem}
\label{FD.6}
Assume that
\begin{itemize}
\item $X[\calT]$ is a locally convex topological vector space whose topology is generated by a filtering family of seminorms $\la \| \cdot \|_i \ra_{i \in I}$;
\item $\lno \cdot \rno$ is a seminorm on $X$ that is $\calT$-lower-semicontinuous;
\item $\la X_k \ra_k$ is a non-decreasing sequence of $\calT$-closed vector subspaces whose union is $X$;
\item $C_k = X_k \cap \{ x : \lno x \rno \leq k \}$ for each $k \in \N$;
\item $C_k[\calT \hel C_k]$ is sequential for each $k \in \N$;
\item $\calC = \{ C_k : k \in \N \}$;
\item $Y \subset X$ is a vector subspace and is uniformly sequentially $\calT_\calC$-dense in $X$;
\item $f : Y \to \R$ is a linear function satisfying the following condition:
\begin{equation*}
(\forall \veps > 0)(\exists i \in I)(\exists \theta > 0) \left( \forall y \in Y \cap X_{\lceil \veps^{-1} \rceil}\right) : |f(y)| \leq \theta \|y\|_i + \veps \lno x \rno .
\end{equation*} 
\end{itemize}
Then there exists a $\calT_\calC$-continuous linear function $\hat{f} : X \to \R$ whose restriction to $Y$ is $f$ and for all $k \in \N$
\begin{equation*}
p_{C_k} \left( \hat{f} \right) \leq p_{Y \cap C_l}(f)
\end{equation*}
where $l = \max \{ \lceil \Gamma k \rceil , \gamma(k)\}$ and $\gamma$ and $\Gamma$ are as is the definition of $Y$ being uniformly sequentially $\calT_\calC$-dense.
\end{Theorem}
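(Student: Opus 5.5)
We construct $\hat f$ by approximation along the sequences furnished by uniform sequential density, with no appeal to Hahn--Banach. Two reductions come first. Since the hypothesis on $f$ is condition (c) of \ref{CCC.5} written for $Y$, the argument in the proof of \ref{CCC.5}, $(c)\Rightarrow(a)$, shows that each restriction $f|_{Y\cap C_l}$ is uniformly $\calT$-continuous on $Y \cap C_l$. Concretely, given $\veps>0$ there are $i\in I$ and $\theta>0$ such that
\begin{equation*}
|f(y)-f(y')|\leq \theta\|y-y'\|_i+\veps \quad\text{whenever } y,y'\in Y\cap C_l .
\end{equation*}
To obtain this, apply the hypothesis with $\hat\veps = \veps/(2l)$. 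Then $\lceil \hat\veps^{-1}\rceil \geq l$, so $y-y' \in Y\cap X_l\subset Y \cap X_{\lceil\hat\veps^{-1}\rceil}$, and $\lno y-y'\rno\leq 2l$. The same estimate, applied to $y$ and $0$, shows that $p_{Y\cap C_l}(f)<\infty$, since $\calT$-seminorms are bounded on $C_l$ by the uniform density assumption.

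Next we define $\hat f$. Fix $x\in X$ and choose $k$ with $x \in X_k$. The definition \ref{FD.5} yields a sequence $\la y_j\ra_j$ in $Y\cap X_{\gamma(k)}$ with $y_j\to x$ in $\calT$ and $\lno y_j\rno\leq \Gamma\lno x\rno$. Hence all $y_j$ lie in $Y\cap C_m$ for a fixed $m$, and $\la y_j \ra_j$ is $\calT$-Cauchy. The uniform continuity estimate then shows that $\la f(y_j)\ra_j$ is Cauchy in $\R$; we set $\hat f(x)=\lim_j f(y_j)$.

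This limit does not depend on the choices. Given two such sequences, possibly associated with different $k$, both lie in a common $Y\cap C_m$. Interlacing them gives a single $\calT$-convergent sequence in $Y\cap C_m$, and its image under $f$ converges, so the two limits agree. For $x \in Y$ the constant sequence is admissible, so $\hat f$ extends $f$. Linearity follows because sums and scalar multiples of approximating sequences are again $\calT$-convergent sequences in some common $Y\cap C_m$, and the limit does not depend on the choice of sequence.

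For the norm bound, take $x\in C_k$, so that $x \in X_k$ and $\lno x\rno\leq k$. The approximating $y_j$ lie in $X_{\gamma(k)}\subset X_l$ and satisfy $\lno y_j\rno\leq \Gamma k\leq l$. Thus $y_j\in Y\cap C_l$, and $|\hat f(x)|=\lim_j|f(y_j)|\leq p_{Y\cap C_l}(f)$.

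The main obstacle is continuity of $\hat f$, since the approximating sequences for points of $C_k$ need not stay inside $C_k$. By \ref{CCC.2}(B), which applies because $C_k[\calT\hel C_k]$ is sequential, it suffices to prove sequential $\calT_\calC$-continuity. By linearity we may test a $\calT_\calC$-null sequence $\la x_n\ra_n$. By \ref{CCC.1}(A), the $x_n$ lie in some $C_k$ and $x_n \to 0$ in $\calT$.

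Fix $\veps>0$, and let $i,\theta$ be chosen for $Y\cap C_l$ as in the first paragraph; by the filtering property we may increase $i$ as needed. For each $n$ pick $y_{n}\in Y\cap C_l$ from the approximating sequence of $x_n$ such that $|\hat f(x_n)-f(y_n)|\leq\veps$ and $\|y_n-x_n\|_i\leq 1/n$. Then $y_n\to 0$ in the seminorm $\|\cdot\|_i$ while staying in $Y\cap C_l$. Comparing with $y'=0$ in the uniform estimate gives $|f(y_n)|\leq \theta\|y_n\|_i+\veps$, and therefore $\limsup_n|\hat f(x_n)|\leq 2\veps$. Since $\veps$ was arbitrary, $\hat f(x_n)\to 0$, which gives the $\calT_\calC$-continuity of $\hat f$ and completes the proof.
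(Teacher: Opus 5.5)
Your proof is correct. Its first part (defining $\hat f(x)=\lim_j f(y_j)$ along approximating sequences, independence of the choice by interlacing, linearity, extension, and the bound $p_{C_k}(\hat f)\le p_{Y\cap C_l}(f)$) is essentially the paper's steps (i)--(iii) and (v).

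The genuine difference is how you get continuity.

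\begin{itemize}
\item The paper passes to the limit in the hypothesis on $f$ itself. With $\hat\veps=\min\{\veps/\Gamma,1/\gamma(\lceil\veps^{-1}\rceil)\}$ it derives $|\hat f(x)|\le\theta\|x\|_i+\veps\lno x\rno$ for all $x\in X_{\lceil\veps^{-1}\rceil}$. This is condition (c) of \ref{CCC.5} for $\hat f$, and the paper concludes by \ref{CCC.5}, whose implication (c)$\Rightarrow$(a) is proved with nets.
\item You instead reduce, through \ref{CCC.2}(B) and \ref{CCC.1}(A), to a $\calT_\calC$-null sequence lying in one $C_k$. You then need only the uniform continuity of $f$ on the single set $Y\cap C_l$, plus a diagonal choice of approximants.
\end{itemize}

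The paper's route never uses the hypothesis that $C_k[\calT\hel C_k]$ is sequential, so its argument proves the theorem without it. It also gives $\hat f$ an explicit estimate of the same shape as the one assumed for $f$. Your route avoids the bookkeeping relating $\gamma$, $\Gamma$ and $\lceil\veps^{-1}\rceil$, but it uses up the sequential hypothesis.

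Even that is avoidable within your own framework. Passing to the limit in your uniform estimate gives $|\hat f(x)-\hat f(x')|\le\theta\|x-x'\|_i+\veps$ for $x,x'\in C_k$. That is exactly the $\calT\hel C_k$-continuity of $\hat f|_{C_k}$, and \ref{EUL.3}(ii) then gives $\calT_\calC$-continuity directly.

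There are three small corrections.

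\begin{itemize}
\item Your assertion that $p_{Y\cap C_l}(f)<\infty$, because the $\calT$-seminorms are bounded on $C_l$ ``by the uniform density assumption'', is unfounded. Uniform sequential density says nothing about $\|\cdot\|_i$ on $C_l$. For instance, if $\lno\cdot\rno$ vanishes identically then $C_l=X_l$, and $p_{Y\cap C_l}(f)=+\infty$ whenever $f$ does not vanish on $Y\cap X_l$. Nothing in your argument uses this finiteness, since the asserted inequality is trivial when its right side is $+\infty$. Simply drop the remark.
\item State well-definedness for all $\calT$-convergent sequences contained in some $Y\cap C_m$. By \ref{CCC.1}(A) this is the same as all $\calT_\calC$-convergent sequences in $Y$, which is the class the paper uses. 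This matters because the sums you use for linearity, and the constant sequence for $x\in Y$, need not be of the form provided by \ref{FD.5}. Your interlacing argument covers this larger class without change.
\item In your first paragraph, restrict to $\veps\le 2$, so that $\lceil\hat\veps^{-1}\rceil\ge l$.
\end{itemize}
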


\begin{proof}
{\bf (i)} We claim that if $\la y_j \ra_j$ is a sequence in $Y$ that is $\calT_\calC$-convergent in $X$ then the sequence $\la f(y_j) \ra_j$ is Cauchy in $\R$.
In order to prove this we first note that, according to \ref{CCC.4}(A), $\la y_j \ra_j$ in a sequence in $X_k$ for some $k \in \N$ and $b = \sup_j \lno y_j \rno < \infty$.
Given $0 < \veps < k^{-1}$ let $i \in I$ and $\theta$ be associated with $\frac{\veps}{4(b+1)}$ in the last hypothesis of the theorem.
Let $j_0$ be so that if $\min\{j_1,j_2\} \geq j_0$ then $\|y_{j_1} - y_{j_2} \|_i \leq \frac{\veps}{2\theta}$.
For such indexes $j_1$ and $j_2$ one has
\begin{equation*}
\left| f\left( y_{j_1} \right) - f\left( y_{j_2} \right)\right| \leq
\theta \left\| y_{j_1} - y_{j_2}\right\|_i + \frac{\veps}{4(b+1)} \lno y_{j_1} - y_{j_2} \rno \leq \veps.
\end{equation*}
\par 
{\bf (ii)} If $\la y'_j \ra_j$ and $\la y''_j \ra_j$ are two sequences in $Y$ both $\calT_\calC$-converging to the same limit in $X$ then $\lim_j f(y'_j) = \lim_j f(y''_j)$.
Indeed, interlacing both sequences into one single sequence $\la y_j \ra_j$ (\ie $y_{2j} = y'_j$ and $y_{2j-1} = y''_j$) we infer from {\bf (i)} that $\la f(y_j) \ra_j$ is convergent and its subsequences $\la f(y'_j) \ra_j$ and $\la f(y''_j) \ra_j$ have the same limit.
\par 
{\bf (iii)} We define $\hat{f} : X \to \R$ in the following way.
Given $x \in X$ there exists a sequence $\la y_j \ra_j$ in $Y$ that $\calT_\calC$-converges to $x$ and we define $\hat{f}(x) = \lim_j f(y_j)$ -- unambiguously so, according to {\bf (ii)}.
It is obvious that $\hat{f}|_Y = f$.
The linearity of $\hat{f}$ follows from that of $f$ and a routine application of {\bf (ii)}.
\par 
{\bf (iv)} We now show that $\hat{f}$ is $\calT_\calC$-continuous.
To this end we will show that $\hat{f}$ satisfies the condition \ref{CCC.5}(c).
Let $\gamma$ and $\Gamma$ be as in the definition of $Y$ being uniformly sequentially $\calT_\calC$-dense.
Let $\veps > 0$.
Define
\begin{equation*}
\hat{\veps} = \min \left\{ \frac{\veps}{\Gamma} , \frac{1}{\gamma(\lceil \veps^{-1} \rceil)}\right\} .
\end{equation*}
Let $i \in I$ and $\theta > 0$ be associated with $\hat{\veps}$ in the last hypothesis of the theorem.
Let $x \in X_{\lceil \veps^{-1} \rceil}$.
Referring to the uniform sequential $\calT_\calC$-density of $Y$ in $X$, choose a sequence $\la y_j \ra_j$ in $X_{\gamma(\lceil \veps^{-1} \rceil)}$ which is $\calT$-convergent to $x$ and so that $\sup_j \lno y_j \rno \leq \Gamma \cdot \lno x \rno$.
For all $j$ we have $|f(y_j)| \leq \theta \|y_j\|_i + \hat{\veps}\lno y_j \rno$.
Letting $j \to \infty$ we obtain $\left| \hat{f}(x)\right| = \lim_j |f(y_j)| \leq \limsup_j \|y_j\|_i + \hat{\veps} \limsup_j \lno y_j \rno \leq \theta \|x\|_i + \veps \lno x \rno$.
\par 
{\bf (v)} Let $k \in \N$ and $l = \max \{ \lceil \Gamma k \rceil , \gamma(k)\}$.
Let $x \in C_k$ and choose a sequence $\la y_j \ra_j$ converging to $x$, according to the uniform sequential $\calT_\calC$-density of $Y$.
Notice that $\la y_j \ra_j$ is a sequence in $Y \cap C_l$.
Furthermore, $\left| \hat{f}(x) \right| = \lim_j |f(y_j)| \leq p_{Y \cap C_l}(f)$.
Since $x$ is arbitrary, the proof is complete.
\end{proof}

Below we underline the extra hypothesis with respect to \ref{FD.6}.

\begin{Corollary}
\label{FD.7}
Assume that
\begin{itemize}
\item $X[\calT]$ is a locally convex topological vector space;
\item $\lno \cdot \rno$ is a seminorm on $X$ that is $\calT$-lower-semicontinuous;
\item $\la X_k \ra_k$ is a non-decreasing sequence of $\calT$-closed vector subspaces whose union is $X$;
\item $C_k = X_k \cap \{ x : \lno x \rno \leq k \}$ for each $k \in \N$;
\item \underline{$C_k[\calT \hel C_k]$ is first countable for each $k \in \N$};
\item $\calC = \{ C_k : k \in \N \}$;
\item $Y \subset X$ is a vector subspace and is uniformly sequentially $\calT_\calC$-dense in $X$;
\item $f : Y \to \R$ is a linear function.
\end{itemize}
The following are equivalent.
\begin{enumerate}
\item[(A)] $f$ is $\calT_\calC \hel Y$-continuous.
\item[(B)] $f$ is $(\calT \hel Y)_{\calC \shel Y}$-continuous.
\end{enumerate}
\end{Corollary}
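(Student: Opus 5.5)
(A) $\Rightarrow$ (B) is immediate from \ref{GO.1}(G): since $\calT_\calC \hel Y \subset (\calT \hel Y)_{\calC \shel Y}$, every $\calT_\calC \hel Y$-open set is $(\calT \hel Y)_{\calC \shel Y}$-open, so continuity for the coarser topology gives continuity for the finer one. The real work is (B) $\Rightarrow$ (A), and I plan to obtain it from the extension theorem \ref{FD.6}: produce a $\calT_\calC$-continuous $\hat f : X \to \R$ with $\hat f|_Y = f$. Then $f = \hat f \circ \iota$, with $\iota : Y \to X$ the inclusion, which is $(\calT_\calC \hel Y, \calT_\calC)$-continuous, so $f$ is $\calT_\calC \hel Y$-continuous.

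To apply \ref{FD.6}, I have to check its hypotheses. Everything in \ref{FD.6} is assumed here except two points. First, \ref{FD.6} needs a filtering family of seminorms generating $\calT$; I can always replace any generating family by the family of finite maxima, which is filtering and generates the same topology. Second, \ref{FD.6} needs $C_k[\calT \hel C_k]$ to be sequential, and this follows from first countability. What remains is to derive, from (B), the inequality required of $f$ in the last hypothesis of \ref{FD.6}:
\begin{equation*}
(\forall \veps > 0)(\exists i)(\exists \theta>0)\left(\forall y \in Y \cap X_{\lceil \veps^{-1}\rceil}\right): |f(y)| \leq \theta\|y\|_i + \veps\lno y \rno .
\end{equation*}

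I will get this by re-running the proof of \ref{CCC.5}, (a)$\Rightarrow$(b)$\Rightarrow$(c), inside $Y$. Assume (B) and fix $\veps>0$ and $k \geq \veps^{-1}$. Write $D_k = Y \cap C_k$, which is the $k$-th member of $\calC \hel Y$. By \ref{GO.1}(B), applied to the localization of $\calT \hel Y$ by $\calC \hel Y$, the restriction $f|_{D_k}$ is $(\calT\hel Y)\hel D_k = \calT \hel D_k$-continuous at $0$. Hence there is $U \in \calT$ with $|f| \leq \veps$ on $U \cap D_k$, and filtering gives $i$ and $\delta$ with $\{\|\cdot\|_i \leq \delta\} \subset U$. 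This is condition (b) of \ref{CCC.5} restricted to $Y$. The scaling argument (b)$\Rightarrow$(c) only uses that $Y \cap X_{\lceil\veps^{-1}\rceil}$ is a vector space, so it transfers verbatim and yields the displayed inequality. Note that this direction needs no sequentiality of $\calT \hel D_k$, so the subtlety raised in \ref{FD.4}(1) does not arise.

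I expect the main obstacle to be exactly this transfer: making sure that (a)$\Rightarrow$(c) of \ref{CCC.5} is purely local and algebraic, and never uses that $Y$ is $\calT$-closed or that $Y\cap X_k$ is closed. Once the inequality is established, \ref{FD.6} applies with the uniform sequential density of $Y$ and gives $\hat f$, which concludes the proof as described in the first paragraph.
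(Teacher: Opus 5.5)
Your proposal is correct and follows the paper's proof. The paper also gets (A)$\Rightarrow$(B) from \ref{GO.1}(G), and for (B)$\Rightarrow$(A) it obtains condition \ref{CCC.5}(c) for $f$ on $Y$, extends $f$ via \ref{FD.6}, and concludes from $f = \hat f|_Y$. The only difference is that the paper invokes \ref{CCC.5} directly for $Y[\calT \hel Y]$ with $Y_k = Y \cap X_k$ (these are $\calT \hel Y$-closed, so the theorem applies verbatim) instead of re-running the (a)$\Rightarrow$(b)$\Rightarrow$(c) argument by hand. Your remarks that the seminorm family can be taken filtering, and that this direction needs no sequentiality, correctly fill points the paper leaves implicit.
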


\begin{proof}[Proof that $(A) \Rightarrow (B)$]
This is an immediate consequence of \ref{GO.1}(G).
\end{proof}

\begin{proof}[Proof that $(B) \Rightarrow (A)$]
Assume that $f$ is $(\calT \hel Y)_{\calC \shel Y}$-continuous.
We apply \ref{CCC.5} to $Y[\calT \hel Y]$ in place of $X[\calT]$, the restriction of $\lno \cdot \rno$ to $Y$, $Y_k = Y \cap X_k$ in place of $X_k$, $D_k = Y_k \cap \{ y : \lno y \rno \leq k \} = Y \cap C_k$ in place of $C_k$ and we note that the topology $\calT \hel D_k$, being a subspace topology of $\calT \hel C_k$, is first countable, whence, is sequential.
Accordingly, $f$ satisfies \ref{CCC.5}(c) which is the last hypothesis of \ref{FD.6}.
It follows from \ref{FD.6} that there exists a $\calT_\calC$-continuous $\hat{f} : Y \to \R$ such that $f = \hat{f}|_Y$.
Therefore, $f$ is $\calT_\calC \hel Y$-continuous.
\end{proof}
\section{Bounded weak*, Banach-Grothendieck, and Krein-\v{S}mulian}
\label{sec.BWS}

\begin{Empty}[Bounded weak* topology]
\label{SD.1}
\label{BWS.1}
Let $Y$ be a locally convex topological vector space and let $\calB$ be a local base for its topology.
We abbreviate $\calB^\circ = \{ U^\circ : U \in \calB \}$, where $\circ$ denotes the polarity with respect to the dual system $(Y,Y^*,\la\cdot,\cdot\ra)$, see \ref{DUAL.2}(2) and \ref{DUAL.4}.
\begin{enumerate}
\item[(A)] {\it $\calB^\circ$ is a localizing family in $Y^*$.}
\end{enumerate}
\par 
{\it Proof.}
Clearly, $\calB^\circ$ is non-empty and each of its members is a convex set containing the origin.
Let $U \in \calB$, $y_0^* \in Y^*$, and $t \in \R$.
We ought to show that $y_0^* + t \cdot U^\circ \subset V^\circ$ for some $V \in \calB$.
One easily checks that it suffices to choose $V$ such that $(2t) \cdot V \subset U$ and $|\la y,y^*_0\ra| \leq \frac{1}{2}$ for all $y \in $V. \cqfd
\begin{enumerate}
\item[(B)] {\it If $\calB_1$ and $\calB_2$ are both a local base of $Y$ then $\calB_1^\circ$ and $\calB_2^\circ$ are interlaced.}
\end{enumerate}
\par 
{\it Proof.}
Let $V_1 \in \calB_1$.
There exists $V_2 \in \calB_2$ such that $V_2 \subset V_1$, thus, $V_1^\circ \subset V_2^\circ$.
One completes the proof by switching the role played by $\calB_1$ and $\calB_2$.\cqfd
\par 
We let $\sigma(Y^*,Y)$ be the usual weak* topology defined on $Y^*$, see \eg \ref{DUAL.3}.
It follows from (A) that the localized topology $\sigma(Y^*,Y)_{\calB^\circ}$, corresponding to a local base $\calB$ of $Y$, is well-defined and it follows from (B) and \ref{GO.1}(C) that it is independent of the choice of $\calB$.
We shall denote it as $\sigma_\rmbd(Y^*,Y)$ and call it the {\em bounded weak* topology} of $Y^*$.
Note that each $U^\circ \in \calB^\circ$ is $\sigma(Y^*,Y)$-compact, according to the Banach-Alaoglu theorem \ref{DUAL.4}(D).
\par 
Our interest for bounded weak* topologies is threefold:
\begin{enumerate}
\item[(1)] We will show \ref{SD.4} that each localized topology $\calT_\calC$ arising in our applications occurs as a bounded weak* topology in the dual of a Fr\'echet space;
\item[(2)] An explicit description of a local base for the bounded weak* topology in this case will yield an explicit description of a local base for our localized topologies \ref{SD.9}.
\item[(3)] The same will yield the Krein-\v{S}mulian theorem \ref{BWS.5} that will be used in the proof of our main existence theorem \ref{AET.FR}.
\end{enumerate}
\end{Empty}

\begin{Empty}[Basic sets in the dual]
\label{BWS.2}
Let $Y$ be a locally convex topological vector space.
We say that $V \subset Y^*$ is {\em basic} if there exists a null sequence $\la y_n \ra_{n \in \N}$ in $Y$ such that
\begin{equation*}
V = \bigcap_{n \in \N} \{ y_n \}^\circ = Y^* \cap \{ y^* : | \la y_n , y^* \ra | \leq 1 \text{ for all } n \in \N \}.
\end{equation*}
In other words, $V$ is basic if and only if it is the polar of a null sequence.
\end{Empty}

\begin{Theorem}
\label{BWS.3}
Let $Y$ be a metrizable locally convex topological vector space. 
Then the collection of basic sets is a local base for the bounded weak* topology $\sigma_\rmbd(Y^*,Y)$ of $Y^*$.
\end{Theorem}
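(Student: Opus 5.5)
We must show two things: each basic set is a $\sigma_\rmbd(Y^*,Y)$-neighbourhood of the origin, and each $\sigma_\rmbd(Y^*,Y)$-neighbourhood of the origin contains a basic set. Throughout we fix a countable decreasing local base $U_1 \supset U_2 \supset \cdots$ of $Y$ consisting of absolutely convex sets, which exists because $Y$ is metrizable. By \ref{BWS.1}(B) we may compute $\sigma_\rmbd$ with $\calB = \{U_k\}$. Then $\calB^\circ = \{U_k^\circ\}$ is a non-decreasing countable localizing family. By Banach-Alaoglu \ref{DUAL.4}(D) its members are $\sigma(Y^*,Y)$-compact. Hence \ref{CWC.2}(A) applies: a set $E \subset Y^*$ is $\sigma_\rmbd$-open if and only if $E \cap U_k^\circ$ is relatively weak*-open for every $k$.

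\textbf{Basic sets are neighbourhoods.} Let $\la y_n\ra_n$ be null and $V = \bigcap_n \{y_n\}^\circ$. The set $V$ is convex and contains $0$, so by \ref{EUL.9} it suffices to show that $V \cap U_k^\circ$ is weak*-open in $U_k^\circ$ relative to an interior point. We use the relative-open form: the sets $\rmint$ can be handled by shrinking $V$ to $V' = \bigcap_n\{2y_n\}^\circ$ and showing that $V'$ contains a $\sigma_\rmbd$-neighbourhood of $0$. Fix $k$. Since $y_n \to 0$, there is $N$ with $2y_n \in U_k$ for $n > N$, so $|\la 2y_n, y^*\ra| \le 1$ for all $y^* \in U_k^\circ$ and $n > N$. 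Hence $V' \cap U_k^\circ = U_k^\circ \cap \bigcap_{n\le N}\{2y_n\}^\circ$, and this contains the relatively weak*-open set $W_k = U_k^\circ \cap \{y^* : |\la 2y_n,y^*\ra| < 1,\ n\le N\}$.

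To get a genuinely open set, we take $W = \{y^* : |\la y_n,y^*\ra| < 1 \text{ for all } n\}$. The same finiteness argument, applied with $y_n$ itself and $\sup_{n>N}|\la y_n,\cdot\ra|\le 1/2$ on $U_k^\circ$, shows that $W \cap U_k^\circ$ is relatively weak*-open. Hence $W$ is $\sigma_\rmbd$-open by \ref{CWC.2}(A), contains $0$, and satisfies $W \subset V$.

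\textbf{Neighbourhoods contain basic sets.} This is the main obstacle; it is the Dunford-Schwartz lemma argument. Let $E$ be a $\sigma_\rmbd$-open set containing $0$. We construct inductively finite sets $F_1, F_2, \ldots$ with $F_k \subset U_{k-1}$ (where $U_0 = Y$) such that
\[
U_k^\circ \cap \bigcap_{j\le k} F_j^\circ \subset E .
\]
For $k=1$, the set $E\cap U_1^\circ$ is a relative weak*-neighbourhood of $0$ in $U_1^\circ$, which gives a finite $F_1$. For the inductive step, suppose $F_1,\dots,F_k$ are chosen and no finite $F \subset U_k$ works at stage $k+1$. Then the sets
\[
\Big(U_{k+1}^\circ \cap \bigcap_{j\le k}F_j^\circ \cap F^\circ\Big) \setminus E , \qquad F\subset U_k \text{ finite},
\]
are weak*-closed subsets of the compact set $U_{k+1}^\circ$, because $E\cap U_{k+1}^\circ$ is relatively open. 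They have the finite intersection property, so they have a common point $y^*$. Since $y^*$ lies in $F^\circ$ for every finite $F\subset U_k$, we get $y^* \in U_k^\circ$. Then $y^* \in U_k^\circ \cap \bigcap_{j\le k}F_j^\circ \subset E$, which is a contradiction.

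Finally, enumerate $\bigcup_k F_k$ as a sequence $\la y_n\ra_n$. It is null because $F_k \subset U_{k-1}$ and each $F_k$ is finite. Take $y^* \in \bigcap_n\{y_n\}^\circ$. This set is equicontinuous: the tail of the sequence lies in $U_k$, so $y^*$ is bounded on a neighbourhood of $0$, giving $y^* \in t\,U_k^\circ$ for some $t$. By the localizing property, $y^* \in U_m^\circ$ for some $m$, since $y^*$ is continuous. Then $y^* \in U_m^\circ \cap \bigcap_{j\le m}F_j^\circ \subset E$. Hence $\bigcap_n\{y_n\}^\circ \subset E$, which completes the proof.
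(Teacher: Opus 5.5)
Your proof is correct and is essentially the paper's argument. Your set $W=\{y^*:|\la y_n,y^*\ra|<1\ \text{for all } n\}$ is the paper's $\hat V$, handled by the same finiteness observation on each $U_k^\circ$. The second half is the same Dunford--Schwartz construction of finite sets $F_k\subset U_{k-1}$ via weak* compactness of $U_{k+1}^\circ$. You phrase the inductive condition directly as $U_k^\circ\cap\bigcap_{j\le k}F_j^\circ\subset E$ rather than through representatives $W_k$ with $U_k^\circ\cap W_{k+1}=U_k^\circ\cap W_k$; this is a harmless simplification.

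Two clean-ups are needed.

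First, the paragraph with $V'$ and ``relative to an interior point'' is confused and should be deleted. The $W$ argument that follows it already does the job.

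Second, in the last step the assertion that $\bigcap_n\{y_n\}^\circ$ is equicontinuous is false. Take $Y=\ell_2$ and $y_n=n^{-1}e_n$. Then this set is $\{y^*:|y^*_n|\le n \text{ for all } n\}$, which contains $n e_n$ for every $n$ and so is not norm bounded. Likewise, knowing that $y^*$ is bounded on the tail of the sequence says nothing about boundedness on $U_k$.

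None of this is needed. Each single $y^*\in Y^*$ is continuous, so $y^*\in U_m^\circ$ for some $m$, i.e.\ $Y^*=\bigcup_m U_m^\circ$ by \ref{DUAL.4}(E). Then $y^*\in U_m^\circ\cap\bigcap_{j\le m}F_j^\circ\subset E$, exactly as in step (v) of the paper. Your final clause ``since $y^*$ is continuous'' is the correct justification, so keep only that.
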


\begin{proof}
{\bf (i)}
Since $Y$ is metrizable, there exists a local base $\{ U_k : k \in \N \}$ for the topology of $Y$ which is countable and non-decreasing (\ie $U_{k+1} \subset U_k$ for all $k \in \N$).
It follows from \ref{BWS.1}(A) and (B) that $\sigma_\rmbd(Y^*,Y) = \sigma(Y^*,Y)_{\calB^\circ}$ where $\calB^\circ = \{ U_k^\circ : k \in \N\}$.
Note also that $U_k^\circ \subset U_{k+1}^\circ$ for all $k \in \N$.
\par 
{\bf (ii)}
We start proving that each basic set $V \subset Y^*$ is a $\sigma_\rmbd(Y^*,Y)$-neighbourhood of 0.
Let $\la y_n \ra_{n \in \N}$ be a null sequence in $Y$ such that $V = \cap_{n \in \N} \{ y_n \}^\circ$.
For $n \in \N$ define $\hat{V}_n = Y^* \cap \{ y^* : |\la y_n,y^* \ra| < 1 \}$ and notice that $\hat{V}_n \in \sigma(Y^*,Y)$.
Letting $\hat{V} = \cap_{n \in \N} \hat{V}_n$ one readily checks that $0 \in \hat{V} \subset V$.
It remains to show that $\hat{V} \in \sigma_\rmbd(Y^*,Y)$.
Since $\hat{V}$ is convex, this is equivalent to showing that $\hat{V} \cap U_k^\circ \in \sigma(Y^*,Y) \hel U_k^\circ$ for all $k \in \N$, according to \ref{EUL.9}.
Given $k \in \N$, choose $n(k)$ such that $2 \cdot y_n \in U_k$ for all $n > n(k)$.
For such $n$ we have $U_k^\circ \subset \{2 \cdot y_n\}^\circ \subset \hat{V}_n$, therefore,
\begin{equation*}
\hat{V} \cap U_k^\circ = \cap_{n \in \N} \hat{V}_n \cap U_k^\circ = \left( \cap_{n=1}^{n(k)} \hat{V}_n \right) \cap U_k^\circ \in \sigma(Y^*,Y) \hel U_k^\circ.
\end{equation*}
\par 
{\bf (iii)}
In the remainder of the proof we will show that an arbitrary $\sigma_\rmbd(Y^*,Y)$-open set $W$ containing 0 contains a basic set.
In other words, we ought to prove the existence of a null sequence $\la y_n \ra_n$ such that $\cap_n \{y_n\}^\circ \subset W$.
Since $W$ is $\sigma_\rmbd(Y^{*},Y)$-open, it follows from \ref{GO.1}(2) that for all $k \in \N$ there exists a $\sigma(Y^{*},Y)$-open set $W_k$ such that $U_k^\circ \cap W = U_k^\circ \cap W_k$.
Notice that $U_k^\circ \cap W_{k+1} = U_k^\circ \cap \left( U_{k+1}^\circ \cap W_{k+1}\right) =  U_k^\circ \cap \left( U_{k+1}^\circ \cap W\right) = U_k^\circ \cap W = U_k^\circ \cap W_k$, since $U_k^\circ \subset U_{k+1}^\circ$, by {\bf (i)}.
\par 
{\bf (iv)}
We shall now show that there exists a sequence $\la F_k \ra_k$ of finite subsets of $Y$ such that $(1)_k$ holds for all $k \in \N$ and $(2)_k$ holds for all $k \geq 2$, where
\begin{enumerate}
\item[$(1)_k$] $U_k^\circ \cap \left( \cup_{j=1}^k F_j \right)^\circ \subset U_k^\circ \cap W_k$;
\item[$(2)_k$] $F_k \subset U_{k-1}$.
\end{enumerate}
To pick $F_1$ we simply observe that, by definition of the weak* topology $\sigma(Y^*,Y)$ of $Y^*$, there exists a finite set $F_1 \subset Y$ such that $F_1^\circ \subset W_1$.
Note that $(1)_1$ is trivially satisfied.
\par 
Assume that finite subsets $F_1,\ldots,F_k$ of $Y$ exist and satisfy $(1)_1,\ldots,(1)_k$.
We shall show how to define a finite subset $F_{k+1}$ of $Y$ satisfying $(1)_{k+1}$ and $(2)_{k+1}$.
Let $\calF$ consist of all those finite subsets $F$ of $Y$ such that
\begin{equation}
\label{eq.SD.1}
F \subset U_k.
\end{equation}
With each $F \in \calF$ we associate the set
\begin{equation*}
K_F = U_{k+1}^\circ \cap \left( \left( \cup_{j=1}^k F_j \right) \cup F \right)^\circ \cap W_{k+1}^c.
\end{equation*} 
Observe that if there exists $F \in \calF$ such that $K_F = \emptyset$ then the set $F_{k+1}=F$ satisfies $(1)_{k+1}$ and $(2)_{k+1}$.
Aiming for a contradiction, assume instead that $K_F \neq \emptyset$ for all $F \in \calF$.
In this case the collection $\{ K_F : F \in \calF \}$ has the finite intersection property.
Indeed, one easily checks that $K_{G_1} \cap \cdots \cap K_{G_N} = K_{G_1 \cup \cdots \cup G_N} \neq \emptyset$ whenever $G_1,\ldots,G_N \in \calF$ (recall \ref{DUAL.4}(B)(c)).
Furthermore, each $K_F$, $F \in \calF$, is a $\sigma(Y^*,Y)$-closed subset of $U_{k+1}^\circ$ and the latter is $\sigma(Y^*,Y)$-compact, according to the Banach-Alaoglu theorem \ref{DUAL.4}(D).
Thus, there exists $y^* \in  \cap_{F \in \calF} K_F$.
Observe that, on the one hand
\begin{equation*}
y^* \in  \bigcap_{F \in \calF} K_F \subset \bigcap_{F \in \calF}\left( \left( \bigcup_{j=1}^k F_j \right) \cup F \right)^\circ \subset \bigcap_{F \in \calF} F^\circ = \left( \bigcup_{F \in \calF} F\right)^\circ  = U_k^\circ,
\end{equation*}
by \eqref{eq.SD.1}, and on the other hand
\begin{equation*}
y^* \in  \bigcap_{F \in \calF} K_F \subset U_{k+1}^\circ \cap  \left( \bigcup_{j=1}^k F_j \right)^\circ \cap W_{k+1}^c.
\end{equation*}
Since $U_k^\circ \subset U_{k+1}^\circ$, by {\bf (i)}, we have
\begin{equation*}
\begin{split}
y^* & \in U_{k}^\circ \cap  \left( \bigcup_{j=1}^k F_j \right)^\circ \cap W_{k+1}^c \\
& \subset (U_k^\circ \cap W_k) \cap W_{k+1}^c \,\qquad\qquad\text{(by $(1)_k$)}\\
& = U_k^\circ \cap W_{k+1} \cap W_{k+1}^c \qquad\qquad\text{(by {\bf (iii)})}\\
& = \emptyset.
\end{split}
\end{equation*}
As this is impossible, the proof of the existence of $\la F_k \ra_k$ satisfying the stated properties is complete.
\par 
{\bf (v)}
Let $\la y_n \ra_n$ be any numbering of the countable set $\cup_{k \in \N} F_k$.
Abbreviate $V = \cap_n \{y_n\}^\circ = \cap_k F_k^\circ$.
Let $y^* \in V$.
Since $Y^* = \cup_k U_k^\circ$ (recall \ref{DUAL.4}(E)), there exists $k$ such that $y^* \in U_k^\circ$.
Thus, $y^* \in U_k^\circ \cap V \subset U_k^\circ \cap \left( \cup_{j=1}^k F_j \right)^\circ \subset U_k^\circ \cap W_k = U_k^\circ \cap W \subset W$, by $(1)_k$ and {\bf (iii)}.
Since $y^*$ is arbitrary, we conclude that $V \subset W$.
\par 
It remains to show that $\la y_n \ra_n$ is a null sequence in $Y$.
Fix $k \in \N$ and define $n(k) = \max \left\{ n : y_n \in \cup_{j=1}^k F_j \right\}$.
If $n > n(k)$ then $y_n \in F_j$ for some $j \geq k+1$, whence, $y_n \in F_j \subset U_{j-1} \subset U_k$. 
The proof is complete.
\end{proof}

\begin{Theorem}[Banach-Grothendieck]
\label{BWS.4}
Let $Y$ be a Fr\'echet space and $f : Y^* \to \R$ a linear function.
The following are equivalent.
\begin{enumerate}
\item[(a)] $f$ is $\sigma(Y^*,Y)$-continuous.
\item[(b)] $f$ is $\sigma_\rmbd(Y^*,Y)$-continuous.
\end{enumerate}
\end{Theorem}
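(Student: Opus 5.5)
(a) $\Rightarrow$ (b) is immediate from \ref{GO.1}(A) applied with $\calT = \sigma(Y^*,Y)$ and $\calC = \calB^\circ$, since $\sigma(Y^*,Y) \subset \sigma_\rmbd(Y^*,Y)$. The real content is (b) $\Rightarrow$ (a). We will show that such an $f$ is given by evaluation at some $y \in Y$; this forces $\sigma(Y^*,Y)$-continuity, as the $\sigma(Y^*,Y)$-continuous linear forms on $Y^*$ are exactly the evaluations (\ref{DUAL.3}).

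Fix a decreasing countable local base $\la U_k \ra_k$ of $Y$ as in step (i) of the proof of \ref{BWS.3}. Since $f$ is $\sigma_\rmbd(Y^*,Y)$-continuous, $\{|f| \leq 1\}$ is a $\sigma_\rmbd$-neighbourhood of $0$. By \ref{BWS.3} it therefore contains a basic set $V = \cap_n \{y_n\}^\circ$ for some null sequence $\la y_n\ra_n$ in $Y$. Let $K$ be the closed absolutely convex hull of $\{y_n : n \in \N\}$ in $Y$. Since $Y$ is a Fr\'echet space (complete and metrizable), $K$ is compact; this is the standard fact that the closed absolutely convex hull of a null sequence in a Fréchet space is compact. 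Moreover $K^\circ = V$, so $|f| \leq 1$ on $K^\circ$. In other words, $f$ is continuous at $0$ for the topology of uniform convergence on $K$, and this continuity holds with respect to the seminorm $p_K(y^*) = \sup_{y\in K}|\la y,y^*\ra|$.

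The key step is to realize $f$ as evaluation at a point of $K$. Consider the dual pair $(Y^*, Y)$. By the bipolar theorem it is enough to know that $f$ is $\sigma(Y^*,Y)$-continuous, which is what we want to prove, so we argue differently. Consider the map $J : Y^* \to C(K)$, $J(y^*) = y^*|_K$. We have $|f(y^*)| \leq p_K(y^*) = \|J y^*\|_\infty$, so $f$ factors through $J(Y^*)$ as a bounded linear functional. By Hahn--Banach it extends to $C(K)$, and by Riesz it is represented by a Radon measure $\mu$ on $K$ with $\|\mu\| \leq 1$. Thus $f(y^*) = \int_K \la y, y^*\ra \,d\mu(y)$.

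The main obstacle is to show that this integral is an evaluation at some point $y_0 \in Y$, i.e.\ that the barycenter of $\mu$ exists in $Y$. We plan to use the compactness and convexity of $K$ together with completeness of $Y$: approximate $\mu$ weakly* by finitely supported measures $\mu_\alpha$ with $\|\mu_\alpha\| \leq 1$. Their barycenters $b_\alpha$ lie in $K$ (because $K$ is absolutely convex), so some subnet converges in $K$ to a point $y_0$. Continuity of $y \mapsto \la y,y^*\ra$ on $K$ then gives $f(y^*) = \la y_0, y^*\ra$ for all $y^* \in Y^*$. This step is exactly where we need the compactness of $K$, hence the completeness of $Y$. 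If one prefers to avoid measures, we could instead apply the Mackey--Arens characterization \ref{DUAL.6}: $f$ is continuous for the topology of uniform convergence on the absolutely convex $\sigma(Y,Y^*)$-compact set $K$, which is coarser than the Mackey topology $\tau(Y^*,Y)$, and the dual of $Y^*[\tau(Y^*,Y)]$ is $Y$. Either route yields $f = \la y_0,\cdot\ra$, which completes the proof.
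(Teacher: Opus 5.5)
Your proof is correct, but after the common first step it takes a different route from the paper. That common step uses \ref{BWS.3} to obtain a null sequence $\la y_n\ra_n$ whose polar lies in $\{|f|\leq 1\}$.

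The paper never forms $K$. It maps $Y^*$ into $c_0(\N)$ by $y^*\mapsto(\la y_n,y^*\ra)_n$ and extends the induced functional to $c_0(\N)$ by Hahn--Banach. It then represents the extension by some $\gamma\in\ell_1(\N)$, and uses completeness of $Y$ only to sum the series $\sum_n\gamma(n)\,y_n$, whose limit is the required point.

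Your argument instead rests on the compactness of $K=\overline{\rmabsconv}(\{y_n : n\in\N\})$, which you cite without proof. Its natural proof is exactly the paper's series argument: $K$ is the image of the $\sigma(\ell_1,c_0)$-compact unit ball of $\ell_1(\N)$ under $\gamma\mapsto\sum_n\gamma(n)\,y_n$. So the paper's proof is the more elementary and self-contained one. It also avoids the Riesz representation theorem and your barycenter step. That step is nonetheless valid, because finitely supported measures of norm at most $1$ are weak* dense in the unit ball of $C(K)^*$, and their barycenters lie in $K$.

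Of your two endings, the Mackey one is much the better, and it makes the measure-theoretic detour superfluous. Since $K$ is absolutely convex and $\sigma(Y,Y^*)$-compact, $K^\circ$ is a $\tau(Y^*,Y)$-neighbourhood of $0$. Hence $f$ is $\tau(Y^*,Y)$-continuous, and $f\in\iota_2(Y)$ by \ref{DUAL.6}(B).

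This ending also gives a structural statement the paper does not isolate. By \ref{BWS.3}, every basic set has the form $K^\circ$ with $K$ absolutely convex and compact, so you get $\sigma(Y^*,Y)\subset\sigma_\rmbd(Y^*,Y)\subset\tau(Y^*,Y)$. The compatibility of $\sigma_\rmbd(Y^*,Y)$ with the dual system, which is what the proof of \ref{BWS.5} actually uses, then follows at once from the Mackey--Arens theorem \ref{DUAL.6}(E).
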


\begin{proof}[Proof that $(a) \Rightarrow (b)$]
This is trivial, since $\sigma(Y^*,Y) \subset \sigma_\rmbd(Y^*,Y)$, by \ref{GO.1}(A).
\end{proof}

\begin{proof}[Proof that $(b) \Rightarrow (a)$]
We assume that $f$ is $\sigma_\rmbd(Y^*,Y)$-continuous.
By \ref{BWS.3}, there exists a null sequence $\la y_n \ra_n$ in $Y$ such that 
\begin{equation}
\label{eq.BWS.1}
\cap_{n \in \N} \{ y_n \}^\circ \subset Y^* \cap \{ y^* : |f(y^*)| < 1 \}.
\end{equation}
For each $y^* \in Y^*$ we define a function $g(y^*) : \N \to \R$ by the formula $g(y^*)(n) = \la y_n , y^* \ra$.
Notice that $\lim_n g(y^*)(n) = 0$, in other words $g(y^*) \in c_0(\N)$.
Furthermore,
\begin{equation}
\label{eq.BWS.2}
|f(y^*)| \leq \|g(y^*)\|_{c_0}
\end{equation}
for all $y^* \in Y^*$.
Indeed, the inequality $\|g(y^*)\|_{c_0} \leq 1$ is equivalent to $y^* \in \cap_{n \in \N} \{y_n\}^\circ$.
Applying this observation and \eqref{eq.BWS.1} to scalar multiples of $y^*$ yields \eqref{eq.BWS.2}.
\par 
Notice that $g : Y^* \to c_0(\N)$ is linear and define a linear space $Z = \rmim(g) \subset c_0(\N)$.
Observe that a linear map $h : Z \to \R$ is well-defined by the relation $h(g(y^*)) = f(y^*)$ for all $y^* \in Y^*$, according to \eqref{eq.BWS.2}.
What's more, $h$ is continuous, by \eqref{eq.BWS.2} again.
It follows from Hahn-Banach's theorem that $h$ extends to a continuous linear $\hat{h} : c_0(\N) \to \R$.
Since $c_0(\N)^* \cong \ell_1(\N)$, there exists $\gamma \in \ell_1(\N)$ such that $\hat{h}(z) = \sum_{n \in \N} \gamma(n) z(n)$, in particular,
\begin{equation}
\label{eq.BWS.3}
f(y^*) = \hat{h}(g(y^*)) = \sum_{n=1}^\infty \gamma(n) \la y_n , y^* \ra .
\end{equation}
\par 
We let $s_k = \sum_{n=1}^k \gamma(n) \cdot y_n$ and we notice that if $q$ is a seminorm on $Y$ such that $\lim_n q(y_n) = 0$ then $\la s_k \ra_k$ is Cauchy with respect to $q$.
This observation applied to a family of seminorms that generates the topology of $Y$ shows that $\la s_k \ra_k$ converges to a limit $y \in Y$, since $Y$ is complete.
It then follows from \eqref{eq.BWS.3} that $f(y*) = \la y,y^*\ra$.
Thus, $f$ is $\sigma(Y^*,Y)$-continuous.
\end{proof}

\begin{Theorem}[Krein-\v{S}mulian]
\label{BWS.5}
Assume that
\begin{itemize}
\item $Y$ is a Fr\'echet space;
\item $\{ U_k : k \in \N\}$ is a local base for the topology of $Y$ such that $U_{k+1} \subset U_k$ for all $k \in \N$;
\item $A \subset Y^*$ is absolutely convex.
\end{itemize}
The following are equivalent.
\begin{enumerate}
\item[(a)] $A$ is $\sigma(Y^*,Y)$-closed.
\item[(b)] For all $k \in \N$, $A \cap U_k^\circ$ is $\sigma(Y^*,Y)$-closed.
\end{enumerate}
\end{Theorem}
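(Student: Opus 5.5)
The implication (a) $\Rightarrow$ (b) is immediate: each $U_k^\circ$ is $\sigma(Y^*,Y)$-closed, being an intersection of the closed sets $\{y\}^\circ$, $y \in U_k$. So $A \cap U_k^\circ$ is the intersection of two $\sigma(Y^*,Y)$-closed sets.

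For (b) $\Rightarrow$ (a), I will pass through the bounded weak* topology. Put $\calB = \{U_k : k \in \N\}$, so that $\sigma_\rmbd(Y^*,Y) = \sigma(Y^*,Y)_{\calB^\circ}$ by \ref{BWS.1}. The family $\calB^\circ$ is non-decreasing, countable and localizing. Each of its members is $\sigma(Y^*,Y)$-compact by Banach--Alaoglu \ref{DUAL.4}(D). Hence \ref{CWC.2}(B)(b) applies: a set $E \subset Y^*$ is $\sigma_\rmbd(Y^*,Y)$-closed if and only if $E \cap U_k^\circ$ is $\sigma(Y^*,Y) \hel U_k^\circ$-closed for every $k$. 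Under (b), $A \cap U_k^\circ$ is $\sigma(Y^*,Y)$-closed and contained in $U_k^\circ$, so it is relatively closed in $U_k^\circ$. Therefore $A$ is $\sigma_\rmbd(Y^*,Y)$-closed.

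It remains to show that a convex $\sigma_\rmbd(Y^*,Y)$-closed set is $\sigma(Y^*,Y)$-closed. This is the key step, and I will use Hahn--Banach together with the Banach--Grothendieck theorem \ref{BWS.4}. Let $y_0^* \notin A$. The topology $\sigma_\rmbd(Y^*,Y)$ is locally convex by \ref{EUL.4}, and $A$ is closed and absolutely convex. So Hahn--Banach separation (\ref{LCTVS.8}) gives a $\sigma_\rmbd(Y^*,Y)$-continuous linear $f : Y^* \to \R$ with $\sup_A |f| \leq 1 < f(y_0^*)$; here absolute convexity of $A$ lets us bound $|f|$ rather than $f$. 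By \ref{BWS.4}, which uses that $Y$ is Fr\'echet, $f$ is $\sigma(Y^*,Y)$-continuous, i.e.\ $f = \la y, \cdot \ra$ for some $y \in Y$. Then $\{y^* : f(y^*) > 1\}$ is a $\sigma(Y^*,Y)$-open neighbourhood of $y_0^*$ disjoint from $A$. Since $y_0^* \notin A$ was arbitrary, $A$ is $\sigma(Y^*,Y)$-closed.

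The main obstacle is already handled by earlier results. One part is that $\sigma_\rmbd$ is described by the simple criterion of \ref{CWC.2}, which needs compactness of the polars. The other is that $\sigma_\rmbd$ has the same continuous linear functionals as $\sigma(Y^*,Y)$, which is \ref{BWS.4} and rests on the basic-set local base \ref{BWS.3}. With those in hand, the only care needed is to check that \ref{CWC.2} applies in the setting of $Y^*$ with its weak* topology.
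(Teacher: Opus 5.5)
Your proof is correct and follows the paper's argument. The paper also uses \ref{CWC.2}, applicable via Banach--Alaoglu and $U_k^\circ \subset U_{k+1}^\circ$, to show that (b) means exactly that $A$ is $\sigma_\rmbd(Y^*,Y)$-closed, and then passes to $\sigma(Y^*,Y)$-closedness because both topologies have the same dual by \ref{BWS.4}; the only difference is that the paper cites Mazur's theorem \ref{DUAL.6}(C) for this last step, whereas you reprove it inline with the Hahn--Banach separation on which Mazur's theorem rests.
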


\begin{proof}[Proof that $(a) \Rightarrow (b)$]
Trivial, by \ref{DUAL.4}(A).
\end{proof}

\begin{proof}[Proof that $(b) \Rightarrow (a)$]
Observe that the topology $\sigma(Y^*,Y)$ is compatible with the dual system $(Y^*,Y,\la \cdot,\cdot \ra)$ (recall \ref{DUAL.6} and \ref{DUAL.2} for the vocabulary), according to \ref{DUAL.3}(B), and so is $\sigma_\rmbd(Y^*,Y)$, according to \ref{BWS.4}.
Therefore, it follows from Mazur's theorem \ref{DUAL.6}(C) that $A$ is $\sigma(Y^*,Y)$-closed if and only if it is $\sigma_\rmbd(Y^*,Y)$-closed.
The latter is equivalent to $A \cap U_k^\circ$ being $\sigma(Y^*,Y) \hel U_k^\circ$-closed for all $k \in \N$, by \ref{CWC.2}, and in turn equivalent to $A \cap U_k^\circ$ being $\sigma(Y^*,Y$)-closed for all $k \in \N$, since $U_k^\circ$ is itself $\sigma(Y^*,Y)$-closed.
That \ref{CWC.2} applies is a consequence of the inclusions $U_k^\circ \subset U_{k+1}^\circ$ for all $k \in \N$ and of the $\sigma(Y^*,Y)$-compactness of each $U_k^\circ$ (Banach-Alaoglu theorem \ref{DUAL.4}(D)).
\end{proof}
\section{Second dual, semireflexivity, and local base}
\label{sec.SD}

\begin{Empty}
\label{SD.2}
Let $X$ be locally convex topological vector space.
Recall that the strong topology $\beta(X^*,X)$ on the first dual is defined in \ref{LCTVS.7} whereas the second dual $X^{**} = X^*[\beta(X^*,X)]^*$ and the evaluation map $\rmev : X \to X^{**}$ are defined in \ref{LCTVS.9}.
Remember that $X$ is called semireflexive if the evaluation map is surjective.
In this section, in order to avoid confusion, we let $\circ$ denote the polarity with respect to the dual system $( X , X^* , \la \cdot, \cdot \ra )$ and we let $\bullet$ denote the polarity with respect to the dual system $( X^* , X^{**} , \la \cdot , \cdot \ra )$.
\begin{enumerate}
\item[(A)] {\it If $X$ is semireflexive then $\rmev(\leftindex^\circ S)=S^\bullet$ for every $S \subset X^*$.}
\end{enumerate}
\par 
{\it Proof.}
For every $x^{**} \in X^{**}$ one has:
\begin{equation*}
\begin{split}
x^{**} \in \rmev(\leftindex^\circ S) & \iff (\exists x \in X) : x \in \leftindex^\circ S \text{ and } x^{**} = \rmev(x) \\
& \iff (\exists x \in X) : x^{**} \in S^\bullet \text{ and } x^{**} = \rmev(x) \\
& \iff x^{**} \in S^\bullet.
\end{split}
\end{equation*}\cqfd
\begin{enumerate}
\item[(B)] {\it If $X$ is semireflexive then $\rmev$ is a $(\sigma(X,X^*),\sigma(X^{**},X^*))$-homeomorphism.}
\end{enumerate}
\par 
{\it Proof.}
This is a consequence of (A) and the fact that a local base of $\sigma(X,X^*)$ consists of the sets $\leftindex^\circ F$ corresponding to all finite subsets $F$ of $X^*$ whereas a local base of $\sigma(X^{**},X^*)$ consists of the sets $F^\bullet$ corresponding to all finite subsets $F$ of $X^*$.\cqfd
\end{Empty}

\begin{Empty}[Setting]
\label{SD.3}
In the remainder of this section we will consider three spaces $X$, $X^*$, and $X^{**}$ as follows.
\begin{itemize}
\item $X$ is a vector space equipped with some localized topology $\calT_\calC$. 
\item $X^* = X[\calT_\calC]^*$ is its first dual. It is equipped with the strong topology $\beta(X^*,X)$. 
Furthermore, $X$ itself is also equipped with the weak topology $\sigma(X,X^*)$. 
Recall that the polarity with respect to the dual system $( X , X^* , \la \cdot, \cdot \ra )$ is denoted $\circ$ in this section.
\item $X^{**} = X^*[\beta(X^*,X)]^*$ is the bidual of $X$. 
It is equipped with the weak* topology $\sigma(X^{**},X^*)$ and with the bounded weak* topology $\sigma_\rmbd(X^{**},X^*)$. 
Recall that the polarity with respect to the dual system $( X^* , X^{**} , \la \cdot, \cdot \ra )$ is denoted $\bullet$ in this section.
\end{itemize}
\end{Empty}

\begin{Scholia}
\label{SD.4}
Assume that:
\begin{itemize}
\item $X[\calT]$ is a locally convex topological vector space;
\item $\calC$ is a non-decreasing countable localizing family in $X$;
\item Each member of $\calC$ is absolutely convex and $\calT$-closed;
\item A subset of $X$ is $\calT_\calC$-bounded if and only if it is contained in some $C \in \calC$.
\end{itemize}
The following hold.
\begin{enumerate}
\item[(A)] Letting $\calC = \{ C_k : k \in \N \}$ be numbered so that $C_k \subset C_{k+1}$ for all $k \in \N$ and abbreviating $U_k = \frac{1}{k	} C_k^\circ$, the family $\calB = \{ U_k : k \in \N\}$ is a local base for the strong topology $\beta(X^*,X)$ of $X^*$.
\item[(B)] Letting $\calD = \{ U_k^\bullet : k \in \N \}$ one has $\sigma_\rmbd(X^{**},X^*) = \sigma(X^{**},X^*)_\calD$. Furthermore, $U_k^\bullet \subset U_{k+1}^\bullet$ for all $k \in \N$.
\item[(C)] If $X[\calT_\calC]$ is semireflexive then for all $k \in \N$ one has $\rmev(C_k) = \frac{1}{k} U_k^\bullet$ and $C_k = \frac{1}{k} \rmev^{-1}(U_k^\bullet)$.
\end{enumerate}
\end{Scholia}

\begin{proof}[Proof of (A)]
It follows from the last hypothesis that $\calC$ is a fundamental system of $\calT_\calC$-bounded sets in $X$, therefore, the topology $\beta(X^*,X)$ is generated by the non-decreasing sequence of seminorms $\la p_{C_k} \ra_k$.
It easily ensues that the sets $X^* \cap \left\{ x^* : p_{C_k}(x^*) \leq \frac{1}{k} \right\}$, $k \in \N$, constitute a local base of $\beta(X^*,X)$.
It remains to observe that $X^* \cap \left\{ x^* : p_{C_k}(x^*) \leq \frac{1}{k} \right\} = \frac{1}{k}C_k^\circ$.
\end{proof}

\begin{proof}[Proof of (B)]
The first conclusion is a direct consequence of (A), recalling \ref{SD.1}.
Regarding the second conclusion one recalls that $C_k \subset C_{k+1}$ so that $C_{k+1}^\circ \subset C_k^\circ$, hence, $U_{k+1} = \frac{1}{k+1}C_{k+1}^\circ \subset \frac{1}{k+1}C_k^\circ \subset \frac{1}{k}C_k^\circ = U_k$, thus, $U_k^\bullet \subset U_{k+1}^\bullet$.
\end{proof}

\begin{proof}[Proof of (C)]
Since $\rmev$ is a bijection, it suffices to prove the first equality.
Note that $C_k$ is $\calT$-closed, by hypothesis, hence, $\calT_\calC$-closed, by \ref{GO.1}(A), and, consequently, weakly closed.
As $C_k$ is also assumed to be absolutely convex, we infer from the bipolar theorem \ref{DUAL.4}(C) that $C_k = \leftindex^\circ(C_k^\circ)$.
Therefore, it follows from \ref{SD.2}(A) that
\begin{equation*}
\rmev(C_k) = \rmev \left( \leftindex^\circ(C_k^\circ)\right) = (C_k^\circ)^\bullet = (kU_k)^\bullet = \frac{1}{k}U_k^\bullet.
\end{equation*}
\end{proof}

\begin{Theorem}
\label{SD.5}
Assume that:
\begin{itemize}
\item $X[\calT]$ is a locally convex topological vector space;
\item $\calC$ is a non-decreasing countable localizing family in $X$;
\item Each member of $\calC$ is absolutely convex and $\calT$-closed;
\item A subset of $X$ is $\calT_\calC$-bounded if and only if it is contained in some $C \in \calC$.
\end{itemize}
The following are equivalent.
\begin{enumerate}
\item[(A)] Each member of $\calC$ is $\calT$-compact.
\item[(B)] $X[\calT_\calC]$ is semireflexive and the evaluation is a $(\calT_\calC,\sigma_\rmbd(X^{**},X^*))$-homeomorphism.
\end{enumerate}
\end{Theorem}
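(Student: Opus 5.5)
The plan is to prove the two implications separately. Scholia \ref{SD.4} supplies the identification $\rmev(C_k) = \frac{1}{k}U_k^\bullet$, and \ref{GO.1}(F) lets compactness pass between a topology and its localization. Throughout, $\calC = \{C_k : k \in \N\}$ is numbered non-decreasingly and $U_k = \frac1k C_k^\circ$ as in \ref{SD.4}(A).

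\textbf{(A) $\Rightarrow$ (B).} First we establish semireflexivity. I will use the corollary \ref{DUAL.6}(H) of the Mackey-Arens theorem in the form: a locally convex space is semireflexive if and only if each of its bounded sets is relatively $\sigma(X,X^*)$-compact. By the last hypothesis, every $\calT_\calC$-bounded set lies in some $C_k$. Each $C_k$ is $\calT$-compact, hence $\calT_\calC$-compact by \ref{GO.1}(F), hence $\sigma(X,X^*)$-compact because $\sigma(X,X^*) \subset \calT_\calC$. So $X[\calT_\calC]$ is semireflexive.

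For the homeomorphism I proceed as follows.
\begin{enumerate}
\item[(1)] On the compact set $C_k$, the Hausdorff topology $\sigma(X,X^*)\hel C_k$ is coarser than the compact topology $\calT_\calC \hel C_k = \calT\hel C_k$, so the two coincide.
\item[(2)] By \ref{SD.2}(B), $\rmev$ is a $(\sigma(X,X^*),\sigma(X^{**},X^*))$-homeomorphism. Combining with (1) and \ref{SD.4}(C), $\rmev$ restricts to a homeomorphism from $C_k[\calT\hel C_k]$ onto $\frac1k U_k^\bullet$ with its $\sigma(X^{**},X^*)$-induced topology.
\item[(3)] The family $\calD' = \{\frac1k U_k^\bullet : k\in\N\} = \{\rmev(C_k): k\in\N\}$ is interlaced with $\calD = \{U_k^\bullet : k\in\N\}$. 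Indeed, $\frac1kU_k^\bullet \subset U_k^\bullet$ since $U_k^\bullet$ is absolutely convex. Conversely, $U_k^\bullet = k\,\rmev(C_k) = \rmev(kC_k)$, and $kC_k \subset C_j$ for some $j$ by \ref{EUL.2}(iii).
\item[(4)] $\calD'$ is a localizing family, being the image of $\calC$ under a linear bijection. Hence \ref{SD.4}(B) and \ref{GO.1}(C) give $\sigma_\rmbd(X^{**},X^*) = \sigma(X^{**},X^*)_{\calD'}$.
\item[(5)] Both $\calC$ and $\calD'$ are non-decreasing countable families of compact convex sets; the $U_k^\bullet$ are weak* compact by Banach-Alaoglu. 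So \ref{CWC.2}(A) describes both localized topologies as final topologies: $E$ is open if and only if $E\cap C$ is open in $C$ for every member $C$.
\item[(6)] Since $\rmev$ is a bijection mapping $C_k$ homeomorphically onto $\rmev(C_k)$ for every $k$, it carries $\calT_\calC$ exactly onto $\sigma_\rmbd(X^{**},X^*)$.
\end{enumerate}

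\textbf{(B) $\Rightarrow$ (A).} Since $U_k$ is a $\beta(X^*,X)$-neighbourhood of $0$ by \ref{SD.4}(A), its polar $U_k^\bullet$ is $\sigma(X^{**},X^*)$-compact by Banach-Alaoglu \ref{DUAL.4}(D). As $U_k^\bullet$ is a member of the localizing family $\calD$ defining $\sigma_\rmbd(X^{**},X^*)$ by \ref{SD.4}(B), \ref{GO.1}(F) shows it is also $\sigma_\rmbd(X^{**},X^*)$-compact. By the homeomorphism assumed in (B) and \ref{SD.4}(C), the set $C_k = \frac1k\rmev^{-1}(U_k^\bullet)$ is $\calT_\calC$-compact, and therefore $\calT$-compact by \ref{GO.1}(F) again. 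This is precisely why (B) includes the homeomorphism: semireflexivity alone would only give $\sigma(X,X^*)$-compactness of $C_k$.

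The main obstacle I anticipate is the semireflexivity step in (A) $\Rightarrow$ (B), namely making sure that \ref{DUAL.6}(H) is available in the needed form: bounded sets relatively weakly compact implies semireflexive, with the bidual taken with respect to $\beta(X^*,X)$. If that corollary is stated only as ``$X$ is semireflexive iff $\beta(X^*,X)$ is compatible with the duality $(X^*,X)$'', I would argue directly instead. Every $\beta(X^*,X)$-neighbourhood of $0$ contains some $\frac1k C_k^\circ$, and $C_k$ is absolutely convex and $\sigma(X,X^*)$-compact. Hence $\beta(X^*,X)$ is coarser than the Mackey topology $\tau(X^*,X)$, and the Mackey-Arens theorem \ref{DUAL.6}(E) then identifies $X^{**}$ with $X$.
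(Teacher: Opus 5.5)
Your proof is correct. The semireflexivity step and the implication (B)$\Rightarrow$(A) coincide with the paper's. Implication (c)$\Rightarrow$(a) of \ref{DUAL.6}(H) is stated in exactly the form you need, so your fallback via Mackey-Arens is valid but unnecessary.

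You depart from the paper in proving that $\rmev$ is a $(\calT_\calC,\sigma_\rmbd(X^{**},X^*))$-homeomorphism. The paper checks continuity of $\rmev$ and of $\rmev^{-1}$ separately through the universal property \ref{EUL.3}(ii): the first against $\calC$, the second against $\calD=\{U_k^\bullet\}$. Because $U_k^\bullet=k\,\rmev(C_k)$, matching $C_k$ with $U_k^\bullet$ forces explicit rescaling computations. For instance, one chooses $O_k$ with $U_k^\bullet\cap kO=U_k^\bullet\cap O_k$, and $E_k$ with $C_k\cap\frac1kE=C_k\cap E_k$.

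You instead replace $\calD$ by the interlaced family $\calD'=\rmev(\calC)$, so the pieces correspond exactly under $\rmev$. You then read off the homeomorphism from the final-topology description \ref{CWC.2}(A) on both sides. This is more symmetric and avoids the bookkeeping. The price is that you invoke \ref{CWC.2}(A), whose proof (the inductive construction via the scholia \ref{CWC.1}) is the heaviest ingredient. Beyond the facts you share with the paper (your steps (1)--(2)), the paper's route needs only \ref{EUL.3}(ii) and \ref{GO.1}(B).

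You could in fact dispense with \ref{CWC.2}(A) as well. The formula \ref{EUL.6} depends on the ambient topology only through its traces on the members of the family, and a linear bijection preserves convexity. Hence $\rmev$ transports $\calT_\calC$ onto $\sigma(X^{**},X^*)_{\calD'}$ as soon as it carries each $\calT\hel C_k$ onto $\sigma(X^{**},X^*)\hel\rmev(C_k)$, which is exactly your step (2).
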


In the course of the proof we let $C_k$, $U_k$, and $\calD$ be as in \ref{SD.4}.

\begin{proof}[Proof that $(A) \Rightarrow (B)$]
{\bf (i)} By hypothesis, each $\calT_\calC$-bounded subset $B$ of $X$ is contained in some $C \in \calC$.
According to (A), $B$ is relatively $\calT_\calC$-compact (recall \ref{GO.1}(F)) and, therefore, relatively weakly compact.
Consequently, it follows from \ref{DUAL.6}(H) that $X[\calT_\calC]$ is semireflexive.
\par 
{\bf (ii)} We claim that $\calT \hel C = \sigma(X,X^*) \hel C$ for all $C \in \calC$.
Indeed, the map $\rmid_C : C \to C$ is trivially $(\calT_\calC \hel C, \sigma(X,X^*) \hel C)$-continuous.
In view of \ref{GO.1}(B), we infer that it is, therefore, also $(\calT \hel C, \sigma(X,X^*) \hel C)$-continuous.
Since $C[\calT \hel C]$ is compact, we conclude that $\rmid_C$ is a $(\calT \hel C, \sigma(X,X^*) \hel C)$-homeomorphism.
\par 
{\bf (iii)} We now show that $\rmev : X \to X^{**}$ is $(\calT_\calC,\sigma_\rmbd(X^{**},X^*))$-continuous.
By \ref{EUL.3}(ii), this amounts to showing that each restriction $\rmev|_{C_k} : C_k \to X^{**}$ is $(\calT \hel C_k,\sigma_\rmbd(X^{**},X^*))$-continuous.
Fix $k \in \N$ and $O \in \sigma_\rmbd(X^{**},X^*)$.
By \ref{GO.1}(B), there exists $O_k \in \sigma(X^{**},X^*)$ such that $U_k^\bullet \cap kO = U_k^\bullet \cap O_k$.
Thus,
\begin{equation*}
\begin{split}
\left( \rmev|_{C_k} \right)^{-1}(O) & = C_k \cap \rmev^{-1}(O) \\
& = \frac{1}{k} \rmev^{-1}(U_k^\bullet \cap k O) \;\;\qquad\qquad\text{(by \ref{SD.4}(C))}\\
& = \frac{1}{k} \rmev^{-1}( U_k^\bullet \cap O_k ) \\
& = C_k \cap \rmev^{-1}\left( \frac{1}{k} O_k \right) \qquad\qquad\text{(by \ref{SD.4}(C))}\\
& \in \sigma(X,X^*) \hel C_k \qquad\qquad \qquad\text{(by \ref{SD.2}(B))}\\
& = \calT \hel C_k \;\;\,\qquad\qquad\qquad\qquad\text{(by {\bf (ii)})}.
\end{split}
\end{equation*}
\par 
{\bf (iv)} It remains to show that $\rmev^{-1} : X^{**} \to X^*$ is $(\sigma_\rmbd(X^{**},X^*),\calT_\calC)$-conti\-nuous.
By \ref{EUL.3}(ii) and \ref{SD.4}(B), this reduces to showing that the restrictions $(\rmev^{-1})|_{U_k^\bullet} : U_k^\bullet \to X$ are $(\sigma(X^{**},X^*) \hel U_k^\bullet,\calT_\calC)$-continuous.
Fix $k \in \N$ and $E \in \calT_\calC$.
Since $\frac{1}{k} E \in \calT_\calC$, it follows from \ref{GO.1}(B) that $C_k \cap \frac{1}{k} E \in \calT \hel C_k$, thus, $C_k \cap  \frac{1}{k} E \in \sigma(X,X^*) \hel C_k$, by {\bf (ii)}.
Accordingly, $C_k \cap \frac{1}{k} E = C_k \cap E_k$ for some $E_k \in \sigma(X,X^*)$.
Therefore,
\begin{equation*}
\begin{split}
\left[ (\rmev^{-1})|_{U_k^\bullet}\right]^{-1}(E) & =  U_k^\bullet \cap \rmev(E) \\
& = k \rmev \left( C_k \cap \frac{1}{k} E \right) \qquad\qquad\text{(by \ref{SD.2}(B))}\\
& = k \rmev(C_k \cap E_k) \\
& = U_k^\bullet \cap \rmev(k E_k) \;\qquad\qquad\quad\,\text{(by \ref{SD.2}(B))}\\
& \in \sigma(X^{**},X^*) \hel U_k^\bullet \quad\qquad\quad\,\text{(by \ref{SD.2}(B))}.
\end{split}
\end{equation*}
\end{proof}

\begin{proof}[Proof that $(B) \Rightarrow (A)$]
Since $X[\calT_\calC]$ is assumed to be semireflexive, we infer from \ref{SD.4}(C) $C_k = \frac{1}{k} \rmev^{-1}(U_k^\bullet)$.
It follows from the Banach-Alaoglu theorem \ref{DUAL.4}(D) that $U_k^\bullet$ is $\sigma(X^{**},X^*)$-compact, hence, it is also $\sigma_\rmbd(X^{**},X^*)$-compact, according to \ref{GO.1}(F).
By the assumed continuity property of $\rmev^{-1}$, we infer that $C_k$ is $\calT_\calC$-compact and, in turn, $\calT$-compact, by \ref{GO.1}(F) again.
\end{proof}

\begin{Theorem}[Local base for localized topologies]
\label{SD.9}
Assume that:
\begin{itemize}
\item $X[\calT]$ is a locally convex topological vector space;
\item $\calC$ is a non-decreasing countable localizing family in $X$;
\item Each member of $\calC$ is absolutely convex and $\calT$-compact.
\end{itemize}
Then the following is a local base of the localized topology $\calT_\calC$:
\begin{equation*}
\calB = \calP(X) \cap \left\{ \cap_{n \in \N} \leftindex^\circ\{x^*_n\} : \la x^*_n \ra_n \text{ is a $\beta(X^*,X)$-null sequence in $X[\calT_\calC]^*$}\right\}.
\end{equation*}
\end{Theorem}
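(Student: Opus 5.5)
The plan is to transport the explicit local base of Theorem \ref{BWS.3} through the evaluation map, using the semireflexivity theorem \ref{SD.5}. First I check the hypotheses of \ref{SD.5}. Each member of $\calC$ is absolutely convex and $\calT$-compact, hence $\calT$-closed, since $\calT$ is Hausdorff. By \ref{FD.0}(A), a subset of $X$ is $\calT_\calC$-bounded if and only if it is contained in some $C \in \calC$. So \ref{SD.5}(A) holds, and \ref{SD.5}(B) yields two facts: $X[\calT_\calC]$ is semireflexive, and $\rmev : X \to X^{**}$ is a $(\calT_\calC,\sigma_\rmbd(X^{**},X^*))$-homeomorphism. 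Consequently a family $\calB$ of subsets of $X$ is a local base of $\calT_\calC$ as soon as $\{\rmev(V) : V \in \calB\}$ is a local base of $\sigma_\rmbd(X^{**},X^*)$ at $0$.

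Next I apply \ref{BWS.3} with $Y = X^*[\beta(X^*,X)]$, so that $Y^* = X^{**}$ and $\sigma_\rmbd(Y^*,Y) = \sigma_\rmbd(X^{**},X^*)$ by the definition in \ref{BWS.1}. This requires $Y$ to be a metrizable locally convex space. Metrizability holds by \ref{FD.2}, which even gives a Fréchet space; alternatively, the countable local base $\{U_k\}$ of \ref{SD.4}(A) suffices. Theorem \ref{BWS.3} then says that the basic sets
\begin{equation*}
\bigcap_{n\in\N}\{x_n^*\}^\bullet, \qquad \la x_n^*\ra_n \text{ a } \beta(X^*,X)\text{-null sequence in } X^*,
\end{equation*}
form a local base of $\sigma_\rmbd(X^{**},X^*)$.

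Finally I pull these sets back. Since $X$ is semireflexive, \ref{SD.2}(A) applied to $S = \{x_n^* : n \in \N\}$ gives $\rmev\big(\leftindex^\circ S\big) = S^\bullet$. Moreover $\leftindex^\circ S = \cap_n \leftindex^\circ\{x_n^*\}$ and $S^\bullet = \cap_n \{x_n^*\}^\bullet$, by the elementary identity that the polar of a union is the intersection of the polars (\ref{DUAL.4}(B)). Hence $\rmev^{-1}$ maps each basic set onto $\cap_n \leftindex^\circ\{x^*_n\}$, which is exactly a member of $\calB$. Because $\rmev$ is a homeomorphism carrying $0$ to $0$, the family $\calB$ is a local base of $\calT_\calC$.

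There is no real obstacle, since all the work is contained in \ref{SD.5} and \ref{BWS.3}. The only step that needs care is the bookkeeping. One must check that the polarity used in \ref{BWS.3} for the dual system $(Y,Y^*)$ is the $\bullet$-polarity of \ref{SD.2}. One must also check that the null sequences in \ref{BWS.3} are null for the topology of $Y$, which here means $\beta(X^*,X)$-null; this matches the description of $\calB$ in the statement.
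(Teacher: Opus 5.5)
Your proposal is correct and follows essentially the paper's own route. You use \ref{FD.0}(A) (that is, \ref{CWC.2}(C)) to meet the hypotheses of \ref{SD.5}, which gives semireflexivity and the $(\calT_\calC,\sigma_\rmbd(X^{**},X^*))$-homeomorphism $\rmev$. You then apply \ref{BWS.3} to the Fr\'echet space $X^*[\beta(X^*,X)]$ from \ref{FD.2}, and pull the basic sets back with \ref{SD.2}(A).

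Your version is slightly more complete. Transporting the whole local base through the homeomorphism also shows that each member of $\calB$ is a $\calT_\calC$-neighbourhood of $0$, a direction the paper's proof leaves implicit.
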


\begin{proof}
Since each $C \in \calC$ is $\calT$-compact, \ref{CWC.2} applies to showing that a subset of $X$ is $\calT_\calC$-bounded if and only if it is contained in some $C \in \calC$.
Therefore, \ref{SD.5} applies and we infer that $\rmev : X \to X^{**}$ is a $(\calT_\calC,\sigma_\rmbd(X^{**},X^*))$-homeomorphism.
Thus, if $W \in \calT_\calC$ contains 0 then $\rmev(W)$ is a $\sigma_\rmbd(X^{**},X^*)$-open neighbourhood of 0.
Upon recalling that $X[\calT_\calC]^*$ is a Fr\'echet space \ref{FD.2} we infer from \ref{BWS.3} applied to it that there exists a $\beta(X^*,X)$-null sequence $\la x^*_n \ra_n$ in $X^*$ such that 
\begin{equation*}
\bigcap_{n \in \N} \{ x_n^*\}^\bullet \subset \rmev(W).
\end{equation*} 
Applying $\rmev^{-1}$ on both sides and referring to \ref{SD.2}(A) we obtain
\begin{equation*}
W \supset \bigcap_{n \in \N} \rmev^{_1} \left( \{ x_n^*\}^\bullet\right) = \bigcap_{n \in \N} \leftindex^\circ \{ x^*_n \} .
\end{equation*}
\end{proof}
\section{Abstract existence theorem}
\label{sec.AET}

The following is the cornerstone for all the existence results obtained in the applications to follow in forthcoming papers.

\begin{Theorem}
\label{AET.FR}
Assume that:
\begin{enumerate}
\item[(A)] $\bE$ is a Fr\'echet space and $\la \bq_k \ra_k$ is non-decreasing sequence of seminorms that generates its topology;
\item[(B)] $X[\calT]$ is a locally convex topological vector space whose topology is generated by a filtering family of seminorms $\la \|\cdot\|_i \ra_{i \in I}$ and $\la X_k \ra_k$ is a non-decreasing sequence of $\calT$-closed linear subspaces whose union is $X$;
\item[(C)] $\lno \cdot \rno$ is a norm on $X$ and $\calT$-lower-semicontinuous;
\item[(D)] For every $k \in \N$ the set $C_k = X_k \cap \{ u : \lno u \rno \leq k \}$ is so that the topological space $C_k[\calT \hel C_k]$ is compact;
\item[(E)] $\bD : \bE \to X^\bigstar$ is linear\footnote{Recall \ref{DUAL.1} for the notation $X^\bigstar$.} and as usual we write $\la u , \bD(v) \ra$ instead of $\bD(v)(u)$ whenever $v \in \bE$ and $u \in X$;
\item[(F)] For every $v \in \bE$ and every $\veps > 0$ there are $i \in I$ and $\theta > 0$ such that for all $u \in X_{\lceil \veps^{-1}\rceil}$
\begin{equation*}
|\la u , \bD(v) \ra | \leq \theta  \|u\|_i + \veps\lno u \rno ;
\end{equation*}
\item[(G)] For every $k \in \N$ there exist $\bcG(k) > 0$ such that
\begin{equation*}
| \la u , \bD(v) \ra | \leq \bcG(k) \lno u \rno \bq_{k}(v)
\end{equation*}
whenever $u \in X_k$ and $v \in \bE$;
\item[(H)] For every $k \in \N$ there exist $\bcH(k) > 0$ such that
\begin{equation*}
\lno u \rno \leq \bcH(k) \sup \left\{ | \la u , \bD(v) \ra | : v \in \bE \text{ and } \bq_{k}(v) \leq 1 \right\}
\end{equation*}
whenever $u \in X_{k}$;
\item[(I)] For every $k \in \N$ and every $u \in X \setminus X_{k}$ there exists $v \in \bE$ such that 
\begin{equation*}
\la u , \bD(v) \ra \neq 0 \quad \text{ and } \quad \bq_k(v) = 0.
\end{equation*}
\end{enumerate}
Letting $\calC = \{ C_k : k \in \N \}$ and $\calT_\calC$ be the corresponding localized topology and letting
\begin{equation*}
\vvvert F \vvvert_k = \sup \left\{ | \la u , F \ra | : u \in X_k \text{ and } \lno u \rno \leq 1\right\} \,,
\end{equation*}
for $F \in X[\calT_\calC]^*$ and $k \in \N$, we conclude that:
\begin{enumerate}
\item[(K)] $X[\calT_\calC]^*[\beta(X^*,X)]$ is a Fr\'echet space whose topology is generated by the sequence of seminorms $\la \vvvert \cdot \vvvert_k \ra_k$; 
\item[(L)] $\bD : \bE \to X[\calT_\calC]^*$ is a linear continuous {\em surjective} operator;
\item[(M)] For every $F \in X[\calT_\calC]^*$, every $k \in \N$ such that $\vvvert F \vvvert_k > 0$, and every $\veps > 0$ there exists $v \in \bE$ such that
\begin{equation*}
F = \bD(v) \quad\text{ and }\quad \bq_k(v) \leq (1+\veps) \bcH(k) \vvvert F \vvvert_k \,.
\end{equation*}
\item[(N)] For every $k \in \N$ and $\veps > 0$ there exists
\begin{equation*}
\bI : X[\calT_\calC]^* \cap \{ F : \vvvert F \vvvert_k > 0 \} \to \bE
\end{equation*}
satisfying the following properties.
\begin{enumerate}
\item[(a)] $(\bD \circ \bI)(F) = F$ for all $F \in X[\calT_\calC]^*$ such that $\vvvert F \vvvert_k > 0$;
\item[(c)] $\bI$ is continuous;
\item[(d)] for all $F \in X[\calT_\calC]^*$ such that $\vvvert F \vvvert_k > 0$ we have
\begin{equation*}
\bq_k \left( \bI(F) \right) \leq (1+\veps) \bcH(k) \vvvert F \vvvert_k .
\end{equation*}
\end{enumerate}

\end{enumerate}
\end{Theorem}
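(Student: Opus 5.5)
We follow the scheme described in the introduction. First, (K). Each $C_k$ is $\calT$-compact and $\calC$ is a non-decreasing countable localizing family of $\calT$-closed sets (\ref{CCC.4}). Hence \ref{FD.0}(A) gives condition ($\diamond$), and \ref{FD.2} shows that $X[\calT_\calC]^*[\beta(X^*,X)]$ is Fr\'echet with seminorms $p_{C_k}$. Moreover $p_{C_k}(F)=k\vvvert F\vvvert_k$, since $C_k=k\cdot(X_k\cap\{\lno u\rno\le1\})$, so the $\vvvert\cdot\vvvert_k$ generate the same topology.

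Next, the well-definedness and continuity of $\bD$ in (L). By (F) and \ref{CCC.5}, each $\bD(v)\in X[\calT_\calC]^*$. By (G), $\vvvert\bD(v)\vvvert_k\le\bcG(k)\bq_k(v)$, so $\bD$ is continuous.

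Third, density of $\rmim\bD$. If $\Phi\in X^{**}$ vanishes on $\rmim\bD$, semireflexivity \ref{SD.5} gives $\Phi=\rmev(u)$ for some $u\in X$, so $\la u,\bD(v)\ra=0$ for all $v$. Pick $k$ with $u\in X_k$. Then (H) gives $\lno u\rno=0$, and since $\lno\cdot\rno$ is a norm, $u=0$. Hahn--Banach then yields density.

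Fourth, closedness of the range, which is the main step. Consider the adjoint $\bD^*:X^{**}\simeq X\to\bE^*$, given by $u\mapsto\la u,\bD(\cdot)\ra$. By the closed range theorem for Fr\'echet spaces, $\rmim\bD$ is closed if and only if $\rmim\bD^*$ is $\sigma(\bE^*,\bE)$-closed. Since $\rmim\bD^*$ is absolutely convex, the Krein--\v{S}mulian theorem \ref{BWS.5} with $U_k=\{\bq_k\le 1/k\}$ reduces this to showing that $\rmim\bD^*\cap U_k^\circ$ is weak* closed.

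Let $\bD^*(u_\lambda)\to\mu\in U_k^\circ$ be a net. Using $\bq_k\le\bq_l$ for $l\ge k$, we get $|\la u_\lambda,\bD(v)\ra|\le k\bq_k(v)$. Property (I) then forces $u_\lambda\in X_k$: otherwise some $v$ with $\bq_k(v)=0$ would give a nonzero value. Now (H) gives $\lno u_\lambda\rno\le k\bcH(k)$, so $u_\lambda\in C_{k'}$ for $k'=\max\{k,\lceil k\bcH(k)\rceil\}$. This set is $\calT$-compact, hence weakly compact by \ref{SD.5}(ii), so a subnet converges weakly to some $u$. Weak continuity of $\bD^*$ (adjoints are weak-weak* continuous) then gives $\bD^*(u)=\mu$. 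Hence the range is closed, and by density $\bD$ is surjective.

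For (M), I would use the open mapping theorem for Fr\'echet spaces in a quantitative form. Given $F$ with $\vvvert F\vvvert_k>0$, consider the quotient $\bE/\ker$ with seminorm $\bq_k$. The bound follows by duality: the set $\{\bD(v):\bq_k(v)\le1\}$ has weak closure containing $\{F:\vvvert F\vvvert_k\le\bcH(k)^{-1}\}$. This is seen by bipolar arguments, since (H) says the polar of $\bD(\{\bq_k\le1\})$ within $X_k$ is controlled by $\bcH(k)$, and (I) handles $u\notin X_k$. The set $\bD(\{\bq_k\le1\})$ is convex, so its weak and strong closures agree. To pass from closure to exact attainment with loss $(1+\veps)$, I would run the standard iterative series argument from the proof of the open mapping theorem, using completeness of $\bE$ and of the higher seminorms $\bq_l$. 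I expect this bookkeeping over all the seminorms $\bq_l$ at once to be the delicate point.

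Finally (N). Apply Michael's selection theorem (appendix \ref{app.MCC}) to the lower semicontinuous map $F\mapsto\{v:\bD v=F,\ \bq_k(v)<(1+\veps)\bcH(k)\vvvert F\vvvert_k\}$. Its values are nonempty by (M), and convex. Lower semicontinuity follows from the openness of $\bD$ (open mapping) together with the strict inequality. Taking closures and selecting continuously gives $\bI$ satisfying (a), (c) and (d).
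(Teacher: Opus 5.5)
Your arguments for (K), for (L), and for (N) are essentially the paper's. In (L) that covers well-definedness and continuity via (F) and (G), density via semireflexivity and (H), and closedness of the range via Krein--\v{S}mulian, (I), (H) and compactness of $C_{k'}$.

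For (M) you take a different, dual route. The paper argues by contradiction:
\begin{itemize}
\item it separates, by Hahn--Banach in $\bE$, the open ball $\{\bq_k<(1+\veps)\bcH(k)\vvvert F\vvvert_k\}$ from the fibre $\bD^{-1}(F)$;
\item it shows the separating functional vanishes on $\ker\bD$, hence equals $\rmev(u)\circ\bD$ for some $u\in X$, using openness of $\bD$ and semireflexivity;
\item then (I) and (H) applied to $u$ contradict the separation.
\end{itemize}
You instead compute the polar of $S_k=\bD(\{\bq_k\le 1\})$ in $X^{**}=\rmev(X)$. By (I) and (H) it is contained in $\rmev(X_k\cap\{u:\lno u\rno\le\bcH(k)\})$. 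The bipolar theorem and Mazur then give $\{G:\vvvert G\vvvert_k\le\bcH(k)^{-1}\}\subset\rmclos_\beta S_k$. This is correct and uses the same ingredients, in polar rather than separation form.

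Your route also buys something. The same computation holds at every level $l$, so the $\beta$-closure of the image of each neighbourhood of $0$ in $\bE$ contains a $\beta$-neighbourhood of $0$. The second half of the usual open mapping proof, which uses only that $\bE$ is Fr\'echet and $\bD$ is continuous, would then give openness and surjectivity of $\bD$ directly. That would make Krein--\v{S}mulian and the closed range theorem unnecessary.

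The step you flag as delicate does need fixing. Iterating with only the level-$k$ estimate fails as sketched. The corrections are controlled only in $\bq_k$ and the residuals only in $\vvvert\cdot\vvvert_k$. So $\sum_n v_n$ need not converge in $\bE$, and $\bD(\sum_n v_n)$ need not tend to $F$ in $\beta(X^*,X)$.

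No series is needed once (L) is proved. Put $r=\bcH(k)\vvvert F\vvvert_k$.
\begin{itemize}
\item By (L) and the open mapping theorem, $W=\bD(\{w:\bq_k(w)<\veps r\})$ is a $\beta$-neighbourhood of $0$.
\item Your polar estimate gives $F\in\rmclos_\beta\bD(\{\bq_k\le r\})$.
\item So there is $v_1$ with $\bq_k(v_1)\le r$ and $F-\bD(v_1)=\bD(w)$ for some $w$ with $\bq_k(w)<\veps r$.
\item Then $v=v_1+w$ satisfies (M).
\end{itemize}
If you prefer a series argument, you must climb levels $k=l_1<l_2<\cdots$. At step $n$ use the polar estimate at level $l_n$, so that the $n$-th correction is small in $\bq_{l_n}$ and the next residual is small in $\vvvert\cdot\vvvert_{l_{n+1}}$.
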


\begin{Remark}
\label{AET.RE.1}
The following comments about the hypotheses are in order.
\begin{enumerate}
\item[(A,B)] In the important case when $\bE$ is a Banach space, $\calT$ is normable,  and $X_k = X$ for all $k \in \N$ our hypotheses simplify a fair amount and the strengthened conclusion that $X[\calT_\calC]^*$ equipped with its strong topology is a Banach space holds, see corollary \ref{AET.BA}.
\item[(B)] In case $\calT$ is normable, hypotheses (D) and (F) can be rephrased, see remarks (D) and (F) below.
\item[(C)] Contrary to what has been assumed so far we now hypothesize that $\lno \cdot \rno$ is a {\em norm} rather than merely a seminorm. 
In fact, {\it assuming that $\lno \cdot \rno$ is a seminorm and assuming hypotheses (G) and (H) hold, the following are equivalent:
\begin{enumerate}[left=.5cm]
\item[(C1)] $\lno \cdot \rno$ is a norm;
\item[(C2)] For every $u \in X$, {\em if} $\la u , \bD(v) \ra = 0$ for every $v \in \bE$, {\em then} $u=0$.
\end{enumerate}
Proof.} 
In order to prove that $(C1) \Rightarrow (C2)$, let $u \in X$ and assume that $\la u , \bD(v) \ra = 0$ for all $v \in \bE$. 
We ought to show that $u=0$. 
This is the case because $\lno u \rno = 0$, according to (H), and $\lno \cdot \rno$ is a norm. 
In order to prove that $(C2) \Rightarrow (C1)$, let $u \in X$ and assume that $\lno u \rno = 0$. %
Note that $\la u , \bD(v) \ra = 0$ for all $v \in \bE$, according to (G). 
We conclude that $u=0$, by (C2). \cqfd 
\par
In the applications to follow, condition (C2) will correspond to a ``constancy theorem''. 
It is used in the proof of the theorem to establish that $\rmim \bD$ is dense via an application of the Hahn-Banach theorem. 
This is a convenient substitute for smoothing and approximation used in \cite[4.1]{DEP.PFE.06b} and \cite[4.3]{DEP.TOR.09}.
\item[(D)] This is a critical hypothesis, as it guarantees that $X[\calT_\calC]$ is semireflexive via an application of \ref{SD.5}.
The semireflexivity property is used often in the proof of (L) and (M).
\item[(E)] By the assumption the map 
\begin{equation*}
\la \cdot , \cdot \ra : X \times \bE : (u,v) \mapsto \la u ,\bD(v) \ra
\end{equation*}
is bilinear. 
It follows from (C2) above that it is separating in the first variable but it is in general not separating in the second variable. 
In other words $\bD$ is not injective and in fact its kernel is in general not complemented in $\bE$, in which case conclusion (M) is valuable.
\end{enumerate}
\end{Remark}

\begin{proof}[Preliminaries to the proof]
As in the preliminaries to the proof of \ref{CCC.4} we note that $\calC$ is a non-decreasing countable localizing family on $X$.
Therefore, the localized topology $\calT_\calC$ on $X$ is defined, by \ref{EUL.4}.
In the course of the proof we will use the notations $X^* = X[\calT_\calC]^*$ and $X^{**} = X^*[\beta(X^*,X)]^*$.
Furthermore, each $C \in \calC$ being $\calT$-compact we may apply \ref{CWC.2}(C).
In other words, $\calC$ is a fundamental system of $\calT_\calC$-bounded sets in $X$ and \ref{FD.2} applies.
Since each $C \in \calC$ is readily absolutely convex, \ref{SD.5} applies as well: $X[\calT_\calC]$ is semireflexive, \ie the evaluation map $\rmev : X \to X^{**}$ is a bijection, a fact that will be used repeatedly in the proof.
\par 
In the course of the proof, we shall only consider one topology on the following three spaces:
\begin{itemize}
\item The localized topology $\calT_\calC$ on $X$;
\item The strong topology $\beta(X^*,X)$ on $X^*$;
\item The weak* topology $\sigma(X^{**},X^*)$ on $X^{**}$.
\end{itemize}
\end{proof}

\begin{proof}[Proof of (K)]
That $X[\calT_\calC]^*$ equipped with its strong topology $\beta(X^*,X)$ is a Fr\'echet space follows from \ref{FD.2}, since $\calC$ is a fundamental system of $\calT_\calC$-bounded sets of $X$.
For this reason also, $\la p_{C_k} \ra_k$ is a sequence of seminorms that generates the strong topology of $X^*$.
To finish the proof it remains to note that $kp_{C_k}(F) = \vvvert F \vvvert_k(F)$ for all $F \in X^*$ and all $k \in \N$.
\end{proof}

\begin{proof}[Proof of (L)]
The proof is an application of the closed range theorem.
\par 
{\bf (i)}
We first observe that $\bD(v)$, for each $v \in \bE$, is $\calT_\calC$-continuous, \ie $\bD$ takes values in $X^*$.
Indeed, this follows from assumption (F) and \ref{CCC.5}.
\par 
{\bf (ii)}
We next note that $\bD : \bE \to X^*$ is continuous.
Indeed, for each $v \in \bE$ and $k \in \N$ it follows from hypothesis (G) that 
\begin{equation*}
\vvvert \bD(v) \vvvert_k = \sup \left \{ | \la u , \bD(v) \ra | : u \in X_k \text{ and } \lno u \rno \leq 1\right\} \leq \bc_{(G)}(k) \bq_k(v).
\end{equation*}
\par 
{\bf (iii)}
We now establish that $\rmim(\bD)$ is $\beta(X^*,X)$-dense in $X^*$.
In view of the Hahn-Banach theorem \cite[3.5]{RUDIN}, this is equivalent to showing that for all $\alpha \in X^{**}$ {\bf if} $\la \bD(v),\alpha \ra = 0$ for all $v \in \bE$ {\em then} $\alpha = 0$.
With each $\alpha \in X^{**}$ there corresponds $u \in X$ such that $\alpha = \rmev(u)$, by semireflexivity.
Thus, the vanishing of $\la \bD(v),\alpha \ra$ for all $v \in \bE$ is equivalent to the vanishing of $\la u,\bD(v) \ra$ for all $v \in \bE$.
The latter implies that $u=0$, by assumptions (C) and (H) (recall the discussion in \ref{AET.RE.1}(C)).
Finally, $\alpha = \rmev(u)=0$.
\par 
{\bf (iv)}
Here, we will show that $\rmim(\bD^*)$ is $\sigma(E^*,E)$-closed.
By the Krein-\v{S}mulian theorem \ref{BWS.5} applied to the Fr\'echet space $\bE$, this is equivalent to proving that $\rmim(\bD^*) \cap V_k^\circ$ is $\sigma(E^*,E)$-closed for all $k \in \N$, where $V_k = \bE \cap \left\{ v : \bq_k(v) \leq \frac{1}{k} \right\}$ (indeed, $\{ V_k : k \in \N\}$ is readily a local base for $\bE$ and $V_{k+1} \subset V_k$).
\par 
Fix $k \in \N$ and let $\la v^*_\lambda \ra_{\lambda \in \Lambda}$ be a net in $\rmim(D^*) \cap V_k^\circ$ that $\sigma(E^*,E)$-converges to some $v^* \in \bE$.
We ought to show that $v^* \in \rmim(\bD^*)$ (recall that $V_k^\circ$ is $\sigma(E^*,E)$-closed).
For each $\lambda \in \Lambda$ choose $\alpha_\lambda \in X^{**}$ so that $v_\lambda^* = \bD^*(\alpha_\lambda)$ 
Next, referring to the semireflexivity of $X$, choose $u_\lambda \in X$ so that $\alpha_\lambda = \rmev(u_\lambda)$.
Note that for all $v \in \bE$ and all $k \in \N$ one has
\begin{equation}
\label{eq.AET.1}
\la v , v_\lambda^* \ra = \la v , \bD^*(\alpha_\lambda) \ra = \la \bD(v) , \alpha_\lambda \ra = \la \bD(v) , \rmev(u_\lambda) \ra = \la u_\lambda , \bD(v) \ra .
\end{equation}
Our next task consists in showing that the net $\la u_\lambda \ra_{\lambda \in \Lambda}$ is in some $C \in \calC$.
\par 
We claim that $u_\lambda \in X_k$ for all $\lambda \in \Lambda$.
Assume if possible that $u_\lambda \in X \setminus X_k$ for some $\lambda \in \Lambda$.
According to assumption (I), there exists $v \in \bE$ such that $\bq_k(v) = 0$ and $\la u_\lambda , \bD(v) \ra \neq 0$.
Note that $\bq_k(tv) = 0$ for each $t \in \R \setminus \{0\}$, hence, $tv \in V_k$.
As $v_\lambda^* \in V_k^*$, we infer from \eqref{eq.AET.1} that
\begin{equation*}
1 \geq | \la tv , v_\lambda^* \ra | = | \la u_\lambda , \bD(tv) \ra | = |t|. |\la u_\lambda , \bD(v) \ra |.
\end{equation*}
Since $t$ is arbitrary, we obtain $\la u_\lambda , \bD(v) \ra = 0$, a contradiction.
\par 
Next, we claim that $\lno u_\lambda \rno \leq \bc_{(H)}(k).k$ for all $\lambda \in \Lambda$.
Fix $\lambda \in \Lambda$.
First observe that if $v \in \bE$ and $\bq_k(v) \leq 1$ then $k^{-1}v \in V_k$, whence, $|\la k^{-1}v , v_\lambda^* \ra| \leq 1$, since $v_\lambda^* \in V_k^\circ$.
In turn, $| \la u_\lambda , \bD(v) \ra| \leq k$, by \eqref{eq.AET.1}.
Since $u_\lambda \in X_k$, by the previous paragraph, and $v$ is arbitrary, it follows from hypothesis (H) that $\lno u_\lambda \rno \leq \bc_{(H)}(k).k$.
\par 
Letting $\kappa = \max \left\{ \lceil \bc_{(H)}(k).k \rceil , k \right\}$, we infer from the previous two paragraphs, the definition of $C_\kappa$, and the fact that the sequence $\la C_n \ra_n$ is non-decreasing that the net $\la u_\lambda \ra_{\lambda \in \Lambda}$ is in $C_\kappa$.
As $\calT \hel C_\kappa$ is compact, by hypothesis (D), and $\calT_\calC \hel C_\kappa = \calT \hel C_\kappa$, by \ref{GO.1}(B), there exists a subnet\footnote{For a discussion of nets, subnets, and compactness see \eg \cite[Ch.2 and Ch.5 \S 1 Theorem 2]{KELLEY}.} $\la u_{\lambda'} \ra_{\lambda' \in \Lambda'}$ of $\la u_\lambda \ra_{\lambda \in \Lambda}$ that $\calT_\calC$-converges in $X$ to some $u \in C_\kappa$.
Let $\alpha = \rmev(u)$.
According to the $(\calT_\calC,\sigma(X^{**},X^*))$-continuity of $\rmev$ \ref{LCTVS.10}(A), we infer that $\la \alpha_{\lambda'} \ra_{\lambda' \in \Lambda'}$ is $\sigma(X^{**},X^*)$-convergent\footnote{For continuity and nets see \eg \cite[Ch.3 \S 1 Theorem 1]{KELLEY}.} to $\alpha$.
According to the $(\sigma(X^{**},X^*),\sigma(E^*,E))$-continuity of $\bD^*$, we infer that $\la v_{\lambda'}^* \ra_{\lambda' \in \Lambda'}$ is $\sigma(E^*,E)$-convergent to $\bD^*(\alpha)$.
Since it also $\sigma(E^*,E)$-converges to $v^*$, we conclude that $v^* = \bD^*(\alpha)$.
This completes the proof that $\rmim(\bD^*)$ is $\sigma(E^*,E)$-closed.
\par 
{\bf (v)}
Recalling that $X^*$ is a Fr\'echet space, by conclusion (K), we may apply the closed range theorem \cite[8.6.13]{EDWARDS} to the continuous linear map $\bD$.
Since $\rmim(\bD^*)$ is $\sigma(E^*,E)$-closed, by {\bf (iv)}, it follows that $\rmim(\bD)$ is closed.
Since it is also dense, by {\bf (iii)}, we conclude that $\bD$ is surjective.
\end{proof}

\begin{proof}[Proof of (M)]
The proof is an application of the Hahn-Banach theorem.
\par 
{\bf (i)}
Let $F \in X^*$, $k \in \N$ such that $\vvvert F \vvvert_k > 0$, and $\veps > 0$.
Define
\begin{equation*}
A = \bE \cap \left\{ v : \bq_k(v) < (1+\veps)\bc_{(H)}(k)\vvvert F \vvvert_k \right\}
\end{equation*}
and 
\begin{equation*}
B = \bE \cap \{ v : \bD(v) = F \} .
\end{equation*}
Observe that conclusion (M) is equivalent to $A \cap B \neq \emptyset$.
In the remainder of this proof we shall assume instead that $A \cap B = \emptyset$ and derive a contradiction.
\par 
{\bf (ii)}
Notice that $A \neq \emptyset$ (because $\vvvert F \vvvert_k > 0$) and $B \neq \emptyset$ (by conclusion (L)).
Observe also that $A$ and $B$ are both convex and that $A$ is open.
Since we assume that $A \cap B = \emptyset$, it follows from Hahn-Banach's theorem \cite[3.4(a)]{RUDIN} applied in $\bE$ that there are $v^* \in E^*$ and $\gamma \in \R$ such that
\begin{equation}
\label{eq.AET.2}
\la v , v^* \ra < \gamma \leq \la w , v^* \ra
\end{equation}
for all $v \in A$ and all $w \in B$.
For the rest of this proof we fix some $w_0 \in B$.
\par 
{\bf (iii)}
$\ker \bD \subset \ker v^*$.
Indeed, if $w \in \ker \bD$ and $t \in \R$ then $w_0 + tw \in B$, thus, $\gamma \leq \la w_0,v^* \ra + t \la w,v^* \ra$, by \eqref{eq.AET.2}.
Since $t$ is arbitrary, $\la w , v^* \ra = 0$.
\par 
{\bf (iv)}
{\it There exists $\alpha \in X^{**}$ such that $v^* = \alpha \circ \bD$.}
One defines $\alpha : X^* \to \R$ as follows.
Given $G \in X^*$ we choose $v \in \bE$ so that $G = \bD(v)$, by means of conclusion (L), and we let $\alpha(G) = \la v , v^* \ra$.
This is well-defined, according to {\bf (iii)}, and, therefore, also linear.
Finally, observe that
\begin{equation*}
X^* \cap \{ G : | \la G, \alpha \ra| < 1 \} = \bD \left( \bE \cap \{ v : | \la v,v^* \ra| < 1 \} \right)
\end{equation*}
is open in $X^*$, since $v^*$ is continuous and $\bD$ is open as a consequence of the open mapping theorem \cite[2.12(a)]{RUDIN}, conclusion (L), and the continuity of $\bD$ (see proof of (L) {\bf (ii)}).
Accordingly, $\alpha$ is continuous, \ie $\alpha \in X^{**}$.
\par 
{\bf (v)}
By semireflexivity, there exists $u \in X$ such that $\alpha = \rmev(u)$.
We note that
\begin{equation}
\label{eq.AET.3}
\la v,v^* \ra = \la v , \alpha \circ \bD \ra = \la \bD(v) , \alpha \ra = \la \bD(v),\rmev(u) \ra = \la u , \bD(v) \ra
\end{equation}
for all $v \in \bE$.
\par 
{\bf (vi)}
$u \in X_k$.
If not, by hypothesis (I), there exists $v \in \bE$ such that $\la u , \bD(v) \ra \neq 0$ and $\bq_k(v) = 0$.
Let $t \in \R$.
Since $\bq_k(tv)=0$, one has $tv \in A$, thus, $\la tv , v^* \ra < \gamma$, by \eqref{eq.AET.2}.
As $t$ is arbitrary, $0 = \la v,v^* \ra = \la u , \bD(v) \ra$, by \eqref{eq.AET.3}, a contradiction.
\par 
{\bf (vii)}
We claim that
\begin{equation}
\label{eq.AET.4}
\lno u \rno  \leq \frac{\gamma}{\left( 1 + \frac{\veps}{2}\right) \vvvert F \vvvert_k}.
\end{equation}
In order to establish this, let $v \in \bE$ be such that $\bq_k(v) \leq 1$ and define
\begin{equation*}
v' = v \times \left( 1 + \frac{\veps}{2}\right) \bc_{(H)}(k) \vvvert F \vvvert_k.
\end{equation*}
Note that $-v',v' \in A$.
Therefore,
\begin{equation*}
\pm \la u , \bD(v') \ra = \pm \la v' , v^* \ra < \gamma,
\end{equation*}
by \eqref{eq.AET.3} and \eqref{eq.AET.2}, respectively.
Consequently,
\begin{equation*}
| \la u , \bD(v) \ra | \leq \frac{\gamma}{\left( 1 + \frac{\veps}{2}\right) \bc_{(H)}(k) \vvvert F \vvvert_k}.
\end{equation*}
By {\bf (vi)}, the arbitrariness of $v$, and hypothesis (H), we conclude that \eqref{eq.AET.4} holds.
\par 
{\bf (viii)}
We observe that
\begin{equation*}
\begin{split}
\gamma &\leq \la w_0 , v^* \ra \quad\,\qquad\qquad \text{(by \eqref{eq.AET.2}, since $w_0 \in B$)}\\
& = \la u , \bD(w_0) \ra \qquad\qquad \text{(by \eqref{eq.AET.3})}\\
& = \la u , F \ra  \qquad\qquad\qquad \text{(by definition of $B$ and $w_0 \in B$)}\\
& \leq \lno u \rno . \vvvert F \vvvert_k \qquad\qquad\;\; \text{(by definition of $\vvvert \cdot \vvvert_k$ and {\bf (vi)})}\\
& \leq \frac{\gamma}{1 + \frac{\veps}{2}}. \qquad\qquad\quad\;\;\, \text{(by \eqref{eq.AET.4})}
\end{split}
\end{equation*}
Upon observing that $0 \in A$ we infer from \eqref{eq.AET.2} that $0 < \gamma$.
This readily contradicts the preceding inequality.
On that account the proof is complete.
\end{proof}

\begin{proof}[Proof of (N)]
{\bf (i)}
Abbreviate $\bF = X[\calT_\calC]^* \cap \{ F : \vvvert F \vvvert_k > 0 \}$.
Note that $\bF$ is made a metric space by considering the restriction to $\bF$ of the strong topology $\beta(X^*,X)$ of $X[\calT_\calC]^*$.
Define $\Gamma : \bF \to \calP(\bE)$ by the formula
\begin{equation*}
\Gamma(F) = \bE \cap \left\{ v : \bD(v) = F \text{ and } \bq_k(v) < (1+\veps) \bcH(k) \vvvert F \vvvert_k \right\}.
\end{equation*}
We observe that for each $F \in \bF$, $\Gamma(F)$ is convex, since $\bD$ is linear and $\bq_k$ is a seminorm, and non-empty, by conclusion (M) (applied with, \eg $\frac{\veps}{2}$).
\par 
{\bf (ii)}
We now show that $\Gamma$ is lower semi-continuous in the sense of \ref{MCC.2}.
Let $O \subset \bE$ be open.
We ought to show that the set
\begin{equation*}
\calF := \bF \cap \{ F : \Gamma(F) \cap O \neq \emptyset \}
\end{equation*} 
is open.
As the case $\calF = \emptyset$ is trivial, we henceforth assume that $\calF$ is non-empty.
Let $F_0 \in \calF$.
There exists $v_0 \in \Gamma(F_0) \cap O$.
In particular, $\bq_k(v_0) < (1+\veps)\bcH(k) \vvvert F_0 \vvvert_k$.
Thus, we may choose $\eta > 0$ small enough for 
\begin{equation*}
\bq_k(v_0) + \eta < (1+\veps) \bcH(k) \left( \vvvert F_0 \vvvert_k - \eta \right).
\end{equation*}
Define an open set $O_0 = O \cap \{ v : \bq_k(v-v_0) < \eta \}$.
Since $X[\calT_\calC]^*$ is Fr\'echet, by conclusion (K), and $\bD$ is continuous and surjective, by conclusion (L), we infer from the open mapping theorem \cite[2.12(a)]{RUDIN} that $U_0 = \bD(O_0)$ is open.
Next, we define $U = U_0 \cap \bF \cap \{ F : \vvvert F-F_0 \vvvert_k < \eta \}$ which is an open neighbourhood of $F_0$ in $\bF$.
Let $F \in U$.
We shall show that $F \in \calF$, \ie $\Gamma(F) \cap O \neq \emptyset$.
Since $F \in U_0$, there exists $v \in \bE$ such that $\bD(v) = F$ and $\bq_k(v-v_0) < \eta$.
Upon noticing that $\vvvert F_0 \vvvert_k < \vvvert F \vvvert_k + \vvvert F -  F_0 \vvvert_k < \vvvert F \vvvert_k + \eta$, since $F \in U_0$, we infer that
\begin{equation*}
\bq_k(v) \leq \bq_k(v_0) + \bq_k(v-v_0) < \bq_k(v_0) + \eta < (1+\veps) \bcH(k) \left( \vvvert F_0 \vvvert_k - \eta \right) < (1+\veps) \bcH(k) \vvvert F \vvvert_k.
\end{equation*}
Therefore, $v$ is a witness that $\Gamma(F) \cap O \neq \emptyset$.
Since $F$ is arbitrary, we have $U \subset \calF$, \ie $\calF$ is a neighbourhood of $F_0$.
As $F_0$ is arbitrary, we conclude that $\calF$ is open.
\par 
{\bf (iii)}
We now consider $\bar{\Gamma} : \bF \to \calP(\bE)$ defined by $\bar{\Gamma} = \rmclos \Gamma(F)$ for all $F \in \bF$.
Clearly, $\bar{\Gamma}(F)$ is non-empty, convex, and closed, $F \in \bF$.
Moreover, $\bar{\Gamma}$ is lower semi-continuous, according to {\bf (ii)} and \ref{MCC.2}(A).
Notice that if $v \in \bar{\Gamma}(F)$ then $\bD(v) = F$, since $\bD$ is continuous, and $\bq_k(v) \leq (1+\veps) \bcH(k) \vvvert F \vvvert_k$.
\par 
{\bf (iv)} 
It follows from {\bf (iii)} and Michael's selection theorem \ref{MCC.2}(D) that there exists a continuous selector $\bI : \bF \to \bE$, \ie $\bI(F) \in \bar{\Gamma}(F)$ for all $F \in \bF$.
Thus, conclusion (c) is satisfied and conclusions (a) and (d) follow both from the last sentence of {\bf (iii)}.
\end{proof}

\begin{Corollary}
\label{AET.BA}
Assume that:
\begin{enumerate}
\item[(A)] $\bE[\bq]$ is a Banach space;
\item[(B)] $X[\|\cdot\|]$ is a normed space and $\calT$ is the topology associated with $\|\cdot\|$;
\item[(C)] $\lno \cdot \rno$ is a $\calT$-lower-semicontinuous norm on $X$;
\item[(D)] $X \cap \{ u : \lno u \rno \leq 1 \}$ is $\calT$-compact;
\item[(E)] $\bD : \bE \to X^\#$ is linear;
\item[(F)] For every $v \in \bE$ and every $\veps > 0$ there exists $\theta > 0$ such that
\begin{equation*}
| \la u,\bD(v) \ra | \leq \theta \|u\| + \veps \lno u \rno 
\end{equation*}
whenever $u \in X$;
\item[(G)] There exist $\bcG > 0$ such that
\begin{equation*}
| \la u , \bD(v) \ra | \leq \bcG \lno u \rno \bq(v)
\end{equation*}
whenever $u \in X$ and $v \in \bE$;
\item[(H)] There exist $\bcH > 0$ such that
\begin{equation*}
\lno u \rno \leq \bcH \sup \left\{ | \la u , \bD(v) \ra | : v \in \bE \text{ and } \bq(v) \leq 1 \right\}
\end{equation*}
whenever $u \in X$.
\end{enumerate}
Letting $C_k = X \cap \{ u : \lno u \rno \leq k \}$, $\calC = \{ C_k : k \in \N \}$ and $\calT_\calC$ be the corresponding localized topology, and letting
\begin{equation*}
\vvvert F \vvvert = \sup \left\{ | \la u , F \ra | : u \in X \text{ and } \lno u \rno \leq 1\right\} \,,
\end{equation*}
for $F \in X[\calT_\calC]^*$, we conclude that:
\begin{enumerate}
\item[(K)] $X[\calT_\calC]^*[\beta(X^*,X)]$ is a Banach space whose topology is associated with the norm $\vvvert \cdot \vvvert$;
\item[(L)] $\bD : \bE \to X[\calT_\calC]^*$ is a linear continuous {\em surjective} operator;
\item[(M)] For every $F \in X[\calT_\calC]^*$ and every $\veps > 0$ there exists $v \in \bE$ such that $F = \bD(v)$ and
\begin{equation*}
\bq(v) \leq (1+\veps) \bcH \vvvert F \vvvert \,.
\end{equation*}
\item[(N)] For every $\veps > 0$ there exists
\begin{equation*}
\bI : X[\calT_\calC]^* \to \bE
\end{equation*}
satisfying the following properties.
\begin{enumerate}
\item[(a)] $(\bD \circ \bI)(F) = F$ for all $F \in X[\calT_\calC]^*$;
\item[(b)] $\bI$ is homogeneous of degree 1, \ie for all $F \in X[\calT_\calC]^*$ and $t > 0$ we have $\bI(t \cdot F) = t \cdot \bI(F)$;
\item[(c)] $\bI$ is continuous;
\item[(d)] for all $F \in X[\calT_\calC]^*$ we have
\begin{equation*}
\bq \left( \bI(F) \right) \leq (1+\veps) \bcH \vvvert F \vvvert .
\end{equation*}
\end{enumerate}
\end{enumerate}
\end{Corollary}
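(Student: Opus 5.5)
Corollary \ref{AET.BA} will be derived from Theorem \ref{AET.FR} by specializing. Take $\bq_k = \bq$ for all $k$, the single norm $\|\cdot\|$ as the filtering family, $X_k = X$ for all $k$, and $\bcG(k) = \bcG$, $\bcH(k) = \bcH$. Then hypotheses (A), (B), (C), (E), (F), (G), (H) of \ref{AET.FR} are exactly the corresponding hypotheses of the corollary. Hypothesis (I) is vacuous because $X \setminus X_k = \emptyset$. For (D), note that $C_k = k \cdot C_1$ and scalar multiplication is a homeomorphism, so $C_k$ is $\calT$-compact since $C_1$ is.

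With these choices $\vvvert \cdot \vvvert_k = \vvvert \cdot \vvvert$ for every $k$, so conclusion \ref{AET.FR}(K) says the strong topology on $X[\calT_\calC]^*$ is generated by this single seminorm. It is a norm: if $\vvvert F \vvvert = 0$ then $F$ vanishes on $C_1$, and hence on $X = \bigcup_k k C_1$. Completeness comes from \ref{AET.FR}(K); alternatively, \ref{FD.3} applies with $C_0 = C_1$. This gives (K). Conclusion (L) is \ref{AET.FR}(L) verbatim. For (M), if $\vvvert F \vvvert > 0$ we invoke \ref{AET.FR}(M); if $\vvvert F \vvvert = 0$ then $F = 0$ and $v = 0$ works.

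The only genuinely new point is (N), which asks for degree-one homogeneity and for $\bI$ defined on all of $X[\calT_\calC]^*$. Let $S = \{ F : \vvvert F \vvvert = 1 \}$ and take $\bI_0$ from \ref{AET.FR}(N) for the given $\veps$, restricted to $S$. Define $\bI(F) = \vvvert F \vvvert \cdot \bI_0(F/\vvvert F \vvvert)$ for $F \neq 0$ and $\bI(0) = 0$.

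Properties (a), (b) and (d) follow immediately from linearity of $\bD$ and homogeneity of $\bq$. Continuity on $X^*\setminus\{0\}$ is clear, being a composition of continuous maps.

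The main obstacle is continuity at $0$. This follows from (d): $\bq(\bI(F)) \leq (1+\veps)\bcH \vvvert F \vvvert \to 0$ as $F \to 0$, and $\bq$ is the norm of $\bE$.
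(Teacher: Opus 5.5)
Your proof is correct and is essentially the paper's: you specialize \ref{AET.FR} with $X_k = X$, $\bq_k = \bq$, and $C_k = k \cdot C_1$ to get (K)--(M), then homogenize a continuous selector defined on the unit sphere $\{F : \vvvert F \vvvert = 1\}$, with continuity at $0$ coming from the bound (d). The only difference is in how you get the selector on the sphere: you restrict the map already furnished by \ref{AET.FR}(N), whereas the paper reruns the Michael-selection construction directly on the sphere; yours is slightly more economical, and you also make explicit two points the paper leaves implicit, namely that $\vvvert \cdot \vvvert$ is a norm and that $F = 0$ must be handled separately in (M).
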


\begin{proof}
With regard to conclusions (K), (L), and (M) it suffices to apply \ref{AET.FR} with $X_k = X$ for all $k \in \N$, $\|\cdot\|_i = \|\cdot\|$ for all $i \in I$, and $C_k = k \cdot C_1$ where $C_1$ is the set considered in hypothesis (D). 
\par 
We now turn to establishing conclusion (N) whose proof is similar to the homonymous conclusion of \ref{AET.FR}.
We now indicate the differences.
In {\bf (i)} we redefine $\bF = X[\calT_\calC]^* \cap \{ F : \vvvert F \vvvert = 1 \}$ to be the unit sphere of the Banach space $X[\calT_\calC]^*$ and $\Gamma : \bF \to \calP(\bE)$ is defined by means of the formula
\begin{equation*}
\Gamma(F) = \bE \cap \{ v : \bD(v) = F \text{ and } \bq(v) < (1+\veps) \bcH \}.
\end{equation*}
Each $\Gamma(F)$ is readily convex and is non-empty, according to conclusion (M) applied with $\frac{\veps}{2}$.
The proof {\bf (ii)} that $\Gamma$ is lower semi-continuous and analogous, simpler, and left to the reader.
Steps {\bf (iii)} and {\bf (iv)} are identical and yield a continuous selector $\tilde{\bI} = \bF \to \bE$, \ie $\tilde{\bI}(F) \in \rmclos \Gamma(F)$ for all $F \in \bF$.
We then define $\bI(F) = \vvvert F \vvvert \cdot \tilde{\bI}( \vvvert F \vvvert^{-1} F)$ when $F \neq 0$ and $\bI(0) = 0$.
Conclusions (a), (b), and (d) easily follow.
The continuity of $\bI$ along convergent sequences whose limit is not zero routinely follows from that of $\tilde{\bI}$ whereas the continuity of $\bI$ along null sequences follows from (d).
\end{proof}
\section{Abstract compactness theorem}
\label{sec.CT}

\begin{Theorem}
\label{CT.1}
Assume that:
\begin{itemize}
\item $X[\calT]$ is a locally convex topological vector space whose topology is generated by a filtering family of seminorms $\la \|\cdot\|_i \ra_{i \in I}$;
\item $\lno \cdot \rno$ is a seminorm on $X$ and $\calT$-lower-semicontinuous;
\item $\la X_k \ra_k$ is a non-decreasing sequence of $\calT$-closed linear subspaces whose union is $X$;
\item $C_k = X_k \cap \{ u : \lno u \rno \leq k \}$ for each $k \in \N$;
\item $\calT \hel C_k$ is compact for each $k \in \N$;
\item $\calC = \{ C_k : k \in \N\}$.
\end{itemize}
Let $\calF \subset X[\calT_\calC]^*$ be $\beta(X^*,X)$-closed.
The following are equivalent.
\begin{enumerate}
\item[(A)] $\calF$ is $\beta(X^*,X)$-compact.
\item[(B)] $(\forall \veps > 0)(\exists i \in I)(\exists \theta > 0)(\forall x \in X_{\lceil \veps^{-1} \rceil})(\forall f \in \calF): |f(x)| \leq \theta \cdot \|x\|_i + \veps \cdot \lno x \rno$.
\end{enumerate}
\end{Theorem}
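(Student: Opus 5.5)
The plan is to characterize $\beta(X^*,X)$-compactness through Ascoli's theorem applied on each compact $C_k$. The preliminaries are as follows. By \ref{CWC.2}(C) the family $\calC$ is a fundamental system of $\calT_\calC$-bounded sets. Hence $\beta(X^*,X)$ is generated by the seminorms $p_{C_k}$, and by \ref{FD.2} the space $X^*[\beta(X^*,X)]$ is a Fr\'echet space, in particular metrizable and complete. Consider the restriction map $R_k : X^* \to C(C_k[\calT \hel C_k])$, $f \mapsto f|_{C_k}$. It is well defined by \ref{GO.1}(B) and is an isometry from $p_{C_k}$ to the sup norm. Thus $\beta(X^*,X)$ is the initial topology of the maps $R_k$, and the map $f \mapsto (f|_{C_k})_k$ embeds $X^*$ into $\prod_k C(C_k)$.

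For $(A)\Rightarrow(B)$, first note that each $R_k(\calF)$ is compact in the Banach space $C(C_k)$. Since $C_k$ is a compact Hausdorff space, Ascoli gives that $R_k(\calF)$ is equicontinuous on $C_k$ and uniformly bounded. Since $C_k - C_k \subset C_{2k}$ up to localizing, I would use equicontinuity at $0$ on $C_{2k}$ in the form used in \ref{CCC.5} (a)$\Rightarrow$(b). Fix $\veps>0$ and $k\ge\veps^{-1}$. Equicontinuity of $\calF|_{C_k}$ at $0$, together with the filtering property of the seminorms, yields $i$ and $\delta$ such that $\|x\|_i\le\delta$, $x\in C_k$ imply $|f(x)|\le\veps$ for all $f\in\calF$. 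This is the uniform version of \ref{CCC.5}(b). The homogeneity argument of \ref{CCC.5} (b)$\Rightarrow$(c) then goes through verbatim with $f$ ranging over $\calF$, since $i,\delta,\theta=\veps/\delta$ do not depend on $f$. This gives (B).

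For $(B)\Rightarrow(A)$, the argument has three steps. First, $\calF$ is bounded in each $p_{C_k}$. Apply (B) with some $\veps\le 1/k$ and note that $C_k$ is $\calT$-compact, hence $\sup_{C_k}\|x\|_i<\infty$. This gives $p_{C_k}(f)\le \theta\sup_{C_k}\|\cdot\|_i+\veps k$ uniformly in $f$. Second, $\calF|_{C_k}$ is equicontinuous on $C_k[\calT\hel C_k]$. Repeat the estimate in the proof of \ref{CCC.5} (c)$\Rightarrow$(a) with $\hat\veps=\veps/4k$: for $x,y\in C_k$ we get $|f(x)-f(y)|\le\theta\|x-y\|_i+\veps/2$ uniformly in $f$, which gives uniform equicontinuity. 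By Ascoli, each $R_k(\calF)$ is then relatively compact in $C(C_k)$. Third, since $\beta(X^*,X)$ is metrizable, it suffices to show that $\calF$ is sequentially compact. Take a sequence $\la f_n\ra_n$ in $\calF$ and extract, by a diagonal argument, a subsequence that converges uniformly on every $C_k$. It is then $\beta(X^*,X)$-Cauchy, so it converges in $X^*$ by completeness (\ref{FD.2}). Its limit lies in $\calF$ because $\calF$ is closed.

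The step I expect to need the most care is the first implication's passage from equicontinuity on the compact sets $C_k$ to a uniform $\calT$-neighbourhood estimate with a single seminorm $\|\cdot\|_i$. Ascoli gives equicontinuity at each point separately and in the relative topology. At $0\in C_k$, however, a relative $\calT\hel C_k$ neighbourhood of $0$ contains $C_k\cap\{\|x\|_i\le\delta\}$ for some $i$ and $\delta$ by the filtering hypothesis. So pointwise equicontinuity at the single point $0$ suffices, and no uniformity over the points of $C_k$ is needed.
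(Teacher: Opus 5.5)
Your proof is correct. For (B)$\Rightarrow$(A) you follow the paper's argument:
\begin{itemize}
\item equicontinuity of the restrictions to each $C_k$, obtained from (B) with $\hat{\veps}=\frac{\veps}{4k}$;
\item Ascoli on each compact $C_k$;
\item a diagonal subsequence;
\item metrizability of $\beta(X^*,X)$.
\end{itemize}
The differences are cosmetic. You prove uniform rather than pointwise boundedness. You also obtain the limit from the completeness in \ref{FD.2} instead of assembling it by hand from the $g_k$, which is the same argument.

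For (A)$\Rightarrow$(B) your route genuinely differs.
\begin{itemize}
\item The paper stays inside $X^*$. It takes a finite $\eta$-net $f_1,\dots,f_N$ of $\calF$ for the metric $d$, so that $p_{C_k}(f-f_n)<\frac{\veps}{2}$ and hence $|f(x)-f_n(x)|\leq\frac{\veps}{2}\lno x\rno$ on $X_k$. It then applies \ref{CCC.5} as a black box to the finitely many $f_n$, and uses the filtering property to take a common index $i$ and $\theta=\max_n\theta_n$.
\item You instead transport compactness to $C(C_k)$ via $R_k$. You use the converse half of Ascoli to get equicontinuity at $0$, and rerun \ref{CCC.5} (a)$\Rightarrow$(b)$\Rightarrow$(c) with $i$, $\delta$, $\theta$ independent of $f$.
\end{itemize}
Your route gives a transparent reading of the theorem. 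Condition (B) is exactly the uniform version of \ref{CCC.5}(c), namely equicontinuity at $0$ on each $C_k$, so both directions pass through Ascoli symmetrically. The price is that you must reopen the proof of \ref{CCC.5} rather than cite it. The underlying mechanism is the same finite-net argument, hidden inside the converse of Ascoli.

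Two small remarks.
\begin{itemize}
\item The detour through $C_k-C_k\subset C_{2k}$ is unnecessary. As you observe at the end, only equicontinuity at the single point $0\in C_{\lceil\veps^{-1}\rceil}$ is used.
\item When you copy the homogeneity argument, treat the case $\lno x\rno=0=\|x\|_i$ separately. Scaling $x$ in (b) forces $f(x)=0$ uniformly in $f$. Case (i) of the paper's proof of \ref{CCC.5} silently divides by zero here.
\end{itemize}
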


In the proof below, there is no restriction to assume that $\calF \neq \emptyset$.

\begin{proof}[Proof that $(A) \Rightarrow (B)$]
{\bf (i)}
Recall \ref{LCTVS.7} and \ref{LCTVS.11} that the topology $\beta(X^*,X)$ is generated by the sequence of seminorms $\la p_{C_k} \ra_k$ and, therefore, is metrized by the metric $d$, where $d(f,g) = \sum_k \frac{p_{C_k}(f-g)}{2^k \cdot (1+p_{C_k}(f-g))}$, $f,g \in X[\calT_\calC]^*$.
\par 
{\bf (ii)}
Let $\veps > 0$.
Choose a positive integer $k$ such that $\lceil \veps^{-1} \rceil \leq k$.
Choose $\eta > 0$ small enough for $\eta \cdot 2^k < (1-\eta \cdot 2^k) \cdot \frac{\veps}{2}$.
The set $\calF$ being $\beta(X^*,X)$-compact and non-empty, there are $f_1,\ldots,f_N \in \calF$ such that for all $f \in \calF$ there exists $n \in \{1,\ldots,N\}$ with $d(f,f_n) < \eta$.
In particular, $\frac{p_{C_k}(f-f_n)}{2^k \cdot (1+p_{C_k}(f-f_n))} < \eta$ from which follows that $p_{C_k}(f-f_n) < \frac{\veps}{2}$, by the choice of $\eta$.
By definition of $p_{C_k}$, we infer that $|f(x)-f_n(x)| \leq \frac{\veps}{2} \cdot \lno x \rno$ for all $x \in X_k$.
\par 
{\bf (iii)}
For each $n \in \{1,\ldots,N\}$, it ensues from theorem \ref{CCC.5} that there exist $i_n \in I$ and $\theta_n > 0$ such that $|f_n(x)| \leq \theta_n \cdot \|x\|_{i_n} + \frac{\veps}{2} \cdot \lno x \rno$ for all $x \in X_{\lceil \veps^{-1} \rceil}$.
Since $\la \|\cdot\|_i \ra_{i \in I}$ is filtering, there exists $i \in I$ such that $\max \{ \|x\|_{i_n} : n =1,\ldots,N\} \leq \|x\|_i$ for all $x \in X$.
We also define $\theta = \max \{ \theta_n : n =1,\ldots,N \}$.
\par 
{\bf (iv)}
Let $x \in X_{\lceil \veps^{-1} \rceil}$ and $f \in \calF$.
Choose $n \in \{1,\ldots,N\}$ corresponding to $f$ as in {\bf (ii)}.
Upon noticing that $x \in X_k$, we infer from {\bf (ii)} and {\bf (iii)} that
\begin{equation*}
|f(x)| \leq  |f_n(x)| + |f_n(x)-f(x)| \leq \theta_n \cdot \|x\|_{i_n} + \frac{\veps}{2} \cdot \lno x \rno + \frac{\veps}{2} \cdot \lno x \rno \leq \theta \cdot \|x\|_i + \veps \cdot \lno x \rno.
\end{equation*}
\end{proof}

\begin{proof}[Proof that $(B) \Rightarrow (A)$]
{\bf (i)}
Fist, we show that $\calF$ is pointwise bounded, \ie $\sup \{ |f(x)| : f \in \calF \} < +\infty$ for all $x \in X$.
Let $x \in X \setminus \{0\}$.
There exists $k \in \N$ such that $x \in X_k$.
Choose $\veps > 0$ such that $k \leq \veps^{-1}$.
Let $i \in I$ and $\theta > 0$ be associated with $\veps$ in (B).
For all $f \in \calF$ we have $|f(x)| \leq \theta \cdot \|x\|_i + \veps \cdot \lno x \rno$, since $x \in X_{\lceil \veps^{-1} \rceil}$.
\par 
{\bf (ii)}
Fix a positive integer $k$.
We consider $\calF_k = \{ f|_{C_k} : f \in \calF \}$. 
For each $f \in \calF$, $f|_{C_k}$ is $\calT_\calC \hel C_k$-continuous, hence, $\calT \hel C_k$-continuous, by \ref{GO.1}(B).
In other words, $\calF_k \subset C(C_k)$, where $C_k$ is equipped with its Hausdorff topology $\calT \hel C_k$.
We now show that $\calF_k$ is equicontinuous, \ie
\begin{equation*}
(\forall \veps > 0)(\forall x \in C_k)(\exists V_x \in \calV_x(\calT \hel C_k))(\forall y \in V_x)(\forall f \in \calF): |f(x)-f(y)| < \veps,
\end{equation*}
where $\calV_x(\calT \hel C_k)$ is the set of $\calT \hel C_k$-neighbourhoods of $x$.
To this end, fix $\veps > 0$, notice that there is no restriction to assume that $k \leq \frac{1}{\veps}$, and define $\hat{\veps} = \frac{\veps}{4 \cdot k}$.
Let $i \in I$ and $\theta > 0$ be associated with $\hat{\veps}$ in (B).
Fix $x \in C_k$, define $V_x = C_k \cap \left\{ y : \| y-x \|_i < \frac{\veps}{2 \cdot \theta} \right\}$ and observe that this is a $\calT \hel C_k$-neighbourhood of $x$.
If $y \in V_x$ and $f \in \calF$ then $y-x \in X_k$, whence, $y-x \in X_{\rceil \veps^{-1} \lceil}$, so that, by (B), we have
\begin{equation*}
|f(y)-f(x)| = |f(y-x)| \leq \theta \cdot \|y-x\|_i + \hat{\veps} \cdot \lno y-x \rno \leq \theta \cdot \frac{\veps}{2 \cdot \theta} + \hat{\veps} \cdot 2k < \veps.
\end{equation*}
\par 
As $\calT \hel C_k$ is a compact Hausdorff topological space, by assumption, Ascoli's theorem \cite[A5]{RUDIN} applies in $C(C_k)$.
Thus, each sequence in $\calF_k$ admits a subsequence that uniformly converges to some member of $C(C_k)$.
\par 
{\bf (iii)}
Let $\la f_n \ra_n$ be a sequence in $\calF$.
Applying {\bf (iii)} inductively on $k$ and referring to a diagonal argument we find a subsequence $\la f_{\psi(n)} \ra_n$ of $\la f_n \ra_n$ and a sequence $\la g_k \ra_k$ such that $g_k \in C(C_k)$ and $\la f_{\psi(n)}|_{C_k} \ra_{n}$ converges uniformly to $g_k$ on $C_k$ for all $k$.
In particular, $g_j = g_k|_{C_j}$ whenever $j \leq k$ so that $f : X \to \R$ is well-defined by the requirement that $f(x) = g_k(x)$ if $x \in X_k$.
The pointwise convergence of $\la f_{\psi(n)} \ra_n$ to $f$ readily implies that $f$ is linear.
Moreover, since $f|_{C_k} = g_k$ is $\calT \hel C_k$-continuous for all $k$, it follows from definition \ref{EUL.3} that $f$ is $\calT_\calC$-continuous.
Finally, we observe that $f$ is the limit of $\la f_{\psi(n)} \ra_n$ with respect to the $\beta(X^*,X)$ topology.
Indeed, $p_{C_k}(f-f_{\psi(n)}) = \| f|_{C_k} - f_{\psi(n)}|_{C_k}\|_{\infty} = \| g_k - f_{\psi(n)}|_{C_k}\|_{\infty}$.
\par 
{\bf (iv)}
It follows from {\bf (iii)} that $\calF$ is sequentially $\beta(X^*,X)$-compact.
Upon recalling that $\beta(X^*,X)$ is metrizable, this completes the proof.
\end{proof}
\section{One example: The divergence of continuous vector fields}
\label{sec.OE}

\begin{Empty}[The localized topology $\calM_{\TV}$]
\label{OE.1}
Here, $m \geq 2$ is an integer.
We introduce the following notation:
\begin{itemize}
\item $X = BV_c(\Rm)$ is the vector space consisting of equivalence classes (with respect to equality\footnote{$\ssfL^m$ is the Lebesgue measure in $\Rm$.} $\ssfL^m$-a.e.) of functions of bounded variation $u : \Rm \to \R$ such that the support of $\ssfL^m \hel u$ -- which we abbreviate as $\rmspt u$ -- is compact;
\item $X_k = BV_c(\Rm) \cap \{ u : \rmspt u \subset B(0,k) \}$, $k \in \N$;
\item $\calM$ is the topology of convergence in the mean on $X$, \ie the (locally convex vector) topology corresponding to the norm $\|u\| = \|u\|_{L_1}$;
\item $\lno u \rno = \|Du\|(\Rm)$ is the total variation of $u \in BV_c(\Rm)$;
\item $C_k = X_k \cap \{ u : \lno u \rno \leq k \} = BV_c(\Rm) \cap \{ u : \rmspt u \subset B(0,k) \text{ and } \|Du\|(\Rm) \leq k \}$, $k \in \N$;
\item $\TV = \{ C_k : k \in \N \}$.
\end{itemize}
\par 
For functions of bounded variation we refer to \cite{EVANS.GARIEPY.2}.
For instance, $\|Du\|(\Rm)$ is the real number defined in \cite[definition 5.1(i)]{EVANS.GARIEPY.2} or \cite[remarks (ii) and (iii) p.197]{EVANS.GARIEPY.2}.
It is clear from the definition that $u \mapsto \lno u \rno$ is a seminorm on $BV_c(\Rm)$.
It is lower-semicontinuous with respect to the topology $\calM$, according to \cite[theorem 5.2 p.199]{EVANS.GARIEPY.2}.
Since $X_k$ is clearly $\calM$-closed, it follows that $C_k$ is a $\calM$-closed subset of $X$ for each $k \in \N$.
Consequently, $\TV$ is a localizing family (recall, \eg the preliminaries to the proof of \ref{CCC.4}).
Therefore, the localized topology $\calM_{\TV}$ is defined on $BV_c(\Rm)$, by \ref{EUL.4}.
\begin{enumerate}
\item[(A)] {\it A sequence $\la u_j \ra_j$ in $BV_c(\Rm)$ converges to $u \in BV_c(\Rm)$ with respect to the topology $\calM_{\TV}$ if and only if the following three conditions are satisfied:
\begin{enumerate}
\item[(i)] There is a compact set $K \subset \Rm$ such that $\rmspt u_j \subset K$ for all $j$;
\item[(ii)] $\lim_j \|u-u_j\|_{L_1} = 0$;
\item[(iii)] $\sup_j \|Du_j\|(\Rm) < + \infty$.
\end{enumerate}
}
\end{enumerate}
\par 
{\it Proof.}
This a reformulation of \ref{CCC.1}(A) or \ref{CCC.4}(A).\cqfd
\par 
Note that this differs from the notion of ``strict convergence'' considered in \cite[definition 3.14]{AMBROSIO.FUSCO.PALLARA} since, there, condition (iii) is strengthened to $\lim_j \|Du_j\|(\Rm) = \|Du\|(\Rm)$.
\par 
\begin{enumerate}
\item[(B)] {\it Each $C_k$ is $\calM$-compact.}
\end{enumerate}
\par 
{\it Proof.}
Fix a positive integer $k$.
Abbreviate $U_k = \Rm \cap \{ x : |x| < k+1 \}$ which is open, bounded, has Lipschitz boundary, and $\|Du\|(\Rm) = \|Du\|(U_k)$ for all $u \in X_k$.
Moreover, $\|u\|_{L_1} \leq \bc(k,m) \cdot \|Du\|(\Rm)$ for all $u \in X_k$, by the Poincar\'e \cite[theorem 5.10(i) p.216]{EVANS.GARIEPY.2} and H\"older inequalities.
Accordingly, $C_k$ is $\calM$-compact, by the $BV$-compactness theorem \cite[theorem 5.5]{EVANS.GARIEPY.2} applied in $U_k$.\cqfd
\par 
\begin{enumerate}
\item[(C)] {\it A subset of $BV_c(\Rm)$ is $\calM_{\TV}$-bounded if and only if it is contained in some $C_k$.}
\end{enumerate}
\par 
{\it Proof.}
This is a reformulation of \ref{CWC.2}(C).\cqfd
\begin{enumerate}
\item[(D)] {\it $BV_c(\Rm)[\calM_{\TV}]$ is semireflexive.}
\end{enumerate}
\par 
{\it Proof.}
Each $C_k$ is clearly absolutely convex.
Thus, it follows from (B) and (C) above that theorem \ref{SD.5} applies.\cqfd
\end{Empty}

\begin{Empty}[Strong charges]
\label{OE.2}
Following the vocabulary of \cite{DEP.PFE.06b}, an $\calM_{\TV}$-continuous linear form $F : BV_c(\Rm) \to \R$ is called a {\em strong charge}. 
Thus, strong charges are the members of the dual space $BV_c(\Rm)[\calM_{\TV}]^*$ which we abbreviate as $\SCH(\Rm)$.
We let $\beta$ be the (locally convex) strong topology on $\SCH(\Rm)$, \ie the topology of uniform convergence on $\calM_{\TV}$-bounded subsets of $BV_c(\Rm)$.
It follows from \ref{OE.1}(C) that $\beta$ is associated with the filtering sequence of seminorms $\la \vvvert \cdot \vvvert_k \ra_k$ defined by
\begin{equation*}
\begin{split}
\vvvert F \vvvert_k & = \sup \left\{ |\la u , F \ra| : u \in C_k \right\} \\
& = \sup \left\{ |\la u , F \ra| : \rmspt u \subset B(0,k) \text{ and } \|Du\|(\Rm) \leq k \right\}.
\end{split}
\end{equation*}
\par 
\begin{enumerate}
\item[(A)] {\it Let $F : BV_c(\Rm) \to \R$ be linear. The following are equivalent:
\begin{enumerate}
\item[(a)] $F$ is a strong charge.
\item[(b)] $(\forall \veps > 0)(\exists \delta > 0)(\forall u \in BV_c(\Rm))$:
$$
\big[ \rmspt u \subset B(0,\lceil \veps^{-1} \rceil) \text{ and } \|u\|_{L_1} \leq \delta \text{ and } \|Du\|(\Rm) \leq \veps^{-1} \big] \Rightarrow |\la u,F \ra| \leq \veps.
$$
\item[(c)] $(\forall \veps > 0)(\exists \theta > 0)(\forall u \in BV_c(\Rm))$:
$$
\rmspt u \subset B(0,\lceil \veps^{-1} \rceil) \Rightarrow |\la u , F \ra| \leq \theta \cdot \|u\|_{L_1} + \veps \cdot \|Du\|(\Rm).
$$
\end{enumerate}
}
\end{enumerate}
\par 
{\it Proof.}
This is a reformulation of \ref{CCC.5}.\cqfd
\par 
Together with \ref{OE.1}(D), the following shows that $BV_c(\Rm)$ ``is'' the dual of a Fr\'echet space, namely is identified with the dual of $\SCH(\Rm)[\beta]$ via the evaluation map.
\begin{enumerate}
\item[(B)] {\it $\SCH(\Rm)[\beta]$ is a Fr\'echet space.}
\end{enumerate}
\par 
{\it Proof.}
This follows from \ref{FD.2} and \ref{OE.1}(C).\cqfd
\begin{enumerate}
\item[(C)] {\it If $v \in C(\Rm;\Rm)$ then the linear functional $\rmdiv v : BV_c(\Rm) \to \R$ defined by the formula
$$
\la u , \rmdiv \ra = - \sum_{i=1}^m \int_{\Rm} v_i \, d\partial_iu
$$
is a strong charge.
}
\end{enumerate}
\par 
{\it Proof.}
In view of (A)(c) above, the proof is similar to that given in the introduction in the vicinity of the displayed inequality \eqref{eq.INTRO.3}.\cqfd
\par 
\begin{enumerate}
\item[(D)] {\it If $f \in L_{m,\rmloc}(\Rm)$ then the linear functional $F_f : BV_c(\Rm) \to \R$ defined by $\la u , F_f \ra = \int_{\Rm} f \cdot u \,d\ssfL^m$ is a strong charge.}
\end{enumerate}
\par 
{\it Proof.}
We first note that $F_f$ is defined, \ie $f \cdot u \in L_1(\Rm)$ for all $u \in BV_c(\Rm)$.
Indeed, letting $1^*$ be the H\"older conjugate of $m$ we have $u \in L_{1^*}(\Rm)$, by Poincar\'e's inequality \cite[theorem 5.10(i)]{EVANS.GARIEPY.2}, hence, $f \cdot u = (f \cdot \ind_{\rmspt u}) \cdot u$ is summable, by H\"older's inequality, since $f \cdot \ind_{\rmspt u} \in L_m(\Rm)$.
\par 
Let $\veps > 0$ and assume that $\rmspt u \subset B(0,\lceil \veps^{-1} \rceil) =: B$.
There exists $\theta > 0$ such that $\int_{B \cap \{ |f| \geq \theta\}} |f|^m \,d\ssfL^m < \veps$.
Thus,
\begin{multline*}
\left| \int_{\Rm} f \cdot u \, d\ssfL^m \right| \leq \left| \int_{B \cap \{ |f| < \theta \}} f \cdot u \, d\ssfL^m \right| + \left| \int_{B \cap \{ |f| \geq \theta\}} f \cdot u \, d\ssfL^m \right| \\ < \theta \cdot \|u\|_{L_1} + \left( \int_{B \cap \{ |f| \geq \theta\}} |f|^m \,d\ssfL^m \right)^{\frac{1}{m}} \cdot \left( \int_{\Rm} |u|^{1^*} \, d\ssfL^m\right)^{\frac{1}{1^*}} \\ \leq \theta \cdot \|u\|_{L_1} + \veps^{\frac{1}{m}} \cdot \bc(m) \cdot \|Du\|(\Rm),
\end{multline*}
where we have used Poincar\'e's inequality again.
The conclusion follows from (A)(c) above.\cqfd
\end{Empty}

The vector space $C(\Rm;\Rm)$ is equipped with its structure of a Fr\'echet space associated with the increasing sequence $\la \|\cdot\|_{\infty,B(0,k)}\ra_k$ of seminorms defined by
$$
\|v\|_{\infty,B(0,k)} = \sup \{ |v(x)|_2 : x \in B(0,k) \}.
$$
The following can be interpreted as an existence and optimal regularity result for the PDE $\rmdiv v = F$.

\begin{Theorem}
\label{OE.3}
The bounded linear operator $\rmdiv : C(\Rm;\Rm) \to \SCH(\Rm)$ is surjective.
Moreover, for every $k \in \N$ and $\veps > 0$ there exists a continuous map 
$$
\bv : \SCH(\Rm) \cap \{ F : \vvvert F \vvvert_k > 0 \} \to C(\Rm;\Rm)
$$ 
such that $\rmdiv(\bv(F)) = F$ and $\|\bv(F)\|_{\infty,B(0,k)} \leq (1+ \veps) \cdot \vvvert F \vvvert_k$ for all $F$.
\end{Theorem}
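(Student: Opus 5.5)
The plan is to apply the abstract existence theorem \ref{AET.FR} with $\bE = C(\Rm;\Rm)$, $\bq_k = \|\cdot\|_{\infty,B(0,k)}$, $X = BV_c(\Rm)$ with $\calT = \calM$ (a single norm, so $I$ is a singleton), $X_k$, $\lno\cdot\rno$ and $C_k$ as in \ref{OE.1}, and $\bD = \rmdiv$ given by
\[
\la u,\rmdiv v\ra = -\sum_{i=1}^m \int_{\Rm} v_i\,d\partial_i u .
\]
Conclusions (L) and (N) then give exactly the surjectivity and the continuous right inverse $\bv = \bI$. The one point to watch is the constant: the statement has $(1+\veps)\vvvert F\vvvert_k$ with $\vvvert\cdot\vvvert_k$ the seminorm of \ref{OE.2}, i.e.\ a supremum over $C_k$. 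This equals $k$ times the seminorm of \ref{AET.FR}. So I must check that $\bcH(k)$ can be taken $\leq k$, or else renormalize (for instance by choosing $\bq_k = k^{-1}\|\cdot\|_{\infty,B(0,k)}$), so that the constants match. I will fix this bookkeeping once the hypotheses are verified.

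The hypotheses are checked as follows.
\begin{itemize}
\item (A) and (B) are standard: $C(\Rm;\Rm)$ is Fréchet, and $X_k$ is $L_1$-closed.
\item (D) is \ref{OE.1}(B).
\item (F) is \ref{OE.2}(C), via \ref{OE.2}(A)(c).
\item (G) follows from $|\int v_i\,d\partial_i u| \leq \|v\|_{\infty,B(0,k)}\,\|Du\|(\Rm)$ when $\rmspt u \subset B(0,k)$, since $\|Du\|$ is carried by $\rmspt u$. This gives $\bcG(k)=1$ after a possible closed-versus-open ball adjustment.
\item (C), that $\lno\cdot\rno$ is a norm: if $\|Du\|(\Rm)=0$ then $u$ is a.e.\ constant, and compact support forces $u=0$.
\end{itemize}

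For (H) I use the polar decomposition $Du = \sigma\,\|Du\|$ with $|\sigma|=1$ $\|Du\|$-a.e. By Lusin's theorem and Tietze extension, I pick continuous $v$ with $|v|\leq 1$ on $B(0,k)$ and $v \approx -\sigma$ off a set of small $\|Du\|$-measure. This yields $\lno u\rno = \sup\{|\la u,\rmdiv v\ra| : \bq_k(v)\leq 1\}$, so $\bcH(k)=1$.

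I expect (I) to be the main obstacle. Given $u \notin X_k$, there is a point $x_0 \in \rmspt u$ with $|x_0| \geq k$, and in fact, since the support is closed, points of the support lie outside any slightly larger ball $\bar B(0,k)$ modulo boundary effects. I would take $v = \nabla\vphi$ with $\vphi \in \calD(\Rm)$ supported in $\Rm\setminus \bar B(0,k)$. Then $\bq_k(v)=0$, and integrating by parts, $\la u,\rmdiv \nabla\vphi\ra = \int u\,\triangle\vphi\,d\ssfL^m$.

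If this vanished for all such $\vphi$, then $u$ would be harmonic on $\Rm\setminus\bar B(0,k)$. For $m\geq2$ this region is connected and contains a neighbourhood of infinity where $u$ vanishes, so unique continuation gives $u=0$ there. That contradicts $u\notin X_k$, provided I handle the boundary sphere: $u\notin X_k$ should mean $\rmspt u \not\subset B(0,k)$. If the ball in the definition of $X_k$ is open, the support may meet only the sphere $|x|=k$, which has null measure. I would resolve this either by working with closed balls throughout or by noting that $\ssfL^m$-a.e.\ defined $u$ vanishing off $\bar B(0,k)$ has support in $\bar B(0,k)$, and then adjusting the convention for $X_k$ (for instance using closed balls). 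I expect this detail, together with matching the constant $(1+\veps)$ to the normalization of $\vvvert\cdot\vvvert_k$, to need the most care.
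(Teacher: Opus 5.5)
Your proposal is correct and follows the paper's proof: both apply \ref{AET.FR} with exactly these data and verify (A)--(I), and the only differences are tactical. You check (H) via polar decomposition and Lusin--Tietze, where the paper simply invokes the definition of total variation; you check (I) via harmonicity of $u$ off the closed ball $B(0,k)$ plus unique continuation, where the paper uses the constancy theorem on balls in $\Rm \setminus B(0,k)$ plus connectedness. Your normalization worry is already settled by your own computation: the $\vvvert\cdot\vvvert_k$ of \ref{OE.2} is $k$ times the seminorm of \ref{AET.FR} and $\bcH(k)=1\leq k$, so \ref{AET.FR}(N)(d) gives the stated bound with room to spare; and the paper's convention that $B(0,k)$ is a closed ball is exactly what makes (I) hold.
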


\begin{proof}
We shall show that this is a consequence of the abstract existence theorem \ref{AET.FR} applied with $\bE = C(\Rm;\Rm)$, $\bq_k = \|\cdot\|_{\infty,B(0,k)}$, $X = BV_c(\Rm)$, $\calT_\calC = \calM_{\TV}$, $\lno \cdot \rno = \|D \cdot\|(\Rm)$, and $\bD = \rmdiv$. 
\par 
Hypotheses (A), (B), and (C) have already been checked.
The important hypothesis (D) is \ref{OE.1}(B).
The map $(u,v) \mapsto \la u , \rmdiv v\ra$ is linear in both $u$ and $v$, which is hypothesis (E).
Hypothesis (F) is \ref{OE.2}(C).
Hypothesis (G) readily holds with $\bcG(k)=1$, indeed, if $v \in C(\Rm;\Rm)$, $u \in BV_c(\Rm)$, and $\rmspt u \subset B(0,k)$ then 
$$
|\la u , \rmdiv v\ra| \leq \|v\|_{\infty,B(0,k)} \cdot \|Du\|(\Rm).
$$
Hypothesis (H) holds with $\bcH(k)=1$, according to the definition of the total variation.
Indeed, if $u \in BV_c(\Rm)$ and $\rmspt u \subset B(0,k)$ then
\begin{equation*}
\begin{split}
\|Du\|(\Rm) & = \sup \{ | \la u , \rmdiv v \ra| : v \in C^\infty_c(\Rm;\Rm) \text{ and } \|v\|_\infty \leq 1 \} \\
& = \sup \{ | \la u , \rmdiv v \ra| : v \in C(\Rm;\Rm) \text{ and } \|v\|_{\infty,B(0,k)} \leq 1 \},
\end{split}
\end{equation*}
by smoothing.
It remains to check that hypothesis (I) is satisfied. 
Let $u \in BV_c(\Rm)$ and $k \in \N$.
Recall that $B(0,k)$ is a closed ball.
Assume that for every open ball $U \subset \Rm$ such that $(\rmclos U) \cap B(0,k) = \emptyset$ we have $\|Du\|(U) = 0$.
Then for such that $U$ there exists $c_U \in \R$ such that $u$ equals $c_U$ a.e. in $U$, by the constancy theorem for distributions applied in $U$.
Since $\Rm \setminus B(0,k)$ is connected, we infer that all $c_U$ are equal to some $c \in \R$.
As $\rmspt u$ is compact, it ensues that $c = 0$.
Therefore, $\rmspt u \subset B(0,k)$, \ie $u \in X_k$ showing that hypothesis (I) holds.
\par 
The first sentence in the sentence of the theorem is conclusion (L) of theorem \ref{AET.FR} and the second sentence is conclusion (N) of \ref{AET.FR}.
\end{proof}

\begin{Empty}[Awkwardness of $\calM_{\TV}$]
\label{OE.4}
If $Y$ is any Hausdorff topological space and $f : BV_c(\Rm) \to Y$ is sequentially $\calM_{\TV}$-continuous then $f$ is $\calM_{\TV}$ continuous, by (A) below and \ref{PR.2}(E), yet if $A \subset BV_c(\Rm)$ is $\calM_{\TV}$-dense then it may not be sequentially $\calM_{\TV}$-dense, by (B) below and \ref{PR.3}(C).
\begin{enumerate}
\item[(A)] {\it $BV_c(\Rm)[\calM_{\TV}]$ is sequential.}
\end{enumerate}
\par 
{\it Proof.} 
This follows from \ref{CWC.6}(A), \ref{OE.1}(B), and the fact that each topology $\calM \hel C_k$ is sequential (being metrizable).\cqfd
\begin{enumerate}
\item[(B)] {\it $BV_c(\Rm)[\calM_{\TV}]$ is not Fr\'echet-Urysohn.}
\end{enumerate}
\par 
{\it Proof.}
We show this very explicitly by means of \ref{PR.3}(D) (which applies, by (A) above).
Let $\la r_j \ra_j$ be any null-sequence of positive real numbers, $\theta_{j,k} = r_j^{1-m} \cdot k$, and $u_{j,k} = \theta_{j,k} \cdot \ind_{B(0,r_j)}$.
Observe\footnote{$\balpha(m)$ is the $\ssfL^m$-measure of the unit Euclidean ball in $\Rm$.} that $\|Du_{j,k}\|(\Rm) = m \cdot \balpha(m) \cdot k$ and $\|u_{j,k}\|_{L_1} = \balpha(m) \cdot r_j \cdot k$ for all $(j,k) \in \N \times \N$.
Clearly all $u_{j,k}$ are supported in a common compact set.
Therefore, $(\calM_{\TV}) \lim_j u_{j,k} = 0$ for all $k \in \N$, by \ref{OE.1}(A).
Yet, for all pairs of increasing functions $j : \N \to \N$ and $k : \N \to \N$ the sequence $\la u_{j(n),k(n)} \ra_n$ is not $\calM_{\TV}$-convergent, by \ref{OE.1}(A), since $\sup_n \|Du_{j(n),k(n)}\|(\Rm) = + \infty$.\cqfd
\begin{enumerate}
\item[(C)] {\it $BV_c(\Rm)[\calM_{\TV}]$ is not barrelled.}
\end{enumerate}
\par 
{\it Proof.}
We consider the bounded Borel-measurable vector field $v : \Rm \to \Rm$ defined by
\begin{equation*}
v(x) = \begin{cases}
\frac{x}{|x|_2} & \text{if } x \neq 0 \\
0 & \text{if } x = 0.
\end{cases}
\end{equation*} 
Note that $F : BV_c(\Rm) \to \R$ is well-defined by means of the formula
\begin{equation*}
F(u) = - \sum_{i=1}^m \int_{\Rm} v_i \, d\partial_i u .
\end{equation*}
Let $\la \phi_\veps \ra_{\veps > 0}$ be a (compactly supported) smooth approximation of the identity, $\veps_j \downarrow 0$ as $j \uparrow \infty$, and $v_j = \phi_{\veps_j}*v \in C(\Rm;\Rm)$.
Let $F_j = \rmdiv v_j \in \SCH(\Rm)$ (recall \ref{OE.2}(C)), $j \in \N$, and observe that $\la u , F_j \ra \to F(u)$ as $j \to \infty$ for all $u \in BV_c(\Rm)$, by the dominated convergence theorem, since $\|v_j\|_\infty \leq \ind_{\Rn} \in L_1(\|Du\|)$, $v_j(x) \to v(x)$ as $j \to \infty$ for all $x \neq 0$, and $\|Du\|(\{0\})=0$ (as $\|Du\| \ll \ssfH^{m-1}$, see \eg \cite[lemma 3.76 p.170]{AMBROSIO.FUSCO.PALLARA}).
In particular, $F$ is linear.
In order to complete the proof, in view of \ref{PR.8}(C), it suffices to show that $F$ is not a strong charge.
With the same notation as in the proof of (B) above, we observe that $\lim_j u_{j,1} = 0$ with respect to $\calM_{\TV}$, yet $F(u_{j,1}) = \theta_{j,1} \int_{\rmbdry B(0,r_j)}  \frac{x}{|x|_2} \ip \vec{n}_{\rmbdry B(0,r_j)}(x) \, d\ssfH^{m-1}(x) = m \cdot \balpha(m) \not \to 0$ as $j \to \infty$.\cqfd
\begin{enumerate}
\item[(D)] {\it $BV_c(\Rm)[\calM_{\TV}]$ is not bornological.}
\end{enumerate}
\par 
{\it Proof.}
Let $F$ be as in the proof of (C) above and recall that $F$ is linear and not $\calM_{\TV}$-continuous.
In view of \ref{PR.13}(B) applied to the seminorm $|F|$, if suffices to show that $|F|$ sends $\calM_{\TV}$-bounded subsets $B$ of $BV_c(\Rm)$ to bounded subsets of $\R$.
By \ref{OE.1}(B), it is enough to prove this for $B = C_k$.
If $u \in C_k$ then 
\begin{equation*}
|F(u)| = \left| \int_{\Rm} v \ip d(Du) \right| \leq \|v\|_\infty \cdot \|Du\|(\Rm) \leq k. 
\end{equation*}\cqfd
\end{Empty}

\begin{Empty}[Strong charges as distributions]
\label{OE.5}
At the beginning of the introduction to this paper, we considered $\rmdiv v$, $v \in C(\Rm;\Rm)$, as a distribution satisfying a certain continuity property \ref{eq.INTRO.3}.
In the course of proving theorem \ref{OE.3} we had to replace the domain $\calD(\Rm)$ of $\rmdiv v$ by $BV_c(\Rm)$ in order to gain a compactness property that was instrumental in proving the semireflexivity theorem \ref{OE.1}(D) allowing, in turn, for the abstract existence theorem \ref{AET.FR} to apply.
Here, we address the question whether the final result \ref{OE.3} can be formulated in terms of distributions only, forgetting altogether about $BV_c(\Rm)$.
In order to state the positive answer we let $\widetilde{\SCH}(\Rm)$ be the set of linear forms $F : \calD(\Rm) \to \R$ satisfying the following condition:
\begin{equation*}
(\forall \veps > 0)(\exists \theta > 0)(\forall \vphi \in \calD(\Rm)):
\rmspt \vphi \subset B(0,\lceil \veps^{-1} \rceil) \Rightarrow |\la \vphi , F \ra | \leq \theta \cdot \|\vphi\|_{L_1} + \veps \cdot \| \nabla \vphi \|_{L_1}.
\end{equation*}
\begin{enumerate}
\item[(A)] {\it $\widetilde{\SCH}(\Rm)$ is a vector space of which each member is a distribution in $\Rm$.}
\end{enumerate}
\par 
{\it Proof.}
Trivial.\cqfd
\begin{enumerate}
\item[(B)] {\it The map $\rmrestr : \SCH(\Rm) \to \widetilde{\SCH}(\Rm) : F \mapsto F|_{\calD(\Rm)}$ is a linear bijection.}
\end{enumerate}
\par 
{\it Proof.}
{\bf (i)}
We note that $\calD(\Rm) \subset BV_c(\Rm)$ and that $\|D\vphi\|(\Rm) = \|\nabla \vphi\|_{L_1}$ for all $\vphi \in \calD(\Rm)$.
Therefore, $\rmrestr$ is well-defined.
It is readily linear.
\par 
{\bf (ii)}
Here, we observe that $\calD(\Rm)$ is uniformly sequentially $\calM_{\TV}$-dense in $BV_c(\Rm)$, recall \ref{FD.5}.
We can take $\gamma(k)=k+1$ and $\Gamma = 1$ in the definition.
Indeed, let $\la \phi_{\veps} \ra_{\veps > 0}$ be a smooth approximation of the identity with $\rmspt \phi_{\veps} \subset B(0,\veps)$ and $\veps_j \downarrow 0$ as $j \uparrow \infty$.
Assume that $\veps_0 \leq 1$ so that $\rmspt \phi_{\veps_j} \subset B(0,1)$ for all $j$.
Given $u \in BV_c(\Rm)$ and $k \in \N$ such that $\rmspt u \subset B(0,k)$ we define $u_j = \phi_{\veps_j} * u$ and we note that $u_j \in \calD(\Rm)$, $\rmspt u_j \subset B(0,k+1)$, $\|u-u_j\|_{L_1} \to 0$ as $j \to \infty$, and $\|Du_j\|(\Rm) \leq \|Du\|(\Rm)$ for all $j$ (the latter follows from the definition of total variation and properties of convolution product). 
\par 
{\bf (iii)}
It follows from {\bf (ii)} and theorem \ref{FD.6} that each $F \in \widetilde{\SCH}(\Rm)$ admits a linear extension $\hat{F} : BV_c(\Rm) \to \R$ which is a strong charge.
We note that if $\check{F}$ is any other strong charge extending $F$ then $\check{F}=\hat{F}$, since they coincide on $\calD(\Rm)$ which is $\calM_{\TV}$-dense (recall \ref{FD.5}).
Define $\rmext (F) \in \SCH(\Rm)$ to be the unique $\calM_{\TV}$-continuous linear extension of $F$.
It is plain that $\rmrestr \circ \rmext = \rmid_{\widetilde{\SCH}}$.
Moreover, by uniqueness, $\rmext \circ \rmrestr = \rmid_{\SCH}$.
It is now trivial that $\rmrestr$ is bijective and $\rmext$ is linear.\cqfd
\par
We then recover the main result of \cite{DEP.PFE.06b}:
\begin{enumerate}
\item[(C)] {\it Let $F$ be a distribution in $\Rm$. The following are equivalent.
\begin{enumerate}
\item[(a)] $F \in \widetilde{\SCH}(\Rm)$.
\item[(b)] There exists $v \in C(\Rm;\Rm)$ such that $F = \rmdiv v$.
\end{enumerate}
}
\end{enumerate}
\par 
{\it Proof.}
This is an immediate consequence of theorem \ref{OE.3} and (B) above.\cqfd
\end{Empty}

\appendix
\numberwithin{Theorem}{section}

\section{Sequential vs. Fr\'echet-Urysohn topological spaces}
\label{app.A}

\begin{Empty}[Topological spaces]
Our notation for a topological space is $X[\calT]$ where $X$ is the underlying set and $\calT$ is the topology.
When only one topology is at stakes (less often than not) we might write $X$ instead of $X[\calT]$.
All topological spaces considered in this paper are Hausdorff except for a little while in subsection \ref{sec.EUL} until it is proved that $\calS$ is actually Hausdorff.
In case $Y \subset X$, recall our notation \ref{EUL.1} $\calT \hel Y$ for the subspace topology of $Y$.
The closure (resp. interior) of $A \subset X$ is denoted $\rmclos_\calT A$ (resp. $\rmint_\calT A$) or simply $\rmclos A$ (resp. $\rmint A$) when no confusion can occur.
\end{Empty}

\begin{Empty}[Sequential Spaces]
\label{PR.2}
Let $X$ be a Hausdorff topological space.
A set $A \subset X$ is called {\em sequentially closed} iff each sequence in $A$ that is convergent in $X$ converges to a point of $A$. 
A set $A \subset X$ is called {\em sequentially open} iff each sequence in $X$ converging to a point of $A$ is eventually in $A$. 
\begin{enumerate}
\item[(A)] {\it If $A \subset X$ is closed (resp. open) then it is sequentially closed (resp. sequentially open).}
\end{enumerate}
\par 
{\it Proof.}
Trivial.\cqfd
\par 
The converse is false in general. 
For instance, the unit sphere $A = \ell_1(\N) \cap \{ x : \|x\|_1 = 1 \}$ is sequentially weakly closed (\ie it is sequentially $\sigma(\ell_1,\ell_\infty)$-closed), since it is closed and $\ell_1(\N)$ has the Schur property \cite[Theorem 2.3.6]{ALBIAC.KALTON.2nd}, but not weakly closed, since any weak neighbourhood of the origin contains a finite-codimensional subspace, hence, intersects $A$.
\begin{enumerate}
\item[(B)] {\it If $A \subset X$ is sequentially closed (resp. sequentially open) then its complement $A^c$ is sequentially open (resp. sequentially closed).}
\end{enumerate}
\par 
{\it Proof.}
Trivial.\cqfd
\par 
Therefore, in a Hausdorff topological space $X$ every sequentially closed set is closed iff every sequentially open set is open.
In case $X$ satisfies these properties we call it {\em sequential}.
Of course each metrizable space is sequential. 
The examples of localized topologies studied in this paper are sequential but non-metrizable.
We now observe that the sequential character of $X$ is inherited by its open and closed subspaces.
\par 
\begin{enumerate}
\item[(C)] {\it Let $X[\calT]$ be a sequential Hausdorff topological space.
\begin{enumerate}
\item[(a)] If $C \subset X$ is closed (equivalently, if $C$ is sequentially closed) then $C[\calT \hel C]$ is sequential.
\item[(b)] If $O \subset X$ is open (equivalently, if $O$ is sequentially open) then $O[\calT \hel O]$ is sequential.
\end{enumerate}
}
\end{enumerate}
\par 
{\it Proof.}
Assume $A \subset C$ is sequentially closed in $C$.
Let $\la x_j \ra_j$ be a sequence in $A$ that converges to $x \in X$.
Then $x \in C$, since $C$ is closed in $X$.
Thus, $x \in A$, since $A$ is sequentially closed in $C$. 
By the arbitrariness of $x$ and $\la x_j \ra_j$, $A$ is sequentially closed in $X$, thus, closed in $X$, since $X$ is sequential.
Accordingly, $A$ is closed in $C$.
This proves (a).
The proof of (b), being similar to that of (a), is left to the reader.
\cqfd
\par 
Given a Hausdorff topological space $X[\calT]$ we let $\calT_\rmseq$ be the set consisting of the sequentially open subsets of $X$.
\begin{enumerate}
\item[(D)] {\it The following hold.
\begin{enumerate}
\item[(a)] $\calT \subset \calT_\rmseq$;
\item[(b)] $\calT_\rmseq$ is a Hausdorff topology on $X$;
\item[(c)] $X[\calT]$ is sequential iff $\calT = \calT_\rmseq$.
\end{enumerate}
}
\end{enumerate}
\par 
{\it Proof.}
None of these resist routine verification.\cqfd
\par 
Letting $X, Y$ be Hausdorff topological spaces and $f : X \to Y$ we say that {\em $f$ is sequentially continuous} iff $\lim_j f(x_j) = f(x)$ for every convergent sequence $\la x_j \ra_j$ in $X$ whose limit is $x$. 
If $f$ is continuous then it is sequentially continuous. 
The following characterizes sequential spaces.
\begin{enumerate}
\item[(E)] {\it Let $X$ be a Hausdorff topological space. The following are equivalent.
\begin{enumerate}
\item[(a)] $X$ is sequential.
\item[(b)] For every Hausdorff topological space $Y$ and every $f : X \to Y$ {\bf if} $f$ is sequentially continuous {\bf then} $f$ is continuous.
\end{enumerate}
}
\end{enumerate}
\par 
{\it Proof that $(a) \Rightarrow (b)$}.
Letting $C \subset Y$ be closed we observe that $f^{-1}(C)$ is sequentially closed: If $\la x_j\ra_j$ is a sequence in $f^{-1}(C)$ converging to $x \in X$ then $f(x) = \lim_j f(x_j) \in C$. 
Thus, $C$ is closed, since $X$ is sequential.
\par 
{\it Proof that $(b) \Rightarrow (a)$}.
Let $\calT$ be the topology of $X$ and let $\calT_\rmseq$ be as defined above. 
Consider the identity map $\rmid_X : X[\calT] \to X[\calT_\rmseq]$.
By definition of $\calT_\rmseq$, $\rmid_X$ is sequentially continuous.
Thus, $\rmid_X$ is continuous, by assumption (b), \ie $\calT_\rmseq = \calT$.\cqfd
\end{Empty}

\begin{Empty}[Fr\'echet-Urysohn spaces]
\label{PR.3}
Let $X$ be a Hausdorff topological space. We say that $X$ is {\em Fr\'echet-Urysohn} whenever the following holds: For every set $A \subset X$ and every $x \in \rmclos A$ there exists a sequence in $A$ converging to $x$. 
\begin{enumerate}
\item[(A)] {\it If $X$ is Fr\'echet-Urysohn then it is sequential.}
\end{enumerate}
\par 
{\it Proof.}
Let $A \subset X$ be sequentially closed and $x \in \rmclos A$.
Since $X$ is Fr\'echet-Urysohn, there is a sequence $\la x_j \ra_j$ in $A$ that converges to $x$.
Since $A$ is sequentially closed, $x \in A$.
By the arbitrariness of $x$, we infer that $A$ is closed.\cqfd.
\par 
The converse is false in general. 
In fact, most of the localized topologies introduced in this paper are sequential \ref{CWC.6}(A) but not Fr\'echet-Urysohn \ref{CCC.6.R}(Z). 
\begin{enumerate}
\item[(B)] {\it If $X[\calT]$ is Fr\'echet-Urysohn and $Y \subset X$ then $Y[\calT \hel Y]$ is Fr\'echet-Urysohn.}
\end{enumerate}
\par 
{\it Proof.}
Let $A \subset Y$ and $x \in \rmclos_{\calT \shel Y}A$.
Note that, as a subset of $X$, $\rmclos_{\calT \shel Y} A \subset \rmclos_{\calT} A$.
As $X$ is Fr\'echet-Urysohn, there exists a sequence in $A$ that $\calT$-converges to $x$.
Clearly, this sequence also $\calT \hel Y$-converges to $x$.\cqfd
\par 
In order to state our next observation we need the following definition.
With a subset $A \subset X$ of a Hausdorff topological space $X$ we associate its {\em sequential closure} defined as
\begin{equation*}
\rmclos_\rmseq A = X \cap \left\{ x : \text{ there is a sequence $\la x_j \ra_j$ in $A$ that converges to $x$}\right\}.
\end{equation*}
The phrasing, though traditional, is somewhat confusing, since the sequential closure of a set does not need to be sequentially closed.
Indeed, in a general space $X$ one needs to repeat transfinitely many times the sequential closure operation in order to obtain a sequentially closed set which is then, if $X$ is sequential, the (topological) closure of $A$.
In a Fr\'echet-Urysohn space the situation is much simpler as we now show.
\begin{enumerate}
\item[(C)] {\it The following are equivalent.
\begin{enumerate}
\item[(a)] $X$ is Fr\'echet-Urysohn.
\item[(b)] $\rmclos_\rmseq A = \rmclos A$ for all $A \subset X$.
\end{enumerate}
}
\end{enumerate}
\par 
{\it Proof.}
It is most obvious that $\rmclos_\rmseq A \subset \rmclos A$ in general.
The reverse inclusion is the definition of Fr\'echet-Urysohn.\cqfd
\par
Our last observation in this number is critical in \ref{PR.6} below\footnote{The proof of $(a) \Rightarrow (b)$ we give here is from \cite[Appendix 1(1)]{YAMAMURO} who claims it is from \cite{AVE.SMO.68}. In order to avoid confusion it is useful to note that what we call Fr\'echet-Urysohn here is what \textsc{S. Yamamuro} calls sequential.}.
It is to the extent that, in a Fr\'echet-Urysohn space, one is allowed to make ``diagonal arguments''.
\begin{enumerate}
\item[(D)] {\it Let $X$ be a sequential topological vector space. The following are equivalent.
\begin{enumerate}
\item[(a)] $X$ is Fr\'echet-Urysohn.
\item[(b)] If $\la x_j \ra_j$ is a sequence converging to $x$ and if, for each $j \in \N$, $\la x_{j,k} \ra_k$ is a sequence converging to $x_j$ then there are increasing functions $j : \N \to \N$ and $k : \N \to \N$ such that $\lim_n x_{j(n),k(n)} = x$.
\end{enumerate}
}
\end{enumerate}
\par 
{\it Proof that $(a) \Rightarrow (b)$.}
We first treat the case when $x_j = 0$ for all $j \in \N$.
Choose $a \neq 0$\footnote{If $X=\{0\}$ then (b) clearly holds.}.
For each $(j,k) \in \N \times \N$ define $y_{j,k} = x_{j,k} + \epsilon_{j,k} \left(\frac{a}{j}\right)$ where $\epsilon_{j,k} \in \{-1,+1\}$ is chosen in order that $y_{j,k} \neq 0$.
Thus, the set $A = X \cap \{ y_{j,j+l} : (j,l) \in \N \times \N \}$ does not contain 0.
Yet, its closure does.
Indeed, if $U$ is a neighbourhood of 0, we first choose\footnote{See appendix \ref{app.LCTVS} for the vocabulary.} a balanced neighbourhood $V$ of 0 such that $V+V \subset U$, we then select an integer $j$ large enough for $\frac{a}{j} \in V$, and finally we select $l$ large enough for $x_{j,j+l} \in V$.
Thus, $y_{j,j+l} = x_{j,j+l} + \epsilon_{j,j+l} \left( \frac{a}{j} \right) \in V+V \subset U$.
\par 
Since we assume that $X$ is Fr\'echet-Urysohn, there exists a function $\N \to \N \times \N : n \mapsto (j(n),l(n))$ such that $\la y_{j(n),j(n)+l(n)} \ra_n$ is a null sequence.
Define $J = \N \cap \{ j(n) : n \in \N \}$ and $L = \N \cap \{ l(n) : n \in \N \}$.
Note that one or both of $J$ and $L$ is infinite, for otherwise $\la y_{j(n),j(n)+l(n)} \ra_n$ would admit a constant subsequence, according to the pigeonhole principle, in contradiction with $0 \not \in A$.
\par 
We next claim that $J$ is infinite.
Otherwise, as $L$ would be infinite (by the previous paragraph), referring to the pigeonhole principle again, the sequence $\la y_{j(n),j(n)+l(n)} \ra_n$ would admit a subsequence $\la y_{j_0,j_0+l'(n)} \ra_n$ with $l'(n) \uparrow \infty$ a $n \uparrow \infty$.
In that case, taking a further subsequence if necessary,
\begin{equation*}
0 = \lim_n  y_{j_0,j_0+l'(n)} = \lim_n \left( \epsilon_{j_0,l'(n)} \cdot \frac{a}{j_0} + x_{j_0,j_0+l'(n)}\right) = \pm \frac{a}{j_0},
\end{equation*}
in contradiction with $a \neq 0$.
\par 
Accordingly, one can select a subsequence $\la y_{j'(n),j'(n)+l'(n)} \ra_n$ with $j'(n) \uparrow \infty$ as $n \uparrow \infty$.
Since $n \leq j'(n)$, one has $j'(n)+l'(n) \leq j'(j'(n)+l'(n)) < j'(j'(n)+l'(n)) + l'(j'(n)+l'(n))$ for all $n$.
Thus, one can inductively define subsequences $\la j''(n) \ra_n$ and $\la j''(n)+l''(n) \ra_n$ of, respectively, $\la j'(n) \ra_n$ and $\la j'(n)+l'(n) \ra_n$ such that both are increasing.
\par
The proof is now complete in the special case when $x_j = 0$ for all $j$.
In the general case, we define $y_{j,k} = x_{j,k} - x_j$ for all $(j,k) \in \N \times \N$ and we note that $\lim_k y_{j,k} = 0$ for all  $j \in \N$.
Having, thereby, reduced to the special case we infer the existence of increasing functions $j$ and $k$ such that $\lim_n y_{j(n),k(n)} = 0$.
Since $x_{j(n),k(n)} = y_{j(n),k(n)} + x_{j(n)}$, we conclude that 
\begin{equation*}
 \lim_n  x_{j(n),k(n)} = \lim_n \left(y_{j(n),k(n)} + x_{j(n)} \right) = \lim_n x_{j(n)} = x.
\end{equation*}\cqfd
\par 
{\it Proof that $(b) \Rightarrow (a)$.}
Let $A \subset X$.
We claim that $\rmclos_\rmseq A$ is sequentially closed.
In order to prove this, consider a sequence $\la x_j \ra_j$ in $\rmclos_\rmseq A$ that converges to $x \in X$.
By definition of $\rmclos_\rmseq A$, with each $k \in \N$ we can associate a sequence $\la x_{j,k} \ra_k$ in $A$ that converges to $x_j$.
By (b), we infer that $\la x_{j(n),k(n)} \ra_n$ converges to $x$, for some increasing $j,k : \N \to \N$.
Accordingly, $x \in \rmclos_\rmseq A$.
Since $X$ is sequential, this implies that $\rmclos_\rmseq A$ is closed.
Therefore, $\rmclos_\rmseq A = \rmclos A$.
Since $A$ is arbitrary, the conclusion follows from (C).\cqfd
\end{Empty}

\begin{Empty}
\label{PR.20}
We recall that a topological space $X[\calT]$ is {\bf metrizable} iff there exists a metric $d$ on $X$ such that for every $E \subset X$ the following holds: $E \in \calT$ iff for every $x \in E$ there exists $r > 0$ such that $B_d(x,r) \subset E$ where $B_d(x,r) = X \cap \{ \xi : d(\xi,x) \leq r \}$.
\begin{enumerate}
\item[(A)] {\it If $X[\calT]$ is metrizable and $Y \subset X$ then $Y[\calT \hel Y]$ is metrizable.}
\end{enumerate}
\par 
{\it Proof.}
Trivial.\cqfd
\par 
We recall that a topological space $X[\calT]$ is {\bf first countable} iff for every $x \in X$ there exists a countable base of neighbourhoods of $x$, \ie a (finite or) countable set $\calV$ of neighbourhoods of $x$ such that each neighbourhood of $x$ contains a member of $\calV$.
\begin{enumerate}
\item[(B)] {\it If $X[\calT]$ is first countable and $Y \subset X$ then $Y[\calT \hel Y]$ is first countable.}
\end{enumerate}
\par 
{\it Proof.}
Trivial.\cqfd
\begin{enumerate}
\item[(C)] {\it If $X$ is metrizable then it is first countable.}
\end{enumerate}
\par 
{\it Proof.}
$\calV = \{ B_d(x,q) : q \text{ is positive and rational}\}$ is a base of neighbourhoods of $x$.\cqfd
\begin{enumerate}
\item[(D)] {\it If $X$ is first countable then it is Fr\'echet-Urysohn.}
\end{enumerate}
\par 
{\it Proof.}
Let $A \subset X$, $x \in \rmclos A$, and $\calV$ be a countable base of neighbourhoods of $x$.
Let $\la V_j \ra_{j \in \N}$ be a numbering of $\calV$.
For each $k \in \N$ there exists $x_k \in A \cap \left( \cap_{j=1}^k V_j\right)$.
One easily checks that $\la x_k \ra_k$ converges to $x$.\cqfd
\par 
\begin{enumerate}
\item[(E)] {\it For Hausdorff topological spaces the following hold.
\begin{equation*}
\text{metrizable } \Rightarrow \text{ first countable } \Rightarrow \text{ Fr\'echet-Urysohn } \Rightarrow \text{ sequential} 
\end{equation*}
}
\end{enumerate}
\par 
Each of these implications has been proved above.
None of the reverse implication is true in general.
With regard to the last one, it is worth noting that a Hausdorff topological space $X$ is Fr\'echet-Urysohn iff it is hereditarily sequential (\ie each of its subspaces is sequential).
Thus, each of the first three properties above is hereditary (\ie inherited by all subspaces) but the last one is not (though open and closed subspaces inherit sequentiality \ref{PR.2}(C)).
\end{Empty}

\section{Locally convex topological vector spaces}
\label{app.LCTVS}

Here, we gather all the vocabulary and facts regarding locally convex topological vector spaces used in the paper.

\begin{Empty}[Absolutely convex sets]
\label{LCTVS.1}
All vector spaces considered in this paper are real. 
In a vector space $X$ a set $A \subset X$ is:
\begin{itemize}
\item {\em balanced} if $t \cdot x \in A$ whenever $-1 \leq t \leq 1$ and $x \in A$;
\item {\em convex} if $x+t\cdot (y-x) \in A$ whenever $0 \leq t \leq 1$ and $x,y \in A$;
\item {\em absolutely convex} if it is both balanced and convex.
\end{itemize}
Obviously, $X$ itself satisfies the three properties.
Furthermore, each $A \subset X$ is contained in a smallest convex (resp. absolutely convex) set denoted $\rmconv(A)$ (resp. $\rmabsconv(A)$) and called the {\em convex hull} (resp. {\em absolutely convex hull}) of $A$.
\end{Empty}

\begin{Empty}[Topological vector spaces]
\label{LCTVS.2}
If $X$ is a vector space and $\calT$ is a topology on $X$ we say that $\calT$ is a {\em vector topology} or equivalently that $X[\calT]$ is a {\em topological vector space} whenever the following conditions are satisfied:
\begin{itemize}
\item All singletons in $X$ are $\calT$-closed;
\item $X \times X \to X : (x,y) \mapsto x+y$ is $(\calT \times \calT, \calT)$ continuous;
\item $\R \times X \to X : (t,x) \mapsto t\cdot x $ is $(\calT_\R \times \calT , \calT)$ continuous.
\end{itemize}
In case a topology $\calT$ on $X$ satisfies the last two conditions above, it is Hausdorff iff it satisfies the first condition, see \cite[Theorem 1.12]{RUDIN}. 
When the vector topology $\calT$ that equips $X$ is clear from context we will sometimes write $X$ instead of $X[\calT]$.
\par 
A {\em local base} in a topological vector space is a set $\calB$ of neighbourhoods of 0 such that every neighbourhood of 0 contains a member of $\calB$. 
It follows that the open sets are unions of translates of members of $\calB$.
\par 
A topological vector space is termed {\em locally convex} if it admits a local base whose members are convex sets. In that case a local base can be chosen to consist of absolutely convex closed sets, see \cite[Theorem 1.13 and Corollary]{RUDIN}. 
\end{Empty}

\begin{Empty}[Absorbing sets and Minkowski functionals]
\label{LCTVS.3}
\label{SR.2}
Let $X$ be a vector space and $A \subset X$.
We say that $A$:
\begin{itemize}
\item {\em absorbs a set} $B \subset X$ if there exists $t > 0$ such that $B \subset t \cdot A$;
\item is {\em absorbing} if it absorbs every singleton.
\end{itemize}
Each absorbing set contains 0.
With an absorbing set $A \subset X$ we associate its {\em Minkowski functional} $q_A$ defined by
\begin{equation*}
q_A(x) = \inf \left\{ t > 0 : t^{-1} \cdot x \in A \right\} < \infty \,.
\end{equation*}
One checks that that if $A$ is absolutely convex then $q_A$ is a seminorm on $X$ and $\{ q_A < 1 \} \subset A \subset \{ q_A \leq 1\}$. 
Furthermore, if a vector topology $\calT$ is given on $X$ and if $A$ is balanced and $\calT$-closed then $\{ q_A \leq 1 \} = A$.
\end{Empty}

\begin{Empty}[Locally convex topological vector spaces]
\label{LCTVS.4}
We say that a family $\la q_i \ra_{i \in I}$ of seminorms on a real vector space $X$ is {\em separating} if only if only if for every $x \in X \setminus \{0\}$ there exists $i \in I$ such that $q_i(x) > 0$. 
In that case the family $\la q_i \ra_{i \in I}$ defines a locally convex vector topology $\calT$ on $X$ with a local base consisting of the sets 
\begin{equation}
\label{eq.LCTVS.1}
\bigcap_{n=1}^N X \cap \{ x : q_{i_n}(x) \leq \veps_n \}
\end{equation}
corresponding to all choices of $N \in \N \setminus \{0\}$, $i_1,\ldots,i_N \in I$, and $\veps_1 > 0, \ldots, \veps_N > 0$. 
We say that $\la q_i \ra_{i \in I}$ {\em generates} $\calT$. 
Furthermore, we say that $\la q_i \ra_{i \in I}$ is {\em filtering} whenever for each $i_1,\ldots,i_N \in I$ there exists $i \in I$ such that $\max_{n=1,\ldots,N} q_{i_n} \leq q_i$.
\par 
Reciprocally, if $\calT$ is a locally convex vector topology on $X$ with a local base $\calB$ consisting of absolutely convex $\calT$-closed sets then the family of seminorms $\la q_V \ra_{V \in \calB}$ is separating, filtering, and generates $\calT$. 
\end{Empty}

\begin{Empty}[Fr\'echet spaces]
\label{LCTVS.11}
If the topology $\calT$ of a locally convex topological vector space $X$ is generated by a countable family of seminorms $\la q_n \ra_{n \in \N}$ then $\calT$ is metrizable by a metric $d$ defined by the formula $d(x_1,x_2) = \sum_{n \in \N} \frac{q_n(x_1-x_2)}{2^n\cdot (1+q_n(x_1-x_2))}$, for $x_1,x_2 \in X$.
In this case, the topology $\calT$ admits a countable local base.
Notice that the metric $d$ defined above is {\em translation invariant}, \ie $d(x_1+h,x_2+h) = d(x_1,x_2)$ for all $x_1,x_2,h \in X$.
\par 
Reciprocally, if $X[\calT]$ is a locally convex topological vector space whose topology $\calT$ admits a countable local base then there exists on $X$ a metric $d$ which is compatible with $\calT$, translation invariant, and satisfies the property that all open balls are convex, see \cite[1.24]{RUDIN}.
\par 
If $X[\calT]$ is a locally convex topological vector space whose topology is metrized by a translation invariant metric $d$ then one can define the notion of a Cauchy sequence with respect to the topology $\calT$ and also with respect to the metric $d$.
These notions coincide, see \cite[1.25]{RUDIN}.
We say that $X[\calT]$ is a {\em Fr\'echet space} if it is a metrizable locally convex topological vector space and every Cauchy sequence in $X$ is convergent.
\end{Empty}

\begin{Empty}[Banach spaces]
\label{LCTVS.12}
If $X$ is a locally convex topological vector space whose topology is generated by one single seminorm (which is then a norm) we say that $X$ is {\em normable} or a {\em normed space} when a specific norm has been singled out.
If $X$ is both normed and complete with respect to the corresponding metric then we say that $X$ is a {\em Banach space} (thus, Banach spaces are Fr\'echet spaces).
\end{Empty}

\begin{Empty}[Bounded sets]
\label{LCTVS.5}
\label{PR.1}
In a topological vector space $X$ each neighbourhood of 0 is absorbing, see \cite[Theorem 1.15(a)]{RUDIN}. 
We say that a set $B \subset X$ is {\em bounded}\footnote{In case $X$ is metrizable this is {\it not} equivalent to being bounded with respect to a metric defining the topology of $X$. Nonetheless, if $X$ is a normed space then this notion of boundedness coincides with the metric notion relative to the ambient norm.} if it is absorbed by each neighbourhood of 0.
Thus, singletons are bounded.
More generally, one checks that each compact subset of $X$ is bounded.
Obviously, subsets of bounded sets are bounded as well.
In particular, if $\la x_j \ra_{j \in \N}$ is a convergent sequence then the set $\{ x_j : j \in \N\}$ is bounded. 
\begin{enumerate}
\item[(A)] {\it Let $X$ be a topological vector space and $B \subset X$. The following are equivalent.
\begin{enumerate}
\item[(a)] $B$ is bounded.
\item[(b)] For every open set $U$ containing 0 there exists $s > 0$ such that for every $t \geq s$ one has $B \subset t \cdot U$.
\item[(c)] For every open set $U$ containing 0 there exists $t \geq 1$ such that $B \subset t \cdot U$.
\end{enumerate}
}
\end{enumerate}
\par 
{\it Proof.}
This easily follows from the fact that each neighbourhood of 0 contains a balanced neighbourhood of 0, see \eg \cite[1.14(a)]{RUDIN}.\cqfd
\par 
A {\em null sequence} in a topological vector space is one that converges to 0.
The following shows that the knowledge, which among all sequences are null sequences is enough to recover the class of bounded sets.
\begin{enumerate}
\item[(B)] {\it Let $X$ be a topological vector space and $B \subset X$. The following are equivalent.
\begin{enumerate}
\item[(a)] $B$ is bounded.
\item[(b)] If $\la x_j \ra_j$ is a sequence in $B$ and $\la t_j \ra_j$ is a null sequence in $\R$ then $\la t_j \cdot x_j \ra_j$ is a null sequence in $X$.
\end{enumerate}
}
\end{enumerate}
\par 
{\it Proof.}
See for instance \cite[1.30]{RUDIN}.\cqfd
\begin{enumerate}
\item[(C)] {\it Let $X$ be a locally convex topological vector space and $B \subset X$ a bounded set.
\begin{enumerate}
\item[(a)] $\rmclos(B)$ is bounded.
\item[(b)] $\rmabsconv(B)$ is bounded.
\end{enumerate}
}
\end{enumerate}
\par 
{\it Proof.}
Both statements follow from the fact that each neighbourhood of 0 contains an absolutely convex closed neighbourhood of 0.\cqfd
\begin{enumerate}
\item[(D)] {\it Let $X$ be a locally convex topological vector space whose topology is generated by a family $\la q_i \ra_{i \in I}$ of seminorms and let $B \subset X$. The following are equivalent.
\begin{enumerate}
\item[(a)] $B$ is bounded.
\item[(b)] $\sup_{x \in B} q_i(x) < \infty$ for every $i \in I$.
\end{enumerate}
}
\end{enumerate}
\par 
{\it Proof.}
This follows from the special form \eqref{eq.LCTVS.1} of a local base of $X$.\cqfd
\par 
A {\em fundamental system of bounded sets} in $X$ is a set $\calF$ consisting of bounded subsets of $X$ so that each bounded subset of $X$ is contained in some member of $\calF$.
\end{Empty}

\begin{Empty}[Continuous and bounded linear functions]
\label{LCTVS.6}
Here, we are given two topological vector spaces $X$ and $Y$ and a linear function $f : X \to Y$.
\begin{enumerate}
\item[(A)] {\it The following are equivalent.
\begin{enumerate}
\item[(a)] $f$ is continuous.
\item[(b)] $f$ is continuous at 0.
\item[(c)] $f$ is continuous at some point of $X$.
\end{enumerate}
}
\end{enumerate}
\par 
{\it Proof.}
This follows from the continuity of translations in the domain and codomain of $f$ and from its linearity.\cqfd
\begin{enumerate}
\item[(B)] {\it Assume $X$ is locally convex and its topology is generated by the family of seminorms $\la q_i \ra_{i \in I}$ and $Y$ is locally convex and its topology is generated by the family of seminorms $\la r_j \ra_{j \in J}$. The following are equivalent.
\begin{enumerate}
\item[(a)] $f$ is continuous.
\item[(b)] For every $j \in J$ there are $a > 0$, $N \in \N \setminus \{0\}$, and $i_1,\ldots,i_N \in I$ such that $r_j \circ f \leq a \cdot \max_{n=1,\ldots,N} q_{i_n}$. 
\end{enumerate}
}
\end{enumerate}
\par 
{\it Proof.}
This follows from (A) and the form of a special local base of $Y$ and $X$ in \eqref{eq.LCTVS.1}.\cqfd
\par 
If $\la q_i \ra_{i \in I}$ is filtering then (b) is equivalent to the same with $N=1$.
If $Y=\R$ then (b) is equivalent to: There are $a > 0$, $N \in \N\setminus \{0\}$, and $i_1,\ldots,i_N \in I$ such that $|f| \leq a \cdot \max_{n=1,\ldots,N} q_{i_n}$. 
\par 
We say that $f$ is {\em bounded} if $f(B)$ is bounded in $Y$ whenever $B \subset X$ is bounded.
Recall that we say $f$ is {\em sequentially continuous} if $\lim_j f(x_j) = f(x)$ for every convergent sequence $\la x_j \ra_j$ in $X$ whose limit is $x$.
If $f$ is continuous then it is sequentially continuous but the converse fails in general (see also \ref{PR.2}).
\begin{enumerate}
\item[(C)] {\it If $f$ is sequentially continuous then it is bounded.}
\end{enumerate}
\par 
{\it Proof.}
Let $B \subset X$ be bounded and $\la y_j \ra_j$ a sequence in $f(B)$.
Choose a sequence $\la x_j \ra_j$ in $B$ such that $f(x_j) = y_j$ for all $j \in \N$.
If $\la t_j \ra_j$ is a null sequence in $\R$ then $\la t_j \cdot x_j \ra_j$ is a null sequence in $X$, by \ref{LCTVS.5}(B).
Since $f$ is sequentially continuous and linear, we infer that $\la t_j  \cdot y_j \ra_j$ is a null sequence in $Y$.
Since $\la y_j \ra_j$ is arbitrary, the boundedness of $f(B)$ follows from \ref{LCTVS.5}(B).\cqfd
\end{Empty}

\begin{Empty}[First dual and its strong topology]
\label{LCTVS.7}
Let $X[\calT]$ be a topological vector spa\-ce.
We let $X[\calT]^*$ (or simply $X^*$ when no confusion can arise) be the set consisting of all $\calT$-continuous linear functions $X \to \R$ and we call it the {\em first dual} of $X$.
It is obviously a real vector space.
If $B \subset X$ is a bounded set we define a seminorm $p_B : X^* \to \R$ by means of the formula
\begin{equation*}
p_B(f) = \sup \{ |f(x)| : x \in B \}.
\end{equation*}
Notice that $p_B(f) < \infty$ for all $f \in X^*$, by \ref{LCTVS.6}(C).
Let $\frS_\rmbd$ be the set of all bounded subsets of $X$.
Then $\la p_B \ra_{B \in \frS_\rmbd}$ is a family of seminorms on $X^*$ and is separating (since singletons are bounded).
The locally convex topology on $X^*$ generated by this family is called the {\em strong topology} of $X^*$ and denoted $\beta(X^*,X)$ (or $T_{\frS_\rmbd}$ in appendix \ref{app.DUAL}).
\par 
If $X$ is a Banach space then $\beta(X^*,X)$ corresponds to the usual normed topology in $X^*$.
In case $X$ is a Fr\'echet space then $X^*[\beta(X^*,X)]$ does not need to be a Fr\'echet space itself, since $X$ may not possess a {\it countable} fundamental system of bounded sets.
\end{Empty}

\begin{Empty}[Hahn-Banach]
\label{LCTVS.8}
In this paper we refer to the following four versions of the Hahn-Banach theorem.
Here, $X$ is a locally convex topological vector space.
\begin{enumerate}
\item[(A)] {\it Let $x_0 \in X$. If $x_0 \neq 0$ then there exists $x^* \in X^*$ such that $x^*(x_0) \neq 0$.}
\end{enumerate}
\par 
{\it Proof.}
Apply \eg \cite[3.4(b)]{RUDIN} with $A=\{0\}$ and $B=\{x_0\}$.\cqfd
\begin{enumerate}
\item[(B)] {\it If $A \subset X$ is absolutely convex and closed and $x_0 \not \in A$ then there exists $x^* \in X^*$ such that
\begin{enumerate}
\item[(a)] $|x^*(x)| \leq 1$ for all $x \in A$;
\item[(b)] $x^*(x_0) > 1$.
\end{enumerate}
}
\end{enumerate}
\par 
{\it Proof.}
See \eg \cite[3.7]{RUDIN}.\cqfd
\begin{enumerate}
\item[(C)] {\it If $C \subset X$ is convex and closed and $x_0 \not \in C$ then there exists an open convex set $U$ such that $C \subset U$ and $x_0 \not \in U$.}
\end{enumerate}
\par 
{\it Proof.}
Apply \cite[3.4(b)]{RUDIN} with $A=\{x_0\}$ and $B=C$ and let $U = X \cap \{ x : \gamma_2 < \Lambda(x) \}$ where $\gamma_2$ and $\Lambda$ are as in the reference.\cqfd
\begin{enumerate}
\item[(D)] {\it If $X[\calT]$ is normed by $\|\cdot\|$ and $x_0 \in X$ then there exists $x^* \in X[\calT]^*$ such that $x^*(x_0)=\|x_0\|$ and $|x^*(x)| \leq \|x\|$ for all $x \in X$.}
\end{enumerate}
\par 
{\it Proof.}
See \eg \cite[Corollary of 3.3]{RUDIN}.\cqfd
\end{Empty}

\begin{Empty}[Second dual and the evaluation map]
\label{LCTVS.9}
Let $X[\calT]$ be a locally convex topological vector space.
We consider its first dual $X^*[\beta(X^*,X)]$ equipped with the strong topology, recall \ref{LCTVS.7}.
The first dual of the latter is denoted $X^{**}$ (thus, $X^{**}= X^*[\beta(X^*,X)]^*$) and called the {\em second dual} of $X$.
Accordingly, $X^{**}$ is a real vector space and we do not equip it (at this time) with a topology.
\par 
Given $x \in X$ we define $\rmev(x) : X^* \to \R$ by the formula $\rmev(x)(x^*) = x^*(x)$.
This is readily a linear map and $|\rmev(x)| \leq p_{\{x\}}$, whence, $\rmev(x)$ is $\beta(X^*,X)$-continuous.
In other words, $\rmev(x) \in X^{**}$.
This defines a function $\rmev : X \to X^{**} : x \mapsto \rmev(x)$ which we call the {\em evaluation map}.
It is also plain that $\rmev$ is linear.
It follows from the Hahn-Banach theorem \ref{LCTVS.8}(A) that $\rmev$ is injective.
We say that $X$ is {\em semireflexive} if and only if $\rmev$ is surjective.
In \ref{DUAL.6}(H) we recall a characterization of semireflexive spaces, itself a consequence of the Mackey-Arens theorem \ref{DUAL.6}(E).
This is instrumental in the proof of our main existence theorem \ref{AET.FR} via \ref{SD.5}.
\par 
We end this number with a trivial observation.
Recall the definition of weak* topologies, \eg \ref{DUAL.3}.
\begin{enumerate}
\item[(A)] {\it The evaluation map is $(\calT,\sigma(X^{**},X^*))$-continuous.}
\end{enumerate}
\par 
{\it Proof.}
Write $\bev$ for the obvious evaluation map $X^* \to (X^{**})^\bigstar$ (recall \ref{DUAL.1} for the notation) and note that $\sigma(X^{**},X^*)$ is generated by $\la |\bev(x^*)| \ra_{x^* \in X^*}$.
Let $\la q_i \ra_{i \in I}$ be a family of seminorms that generates $\calT$. 
We ought to observe (recall \ref{LCTVS.4}) that for each $x^* \in X^*$ there are $a > 0$ and $i_1,\ldots,i_N \in I$ such that $|\bev(x^*)| \circ ev \leq a \cdot \max_{n=1,\ldots,N} q_{i_n}$, \ie $| \la x,x^* \ra | \leq  a \cdot \max_{n=1,\ldots,N} q_{i_n}(x)$ for all $x \in X$.
This is readily a formulation of the $\calT$-continuity of $x^*$.\cqfd
\end{Empty}

\begin{Empty}[Closed absolutely convex hull]
\label{LCTVS.10}
We recall the definitions of absolutely convex set and absolutely convex hull from \ref{LCTVS.1}.
These make sense in a real vector space.
We now consider a locally convex topological vector space $X[\calT]$ and we notice that any subset $A \subset X$ is contained in a smallest closed absolutely convex set called its {\em closed absolutely convex hull} and denoted $\overline{\rmabsconv}(A)$ or $\overline{\rmabsconv}^{\calT}(A)$ when there is a need to emphasize the topology on $X$ with respect to which the closure is being considered.
\begin{enumerate}
\item[(A)] {\it Assume that $A_1,\ldots,A_N \subset X$ are absolutely convex and compact. Then $\overline{\rmabsconv}(A_1 \cup \cdots \cup A_N)$ is compact as well.}
\end{enumerate}
\par 
{\it Proof.}
\textbf{(i)}
Define $S = [0,1]^N \cap \left\{ (t_1,\ldots,t_N) : \sum_{n=1}^N t_n = 1 \right\}$ and 
\begin{equation*}
C = X \cap \left\{ \sum_{n=1}^N t_n \cdot x_n : (t_1,\ldots,t_N) \in S \text{ and } (x_1,\ldots,x_N) \in A_1 \times \cdots \times A_N \right\}.
\end{equation*}
Abbreviating $A = A_1 \cup \cdots \cup A_N$, it is obvious that $A \subset C \subset \rmabsconv(A)$.
\par 
\textbf{(ii)}
Letting $x = \sum_n s_n \cdot x_n$ and $y = \sum_n t_n \cdot y_n$ be members of $C$ and $0 \leq \lambda \leq 1$, we notice that $\lambda \cdot x + (1-\lambda) \cdot y = \sum_n [\lambda \cdot  s_n \cdot  x_n + (1-\lambda)\cdot t_n \cdot y_n] = \sum_n [\lambda \cdot s_n + (1-\lambda)\cdot t_n] \cdot z_n \in C$ where $z_n \in A_n$ are chosen as follows.
Abbreviating $\alpha_n = \lambda \cdot s_n$ and $\beta_n = (1-\lambda) \cdot t_n$, if $\alpha_n=0=\beta_n$ then we choose $z_n = x_n$, otherwise we choose $z_n = (\alpha_n \cdot x_n + \beta_n \cdot y_n)/(\alpha_n + \beta_n)$.
It follows at once that $C$ is convex.
As $C$ is readily balanced, we infer that $C = \rmabsconv(A)$, in view of \textbf{(i)}.
\par 
\textbf{(iii)}
It follows from \textbf{(ii)} that $C \subset \overline{\rmabsconv}(A)$.
From the definition of $C$ in \textbf{(i)} we observe that $C$ is the image of the compact set $S \times A$ under the continuous map
\begin{equation*}
S \times A \to X : (t_1,\ldots,t_N,x_1,\ldots,x_N) \mapsto \sum_{n=1}^N t_n \cdot x_n.
\end{equation*}
Therefore, $C$ is compact.
Being, in particular, closed and absolutely convex, we infer that $\overline{\rmabsconv}(A) \subset C$.
Thus, $\overline{\rmabsconv}(A) = C$ is compact.\cqfd
\begin{enumerate}
\item[(B)] {\it If $A$ is absolutely convex then so is $\rmclos A$.}
\end{enumerate}
\par 
{\it Proof.}
See \eg \cite[1.13(d) and (e)]{RUDIN}.\cqfd
\begin{enumerate}
\item[(C)] {\it If $A$ is bounded then so is $\overline{\rmabsconv}(A)$.}
\end{enumerate}
\par 
{\it Proof.}
Notice that $\rmabsconv(A)$ is bounded, by \ref{LCTVS.5}(C)(b), thus, $\rmclos(\rmabsconv(A))$ is bounded as well, by \ref{LCTVS.5}(C)(a).
As $\rmclos(\rmabsconv(A))$ is absolutely convex, by (B) above, we infer that $\overline{\rmabsconv}(A) \subset \rmclos(\rmabsconv(A))$, by definition of the former, and the conclusion follows.\cqfd
\end{Empty}
\section{Dual systems and the Mackey-Arens theorem}
\label{app.DUAL}

\begin{Empty}[Algebraic vs. topological dual]
\label{DUAL.1}
If $X$ is a real vector space then its algebraic dual (consisting of all linear functions $X \to \R$) is denoted $X^\bigstar$. 
We recall that if $X[\calT]$ is a locally convex topological vector space then its topological dual is denoted $X[\calT]^*$, \ie
\begin{equation*}
X[\calT]^* = X^\bigstar \cap \{ f : f \text{ is $\calT$-continuous} \}
\footnote{There does not seem to be a universal agreement with notation in this respect. For instance, \cite{EDWARDS} uses $X^*$ for the algebraic dual and $X'$ for the topological dual whereas \cite{RUDIN} uses $X^*$ for the topological dual.}
.
\end{equation*}
\end{Empty}

\begin{Empty}[Dual systems]
\label{DUAL.2}
A {\em dual system} is a triple $(X,Y,b)$ such that $X$ and $Y$ are real vector spaces and $b : X \times Y \to \R$ is a function satisfying the following conditions:
\begin{itemize}
\item $b$ is bilinear;
\item $b$ is separating in the $1^{st}$ variable, \ie $(\forall x \in X \setminus \{0\})(\exists y \in Y) : b(x,y) \neq 0$;
\item $b$ is separating in the $2^{nd}$ variable, \ie $(\forall y \in Y \setminus \{0\})(\exists x \in X) : b(x,y) \neq 0$.
\end{itemize}
If $(X,Y,b)$ is a dual system then so is $(Y,X,\tilde{b})$ where $\tilde{b}(y,x)=b(x,y)$.
We will call $(Y,X,\tilde{b})$ the {\em symmetric} of $(X,Y,b)$.
We now give the two main examples of dual systems.
\begin{enumerate}
\item[(1)] If $X$ is a real vector space and $X^\bigstar$ its algebraic dual then we define a bilinear map
\begin{equation*}
\la \cdot , \cdot \ra : X \times X^\bigstar \to \R : (x,x^\bigstar) \mapsto \la x , x^\bigstar \ra
\end{equation*}
by the formula $\la x , x^\bigstar \ra = x^\bigstar(x)$.
It is trivially separating in the second variable.
It is separating in the first variable thanks to the existence of linear extensions of partially defined linear functions.
Thus, $(X,X^\bigstar,\la \cdot , \cdot \ra)$ is a dual system.
\item[(2)] If $X[\calT]$ is a locally convex topological vector space and $X^*=X[\calT]^*$ is its topological dual then $X^* \subset X^\bigstar$ and the restriction of $\la \cdot , \cdot \ra$ to $X \times X^*$ (still denoted $\la \cdot , \cdot \ra$) readily defines a bilinear map which is separating in the second variable.
It is separating in the first variable as well, according to the Hahn-Banach theorem \ref{LCTVS.8}(A).
Thus, $(X,X^*,\la \cdot , \cdot \ra)$ is a dual system.
\end{enumerate}
\par 
More generally if $Y \subset X^\bigstar$ is a vector subspace such that the restriction of $\la \cdot , \cdot \ra$ to $X \times Y$ is separating in the first variable then $(X,Y,\la \cdot , \cdot \ra)$ is a dual system.
We now explain why this is the generic situation.
\par 
Given a dual system $(X,Y,b)$ and $y \in Y$ we define $\iota_2(y) : X \to \R$ by the formula $\iota_2(y)(x) = b(x,y)$.
Clearly, $\iota_2(y)$ is linear and $\iota_2 : Y \to X^\bigstar$ is linear as well.
Furthermore, $\iota_2$ is injective, since $b$ is separating in the second variable.
Thus, $Y$ is linearly isomorphic to the linear subspace $\iota_2(Y)$ of $X^\bigstar$.
\par 
Similarly, one defines a linear map $\iota_1 : X \to Y^\bigstar : x \mapsto (y \mapsto b(x,y))$ which is injective, since $b$ is separating in the first variable, and $X$ is linearly isomorphic to the linear subspace $\iota_1(X)$ of $Y^\bigstar$.
\end{Empty}

\begin{Empty}[Weak topologies]
\label{DUAL.3}
Let $(X,Y,b)$ be a dual system.
For each $y \in Y$, $|\iota_2(y)|$ is a seminorm on $X$.
Furthermore, the family of seminorms $\la |\iota_2(y)| \ra_{y \in Y}$ is separating, since $b$ is separating in the first variable.
Therefore, it defines a locally convex vector topology on $X$ which we denote by $\sigma(X,Y)$.
Similarly, $\sigma(Y,X)$ is the locally convex vector topology on $Y$ associated with the family of seminorms $\la |\iota_1(x)| \ra_{x \in X}$.
\par 
For instance, if $X[\calT]$ is a locally convex topological vector space and $(X,X^*,\la \cdot,\cdot \ra)$ its associated dual system as in \ref{DUAL.2}(2) then $\sigma(X,X^*)$ is the usual {\em weak topology} of $X$ and $\sigma(X^*,X)$ is the usual {\em weak* topology} of $X^*$.
\begin{enumerate}
\item[(A)] {\it If $(X,Y,b)$ is a dual system and $\calT$ is a vector topology on $X$ such that $\iota_2(y)$ is $\calT$-continuous for all $y \in Y$ then $\sigma(X,Y) \subset \calT$.}
\end{enumerate}
\par 
{\it Proof.}
Easy.\cqfd
\begin{enumerate}
\item[(B)] {\it Let $(X,Y,b)$ be a dual system. Then $X[\sigma(X,Y)]^* = \iota_2(Y)$.}
\end{enumerate}
\par 
{\it Proof.}
{\bf (i)}
If $E$, $F$, and $G$ are real vector spaces and if $f : E \to F$ and $g : E \to G$ are linear maps such that $\ker g \subset \ker f$ then there exists a linear map $h : G \to F$ such that $f = h \circ g$.
It suffices to notice that $h$ is unambiguously defined on $g(E)$ by means of $h(g(x))=f(x)$ and to extend it to the entire $G$.
\par 
{\bf (ii)}
If $f \in X^\bigstar$ and $y_1,\ldots,y_N \in Y$ are such that $\cap_{n=1}^N \ker \iota_2(y_n) \subset \ker f$ then $f$ is a linear combination of $\iota_2(y_1),\ldots,\iota_2(y_N)$.
It suffices to apply {\bf (i)} with $E=X$, $F=\R$, $G=\R^N$, and $g(x)=(\iota_2(y_1)(x),\ldots,\iota_2(y_N)(x))$.
\par 
{\bf (iii)}
Let $f \in X^\bigstar$ be $\sigma(X,Y)$-continuous.
There are $y_1,\ldots,y_N \in Y$ and positive reals $\veps_1,\ldots,\veps_N$ such that
\begin{equation*}
\bigcap_{n=1}^N X \cap \{ x : |\iota_2(y_n)(x)| \leq \veps_n \} \subset X \cap \{ x : |f(x)| < 1 \}.
\end{equation*}
In particular, $\cap_{n=1}^N \ker \iota_2(y_n) \subset \ker f$.
Therefore, $f \in \iota_2(Y)$, by {\bf (ii)}.
\par 
{\bf (iv)}
If $y \in Y$ then $\iota_2(y)$ is readily $\sigma(X,Y)$-continuous.\cqfd
\end{Empty}

\begin{Empty}[Polarity]
\label{DUAL.4}
Let $(X,Y,b)$ be a dual system, $A \subset X$, and $B \subset Y$.
We define the {\em polar} of $A$ by means of the formula
\begin{equation*}
A^\circ = Y \cap \{ y : |b(x,y)| \leq 1 \text{ for all }x\in A\}
\end{equation*}
and the {\em polar} of $B$ by
\begin{equation*}
\leftindex^\circ B = X \cap \{ x : | b(x,y)| \leq 1 \text{ for all }y \in B\}.
\end{equation*}
\begin{enumerate}
\item[(A)] {\it $A^\circ$ is absolutely convex and $\sigma(Y,X)$-closed. $^\circ B$ is absolutely convex and $\sigma(X,Y)$-closed.}
\end{enumerate}
\par 
{\it Proof.}
It suffices to observe that $A^\circ$ is the intersection of the absolutely convex $\sigma(Y,X)$-closed sets $Y \cap \{ y : |\iota_1(x)(y)| \leq 1 \}$ corresponding to all $x \in A$.
A similar argument applies to $^\circ B$.\cqfd
\begin{enumerate}
\item[(B)] {\it The following hold.
\item[(a)] If $A_1 \subset A_2 \subset X$ then $A_1^\circ \supset A_2^\circ$ and if $B_1 \subset B_2 \subset Y$ then $\leftindex^\circ B_1 \supset \leftindex^\circ B_2$.
\item[(b)] If $t \in \R \setminus \{0\}$, $A \subset X$, and $B \subset Y$ then $t \cdot A^\circ = (t^{-1} \cdot A)^\circ$ and $t \cdot  \leftindex^\circ B = \leftindex^\circ(t^{-1} \cdot B)$.
\item[(c)] If $\la A_i \ra_{i \in I}$ is a family in $X$ and $\la B_i \ra_{i \in I}$ a family in $Y$ then $\left( \cup_{i \in I} A_i \right)^\circ = \cap_{i \in I} A_i^\circ$ and $^\circ\left( \cup_{i \in I} B_i \right) = \cap_{i \in I} \leftindex^\circ B_i$.
}
\end{enumerate}
\par 
{\it Proof.}
All trivial.\cqfd
\begin{enumerate}
\item[(C)] {\bf (Bipolar theorem)} {\it If $A \subset X$ then
\begin{equation*}
\leftindex^\circ (A^\circ) = \overline{\rmabsconv}^{\sigma(X,Y)}(A)
\end{equation*}
and if $B \subset Y$ then
\begin{equation*}
 (\leftindex^\circ B)^\circ = \overline{\rmabsconv}^{\sigma(Y,X)}(B).
\end{equation*}
}
\end{enumerate}
\par 
{\it Proof.}
We prove only the first part, since the proof of the second part is similar.
One easily checks that $A \subset \leftindex^\circ (A^\circ)$.
Therefore, it follows from (A) applied to $B = A^\circ$ and from the definition of closed absolutely convex hull that $\overline{\rmabsconv}^{\sigma(X,Y)}(A) \subset \leftindex^\circ (A^\circ)$.
Assume if possible that the exists $x_0 \in \leftindex^\circ (A^\circ) \setminus \overline{\rmabsconv}^{\sigma(X,Y)}(A)$.
According to the Hahn-Banach theorem \ref{LCTVS.8}(B), there exists $f \in X[\sigma(X,Y)]^*$ such that $|f(x)| \leq 1$ for all $x \in \overline{\rmabsconv}^{\sigma(X,Y)}(A)$ and $f(x_0) > 1$.
By \ref{DUAL.3}(B), $f = \iota_2(y)$ for some $y \in Y$.
Thus, $|b(x,y)| = |\iota_2(y)(x)| = |f(x)| \leq 1$ for all $x \in A$, \ie $y \in A^\circ$.
As $x_0 \in \leftindex^\circ(A^\circ)$, we have $1 \geq |b(x_0,y)| = |\iota_2(y)(x_0)| = |f(x_0)|$, a contradiction.\cqfd
\begin{enumerate}
\item[(D)] {\bf (Banach-Alaoglu theorem)} {\it Let $X$ be a locally convex topological vector space and let $V$ be a neighbourhood of the origin in $X$. Then $V^\circ$ is $\sigma(X^*,X)$-compact (here, polarity is understood in the dual system described in \ref{DUAL.2}(2)).}
\end{enumerate}
\par 
{\it Proof.}
See for instance \cite[3.15]{RUDIN}.\cqfd
\par 
It will happen that we will consider in the same calculation polarities with respect to two different dual systems.
Specifically, assume we are considering a dual system $(X,Y,b)$ and we are using the generic symbol $\circ$ for the corresponding polarity.
We may simultaneously need to consider the dual system $(X,X^\bigstar,\la \cdot,\cdot \ra)$ (associated with $X$ in \ref{DUAL.2}(1)) and we will then use the symbol $\bullet$ for the polarity corresponding to this second dual system.
Recall that in this situation one identifies $Y$ with the vector subspace $\iota_2(Y)$ of $X^\bigstar$. 
\begin{enumerate}
\item[(E)] {\it Let $X[\calT]$ be a locally convex topological vector space and $\calB$ be a local base for $\calT$.
Then, in the dual system $(X,X^\bigstar,\la \cdot,\cdot \ra)$ one has
\begin{equation*}
X[\calT]^* = \bigcup \{ V^\bullet : V \in \calB \}.
\end{equation*}
}
\end{enumerate}
\par 
{\it Proof.}
If $V \in \calB$ and $f \in V^\bullet$ then $V \subset X \cap \{ x : |f(x)| \leq 1 \}$. 
This shows that $f$ is $\calT$-continuous, \ie $f \in X[\calT]^*$.
Reciprocally, if $f \in X^\bigstar$ is $\calT$-continuous then there exists $V \in \calB$ such that $V  \subset X \cap \{ x : |f(x)| \leq 1 \}$, \ie $f \in V^\bullet$.\cqfd
\begin{enumerate}
\item[(F)] {\it Let $(X,Y,b)$ be a dual system and let $B \subset Y$ be absolutely convex and $\sigma(Y,X)$-compact. Then
\begin{equation*}
(\leftindex^\circ B)^\bullet = \iota_2(B).
\end{equation*}
}
\end{enumerate}
\par 
{\it Proof.}
{\bf (i)} We first claim that $\leftindex^\circ B = \leftindex^\bullet(\iota_2(B))$. 
Indeed, for all $x \in X$ we have
\begin{equation*}
\begin{split}
x \in \leftindex^\circ B & \iff (\forall y \in B) : |b(x,y)| \leq 1 \\
& \iff (\forall y \in B) : |\la x , \iota_2(y) \ra | \leq 1 \\
& \iff (\forall \zeta \in \iota_2(B)) : | \la x , \zeta \ra | \leq 1 \\
& \iff x \in \leftindex^\bullet(\iota_2(B)).
\end{split}
\end{equation*}
\par 
{\bf (ii)} 
We leave it to the reader to check that $\iota_2 : Y \to X^\bigstar$ is $(\sigma(Y,X),\sigma(X^\bigstar,X))$-continuous (recall \ref{LCTVS.6} and \ref{DUAL.3}).
Since $B$ is $\sigma(Y,X)$-compact, it follows that $\iota_2(B)$ is $\sigma(X^\bigstar,X)$-compact, in particular $\sigma(X^\bigstar,X)$-closed.
As it is also clearly absolutely convex, we infer from the bipolar theorem (C), applied in the dual system $(X,X^\bigstar,\la \cdot,\cdot \ra)$, that
\begin{equation*}
\iota_2(B) = (\leftindex^\bullet \iota_2(B))^\bullet = (\leftindex^\circ B)^\bullet,
\end{equation*}
where the second equality follows from {\bf (i)}.\cqfd
\end{Empty}

\begin{Empty}[Polar topologies]
\label{DUAL.5}
Let $(X,Y,b)$ be a dual system. 
Our aim is to define on $X$ a locally convex vector topology associated with seminorms $p_B$, where $B \subset Y$, defined as follows:
\begin{equation*}
p_B(x) = \sup \{ |b(x,y)| : y \in B \} = \sup \{ |\iota_2(y)(x) | : y \in B \}.
\end{equation*}
It is clear that $p_B$ is a seminorm {\it provided} that $p_B(x) < \infty$ for all $x \in X$.
The latter is equivalent to $B$ being $\sigma(Y,X)$-bounded (\ref{LCTVS.5}(D) applied to $Y[\sigma(Y,X)]$ in place of $X$ and $\la |\iota_1(x)| \ra_{x \in X}$ in place of $\la q_i \ra_{i \in I}$).
We are led to consider 
\begin{equation*}
\frS_\rmbd = \calP(Y) \cap \{ B : B \text{ is $\sigma(Y,X)$-bounded} \} .
\end{equation*}
We say that $\frS \subset \frS_\rmbd$ {\em covers} $Y$ if $Y = \cup \frS$.
In that case the family of seminorms $\la p_B \ra_{B \in \frS}$ is readily separating, whence, it defines a locally convex vector topology on $X$ denoted $T_\frS$.
\begin{enumerate}
\item[(A)] {\it Assume that $\frS_1, \frS_2 \subset \frS_\rmbd$ and each member of $\frS_1$ is contained in a member of $\frS_2$. Then $T_{\frS_1} \subset T_{\frS_2}$. In particular, if $\frS_1 \subset \frS_2 \subset \frS_\rmbd$ then $T_{\frS_1} \subset T_{\frS_2}$.}
\end{enumerate}
\par 
{\it Proof.}
Recalling \eqref{eq.LCTVS.1}, the hypothesis clearly implies that each $T_{\frS_1}$-neighbourhood of 0 is a $T_{\frS_2}$-neighbourhood of 0.\cqfd
\par 
We say that $\frS \subset \frS_\rmbd$ is {\em admissible} if the following three conditions are satisfied:
\begin{itemize}
\item $\frS$ covers $Y$;
\item $(\forall N \in \N \setminus \{0\})\left(\forall (B_1,\ldots,B_N) \in \frS^N \right)(\exists B \in \frS) : \cup_{n=1}^N B_n \subset B$;
\item $(\forall B \in \frS)(\forall t \in \R \setminus \{0\}) : t \cdot B \in \frS$.
\end{itemize}
The topologies $T_\frS$ corresponding to admissible $\frS$ are called {\em polar topologies} probably because of the following observation.
\begin{enumerate}
\item[(B)] {\it Let $\frS \subset \frS_\rmbd$ be admissible. Then $\{ \leftindex^\circ B : B \in \frS \}$ is a local base for the topology $T_\frS$.}
\end{enumerate}
\par 
{\it Proof.}
Given $B \in \frS$ one has $\leftindex^\circ B = X \cap \{ x : p_B(x) \leq 1 \}$.
Whence, $\leftindex^\circ B$ is readily a $T_\frS$-neighbourhood of 0.
Reciprocally, if $U$ is a $T_\frS$-neighbourhood of 0 then there are $B_1,\ldots,B_N \in \frS$ and positive reals $\veps_1,\ldots,\veps_N$ such that 
\begin{equation}
\bigcap_{n=1}^N X \cap \{ x : p_{B_n}(x) \leq \veps_n \} \subset U.
\end{equation}
Upon noticing that $X \cap \{ x : p_{B_n}(x) \leq \veps_n \} = \veps_n \cdot \leftindex^\circ B_n$ we infer (using (B)) that $U$ contains the set
\begin{equation*}
\bigcap_{n=1}^N \veps_n  \cdot \leftindex^\circ B_n = \bigcap_{n=1}^N \leftindex^\circ(\veps_n^{-1} \cdot B_n) = \leftindex^\circ ( \bigcup_{n=1}^N \veps_n^{-1} \cdot B_n ) \supset \leftindex^\circ B,
\end{equation*}
where $B \in \frS$ is a set containing $\cup_{n=1}^N \veps_n^{-1}B_n$, whose existence follows from the last two conditions in the definition of $\frS$ being admissible.\cqfd
\par 
We now give the three main examples of polar topologies defined on $X$, relative to a dual system $(X,Y,b)$.
\begin{enumerate}
\item[(1)] The {\em weak topology} of $X$ is $T_{\frS_\rms}$ where $\frS_\rms = \frS_\rmbd \cap \{ F : F \text{ is finite}\}$. 
One easily checks that $T_{\frS_\rms}$ coincides with $\sigma(X,Y)$ defined in \ref{DUAL.3} and that $\frS_\rms$ is admissible.
\item[(2)] The {\em Mackey topology} of $X$, denoted $\tau(X,Y)$, is the polar topology $T_{\frS_\rmcomp}$ where 
\begin{equation*}
\frS_\rmcomp = \frS_{\rmbd} \cap \{ C : C \text{ is absolutely convex and $\sigma(Y,X)$-compact} \}.
\end{equation*}
We claim that $\frS_\rmcomp$ is admissible.
The first and third conditions of the definition are trivially satisfied but there is something to say about the second one: it is a consequence of \ref{LCTVS.10}(A).
\item[(3)] The {\em strong topology} of $X$, denoted $\beta(X,Y)$, is the polar topology $T_{\frS_\rmbd}$.
The collection $\frS_\rmbd$ is admissible, since a scalar multiple of a bounded set and a finite union of bounded sets are bounded as well.
If $X[\calT]$ is a locally convex topological vector space then the topology $\beta(X^*,X)$ defined in \ref{LCTVS.7} coincides with the topology defined here on $X^*$ with respect to the dual system $(X^*,X,\la \cdot,\cdot \ra)$ as follows from \ref{DUAL.6}(F) (\ie that a subset of $X$ is $\calT$-bounded iff it is $\sigma(X,X^*)$-bounded).
\end{enumerate}
\par 
We observe that $\frS_\rms \subset \frS_\rmcomp \subset \frS_\rmbd$, therefore, we infer from (A) that
\begin{equation}
\label{eq.DUAL.1}
\sigma(X,Y) \subset \tau(X,Y) \subset \beta(X,Y).
\end{equation}
\end{Empty}

\begin{Empty}[Topologies compatible with a dual system]
\label{DUAL.6}
Let $(X,Y,b)$ be a dual system and $\calT$ a locally convex vector topology on $X$.
We say that $\calT$ is {\em compatible} with the dual system $(X,Y,b)$ if $X[\calT]^* = \iota_2(Y)$.
\begin{enumerate}
\item[(A)] {\it The weak topology $\sigma(X,Y)$ is compatible with the dual system $(X,Y,b)$.}
\end{enumerate}
\par 
{\it Proof.}
This is a rephrasing of \ref{DUAL.3}(B).\cqfd
\begin{enumerate}
\item[(B)] {\it The Mackey topology $\tau(X,Y)$ is compatible with the dual system $(X,Y,b)$.}
\end{enumerate}
\par 
{\it Proof.}
In this proof, we denote by $\circ$ the polarity with respect to the dual system $(X,Y,b)$ and we denote by $\bullet$ the polarity with respect to the dual system $(X,X^\bigstar,\la \cdot,\cdot \ra)$.
Recalling \ref{DUAL.5}(3) that $\frS_\rmcomp$ is admissible, we infer from \ref{DUAL.5}(B) that $\calB = \calP(X) \cap \{ \leftindex^\circ C : C \in \frS_\rmcomp \}$ is a local base for the Mackey topology $\tau(X,Y)$.
Thus, it follows from \ref{DUAL.4}(E) that
\begin{equation*}
\begin{split}
X[\tau(X,Y)]^* & = \bigcup \{ V^\bullet : V \in \calB \} \\
& = \bigcup \left\{ (\leftindex^\circ C)^\bullet : C \in \frS_\rmcomp \right\} \\
& = \bigcup \{ \iota_2(C) : C \in \frS_\rmcomp \} \\
& = \iota_2 \left( \bigcup \frS_\rmcomp \right) \\
& = \iota_2(Y),
\end{split}
\end{equation*}
where we have used \ref{DUAL.4}(F).\cqfd
\begin{enumerate}
\item[(C)] {\bf (Mazur theorem)} {\it If $\calT_1$ and $\calT_2$ are two locally convex vector topologies on $X$ both compatible with the dual system $(X,Y,b)$ and if $A \subset X$ is absolutely convex then $A$ is $\calT_1$-closed if and only if $A$ is $\calT_2$-closed.}
\end{enumerate}
\par 
{\it Proof.}
Define $\calF_A = X^\bigstar \cap \left\{ x^\bigstar : \sup_{x \in A} | \la x , x^\bigstar \ra | \leq 1 \right\}$ and, corresponding to a locally convex vector topology $\calT$ on $X$, define $\calF_{A,\calT} = \calF_A \cap X[\calT]^*$.
It follows from the Hahn-Banach theorem \ref{LCTVS.8}(B) that if $A$ is $\calT$-closed then
\begin{equation}
\label{eq.DUAL.2}
A = \bigcap \big\{ \{ x : | \la x , x^\bigstar \ra | \leq 1 \} : x^\bigstar \in \calF_{A,\calT} \big\}.
\end{equation}
\par 
Since $X[\calT_1]^*=X[\calT_2]^*$, by hypothesis, we have $\calF_{A,\calT_1} = \calF_{A,\calT_2}$.
Accordingly, if $A$ is $\calT_1$-closed then it follows from \eqref{eq.DUAL.2} that it equals the intersection of a collection of $\calT_2$-closed sets $X \cap \{ x : | \la x , x^\bigstar \ra \ \leq 1 \}$ corresponding to $x^\bigstar \in \calF_{A,\calT_2}$, hence, $A$ is itself $\calT_2$-closed.
Upon switching the role of $\calT_1$ and $\calT_2$ the proof is complete.\cqfd

\begin{enumerate}
\item[(D)] {\it For every $\sigma(Y,X)$-bounded set $B \subset Y$, $\leftindex^\circ B$ is absorbing and $p_B = q_{\leftindex^\circ B}$.\footnote{Recall \ref{DUAL.5} for the definition of $p_B$ and \ref{LCTVS.3} for the definition of the Minkowski functional $q_{\leftindex^\circ B}$}} 
\end{enumerate}
\par 
{\it Proof.}
Recall that $p_B(x) < \infty$ for all $x \in X$, since $B$ is $\sigma(Y,X)$-bounded.
Furthermore, for all $x \in X$ and all $t > 0$ we have
\begin{equation*}
\begin{split}
p_B(x) \leq t & \iff (\forall y \in B) : |b(x,y)| \leq t \\
& \iff (\forall y \in B) : |b(t^{-1} \cdot x,y)| \leq 1 \\
& \iff t^{-1} \cdot x \in \leftindex^\circ B.
\end{split}
\end{equation*}
This completes the proof. \cqfd
\begin{enumerate}
\item[(E)] {\bf (Mackey-Arens theorem)} {\it Let $(X,Y,b)$ be a dual system and let $\calT$ be a locally convex vector topology on $X$.
The following are equivalent.
\begin{enumerate}
\item[(a)] $\calT$ is compatible with the dual system $(X,Y,b)$.
\item[(b)] $\sigma(X,Y) \subset \calT \subset \tau(X,Y)$.
\end{enumerate}
}
\end{enumerate}
\par 
{\it Proof that $(a) \Rightarrow (b)$.}
Assume $\calT$ is compatible.
The inclusion $\sigma(X,Y) \subset \calT$ has already been noted \ref{DUAL.3}(A).
Let $\calB$ be a local base for $\calT$ consisting of absolutely convex $\calT$-closed neighbourhoods of the origin.
Consider $\frS = \calP(Y) \cap \{ V^\circ : V \in \calB \}$.
Since $\calT$ is compatible, $\frS$ covers $Y$ and, therefore, $T_\frS$ is a locally convex vector topology on $X$.
We shall show that
\begin{equation*}
\calT = T_\frS \subset \tau(X,Y)
\end{equation*}
and the proof will be complete.
\par 
Let $V \in \calB$.
As $V$ is absolutely convex and $\calT$-closed and $\calT$ is compatible with $(X,Y,b)$, it follows from (A) and Mazur's theorem (C) that $V$ is $\sigma(X,Y)$-closed.
Thus, we infer from the bipolar theorem \ref{DUAL.4}(C) that $V = \leftindex^\circ(V^\circ) = \leftindex^\circ B$, where we have abbreviated $B = V^\circ$.
It follows from (D) that $q_V = p_B$.
Since $\la q_V \ra_{V \in \calB}$ generates $\calT$ and $\la p_B \ra_{B \in \frS}$ generates $T_\frS$, we have shown that $\calT = T_\frS$.
\par 
It remains to show that $T_\frS \subset \tau(X,Y)$.
Let $B \in \frS$ and choose $V \in \calB$ such that $B = V^\circ$.
It follows from Banach-Alaoglu's theorem \ref{DUAL.4}(D) that $V^\bullet$ is $\sigma(X^*,X)$-compact where the polarity $\bullet$ is understood in the dual system $(X,X[\calT]^*,\la \cdot,\cdot \ra)$.
Since $\calT$ is compatible with $(X,Y,b)$, this is easily seen to be equivalent to $V^\circ$ being $\sigma(Y,X)$-compact.
Since $V^\circ$ is clearly absolutely convex, one concludes that $V^\circ \in \frS_\rmcomp$.
Thus, $\frS \subset \frS_\rmcomp$ and, in turn, $T_\frS \subset T_{\frS_\rmcomp} = \tau(X,Y)$.\cqfd
\par 
{\it Proof that $(b) \Rightarrow (a)$.}
As $\sigma(X,Y) \subset \calT$, it follows from (A) that $\iota_2(Y) = X[\sigma(X,Y)]^* \subset X[\calT]^*$.
As $\calT \subset \tau(X,Y)$, it follows from (B) that $X[\calT]^* \subset X[\tau(X,Y)]^* = \iota_2(Y)$. \cqfd
\begin{enumerate}
\item[(F)] {\bf (Uniform boundedness principle)} {\it If $X[\calT]$ is a locally convex topological vector space and $B \subset X$ then the following are equivalent.
\begin{enumerate}
\item[(a)] $B$ is $\calT$-bounded.
\item[(b)] $B$ is $\sigma(X,X^*)$-bounded.
\end{enumerate}
}
\end{enumerate}
\par 
{\it Proof.}
Since $\sigma(X,X^*) \subset \calT$, one trivially has $(a) \Rightarrow (b)$.
We turn to proving the reverse implication.
Let $B \subset X$ and let $\la q_i \ra_{i \in I}$ be a family of seminorms that generates the topology $\calT$.
In view of \ref{LCTVS.5}(D), we ought to show that $\sup \{ q_i(x) : x \in B \} < \infty$ for all $i \in I$.
Fix $i \in I$ and define
\begin{equation*}
Y_i = X^\bigstar \cap \left\{ f : |f| \leq a \cdot q_i \text{ for some } a > 0 \right\} \subset X^*.
\end{equation*}
This is readily a vector space.
For $f \in Y_i$ we define 
\begin{equation*}
\|f\|_i = \sup \{ | \la x , f \ra| : x \in X \text{ and } q_i(x) \leq 1 \}.
\end{equation*}
We leave to the reader the routine verification that $Y_i[\|\cdot\|_i]$ is a Banach space and that $|\la x , f\ra| \leq \|f\|_i q_i(x)$ for all $x \in X$ and all $f \in Y_i$.
It follows that if $x \in X$ then $\rmev(x)|_{Y_i} : Y_i \to \R$ is $\|\cdot\|_i$-continuous and that $\|\rmev(x)|_{Y_i}\|_i^* \leq q_i(x)$.
In fact, $\|\rmev(x)|_{Y_i}\|_i^* = q_i(x)$ (since there exists $f \in X^\bigstar$ such that $f(x) = q_i(x)$ and $|f| \leq q_i$, according to the Hahn-Banach theorem \ref{LCTVS.8}(D).
We see that the collection $\{\rmev(x)|_{Y_i} : x \in B \} \subset Y_i^*$ is pointwise bounded, \ie $\sup \{ | \la x , f \ra | : x \in B \} < \infty$ for every $f \in Y_i$, since $Y_i \subset X^*$ and $B$ is $\sigma(X,X^*)$-bounded.
By the usual uniform bounded principle in the context of Banach spaces (see \eg \cite[2.4 and 2.5]{RUDIN}), we infer that $\sup \{ \|\rmev(x)|_{Y_i}\|_i^* : x \in B \} < \infty$. \cqfd
\begin{enumerate}
\item[(G)] {\it Let $X[\calT]$ be a locally convex topological vector space. The strong topology $\beta(X^*,X)$ defined in \ref{LCTVS.7} coincides with the strong topology $\beta(X^*,X)$ relative to the dual system $(X^*,X,\la \cdot , \cdot \ra)$ defined in \ref{DUAL.5}(3).}
\end{enumerate}
\par 
{\it Proof.}
The first is defined as the topology of uniform convergence on the $\calT$-bounded subsets of $X$.
The second is defined as the topology of uniform convergence on the $\sigma(X^*,X)$-bounded subsets of $X$.
Thus, the conclusion is an immediate consequence of (F).\cqfd
\begin{enumerate}
\item[(H)] {\bf (Characterization of semireflexivity)} {\it Let $X[\calT]$ be a locally convex topological vector space. The following are equivalent.
\begin{enumerate}
\item[(a)] $X[\calT]$ is semireflexive\footnote{Recall \ref{LCTVS.9} for the definition of semireflexivity.}.
\item[(b)] $\beta(X^*,X) = \tau(X^*,X)$.
\item[(c)] Each $\calT$-bounded subset of $X$ is contained in a $\sigma(X,X^*)$-compact subset of $X$.
\end{enumerate}
}
\end{enumerate}
\par 
In the above statement and the proof below, reference is made to the dual system $(X^*,X,\la \cdot,\cdot \ra)$ which is the symmetric of $(X^*,X,\la \cdot,\cdot \ra)$ (recall \ref{DUAL.2}(2)).
The polarity with respect to this dual system will simply be denoted by $\circ$.
\par 
{\it Proof that $(a) \iff (b)$.}
If $\iota_2 : X \to (X^*)^\bigstar$ refers to $(X^*,X,\la \cdot,\cdot \ra)$ and if $\widehat{\rmev} : X \to (X^*)^\bigstar$ is defined by same formula as in \ref{LCTVS.9} (forgetting about the strong continuity of each $\rmev(x)$) then, clearly, $\iota(X)=\widehat{\rmev}(X)$ and $\widehat{\rmev}(X)=\rmev(X)$.
Thus,
\begin{equation*}
\begin{split}
X \text{ is semireflexive} & \iff X^*[\beta(X^*,x)]^*=\rmev(X)=\widehat{\rmev}(X) \\
& \iff X^*[\beta(X^*,x)]^*=\iota_2(X) \\
& \iff \beta(X^*,X) \text{ is compatible with } (X^*,X,\la \cdot,\cdot \ra)\\
& \iff \sigma(X^*,X) \subset \beta(X^*,X) \subset \tau(X^*,X) \\
& \iff \beta(X^*,X) = \tau(X^*,X).
\end{split}
\end{equation*}
We have used the Mackey-Arens theorem (E) and \eqref{eq.DUAL.1}.\cqfd
\par 
{\it Proof that $(b) \Rightarrow (c)$.}
Let $B \subset X$ be $\calT$-bounded.
Then $B$ is trivially $\sigma(X,X^*)$-bounded, hence, $B^\circ$ is a $\beta(X^*,X)$-neighbourhood of the origin in $X^*$ (recall the first two sentences of the proof of \ref{DUAL.5}(B)).
Therefore, $B^\circ$ is a $\tau(X^*,X)$-neighbourhood of the origin, by (b).
According to \ref{DUAL.5}(B) (applied to the symmetric of the current dual system), there exists an absolutely convex $\sigma(X,X^*)$-compact set $C \subset X$ such that $C^\circ \subset B^\circ$.
Thus, $B \subset \leftindex^\circ(B^\circ) \subset \leftindex^\circ(C^\circ) = C$, where the first inclusion is trivial, the second one follows from \ref{DUAL.4}(B)(a), and the last one is an application of the bipolar theorem \ref{DUAL.4}(C).\cqfd
\par 
{\it Proof that $(c) \Rightarrow (b)$.}
In view of \ref{DUAL.5}(A), it is sufficient to show that each $\sigma(X,X^*)$-bounded subset of $X$ is contained in an absolutely convex $\sigma(X,X^*)$-compact set.
Let $B \subset X$ be $\sigma(X,X^*)$-bounded.
By the uniform boundedness principle (F), $B$ is $\calT$-bounded.
According to \ref{LCTVS.5}(C)(b), $\rmabsconv(B)$ is $\calT$-bounded.
By (c), there exists a $\sigma(X,X^*)$-compact set $S$ such that $\rmabsconv(B) \subset S$.
It follows from \ref{LCTVS.10}(B) that $C = \rmclos_{\sigma(X,X^*)}(\rmabsconv(B))$ is absolutely convex.
Since $C \subset S$, $C$ is $\sigma(X,X^*)$-compact.
As it contains $B$, the proof is complete.\cqfd
\end{Empty}

\section{Barrelled spaces}
\label{app.BARREL}

\begin{Empty}
\label{PR.7}
In a topological vector space $X[\calT]$ a set $A \subset X$ is called a {\em barrel} if it absolutely convex, closed, and absorbing. We say that a locally convex topological vector space $X[\calT]$ is {\em barrelled} if each barrel in $X$ is a neighborhood of 0.
\end{Empty}

\begin{Empty}
\label{PR.9}
Given a topological space $X[\calT]$ and a function $f : X \to \R$ we say that $f$ is {\em lower-semicontinuous} if $X \cap \{ x : f(x) \leq r \}$ is closed for every $r \in \R$. If $\la f_i \ra_{i \in I}$ is a non-empty family of continuous functions $X \to \R$ such that $f(x) :=  \sup \{ f_i(x) : i \in I \} < \infty$ for every $i \in I$ then $f : X \to \R$ is lower-semicontinuous. Indeed, for each $r \in \R$ the set $X \cap \{ x : f(x) \leq r \} = \cap_{i \in I} X \cap \{ x : f_i(x) \leq r \}$ is closed. 
\end{Empty}

\begin{Empty}
\label{PR.10}
Given a locally convex topological vector space $X[\calT]$ we say that $B \subset X[\calT]^*$ is {\em equicontinuous} if $\cap_{x^* \in B} X \cap \{ x : | \la x,x^* \ra | < 1 \}$ is a neighbourhood of 0. Notice that this set is contained in $B^\circ$, the polar of $B$, see \ref{DUAL.4}.
\end{Empty}

\begin{Proposition}
\label{PR.8}
Let $X[\calT]$ be a locally convex topological vector space. The following are equivalent.
\begin{enumerate}
\item[(A)] $X[\calT]$ is barrelled;
\item[(B)] Every lower semicontinuous seminorm on $X[\calT]$ is continuous;
\item[(C)] Every weakly* bounded subset of $X[\calT]^*$ is equicontinuous. 
\end{enumerate}
\end{Proposition}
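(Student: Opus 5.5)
I will prove the cycle of implications (A) $\Rightarrow$ (B) $\Rightarrow$ (C) $\Rightarrow$ (A). Barrels and lower semicontinuous seminorms are linked through Minkowski functionals (\ref{LCTVS.3}). Barrels and weakly* bounded subsets of the dual are linked through polarity and the bipolar theorem \ref{DUAL.4}(C), with Mazur's theorem \ref{DUAL.6}(C) used to pass between $\calT$-closedness and $\sigma(X,X^*)$-closedness.

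\emph{(A) $\Rightarrow$ (B).} Let $q$ be a lower semicontinuous seminorm and put $A = X \cap \{ x : q(x) \leq 1\}$. This set is absolutely convex because $q$ is a seminorm. It is closed by lower semicontinuity, and it is absorbing since $q(x) < \infty$. So $A$ is a barrel, hence a neighbourhood of $0$ by (A). Then $|q(x)-q(y)| \leq q(x-y)$ shows that $q(x-y) \leq \veps$ whenever $x - y \in \veps A$, so $q$ is continuous.

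\emph{(B) $\Rightarrow$ (C).} Let $B \subset X^*$ be $\sigma(X^*,X)$-bounded and define $q(x) = \sup_{x^* \in B} |\la x,x^*\ra|$. Weak* boundedness gives $q(x) < \infty$ for every $x$. The function $q$ is a seminorm, and it is lower semicontinuous by \ref{PR.9}, being a supremum of the continuous functions $|x^*|$. By (B), $q$ is continuous, so $X \cap \{ x : q(x) < 1\}$ is an open neighbourhood of $0$. This set is contained in $\cap_{x^* \in B} X \cap \{ x : |\la x,x^*\ra| < 1\}$, which therefore is a neighbourhood of $0$, i.e. $B$ is equicontinuous.

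\emph{(C) $\Rightarrow$ (A).} This is the step requiring the most care. Let $A$ be a barrel and consider its polar $B = A^\circ \subset X^*$. First, $B$ is weak* bounded: given $x \in X$, choose $t>0$ with $x \in tA$, since $A$ is absorbing; then $|\la x,x^*\ra| \leq t$ for all $x^* \in B$. By (C), $B$ is equicontinuous, so $\leftindex^\circ B \supset \cap_{x^*\in B}\{|\la \cdot,x^*\ra|<1\}$ is a neighbourhood of $0$. It remains to show $\leftindex^\circ(A^\circ) = A$. The set $A$ is absolutely convex and $\calT$-closed, and $\calT$ and $\sigma(X,X^*)$ are both compatible with $(X,X^*)$ by \ref{DUAL.6}(A). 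Mazur's theorem \ref{DUAL.6}(C) therefore makes $A$ also $\sigma(X,X^*)$-closed, and the bipolar theorem \ref{DUAL.4}(C) then gives $\leftindex^\circ(A^\circ) = A$. Hence $A$ is a neighbourhood of $0$, and $X[\calT]$ is barrelled.
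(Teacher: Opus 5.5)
Your proof is correct and follows the paper's argument essentially step for step. It uses the same cycle (A)$\Rightarrow$(B)$\Rightarrow$(C)$\Rightarrow$(A): the barrel $\{q\leq 1\}$, the supremum seminorm $x\mapsto\sup_{x^*\in B}|\la x,x^*\ra|$, and Mazur's theorem combined with the bipolar theorem to obtain $\leftindex^\circ(A^\circ)=A$. The one small slip is a citation: compatibility of $\calT$ itself with $(X,X^*,\la\cdot,\cdot\ra)$ holds by the definition $X^*=X[\calT]^*$, so \ref{DUAL.6}(A) is needed only for $\sigma(X,X^*)$.
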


\begin{proof}[Proof that $(A) \Rightarrow (B)$]
Let $\lno \cdot \rno$ be a lower-semicontinuous seminorm on $X$. Observe that $A = X \cap \{ x : \lno x \rno \leq 1 \}$ is a barrel. Consequently, $A$ is a neighbourhood of 0. It now easily follows from the triangle inequality that $\lno \cdot \rno^{-1}(O)$ is open in $X$ whenever $O \subset \R$ is open.
\end{proof}
\begin{proof}[Proof that $(B) \Rightarrow (C)$]
Let $\emptyset \neq B \subset X[\calT]^*$ be weakly* bounded and note that the formula $\lno x \rno_B := \sup \{ | \la x,x^* \ra | : x^* \in B \} < \infty$, $x \in X$, defines a lower-semicontinuous seminorm on $X$. The continuity of $\lno \cdot \rno_B$ is equivalent to the equicontinuity of $B$.
\end{proof}
\begin{proof}[Proof that $(C) \Rightarrow (A)$]
Let $A$ be a barrel in $X$.
Since $A$ is $\calT$-closed and $\sigma(X,X^*)$ is compatible with the duality $(X,X^*,\la\cdot,\cdot\ra)$, $A$ is also $\sigma(X,X^*)$-closed, according to Mazur's theorem \ref{DUAL.6}(C).
Therefore, $A = \leftindex^\circ(A^\circ)$, by the bipolar theorem \ref{DUAL.4}(C).
Since $A$ is absorbing, $A^\circ$ is $\sigma(X^*,X)$-bounded.
Thus, by hypothesis, is is equicontinuous, \ie $\cap_{x^* \in A^\circ} X \cap \{ x : |\la x,x^*\ra| < 1 \} \subset \leftindex^\circ(A^\circ) = A$ is a $\calT$-neighbourhood of 0.
\end{proof}

\begin{Empty}
\label{PR.12}
Condition (C) is a version of the Banach-Steinhaus theorem, as we now explain. Assume $X[\|\cdot\|]$ is a normed space. The reader will happily check that $B \subset X^*$ is equicontinuous if and only if $\sup_{x^* \in B} \|x^*\|^* < \infty$ where $\|\cdot\|^*$ is the usual dual norm on the dual space $X^*$.
\par 
Therefore, if $X[\| \cdot \|]$ is a barrelled normed space and $\la x^*_i \ra_{i \in I}$ is a family in $X^*$ such that $\sup_{i \in I} | \la x,x_i^* \ra| < \infty$ for every $x \in X$ then $\sup_{i \in I} \|x_i^*\|^* < \infty$. In particular if $\la x_n^* \ra_n$ is a sequence in $X^*$ and the limit $x^*(x) := \lim_n \la x,x_n^* \ra$ exists for every $x \in X$ then the linear form $x^*$ is continuous. 
\par 
Banach spaces are barrelled normed spaces, according to \ref{PR.11}, but not all all barrelled normed spaces are complete. 
\end{Empty}

\begin{Proposition}
\label{PR.11}
Let $X$ be a locally convex topological vector space. If $X$ is not meagre in itself then $X$ is barrelled. In particular, every Banach space and every Fr\'echet space is barrelled.
\end{Proposition}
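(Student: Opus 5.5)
The argument is the classical Baire category one, and it runs through the definition of barrel in \ref{PR.7}. Let $A \subset X$ be a barrel. The goal is to show that $A$ is a neighbourhood of $0$.

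First, since $A$ is absorbing, every $x \in X$ lies in $n \cdot A$ for some $n \in \N$, so
\begin{equation*}
X = \bigcup_{n \in \N} n \cdot A .
\end{equation*}
Each $n \cdot A$ is closed, because $A$ is closed and $x \mapsto n \cdot x$ is a homeomorphism. Since $X$ is not meagre in itself, some $n \cdot A$ has nonempty interior. Applying the homeomorphism $x \mapsto n^{-1} \cdot x$, the set $A$ itself has nonempty interior. So we may pick $x_0 \in X$ and an open set $U \ni 0$ with $x_0 + U \subset A$.

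Next, I use balancedness and convexity to move this interior point to the origin. Since $A$ is balanced, $-x_0 - U = -(x_0 + U) \subset A$. Let $W = U \cap (-U)$, which is an open neighbourhood of $0$. For $w \in W$ we have $-w \in U$, so $x_0 + w \in A$ and $-x_0 + w = -(x_0 - w) \in A$. Convexity then gives
\begin{equation*}
w = \tfrac{1}{2}(x_0 + w) + \tfrac{1}{2}(-x_0 + w) \in A .
\end{equation*}
Hence $W \subset A$, and $A$ is a neighbourhood of $0$. This shows that $X$ is barrelled.

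For the final sentence, a Fréchet space is a complete metrizable space (\ref{LCTVS.11}), and Banach spaces are Fréchet (\ref{LCTVS.12}). By the Baire category theorem, a nonempty complete metric space is not meagre in itself; one can cite \cite{RUDIN} for this. The first part therefore applies.

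I do not expect a real obstacle. The only points needing care are that scaling by $n$ preserves both closedness and interior, and the symmetrization step that transfers the interior point $x_0$ to $0$.
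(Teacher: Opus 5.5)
Your proof is correct and follows the paper's argument: the Baire category theorem gives $A$ an interior point, and balancedness plus convexity move that interior point to the origin. The only cosmetic difference is in the symmetrization step. The paper takes an open $U \subset \frac{1}{2}A$ and writes $0 \in U + (-U) \subset \frac{1}{2}A + \frac{1}{2}A \subset A$, whereas you apply the midpoint identity on $W = U \cap (-U)$.
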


\begin{proof}
Let $A$ be a barrel in $X$. Since $A$ is absorbing, $X = \cup_{j \in \N} j \cdot A$. As each $j \cdot A$ is closed and $X$ is not meagre in itself, some $j\cdot A$ has non-empty interior and, therefore, so does $\frac{1}{2} A$. Thus, there exists an open set $U$ such that $x \in U \subset \frac{1}{2} A$. Since $\frac{1}{2} A$ is symmetric, we also have $-x \in -U \subset \frac{1}{2} A$ and, therefore, $0 = x + (-x) \subset U + (-U) \subset \frac{1}{2} A + \frac{1}{2} A \subset A$, where the last inclusion follows from the convexity of $A$. Since $U + (-U)$ is open, the proof is complete.
\end{proof}

\begin{Theorem}[Another version of Banach-Alaoglu]
\label{PR.100}
Let $X$ be a barrelled locally convex topological vector space. If $B \subset X^*$ is weakly* bounded and weakly* closed then $B$ is weakly* compact.
\end{Theorem}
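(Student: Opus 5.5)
The plan is to reduce weak* compactness of $B$ to the classical Banach-Alaoglu theorem \ref{DUAL.4}(D), by showing that $B$ sits inside the polar of a neighbourhood of the origin.

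First, since $B$ is weakly* bounded and $X$ is barrelled, \ref{PR.8}(C) says that $B$ is equicontinuous. Concretely, the set
\begin{equation*}
W = \bigcap_{x^* \in B} X \cap \{ x : |\la x , x^* \ra| < 1 \}
\end{equation*}
is a neighbourhood of $0$ in $X$.

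Next, I check directly that $B \subset W^\circ$. Indeed, for every $x^* \in B$ and every $x \in W$ we have $|\la x, x^* \ra| < 1 \leq 1$.

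By Banach-Alaoglu \ref{DUAL.4}(D), $W^\circ$ is $\sigma(X^*,X)$-compact, since $W$ is a neighbourhood of $0$. The set $B$ is a $\sigma(X^*,X)$-closed subset of this compact set, so it is $\sigma(X^*,X)$-compact, as a closed subset of a compact space is compact. Since $\sigma(X^*,X)$ is Hausdorff, there is no subtlety about the closedness being relative to $W^\circ$ or to $X^*$.

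There is no real obstacle in this argument. The only point needing care is the implication (A) $\Rightarrow$ (C) of \ref{PR.8}, which passes through (B) using the lower-semicontinuous seminorm $x \mapsto \sup_{x^* \in B} |\la x,x^*\ra|$, and that step is already established earlier in the paper.
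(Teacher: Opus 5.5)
Your proof is correct. It differs from the paper's in one small but real respect.

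The paper first replaces $B$ by $C=\overline{\rmabsconv}^{\sigma(X^*,X)}(B)$. It then:
\begin{itemize}
\item invokes \ref{LCTVS.10}(C) to see that $C$ is still weakly* bounded;
\item applies \ref{PR.8}(C) to $C$ rather than to $B$;
\item uses the bipolar theorem \ref{DUAL.4}(C) to write $C$ as the polar of a subset of $X$, which equicontinuity of $C$ shows is a neighbourhood of $0$.
\end{itemize}
Banach--Alaoglu then makes $C$ weakly* compact, and $B$, being closed in $C$, is compact.

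You instead apply \ref{PR.8}(C) to $B$ itself. You then note that an equicontinuous set already lies in the polar $W^\circ$ of the neighbourhood $W$, so neither the closed absolutely convex hull nor the bipolar theorem is needed. Your route is strictly shorter.

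The one extra piece of information the paper's detour yields is that the weak* closed absolutely convex hull of $B$ is itself weak* compact. Your argument gives this too: $W^\circ$ is absolutely convex and weak* closed, so it contains that hull as a closed subset.

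Your remark about Hausdorffness is unnecessary. If $F$ is closed and $K$ is compact in any topological space, then $F\cap K$ is compact.
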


\begin{proof}
Letting $C = \overline{\rmabsconv}^{\sigma(X^*,X)}(B)$ it suffices to show that $C$ is weakly* compact.
Note that $\leftindex^\circ (C^\circ) = C$, according to the bipolar theorem \ref{DUAL.4}(C). As follows from the Banach-Alaoglu theorem \ref{DUAL.4}(D), it is now enough to establish that $C^\circ$ is a neighbourhood of 0 in $X$. Since $C$ is weakly* bounded, according to \ref{LCTVS.10}(C), it ensues from \ref{PR.8}(C) that $C$ is equicontinuous, thus, $\cap_{x^* \in C} X \cap \{ x : |\la x,x^* \ra| < 1 \} \subset C^\circ$ is a weak* neighbourhood of 0.
\end{proof}
\section{Bornological spaces}
\label{app.BORN}

\begin{Empty}
\label{PR.5}
In a topological vector space $X[\calT]$ a set $A \subset X$ is called {\em bornivorous} if it absorbs every bounded set. In other words for each bounded set $B \subset X$ there exists $t > 0$ such that $B \subset t \cdot A$. It follows from \ref{PR.1} (\ie the definition of bounded set) that each neighborhood of 0 is bornivorous. 
We say that a locally convex topological vector space $X[\calT]$ is {\em bornological} if each convex balanced bornivorous set is a neighborhood of 0. 
\end{Empty}

\begin{Proposition}
\label{PR.13}
Let $X[\calT]$ be a locally convex topological vector space. Consider the following conditions:
\begin{enumerate}
\item[(A)] $X[\calT]$ is bornological;
\item[(B)] Every bounded seminorm on $X[\calT]$ (that is, bounded on each $\calT$-bounded subset of $X$) is continuous;
\item[(C)] Every strongly bounded subset of $X[\calT]^*$ is equicontinuous.
\end{enumerate}
We have $(A) \Leftrightarrow (B) \Rightarrow (C)$.
\end{Proposition}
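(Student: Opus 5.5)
We prove the two implications $(A) \Leftrightarrow (B)$ first, and then deduce (C) from (B).

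For $(A) \Rightarrow (B)$, let $q$ be a seminorm on $X$ that is bounded on each $\calT$-bounded set, and put $A = X \cap \{ x : q(x) \leq 1 \}$. This set is convex and balanced because $q$ is a seminorm. It is bornivorous: if $B$ is bounded then $s = \sup_B q < \infty$, so $B \subset t \cdot A$ for any $t > \max\{s,0\}$. By (A), $A$ is a neighbourhood of $0$. The triangle inequality then gives continuity of $q$, exactly as in the proof of \ref{PR.8}(A)$\Rightarrow$(B): for $x \in X$ and $\veps > 0$ one has $|q(y)-q(x)| \leq \veps$ whenever $y \in x + \veps A$.

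For $(B) \Rightarrow (A)$, let $A$ be convex, balanced and bornivorous. Since singletons are bounded, $A$ is absorbing, so its Minkowski functional $q_A$ from \ref{LCTVS.3} is a seminorm satisfying $\{ q_A < 1 \} \subset A$. If $B$ is bounded, choose $t > 0$ with $B \subset t \cdot A$; then $q_A \leq t$ on $B$, so $q_A$ is a bounded seminorm. By (B), $q_A$ is continuous. Hence $\{ q_A < 1 \}$ is an open set containing $0$ and contained in $A$, so $A$ is a neighbourhood of $0$.

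For $(B) \Rightarrow (C)$, let $\emptyset \neq \calF \subset X[\calT]^*$ be strongly bounded. This means that for every $\calT$-bounded $B \subset X$, $\sup_{f \in \calF} p_B(f) < \infty$; the case $B$ a singleton shows $\calF$ is pointwise bounded. So $q(x) = \sup_{f \in \calF} |f(x)|$ is a finite seminorm, and for bounded $B$ one has $\sup_{x \in B} q(x) = \sup_{f \in \calF} p_B(f) < \infty$. Thus $q$ is a bounded seminorm, hence continuous by (B). Consequently $\{ q < 1 \} \subset \cap_{f \in \calF} X \cap \{ x : |f(x)| < 1 \}$, the left side is an open neighbourhood of $0$, and $\calF$ is equicontinuous in the sense of \ref{PR.10}.

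No step is a real obstacle. The only point requiring care is in $(B) \Rightarrow (A)$: we must use $\{ q_A < 1 \} \subset A$ rather than the equality $\{ q_A \leq 1 \} = A$, since $A$ is not assumed closed.
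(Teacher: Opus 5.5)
Your proof is correct and follows the paper's argument essentially step for step. The three steps match: the sublevel set $\{q \leq 1\}$ for (A)$\Rightarrow$(B), the Minkowski functional $q_A$ together with $\{q_A < 1\} \subset A$ for (B)$\Rightarrow$(A), and the seminorm $x \mapsto \sup_{f \in \calF}|f(x)|$ for (B)$\Rightarrow$(C). You spell out a few details the paper leaves implicit, namely why $\{q \leq 1\}$ is bornivorous and why $\calF$ is pointwise bounded.
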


\par
{\it Proof that $(A) \Rightarrow (B)$.} Let $\lno \cdot \rno$ be a bounded seminorm on $X$ and observe that $A = X \cap \{ x : \lno x \rno \leq 1 \}$ is convex, balanced, and bornivorous. Therefore, $A$ is a neighbourhood of 0 and the continuity $\lno \cdot \rno$ follows from the triangle inequality.\cqfd
\par 
{\it Proof that $(B) \Rightarrow (C)$.} Let $\emptyset \neq B \subset X[\calT]^*$ be strongly bounded and observe that the formula $\lno x \rno_B := \sup \{ | \la x,x^* \ra | : x^* \in B \} < \infty$, $x \in X$, defines a bounded seminorm on $X$. The continuity of $\lno \cdot \rno_B$ is equivalent to the equicontinuity of $B$.\cqfd
\par 
{\it Proof that $(B) \Rightarrow (A)$.} Let $A$ be convex, balanced, and bornivorous. In particular, $A$ is absorbing and its Minkowski functional $q_A$ is a seminorm of $X$. Furthermore, $q_A$ is bounded, since $A$ is bornivorous.  Therefore, $q_A$ is continuous and $A \supset X  \cap \{ x : q_A(x) < 1 \}$ is a neighbourhood of 0.\cqfd

\begin{Remark}
Arguing as in the proof of \ref{PR.8} $(C) \Rightarrow (A)$ one can show that if every strongly bounded subset of $X[\calT]^*$ is equicontinuous and $A \subset X$ is a convex, balanced, bornivorous, {\it and closed} then $A$ is a neighbourhood of 0. 
\end{Remark}

\begin{Proposition}
\label{PR.21}
Let $X$ be a Fr\'echet-Urysohn topological vector space and $\la y_k \ra_k$ a null sequence.
There are positive integers $j(1) < j(2) < \cdots$ and $k(1) < k(2) < \cdots$ such that $\lim_n j(n) \cdot y_{k(n)} = 0$.
\end{Proposition}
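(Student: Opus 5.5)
The plan is to deduce this from the diagonal characterization \ref{PR.3}(D), direction $(a) \Rightarrow (b)$. That direction was proved using only the Fr\'echet-Urysohn property together with the vector operations; sequentiality was needed only for the converse, and in any case it follows from \ref{PR.3}(A).

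First I build a double sequence. For each $j \in \N$ put $x_{j,k} = j \cdot y_k$. Since $\la y_k \ra_k$ is null and scalar multiplication by the fixed scalar $j$ is continuous, $\lim_k x_{j,k} = 0$ for every $j$. So the $j$-th row converges to $x_j := 0$, and the sequence $\la x_j \ra_j$ (identically $0$) trivially converges to $x = 0$.

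Next I apply \ref{PR.3}(D)$(a)\Rightarrow(b)$ to this double sequence. It yields increasing functions $j, k : \N \to \N$ with $\lim_n x_{j(n),k(n)} = 0$, that is, $\lim_n j(n) \cdot y_{k(n)} = 0$. This is the claimed conclusion.

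The one point to check is that the proof of \ref{PR.3}(D)$(a)\Rightarrow(b)$ really produces \emph{strictly} increasing $j$ and $k$ and uses no hypothesis beyond Fr\'echet-Urysohn plus the topological vector space structure. The argument there, in the special case $x_j = 0$, perturbs by $\pm a/j$, extracts a sequence from $A$ via the Fr\'echet-Urysohn property, shows $J$ is infinite, and then refines to get both index sequences increasing. I would re-read that last refinement step to confirm it gives strict monotonicity. If it does not, I would pass to a further subsequence: since $j(n) \to \infty$ and $k(n) \to \infty$, one can choose indices inductively to make both strictly increasing. The degenerate case $X = \{0\}$ is trivial: take $j(n) = k(n) = n$.
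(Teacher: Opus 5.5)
Your proposal is correct and is essentially the paper's own proof: set $x=0$, $x_j=0$, $x_{j,k}=j\cdot y_k$, and apply \ref{PR.3}(D)$(a)\Rightarrow(b)$, with sequentiality of $X$ supplied by \ref{PR.3}(A). Your worry about strict monotonicity is harmless. In the proof of \ref{PR.3}(D) the first index satisfies $j(n)\uparrow\infty$ and the second index is $j(n)+l(n)>j(n)$, so passing to a further subsequence makes both strictly increasing without affecting convergence, exactly as your fallback suggests.
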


\par 
{\it Proof.}
Let $x=0$, $x_j=0$ for all $j \in \N$, and $x_{j,k} = j \cdot y_k$ for all $(j,k) \in \N \times \N$.
Apply \ref{PR.3}(D) (recall that $X$ is sequential, by \ref{PR.3}(A)).\cqfd

\begin{Proposition}
\label{PR.6}
Let $X[\calT]$ be a locally convex topological vector space. 
If $X[\calT]$ is Fr\'echet-Urysohn then it is bornological. 
In particular, if the topology $\calT$ is metrizable then $X[\calT]$ is bornological.
\end{Proposition}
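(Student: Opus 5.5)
We prove \ref{PR.6} through characterization (B) of \ref{PR.13}: it suffices to show that every bounded seminorm $q$ on $X$ is continuous. A seminorm is continuous as soon as it is continuous at the origin. Moreover, the set $X \cap \{ x : q(x) < 1\}$ is convex. So we reduce to showing that $q$ is sequentially continuous at $0$, and then upgrade sequential continuity to continuity using the Fr\'echet-Urysohn property.

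\emph{Step 1 (sequential continuity).} Let $\la y_k \ra_k$ be a null sequence and assume, to reach a contradiction, that $q(y_k) \not\to 0$. Passing to a subsequence, which is still null, we may assume $q(y_k) \geq \eta > 0$ for all $k$. By \ref{PR.21} there are increasing integers $j(n)$ and $k(n)$ such that $\la j(n) \cdot y_{k(n)} \ra_n$ is a null sequence. A convergent sequence is bounded (\ref{LCTVS.5}), so $\{ j(n) \cdot y_{k(n)} : n \in \N \}$ is bounded. Since $q$ is a bounded seminorm, $\sup_n j(n)\, q(y_{k(n)}) < \infty$. But $j(n)\, q(y_{k(n)}) \geq j(n)\, \eta \to \infty$, a contradiction. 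Hence $q(y_k) \to 0$.

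\emph{Step 2 (from sequential to topological continuity).} Let $V = X \cap \{ x : q(x) < 1 \}$. Suppose $0$ is not an interior point of $V$. Then $0 \in \rmclos(X \setminus V)$. Since $X$ is Fr\'echet-Urysohn, there is a sequence $\la x_j \ra_j$ in $X \setminus V$ converging to $0$. Each term satisfies $q(x_j) \geq 1$, which contradicts Step 1. So $V$ is a neighbourhood of $0$. The same applies to every set $\veps V$ with $\veps > 0$, so $q$ is continuous at $0$ and hence continuous by the triangle inequality. By \ref{PR.13}, $X$ is bornological.

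\emph{The metrizable case.} This follows from \ref{PR.20}(E): metrizable implies first countable, which implies Fr\'echet-Urysohn.

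The main obstacle is Step 1, specifically the diagonal selection. Everything hinges on \ref{PR.21}, which in turn rests on the diagonal characterization \ref{PR.3}(D); this is exactly where the Fr\'echet-Urysohn hypothesis is essential, since mere sequentiality would not provide such a diagonal sequence. Step 2 also uses the Fr\'echet-Urysohn property, but only directly and in a routine way.
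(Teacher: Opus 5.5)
Your proof is correct and is essentially the paper's argument: the paper works directly with an absolutely convex bornivorous set $A$ instead of passing to a bounded seminorm through \ref{PR.13}(B), but it uses the same two ingredients, namely Fr\'echet-Urysohn to extract a null sequence of points lying outside $A$, and \ref{PR.21} to rescale that sequence into a null (hence bounded) sequence which $A$ cannot absorb. Splitting the argument into sequential continuity followed by an upgrade is only a reorganization; note also that your Step 2 needs only sequentiality, since it follows from \ref{PR.2}(E) once Step 1 and the triangle inequality give sequential continuity everywhere.
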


\par 
{\it Proof.}
Assume if possible that $A \subset X$ is absolutely convex and bornivorous, yet not a neighbourhood of 0. 
Let $\calU$ denote the set of open sets containing 0. 
Thus, with each $U \in \calU$ there corresponds $x_U \in U \setminus A$. 
Since $0 \in \rmclos B$, where $B = \{ x_U : U \in \calU \}$, and $X$ is Fr\'echet-Urysohn, there is a sequence $\la U_k \ra_k$ in $\calU$ such that $\lim_k x_{U_k}=0$. 
Abbreviating $y_k = x_{U_k}$ and letting $j(1) < j(2) < \cdots$ and $k(1) < k(2) < \cdots$ be associated with $\la y_k \ra_y$ in \ref{PR.21} we infer that $\la j(n) \cdot y_{k(n)} \ra_n$ is a null sequence. 
Thus, the set $\{ j(n) \cdot  y_{k(n)} : n \in \N \}$ is bounded and, consequently, it is absorbed by $A$: There exists $t > 0$ such that $j(n) \cdot y_{k(n)} \in t \cdot A$ for all $n$. 
If $n$ is so large that $t^{-1} \cdot j(n) \geq 1$ then $y_{k(n)} \in A$, since $A$ is balanced, a contradiction.\cqfd

\par 
The particular case appears to be classical, see for instance \cite[7.3.2]{EDWARDS}, whereas the proposition itself was noted in \cite[Appendix 1(4)]{YAMAMURO}.
\section{Making continuous choices}
\label{app.MCC}

\begin{Empty}
\label{MCC.2}
We choose to use notations compatible with the applications in section \ref{sec.AET}.
Let $\bF$ and $\bE$ be two topological spaces.
We say that $\Gamma : \bF \to \calP(\bE)$ is {\em lower semi-continuous} if $\bF \cap \{ F : \Gamma(F) \cap O \neq \emptyset\}$ is open whenever $O \subset \bE$ is open.
\begin{enumerate}
\item[(A)] {\it Assume that $\Gamma : \bF \to \calP(\bE)$ is lower semi-continuous and define $\bar{\Gamma} : \bF \to \calP(\bE)$ by $\bar{\Gamma}(F) = \rmclos \Gamma(F)$. Then $\bar{\Gamma}$ is lower semi-continuous as well.}
\end{enumerate}
\par 
{\it Proof.}
Let $O \subset \bE$ be open and let $F_0$ be a member of $\bF \cap \{F : \bar{\Gamma}(F) \cap O \neq \emptyset \}$, \ie $(\rmclos \Gamma(F_0)) \cap O \neq \emptyset$.
Therefore, $\Gamma(F_0) \cap O \neq \emptyset$.
Since $\Gamma$ is lower semi-continuous, there is a neighbourhood $U$ of $F_0$ in $\bF$ such that $\Gamma(F) \cap O \neq \emptyset$ for all $F \in U$.
As $\bar{\Gamma}(F) \cap O \neq \emptyset$ for all such $F$, the proof is complete. \cqfd

\par 
In what follows, $\bE$ is a Fr\'echet space.
We fix on $\bE$ a metric $d$ that is compatible with its topology, is translation invariant, is complete, and so that open $d$-balls are convex.
Such a metric $d$ exists, recall \ref{LCTVS.11}.
\begin{enumerate}
\item[(B)] {\it Assume that $\bF$ is a metric space, that $\bE$ is a Fr\'echet space, and that $\Gamma : \bF \to \calP(\bE)$ satisfies the following conditions.
\begin{enumerate}
\item[(a)] For all $F \in \bF$, $\Gamma(F)$ is non-empty and convex.
\item[(b)] $\Gamma$ is lower semi-continuous.
\end{enumerate}
Then for every $\veps > 0$ there exists a continuous (in fact, locally Lipschitz) $\gamma : \bF \to \bE$ such that $\rmdist(\gamma(F),\Gamma(F)) < \veps$ for all $F \in \bF$.
}
\end{enumerate}
\par 
{\it Proof.}
Fix $\veps > 0$ and consider the open ball $V \subset \bE \cap \{ v : d(0,v) < \veps \}$.
Recall \ref{LCTVS.11} that the metric can be chosen so that $V$ is convex.
With $v \in \bE$ associate $O_v = \bF \cap \{ F : \Gamma(F) \cap (v + V) \neq \emptyset \}$.
Note that each $O_v$ is open, since $\Gamma$ is lower semi-continuous, and that $\bF = \cup_{v \in \bE} O_v$, since $\Gamma(F) \neq \emptyset$ for all $F \in \bF$.
Let $\la \psi_i \ra_{i \in I}$ be a partition of unity subordinated to $\la O_v \ra_{v \in \bE}$, see \eg \cite[Chap. IX 5.3]{DUGUNDJI}.
For each $i \in I$, choose $v_i \in \bE$ so that $\rmspt(\psi_i) \subset O_{v_i}$.
Define 
\begin{equation*}
\gamma : \bF \to \bE : F \mapsto \sum_{i \in I} \psi_i(F) v_i.
\end{equation*}
Thus, $\gamma$ is continuous.
Let $F \in \bF$ and $I_F = I \cap \{ i : \psi_i(F) \neq 0 \}$.
If $i \in I_F$ then $F \in \rmspt(\psi_i) \subset O_{v_i}$, whence, $\Gamma(F) \cap (v_i + V) \neq \emptyset$.
One may then choose $w_i \in \Gamma(F) \cap (v_i + V)$ and define $w = \sum_{i \in I_F} \psi_i(F)w_i$.
Observe that $w \in \Gamma(F)$, since $\Gamma(F)$ is convex, and that $w - \gamma(F) = \sum_{i \in I_F} \psi_i(F)(w_i-v_i) \in V$, since $V$ is convex.
Accordingly, $\rmdist(\gamma(F),\Gamma(F)) < \veps$, by the choice of $V$ and the translation invariance of $d$.\cqfd
\begin{enumerate}
\item[(C)] {\it Assume that $\bF$ is a metric space, $\bE$ is a Fr\'echet space, $\veps > 0$, and that $\Gamma : \bF \to \calP(\bE)$ and $\gamma : \bF \to \bE$ satisfy the following conditions.
\begin{enumerate}
\item[(a)] $\gamma$ is continuous.
\item[(b)] $\Gamma$ is lower semi-continuous.
\end{enumerate}
Then $\hat{\Gamma} : \bF \to \calP(\bE)$ defined by $\hat{\Gamma}(F) = \Gamma(F) \cap \{ v : d(\gamma(F),v) < \veps \}$ is lower semi-continuous.
}
\end{enumerate}
\par 
{\it Proof.}
Let $O \subset \bE$ be open and abbreviate $U = \bF \cap \{ F : \hat{\Gamma}(F) \cap O \neq \emptyset \}$.
We ought to show that $U$ is open.
If $U = \emptyset$ then we are done.
If not, let $F_0 \in U$, \ie $\Gamma(F_0) \cap B(\gamma(F_0),\veps) \cap O \neq \emptyset$.
There exists $0 < \tau < 1$ such that $\Gamma(F_0) \cap B(\gamma(F_0),\tau \veps) \cap O \neq \emptyset$.
Consider the open set $\tilde{O} = B(\gamma(F_0),\tau \veps) \cap O$ and define $\tilde{U} = \bF \cap \{ F : \Gamma(F) \cap \tilde{O} \neq \emptyset\}$.
Since $\Gamma$ is lower semi-continuous, $\tilde{U}$ is open.
Note that $\tilde{V} = \gamma^{-1} [ B(\gamma(F_0),(1-\tau)\veps)]$ is also open, as $\gamma$ is continuous.
It is clear that $F_0 \in \tilde{U} \cap \tilde{V}$ and we observe, furthermore, that $\tilde{U} \cap \tilde{V} \subset U$.
Indeed, if $F \in \tilde{U}$ then there exists $v \in \Gamma(F) \cap B(\gamma(F_0),\tau \veps) \cap O$ and, since $F \in \tilde{V}$, $d(v,\gamma(F)) \leq d(v,\gamma(F_0)) + d(\gamma(F_0),\gamma(F)) < \tau \veps + (1-\tau)\veps < \veps$, \ie $v \in \Gamma(F) \cap B(\gamma(F),\veps) \cap O$.
Thus, $U$ is a neighbourhood of $F_0$ and, since $F_0$ is arbitrary, the proof is complete.\cqfd
\begin{enumerate}
\item[(D)] {\bf (Michael's selection theorem) }{\it Assume that $\bF$ is a metric space, $\bE$ is a Fr\'echet space, and that $\Gamma : \bF \to \calP(\bE)$ satisfies the following conditions.
\begin{enumerate}
\item[(a)] For all $F \in \bF$, $\Gamma(F)$ is non-empty, convex, and closed. 
\item[(b)] $\Gamma$ is lower semi-continuous.
\end{enumerate}
Then there exists a continuous $\gamma : \bF \to \bE$ such that $\gamma(F) \in \Gamma(F)$ for all $F \in \bF$.
}
\end{enumerate}
\par 
{\it Proof.}
Let $\la \veps_j \ra_{j \in \N}$ be a sequence of positive real numbers such that $\sum_{j \in \N} \veps_j < + \infty$.
We define inductively a sequence $\la \gamma_j \ra_{j \in \N}$ of continuous maps $\bF \to \bE$ by letting $\gamma_0$ be associated with $\Gamma$ and $\veps_0$ in (A) and letting $\gamma_{j+1}$ be associated with $\Gamma_{j+1}$ and $\veps_{j+1}$ in (A), where $\Gamma_{j+1}(F) = \Gamma(F) \cap B(\gamma_j(F),\veps_{j})$, $F \in \bF$.
In order to check that (B) applies to $\Gamma_{j+1}$ we note that (a) $\Gamma_{j+1}(F)$ is convex, since so are $\Gamma(F)$ and all $d$-balls, and non-empty, since $\rmdist(\gamma_j(F),\Gamma(F)) < \veps_j$ and (b) $\Gamma_{j+1}$ is lower semi-continuous, according to (C) and the continuity of $\gamma_j$.
Note that $\sup \{ d(\gamma_j(F),\gamma_{j+1}(F)) : F \in \bF \} \leq \veps_j + \veps_{j+1}$ for all $j \in \N$.
Accordingly, $\la \gamma_j \ra_{j \in \N}$ is uniformly $d$-Cauchy.
As $\bE$ is $d$-complete, $\la \gamma_j \ra_{j \in \N}$ converges uniformly to some $\gamma : \bF \to \bE$.
Since each $\gamma_j$ is continuous, so is $\gamma$.
Finally, $\gamma(F) \in \Gamma(F)$, since $\gamma_j(F) \in \Gamma(F)$ for all $j \in \N$ and $\Gamma(F)$ is closed, $F \in \bF$.\cqfd
\end{Empty}


\bibliographystyle{amsplain}
\bibliography{../../Bibliography/thdp}




\end{document}